\let\chapter\undefined
\def\NZQ{\mathbb}               
\def\NN{{\NZQ N}}
\def\ZZ{{\NZQ Z}}
\def\frk{\mathfrak}               
\def\Phi{{\frk n}}
\def\Phi{{\frk N}}
\def\Ff{{\frk F}}
\def\Ef{{\frk E}}
\def\Sf{{\frk S}}
\def\Ifr{{\mathfrak I}}
\def\Bf{{\mathfrak B}}
\def\Vc{{\mathcal V}}
\def\Vc{{\mathcal V}}
\def\Xc{{\mathcal X}}
\def\Yc{{\mathcal Y}}
\def\Wc{{\mathcal W}}
\def\Gc{{\mathbf G}}
\def\Uc{{\mathcal U}}
\def\Vc{{\mathcal V}}
\def\Wc{{\mathcal W}}
\def\K{{\mathcal K}}
\def\Sc{{\mathcal S}}
\def\M{{\mathcal M}}
\def\Gb{{\mathbf G}}
\def\As{{\mathsf A}}
\def\xb{{\mathbf x}}
\def\js{{\mathsf j}}
\def\as{{\mathsf a}}
\def\opn#1#2{\def#1{\operatorname{#2}}} 
\opn\chara{char} \opn\length{\ell} \opn\pd{pd} \opn\rk{rk}
\opn\projdim{proj\,dim} \opn\injdim{inj\,dim} \opn\rank{rank}
\opn\depth{depth} \opn\grade{grade} \opn\height{height}
\opn\embdim{emb\,dim} \opn\codim{codim}
\opn\Tr{Tr} \opn\bigrank{big\,rank}
\opn\superheight{superheight}\opn\lcm{lcm}
\opn\trdeg{tr\,deg}
\opn\reg{reg} \opn\lreg{lreg} \opn\ini{in} \opn\lpd{lpd}
\opn\size{size} \opn\sdepth{sdepth}
\opn\link{link}\opn\fdepth{fdepth}\opn\lex{lex}
\opn\LM{LM}
\opn\LC{LC}
\opn\NF{NF}
\opn\Merge{Merge}
\opn\sgn{sgn}
\opn\div{div} \opn\Div{Div} \opn\cl{cl} \opn\Pic{Pic}
\opn\Prin{Prin}
\opn\op{op}
\opn\indeg{indeg} \opn\outdeg{outdeg}
\opn\red{red}
\opn\Spec{Spec} \opn\Supp{Supp} \opn\supp{supp} \opn\Sing{Sing}
\opn\Ass{Ass} \opn\Min{Min}\opn\Mon{Mon}
\opn\Ann{Ann} \opn\Rad{Rad} \opn\Soc{Soc}
 \opn\Ker{Ker} \opn\Coker{Coker} \opn\Am{Am}
\opn\Hom{Hom} \opn\Tor{Tor} \opn\Ext{Ext} \opn\End{End}
\opn\Aut{Aut} \opn\id{id}
\opn\nat{nat}
\opn\pff{pf}
\opn\Pf{Pf} \opn\GL{GL} \opn\SL{SL} \opn\mod{mod} \opn\ord{ord}
\opn\Gin{Gin} \opn\Hilb{Hilb}\opn\sort{sort}
\opn\span{span}
\opn\Image{Image}
\opn\aff{aff} \opn\con{conv} \opn\relint{relint} \opn\st{st}
\opn\lk{lk} \opn\cn{cn} \opn\core{core} \opn\vol{vol}
\opn\link{link} \opn\star{star}\opn\lex{lex}\opn\set{set}
\opn\dist{dist}
\opn\gr{gr}
\def\pot#1#2{#1[\kern-0.28ex[#2]\kern-0.28ex]}
\opn\dirlim{\underrightarrow{\lim}}
\opn\inivlim{\underleftarrow{\lim}}
\let\to=\rightarrow
\def\Implies{\ifmmode\Longrightarrow \else
        \unskip${}\Longrightarrow{}$\ignorespaces\fi}
\def\implies{\ifmmode\Rightarrow \else
        \unskip${}\Rightarrow{}$\ignorespaces\fi}
\def\iff{\ifmmode\Longleftrightarrow \else
        \unskip${}\Longleftrightarrow{}$\ignorespaces\fi}
\newtheorem{Theorem}{Theorem}[section]
\newtheorem{Lemma}[Theorem]{Lemma}
\newtheorem{Corollary}[Theorem]{Corollary}
\newtheorem{Proposition}[Theorem]{Proposition}
\theoremstyle{remark}
\newtheorem{Remark}[Theorem]{Remark}
\theoremstyle{definition}
\newtheorem{Example}[Theorem]{Example}
\newtheorem{Definition}[Theorem]{Definition}
\newtheorem*{Notation}{Notation}
\let\kappa=\varkappa
\def\qed{\ifhmode\textqed\fi
      \ifmmode\ifinner\quad\qedsymbol\else\dispqed\fi\fi}
\def\textqed{\unskip\nobreak\penalty50
       \hskip2em\hbox{}\nobreak\hfil\qedsymbol
       \parfillskip=0pt \finalhyphendemerits=0}
\def\dispqed{\rlap{\qquad\qedsymbol}}
\opn\dis{dis}
\def\pnt{{\raise0.5mm\hbox{\large\bf.}}}
\opn\Lex{Lex}
\opn\syz{{\rm syz}}
\opn\spoly{{\rm spoly}}
\opn\LM{{\rm LM}}
\opn\lm{{\rm lm}}
\opn\lcm{{\rm lcm}} \opn\A{\mathcal A}
\numberwithin{equation}{section}
\begin{document}

\tikzstyle{Cgray}=[draw=black, scale = .4,circle, fill = white, minimum size=3mm] \tikzstyle{Cblack}=[scale = .7,circle, fill = black, minimum size=3mm]
\tikzstyle{C0}=[scale = .9,circle, fill = black!0, inner sep = 0pt, minimum
size=3mm]
\tikzstyle{C1}=[scale = .7,circle, fill = black!0, inner sep = 0pt, minimum
size=3mm]
\title {Divisors on graphs, Connected flags, and Syzygies}


\subjclass[2010]{05C40, 13D02, 05E40, 13A02, 13P10}

\author {Fatemeh Mohammadi}
\email{fatemeh.mohammadi716@gmail.com}
\address{Fachbereich Mathematik und Informatik, Philipps-Universit\"at, 35032 Marburg, Germany}

\author {Farbod Shokrieh}
\email{shokrieh@math.gatech.edu}
\address{Georgia Institute of Technology\\
Atlanta, Georgia 30332-0160\\
USA}

\keywords{Graphs, divisors, chip-firing, syzygy modules, Gr\"obner bases, free resolution, Betti numbers, connected flags.}


\begin{abstract}
We study the binomial and monomial ideals arising from linear equivalence of divisors on graphs from the point of view of Gr\"obner theory. We give an explicit description of a minimal Gr\"obner bases for each higher syzygy module. In each case the given minimal Gr\"obner bases is also a minimal generating set.  The Betti numbers of the binomial ideal and its natural initial ideal coincide and they correspond to the number of ``connected flags'' in the graph. In particular the Betti numbers are independent of the characteristic of the base field. For complete graphs the problem was previously studied by  Postnikov and Shapiro (\cite{PostnikovShapiro04}) 
and by Manjunath and Sturmfels 
(\cite{MadhuBernd}). 
The case of a general graph was stated as an open problem.
\end{abstract}


\maketitle



\section{Introduction}
\label{sec:Introduction}

The theory of divisors on finite graphs can be viewed as a discrete version of the analogous theory on Riemann surfaces. This notion arises in different fields of research including the study of ``abelian sandpiles" (\cite{Dhar90,Gabrielov93}), the study of component groups of N{\'e}ron models of Jacobians of algebraic curves (\cite{RaynaudPicard, Lorenzini89}), and the theory of chip-firing games on graphs (\cite{Biggs97}). Riemann-Roch theory for finite graphs (and generalizations to tropical curves) is developed in this setting (\cite{BN1, GathmannKerber, MK08}).

\medskip

We are interested in the linear equivalence of divisors on graphs from the point of view of commutative algebra. Associated to every graph $G$ there is a canonical binomial ideal $I_G$ which encodes the linear equivalences of divisors on $G$. Let $R$ denote the polynomial ring with one variable associated to each vertex. For any two effective divisors $D_1 \sim D_2$ one can write a binomial $\xb^{D_1}-\xb^{D_2}$. The ideal $I_G \subset R$ is generated by all such binomials. Two effective divisors are linearly equivalent if and only if their associated monomials are equal in $R / I_G$. This ideal is implicitly defined in Dhar's seminal statistical physics paper \cite{Dhar90}; $R / I_G$ is the ``operator algebra'' defined there. To our knowledge, this ideal (more precisely, an affine piece of it) was first introduced in \cite{CoriRossinSalvy02} to address computational questions in chip-firing dynamics using Gr\"obner bases. From a purely computational point of view, there are now much more efficient methods available (see, e.g., \cite{FarbodMatt12} and references therein). However, this ideal seems to encode a lot of interesting information about $G$ and its linear systems. Some of the algebraic properties of $I_G$ (and its generalization for directed graphs) are studied in \cite{perkinson}.  Manjunath and Sturmfels \cite{MadhuBernd} relate Riemann-Roch theory for finite graphs to Alexander duality in commutative algebra using this ideal.

\medskip

In this paper, we study the syzygies and free resolutions of the ideals $I_G$ and $\ini(I_G)$ from the point of view of Gr\"obner theory. Here $\ini(I_G)$ denotes the initial ideal with respect to a natural term order which is defined after distinguishing a vertex $q$ (see Definition~\ref{Def:MonomialOrder}). When $G$ is a complete graph, the syzygies and Betti numbers of the ideal $\ini(I_G)$ are studied by Postnikov and Shapiro  \cite{PostnikovShapiro04}. Again for complete graphs, Manjunath and Sturmfels \cite{MadhuBernd} study the ideal $I_G$ and show that the Betti numbers coincide with the Betti numbers of $\ini(I_G)$. Finding minimal free resolutions for a general graph $G$ was stated as an open problem in both \cite{PostnikovShapiro04} and \cite{MadhuBernd} (also in \cite{Wilmes,perkinson}, where a conjecture is formulated). In particular, it was not known whether the Betti numbers for a general graph depend on the characteristic of the base field or not.

\medskip

We explicitly construct free resolutions for both $\ini(I_G)$ and $I_G$ for a general graph $G$ using Schreyer's algorithm. We remark that Schreyer's algorithm ``almost never'' gives a {\em minimal} free resolution. However in our situation we are able to carefully order our combinatorial objects to enforce the minimality. As a result we describe, combinatorially, the minimal Gr\"obner bases for all higher syzygy modules of $I_G$ and $\ini(I_G)$. In each case the minimal Gr\"obner bases is also a minimal generating set and the given resolution is minimal. This is shown by explicitly describing the differential maps in the constructed resolutions.
In particular, the Betti numbers of $\ini(I_G)$ and $I_G$ coincide. In other words, we have a positive answer to \cite[Question~1.1]{conca} for $I_G$ (see \cite{Fatemeh} and references therein, for other such examples).
For a complete graph the minimal free resolution for $\ini(I_G)$ is nicely structured by a Scarf complex. The resolution for $I_G$ when $G$ is a tree is given by a Koszul complex since $I_G$ is a complete intersection.

\medskip

The description of the generating sets and the Betti numbers is in terms of the ``connected flags'' of $G$. Fix a vertex $q \in V(G)$ and an integer $k$. A {\em connected $k$-flag} of $G$  (based at $q$) is a strictly increasing sequence $U_1 \subsetneq U_2 \subsetneq \cdots \subsetneq U_k =V(G)$ such that $q \in U_1$ and all induced subgraphs on vertex sets $U_{i}$ and $U_{i+1} \backslash U_{i}$ are connected. Associated to any connected $k$-flag one can assign a ``partial orientation'' on $G$ (Definition~\ref{def:Du}). Two connected $k$-flags are considered equivalent if the associated partially oriented graphs coincide. The Betti numbers correspond to the numbers of connected flags up to this equivalence. We give a bijective map between the connected flags of $G$ and minimal Gr\"obner bases for higher syzygy modules of $I_G$ and $\ini(I_G)$. For a complete graph all flags are connected and all distinct flags are inequivalent. So in this case the Betti numbers are simply the face numbers of the order complex of the poset of those subsets of $V(G)$ that contain $q$ (ordered by inclusion). These numbers can be described using classical Stirling numbers (see Example~\ref{exam:complete}). Hence our results directly generalize the analogous results in \cite{PostnikovShapiro04} and \cite{MadhuBernd}.

\medskip

The paper is structured as follows. In \S\ref{sec:Background} we fix our notation and provide the necessary background from the theory of divisors on graphs. We also define the ideal $I_G$ and the natural $\Pic(G)$-grading and a term order $<$ on the polynomial ring relevant to our setting. In \S\ref{sec:comalg} we quickly recall some basic notions from commutative algebra. Our main goal is to fix our notation for Schreyer's algorithm for computing higher syzygies, which is slightly different from what appears in the existing literature but is more convenient for our application. Also, to our knowledge Theorem~\ref{thm:GBini}, which gives a general sufficient criterion for an ideal to have the same graded Betti numbers as its initial ideal, has not appeared in the literature. In \S\ref{sec:Flag} we define connected flags and their equivalence relation. Basic properties of connected flags (up to equivalence) are studied in \S\ref{sec:technical}.
In \S\ref{sec:Syzygy} we study the free resolution and higher syzygies of our ideals from the point of view of Gr\"obner theory, and in \S\ref{sec:minimality} we show that the constructed free resolutions are  minimal. As a corollary we give our description of the graded Betti numbers in \S\ref{sec:betti} and we describe some connections with the theory of reduced divisors.

\medskip

Analogous results were obtained simultaneously (and independently) by Manjunath, Schreyer, and Wilmes in \cite{Madhu} using different techniques. 
Mania \cite{Horia} gives an alternate proof for the expression of the first Betti number (see Remark~\ref{rmk:mania}(ii)).
The constructed minimal free resolutions in this paper are in fact supported on certain cellular complexes. In \cite{FarbodFatemeh2} we describe this geometric picture for both $I_G$ and $\ini(I_G)$, making precise connections with Lawrence and oriented matroid ideals of \cite{popescu, novik}. 
Dochtermann and Sanyal have recently (independently) worked out this geometric picture in \cite{Anton} for the monomial ideal $\ini(I_G)$.

\medskip

{\bf Acknowledgements.} The first author acknowledges support from the Mathematical Sciences Research Institute and the Alexander von Humboldt Foundation. She is grateful to H${\rm \acute{e}}$l${\rm \grave{e}}$ne Barcelo and Volkmar Welker for numerous comments and many helpful conversations. She would like to express her gratitude to Bernd Sturmfels for introducing her to this concept. The second author's work was partially supported by NSF grants DMS-0901487. Part of this work was done while the second author was visiting UC Berkeley, which he would like to thank for the hospitality. We thank the referees for their very helpful suggestions and remarks. We would like to thank Lukas Katth\"an for pointing out the reference \cite{Kamoi}.



\section{Definitions and background}
\label{sec:Background}

\subsection{Graphs and divisors}
Throughout this paper, a {\em graph} means a finite, connected, unweighted multigraph with no loops. As usual, the set of vertices and edges of a graph $G$ are denoted by $V(G)$ and $E(G)$. We set $n=|V(G)|$. A {\em pointed graph} $(G,q)$ is a graph together with a choice of a distinguished vertex $q \in V(G)$.

\medskip

For a subset $S \subseteq V(G)$, we denote by $G[S]$ the induced subgraph of $G$ with the vertex set $S$; the edges of $G[S]$ are exactly the edges that appear in $G$ over the set $S$. We use ``$S$ is connected'' and ``$G[S]$ is connected'' interchangeably.

\medskip

Let $\Div(G)$ be the free abelian group generated by $V(G)$. An element  of $\Div(G)$ is written as
$\sum_{v \in V(G)} a_v (v)$
 and is called a {\em divisor} on $G$. The coefficient $a_v$ in $D$ is also denoted by $D(v)$.  A divisor $D$ is called {\em effective} if $D(v) \geq 0$ for all $v\in V(G)$. The set of effective divisors is denoted by $\Div_{+}(G)$. We write $D \leq E$ if $E-D \in \Div_{+}(G)$. For $D \in \Div(G)$, let $\deg(D) = \sum_{v \in V(G)} D(v)$. For $D_1, D_2 \in \Div(G)$, the divisor $E=\max(D_1,D_2)$ is defined by $E(v)=\max(D_1(v),D_2(v))$ for $v \in V(G)$.

\medskip

We denote by $\M(G)$ the group of integer-valued functions on the vertices. For $A \subseteq V(G)$, $\chi_A \in \M(G)$ denotes the $\{0,1\}$-valued characteristic function of $A$. The {\em Laplacian operator} $\Delta : \M(G) \to \Div(G)$ is defined by

\[\Delta(f) = \sum_{v \in V(G)} \sum_{\{v,w\} \in E(G)} (f(v) - f(w)) (v) \  .\]

The group of {\em principal divisors} is defined as the image of the Laplacian operator and is denoted by $\Prin(G)$. It is easy to check that $\Prin(G) \subseteq \Div^0(G)$, where $\Div^0(G)$ denotes the subgroup consisting of divisors of degree
zero. The quotient
$\Pic^0(G) = \Div^0(G) / \Prin(G)$
is a finite group whose cardinality is the number of spanning trees of $G$ (see, e.g., \cite{FarbodMatt12} and references therein). The full {\em Picard group} of $G$ is defined as
\[
\Pic(G) = \Div(G) / \Prin(G)
\]
which is isomorphic to $\ZZ \oplus \Pic^0(G)$. Since principal divisors have degree zero, the map $\deg: \Div(G) \rightarrow \ZZ$ descends to a well-defined map $\deg: \Pic(G) \rightarrow \ZZ$. Two divisors $D_1$ and $D_2$ are called {\em linearly equivalent} if they become equal in  $\Pic(G)$. In this case we write $D_1 \sim D_2$. The {\em linear system} $|D|$ of $D$ is defined as the set of effective divisors that are linearly equivalent to $D$.

\medskip

To an ordered pair of {\em disjoint} subsets $A, B \subseteq V(G)$  we assign an effective divisor
\begin{equation}\label{DAB}
D(A,B)=\sum_{v \in A }{|\{w \in B : \, \{v,w\} \in E(G)\}|(v)} \ .
\end{equation}
In other words, the support of $D(A,B)$ is a subset of $A$ and for $v \in A$ the coefficient of $(v)$ in $D(A,B)$ is the number of edges between $v$ and $B$. We define
\[d(A,B)=\deg(D(A,B))\]
 which is the number of edges of $G$ with one end in $A$ and the other end in $B$. Although in general $D(A,B) \neq D(B,A)$ we always have $d(A,B)=d(B,A)$.

\subsection{Divisors on graphs and the polynomial ring}
Let $K$ be a field and let $R=K[\xb]$ be the polynomial ring in the $n$ variables $\{x_v: v \in V(G)\}$. Any effective divisor $D$ gives rise to a monomial
\[
\xb^D := \prod_{v \in V(G)}{x_{v}^{D(v)}} \ .
\]

\subsubsection{Gradings} \label{sec:grade}
For an abelian group $\As$, the polynomial ring $R$ is said to be $\As$-graded (or graded by $\As$) if it is endowed with an $\As$-valued degree homomorphism $\deg_{\As} : \Div(G) \rightarrow \As$. This is equivalent to fixing a semigroup homomorphism $\deg_{\As} : \Div_{+}(G) \rightarrow \As$. Let $\deg_{\As}(\xb^D)=\deg_{\As}(D)$. For $\as \in \As$ let $R_{\as}$ denote the $K$-vector space of homogeneous polynomials of degree $\as$. If there is no homogeneous polynomial of degree $\as$ we let $R_{\as}=0$.

\medskip

There are three natural gradings of $R$ in our setting:
\begin{itemize}
\item[(i)] $\As=\ZZ$ and $\deg_{\As}(\xb^D)=\deg(D)$. This is the coarse $\ZZ$-grading of $R$.
\item[(ii)] $\As=\Div(G)$ and $\deg_{\As}(\xb^D)=D$. This is the fine $\ZZ^n$-grading of $R$.
\item[(iii)]  $\As=\Pic(G)$ and $\deg_{\As}(\xb^D)=[D]$, where $[D]$ denotes the equivalence class of $D$ in $\Pic(G)$.
\end{itemize}

\medskip

Gradings (i) and (ii) are, of course, well known. For the grading in (iii) (which is finer than the grading in (i) and is coarser than the grading in (ii)) we have the following lemma.

\begin{Lemma}\label{lem:pic1}
Let $\As=\Pic(G)$ and $\deg_{\As}(\xb^D)=[D]$ as above. Then $R_0=K$ and, for each $\as \in \Pic(G)$, the graded piece $R_{\as}$ is finite-dimensional.
\end{Lemma}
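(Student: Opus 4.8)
The plan is to unwind the definition of the $\Pic(G)$-grading from \S\ref{sec:grade}. First I would record that the map in (iii) genuinely makes $R$ into a graded ring: the monomials $\{\xb^D : D \in \Div_{+}(G)\}$ form a $K$-basis of $R$, each such monomial lies in the single component indexed by $[D]$, and since $\xb^D \xb^{D'} = \xb^{D + D'}$ with $[D + D'] = [D] + [D']$ one gets $R_{\as} \cdot R_{\as'} \subseteq R_{\as + \as'}$. Consequently, for each $\as \in \Pic(G)$ the graded piece $R_{\as}$ is precisely the $K$-span of the monomials $\xb^D$ with $D \in \Div_{+}(G)$ and $[D] = \as$, so that $\dim_K R_{\as}$ equals the number of effective divisors in the class $\as$.

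For the finiteness statement, the key point is that the $\Pic(G)$-grading refines the coarse $\ZZ$-grading: because $\deg$ descends to a homomorphism $\deg \colon \Pic(G) \to \ZZ$, every effective divisor $D$ with $[D] = \as$ satisfies $\deg D = \deg(\as)$, a single fixed integer. If $\deg(\as) < 0$ there is no such $D$ and $R_{\as} = 0$; if $\deg(\as) = d \geq 0$, then $R_{\as}$ sits inside the space of polynomials homogeneous of total degree $d$, and since there are exactly $\binom{d + n - 1}{n - 1}$ effective divisors of degree $d$ on the $n$-element vertex set $V(G)$, we get $\dim_K R_{\as} \leq \binom{d + n - 1}{n - 1} < \infty$. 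Equivalently, when $\as$ has an effective representative $D_0$ one has $\{D \in \Div_{+}(G) : [D] = \as\} = |D_0|$, so $\dim_K R_{\as} = \#\,|D_0|$ is the (finite) size of the linear system.

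For $R_0 = K$ I would specialize to $\as = 0$: an effective divisor $D$ with $[D] = 0$ is linearly equivalent to the zero divisor, hence $\deg D = 0$; an effective divisor of degree zero must be $0$. Thus the only monomial of $\Pic(G)$-degree $0$ is $\xb^0 = 1$, and $R_0 = K \cdot 1 = K$.

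I do not expect a genuine obstacle here: the lemma is essentially a bookkeeping consequence of the fact that $\deg$ is well defined on $\Pic(G)$ — which pins down the total degree on each graded piece — together with the finiteness of the set of effective divisors of a given degree on a finite graph. The only point that deserves care is the preliminary verification that (iii) really defines a grading in the sense of \S\ref{sec:grade}, since the monomial description of $R_{\as}$ used throughout relies on it.
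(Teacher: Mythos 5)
Your proof is correct and takes essentially the same route as the paper: both arguments observe that $R_{\as}$ is spanned by the monomials $\xb^E$ with $E \in |D_0|$ for a representative $D_0$ of $\as$, that all such $E$ share the fixed total degree $\deg(\as)$ (hence the spanning set is finite), and that an effective divisor linearly equivalent to $0$ must have degree $0$ and hence be $0$, giving $R_0 = K$. The extra bookkeeping you include (verifying that (iii) is genuinely a grading, and the explicit binomial-coefficient bound) is sound but not needed beyond what the paper records.
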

We remark that by \cite[Theorem 8.6]{MillerSturmfels} the two conclusions in this lemma are, in general, equivalent. 
\begin{proof}
For each $[D] \in \Pic(G)$, the graded piece $R_{[D]}$ is spanned (as a $K$-vector space) by $\{\xb^E : \, E \in |D|\}$ which is a finite set. This is because if $E \in |D|$, then, in particular, $\deg(E)=\deg(D)$.
 If $D \sim 0$ then $\deg(D)=0$. So if $D$ is effective we get $D=0$. This means $R_0=K$.
\end{proof}

Let $R=\bigoplus_{\as \in \Pic(G)}R_{\as}$ and $\mathfrak{m}=\bigoplus_{0 \ne \as \in \Pic(G)}R_{\as}$. It follows from Lemma~\ref{lem:pic1} that $\mathfrak{m}$ is a maximal ideal of $R$.
Consider the map $u:=\deg \circ \deg_{\As}:\Div(G) \rightarrow \ZZ$ sending $D$ to $\deg([D])=\deg(D)$. It is clear that $u$ takes every nonzero element of $\Div_{+}(G) $ to a strictly positive integer. Equivalently $u(r) \geq 1$ for every nonzero monomial. We can use this observation to prove that Nakayama's lemma holds for $R$ with respect to the $\Pic(G)$-grading (see, e.g., \cite[Proposition~1.4]{Kamoi}).

\begin{Lemma}\label{lem:picgrad}
Let $R$ and $\mathfrak{m}$ be as above. Then for every finitely generated $\Pic(G)$-graded module $M$ such that $\mathfrak{m}M=M$ we have $M=0$.
\end{Lemma}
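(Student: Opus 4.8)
The plan is to run the textbook graded-Nakayama argument, with the homomorphism $u:=\deg\circ\deg_{\As}:\Div(G)\to\ZZ$ (which descends to $u:\Pic(G)\to\ZZ$) playing the role normally played by the $\NN$-grading. The key structural input, already recorded just before the statement, is that $u$ sends the class of every nonzero effective divisor to a \emph{strictly} positive integer, equivalently $R_{\as}\ne 0$ with $\as\ne 0$ forces $u(\as)\ge 1$. So although $\Pic(G)$ itself carries no useful order, $u$ pulls back a $\ZZ$-valued ``degree'' in which the irrelevant ideal $\mathfrak m=\bigoplus_{0\ne\as}R_{\as}$ sits in strictly positive degrees.

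First I would pin down the degrees in which a nonzero finitely generated graded module can live. Suppose $M\ne 0$, and choose finitely many homogeneous generators $m_1,\dots,m_s$ of $\Pic(G)$-degrees $a_1,\dots,a_s$. If $M_a\ne 0$, pick $0\ne x\in M_a$ and write $x=\sum_i f_i m_i$ with $f_i\in R$ homogeneous of degree $a-a_i$; some $f_i\ne 0$, so $R_{a-a_i}\ne 0$, which forces $a-a_i=[E]$ for some effective divisor $E$. Applying $u$ and using $u([E])=\deg(E)\ge 0$ gives $u(a)\ge u(a_i)\ge\min_j u(a_j)$. Hence $S:=\{\,u(a): a\in\Pic(G),\ M_a\ne 0\,\}$ is a nonempty subset of $\ZZ$ that is bounded below, so it attains its minimum; fix $a$ with $M_a\ne 0$ and $u(a)=\min S$.

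Next I would use the hypothesis $\mathfrak m M=M$ in degree $a$. Since $\mathfrak m=\bigoplus_{0\ne c}R_c$ and each $c$ appearing here is the class of a nonzero effective divisor, we have $u(c)\ge 1$ for all such $c$. Therefore $M_a=(\mathfrak m M)_a=\sum_{c} R_c\,M_{a-c}$, the sum over those $c$. For each such $c$, $u(a-c)=u(a)-u(c)\le u(a)-1<\min S$, so $M_{a-c}=0$ by minimality of $u(a)$ in $S$; consequently $M_a=0$, contradicting the choice of $a$. Hence $M=0$.

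The only point that requires any care — and the one I would flag as the ``hard part,'' though it is really just the hypothesis in disguise — is the claim that $S$ is bounded below, i.e.\ that $\mathfrak m$ is concentrated in positive $u$-degree; this is exactly the observation $u(r)\ge 1$ for every nonzero monomial established in the paragraph preceding the statement, and it is what substitutes for the usual ``graded by $\NN$'' assumption. Everything else is the standard degree-minimization. Alternatively, one could simply invoke \cite[Proposition~1.4]{Kamoi} (or \cite[Theorem~8.6]{MillerSturmfels}, using $R_0=K$ and Lemma~\ref{lem:pic1}), but giving the short direct argument above seems cleaner.
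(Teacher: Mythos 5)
Your proof is correct and follows essentially the same route as the paper: it uses the degree functional $u=\deg\circ\deg_{\As}$ to pull back a $\ZZ$-valued ``weight'' on which $\mathfrak m$ is strictly positive, then runs the usual minimal-degree Nakayama argument. The paper's version is terser (in particular, it simply asserts $u(M)>-\infty$ from finite generation, whereas you spell out why — by writing a homogeneous element as $\sum f_i m_i$ and noting that $R_{a-a_i}\ne 0$ forces $u(a)\ge u(a_i)$), but the underlying idea is identical.
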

\begin{proof}
Suppose $M \ne 0$. Write $M=\bigoplus_{\as \in \Pic(G)}M_{\as}$. For any graded piece $M_{\as}$ let $u(M_{\as})$ denote the integer $u(m_{\as})$ for any $m_{\as} \in M_{\as}$. Let $u(M)=\min_{\as \in \Pic(G)}{u(M_{\as})}$. Since $M$ is assumed to be finitely generated $u(M) > -\infty$. Since $u(r) \geq 1$ for all $r \in \mathfrak{m}$ we have $u(\mathfrak{m}M) > u(M)$ and therefore $\mathfrak{m}M \ne M$.
\end{proof}

Because of Lemma~\ref{lem:picgrad} the notions of minimal generating set for (finitely generated) modules, minimal free resolution, and graded Betti numbers all make sense for the $\Pic(G)$-grading. Note that we need Lemma~\ref{lem:picgrad} since $\Pic(G)$ is {\em not} in general torsion-free, and the $\Pic(G)$-grading of $R$ is not a ``positive multigrading'' in the sense of \cite[Definition 8.7]{MillerSturmfels}.

\subsubsection{The binomial ideal $I_G$}
Associated to every graph $G$ there is a canonical ideal which encodes the linear equivalences of divisors on $G$. This ideal is implicitly defined in Dhar's seminal paper \cite{Dhar90}. The ideal was introduced in \cite{CoriRossinSalvy02} to address computational questions in chip-firing dynamics using Gr\"obner bases.

\begin{Definition}\label{Def:BinomialIdeal}
\[
\begin{aligned}
I_G&:= \langle \xb^{D_1} - \xb^{D_2} : \, D_1 \sim D_2 \text{ both effective divisors}\rangle\\ \
& =\span_{K}\{\xb^{D_1} - \xb^{D_2} : \, D_1 \sim D_2 \text{ both effective divisors}\} \ .
\end{aligned}
\]
\end{Definition}

Clearly this ideal is graded (or homogeneous) with respect to the $\ZZ$ and $\Pic(G)$ gradings described in \S\ref{sec:grade} ((i) and (iii)). It follows that the quotient $R/I_G$ is both $\ZZ$-graded and $\Pic(G)$-graded as an $R$-module.

\subsubsection{A natural term order}

Once we fix a vertex $q$, there is a family of natural monomial orders that gives rise to a particularly nice  Gr\"obner bases for $I_G$. This term order was first introduced in \cite{CoriRossinSalvy02}.

Fix a pointed graph $(G,q)$. Consider a total ordering of the set of variables $\{x_v: v \in V(G)\}$ compatible with the distances
 of vertices from $q$ in $G$:
\begin{equation}\label{eq:dist}
\dist(w,q) < \dist(v,q) \, \implies \,  x_w < x_v   \ .
\end{equation}
Here, the distance between two vertices in a graph is the number of edges in a shortest path connecting them. The above ordering can be thought of an ordering on vertices induced by running the breadth-first search algorithm starting at the root vertex $q$.

\begin{Definition} \label{Def:MonomialOrder}
We denote by $<$ the degree reverse lexicographic ordering on $R=K[\xb]$ induced by the total ordering on the variables given in \eqref{eq:dist}.
\end{Definition}

We remark that the choice of the vertex $q$ is implicit in this notation.

\begin{Remark}\label{rmk:potential}
The ``total potential'' functional $b_q(\cdot)$ from \cite{FarbodMatt12} is in the Gr\"obner cone of $<$. In fact it corresponds to the barycenter of this cone (see \cite[\S{3.3} and \S{3.4}]{FarbodFatemeh2}).
\end{Remark}

\medskip

Throughout this paper $\ini(I_G)$ denotes the initial ideal of $I_G$ with respect to this term order. Note that $\ini(I_G)$ is denoted by $M_G$ in \cite{PostnikovShapiro04}.

\section{Commutative algebra: syzygies and Betti numbers} \label{sec:comalg}

In this section, we quickly recall some basic notions from commutative algebra. Our main goal is to fix our notation. A secondary goal is to keep the paper self-contained. Most of the material here is well known and we refer to standard books (e.g. \cite{Eisenbud, Singular}) for proofs and more details. To our knowledge Theorem~\ref{thm:GBini}, which gives a general sufficient criterion for an ideal to have the same Betti numbers as its initial ideal, has not appeared in the literature.

\medskip

Let $K$ be any field and let $R=K[\xb]$ be the polynomial ring in $n$ variables graded by an abelian group $\As$. The degree map will be denoted by $\deg$. Whenever we talk about notions like minimal generating sets, minimal free resolutions, or graded Betti numbers, we further assume that the grading is ``nice'' in the sense that Nakayama's lemma holds and these notions are well defined. In this case we let $\mathfrak{m}$ denote the corresponding maximal ideal of $R$ consisting of nonunit elements. Examples of such nice gradings are all the gradings in \S\ref{sec:grade} as well as ``positive multigradings'' in the sense of \cite[Definition 8.7]{MillerSturmfels} (which generalizes the $\ZZ$ and $\Div(G)$ gradings, but not the $\Pic(G)$-grading).

\subsection{Syzygies}

 Let $F_{-1}$ be the free $R$-module generated by a finite set $E$. Elements of $F_{-1}$ will be written as formal sums (with coefficients in $R$) of symbols $[e]$ (one symbol $[e]$ for each $e \in E$). Fix a {\em module ordering} $<_0$  on $F_{-1}$ extending the monomial ordering $<$ on $R$. Recall that a module ordering on $F_{-1}$ is a total ordering on the set of ``monomials'' $\xb^{\alpha}[e]$ (for $\alpha \in \NN^n$ and $e \in E$) extending a monomial ordering on $R$ and compatible with the $R$-module structure. As usual $\LM$ will denote the {\em leading monomial} with respect to the associated ordering on monomials.

\medskip

Let $M$ be a graded submodule of $F_{-1}$. Assume that the finite totally ordered set $(\Gc , \prec )$ forms a Gr\"obner bases for $(M,<_0)$ consisting of homogeneous elements.  Let $F_0$ be the free module generated by $\Gc$. For $g \in \Gc$ we let the formal symbol $[g]$ denote the corresponding generator for $F_0$; each element of $F_0$ can be written as a sum of these formal symbols with coefficients in $R$.

\medskip

There is a natural surjective homomorphism
\[\varphi_0: F_0\longrightarrow M \subseteq F_{-1} \]
sending $[g]$ to $g$ for each $g \in \Gc$. Moreover, we force this homomorphism to be graded (or homogeneous of degree $0$) by defining
\[
\deg([g]):=\deg(g) \quad \text{for all} \quad g \in \Gc \ .
\]

By definition the syzygy module of $M$ with respect to $\Gc$, denoted by $\syz(\Gc)$, is the kernel of this map. Let $\syz_0(\Gc):=M$ and $\syz_1(\Gc):=\syz(\Gc)$. For $i>1$ the higher syzygy modules are defined as $\syz_i(\Gc):= \syz(\syz_{i-1}(\Gc))$.

\begin{Remark}
Since $R$ is a graded ring, if $\Gc$ is a {\em minimal} set of homogeneous generators of $M$ then $\syz(M):= \syz(\Gc)$ is well defined (i.e. independent of the choice of the generating set $\Gc$) up to a graded isomorphism.
\end{Remark}

\subsection{Gr\"obner bases for syzygy modules}

We now discuss a method to compute a Gr\"obner bases for $\syz(\Gc)$.

\medskip

One can ``pull back'' the module ordering $<_0$ from $F_{-1}$ along $\varphi_0$ to get a compatible module ordering $<_1$ on $F_0$; for $f,h \in \Gc$ define
\begin{equation}\label{orderpull}
\xb^{\beta} [h]<_{1} \xb^{\alpha}[f] \Leftrightarrow
\begin{cases}
\LM(\xb^{\beta}h) <_0 \LM(\xb^{\alpha}f) &\\
\text{or} &\\
\LM(\xb^{\beta}h) = \LM(\xb^{\alpha}f) \quad \text{ and } \quad f \prec h.
\end{cases}
\end{equation}

Note that both $<_0$ and $<_1$ extend the same monomial ordering $<$ on $R$. Also, the module ordering $<_1$ on $F_0$ depends on both $<_0$ on $F_{-1}$ and on the totally ordered set $(\Gc , \prec )$.

\medskip

To simplify the notation we assume the leading coefficients of all elements of $\Gc$ are $1$. Suppose we are given a pair of elements $f \prec h$ of $\Gc$ such that
\[\LM(f)=\xb^{\alpha(f)} [e] \quad \text{and} \quad \LM(h)=\xb^{\alpha(h)} [e]\]
for some $e \in E$. Since $\Gc$ is a Gr\"obner bases, setting $\gamma(f,h):=\max(\alpha(f),\alpha(h))$ (the entry-wise maximum), we have the ``standard representation'':
\begin{equation}\label{eq:standard}
\spoly(f,h)=\xb^{\gamma(f,h)-\alpha(f)}f- \xb^{\gamma(f,h)-\alpha(h)} h=\sum_{g \in \Gc} a_g^{(f,h)} g
\end{equation}
for some polynomials $a_g^{(f,h)} \in R$. We set
\begin{equation}\label{eq:s(f,g)}
s(f,h)=\xb^{\gamma(f,h)-\alpha(f)}[f]- \xb^{\gamma(f,h)-\alpha(h)}[h]-\sum_{g \in \Gc} a_g^{(f,h)} [g]  \in F_0\ .
\end{equation}

Since $f,h \in \Gc$ are by assumption homogeneous
\[\deg([f])=\deg(f)=\deg(\LM(f))=\deg(\alpha(f))+\deg([e]) \ , \]  \[\deg([h])=\deg(h)=\deg(\LM(h))=\deg(\alpha(h))+\deg([e])\ .\]
 It follows that $s(f,h)$ is also homogeneous and its degree is equal to $\deg(\gamma(f,h))+\deg([e])$. Also, by definition $s(f,h) \in \syz(\Gc)$. More is true:

\begin{Theorem}[Schreyer \cite{Schreyer}] \label{thm:Schreyer}
The set
\[\mathcal{S}(\Gc)=\{s(f,h)  : \,  f , h \in \Gc \, , \, f \prec h \, , \, \LM(f)=\xb^{\alpha(f)} [e] \, , \,  \LM(h)=\xb^{\alpha(h)} [e] \text{ for some } e \in E \}\]
forms a homogeneous Gr\"obner bases for $(\syz(\Gc),<_1)$.
\end{Theorem}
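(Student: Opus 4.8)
This is Schreyer's classical theorem, so the plan is to run the usual Buchberger-style argument, the only care being to keep straight the two module orderings $<_0$ on $F_{-1}$ and $<_1$ on $F_0$ as set up in \eqref{orderpull} (and the direction in which $\prec$ breaks ties).

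\emph{Step 1: the $<_1$-leading monomial of $s(f,h)$.} First I would show that, for $f\prec h$ as in \eqref{eq:s(f,g)},
\[
\LM_{<_1}\bigl(s(f,h)\bigr)=\xb^{\gamma(f,h)-\alpha(f)}[f].
\]
Indeed, the two ``outer'' monomials $\xb^{\gamma(f,h)-\alpha(f)}[f]$ and $\xb^{\gamma(f,h)-\alpha(h)}[h]$ both have $F_{-1}$-leading monomial $\xb^{\gamma(f,h)}[e]$, so by \eqref{orderpull} the tie is broken by $\prec$, and since $f\prec h$ we get $\xb^{\gamma(f,h)-\alpha(h)}[h]<_1\xb^{\gamma(f,h)-\alpha(f)}[f]$. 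For the remaining summands, the leading terms of $\xb^{\gamma(f,h)-\alpha(f)}f$ and $\xb^{\gamma(f,h)-\alpha(h)}h$ cancel, so $\LM(\spoly(f,h))<_0\xb^{\gamma(f,h)}[e]$; and in the standard representation \eqref{eq:standard} one has $\LM(a_g^{(f,h)}g)\le_0\LM(\spoly(f,h))$ for every $g$, which, as $R$ is a domain, forces $\LM(\xb^\beta g)<_0\xb^{\gamma(f,h)}[e]$ for each monomial $\xb^\beta$ occurring in $a_g^{(f,h)}$, hence $\xb^\beta[g]<_1\xb^{\gamma(f,h)-\alpha(f)}[f]$. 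Homogeneity of $s(f,h)$ was already checked before the statement.

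\emph{Step 2: the Gr\"obner-bases criterion.} Since $\mathcal{S}(\Gc)\subseteq\syz(\Gc)$, it suffices to prove that every nonzero $\sigma\in\syz(\Gc)$ has $\LM_{<_1}(\sigma)$ divisible by $\LM_{<_1}(s)$ for some $s\in\mathcal{S}(\Gc)$. Write $\sigma=\sum_{g\in\Gc}b_g[g]$ with $\sum_g b_g g=\varphi_0(\sigma)=0$, let $m=\xb^{\tau}[e]$ be the $<_0$-largest element of $\{\LM(b_g g):b_g\neq0\}$, and set $T=\{g:\LM(b_g g)=m\}$. Since the terms of $\sum_g b_g g$ on the monomial $m$ must cancel, $|T|\ge 2$; moreover every $g\in T$ has $\LM(g)=\xb^{\alpha(g)}[e]$ with the \emph{same} symbol $[e]$, and (using $\LM(b_g g)=\LM(b_g)\LM(g)$) we have $\LM(b_g)=\xb^{\tau-\alpha(g)}$ with $\tau-\alpha(g)\ge 0$ componentwise. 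Reading off \eqref{orderpull}, the $<_1$-leading monomial of $\sigma$ is $\LM(b_h)[h]=\xb^{\tau-\alpha(h)}[h]$, where $h$ is the $\prec$-\emph{minimal} element of $T$ (the $\prec$-smaller index wins the tie).

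\emph{Step 3: conclusion.} Choose any $f\in T\setminus\{h\}$, so $h\prec f$ and $s(h,f)\in\mathcal{S}(\Gc)$; by Step~1, $\LM_{<_1}(s(h,f))=\xb^{\gamma(h,f)-\alpha(h)}[h]$. Since $\tau\ge\alpha(h)$ and $\tau\ge\alpha(f)$, we get $\tau\ge\gamma(h,f)=\max(\alpha(h),\alpha(f))$, hence $\xb^{\gamma(h,f)-\alpha(h)}\mid\xb^{\tau-\alpha(h)}=\LM(b_h)$, i.e. $\LM_{<_1}(s(h,f))\mid\LM_{<_1}(\sigma)$. Therefore the leading monomials of the elements of $\mathcal{S}(\Gc)$ generate $\LM_{<_1}(\syz(\Gc))$, which is exactly the assertion that $\mathcal{S}(\Gc)$ is a homogeneous Gr\"obner bases of $(\syz(\Gc),<_1)$. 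The main obstacle is not conceptual but bookkeeping: one must track the interaction of $<_0$ and $<_1$ and, crucially, get the direction of the $\prec$ tie-break right, since this is what makes the relevant Schreyer generator $s(h,f)$ with $h$ the $\prec$-minimal index of $T$ (rather than $s(f,h)$); the rest is the classical Schreyer/Buchberger computation.
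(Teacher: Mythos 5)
The paper does not actually give a proof of Theorem~\ref{thm:Schreyer}; it attributes the result to Schreyer and cites \cite{Schreyer}, and the only pieces it establishes separately are Lemma~\ref{lem:lm} (the formula for $\LM_{<_1}(s(f,h))$) and the chain criterion in Lemma~\ref{lem:Chain criterion}. Your argument is the standard Schreyer/Buchberger proof and it is correct. Your Step~1 is a re-derivation of Lemma~\ref{lem:lm}, so you could simply have invoked that lemma. Steps~2--3 are the classical computation: for a nonzero syzygy $\sigma=\sum_g b_g[g]$, every term $\xb^\beta[g]$ of $\sigma$ satisfies $\LM_{<_0}(\xb^\beta g)\le_0 m$ with equality exactly when $g\in T$ and $\xb^\beta=\LM(b_g)$, so the $<_1$-leading monomial is supported on $[h]$ for $h$ the $\prec$-minimal element of $T$ (you read the tie-break in \eqref{orderpull} the right way: the $\prec$-smaller index wins); then, because $\alpha(h),\alpha(f)\le\tau$ forces $\gamma(h,f)\le\tau$, the leading monomial $\xb^{\gamma(h,f)-\alpha(h)}[h]$ of the Schreyer generator $s(h,f)$ divides $\xb^{\tau-\alpha(h)}[h]=\LM_{<_1}(\sigma)$. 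This is complete. The one small thing you leave implicit is the verification that the $<_1$-leading term of $\sigma$ really is achieved among the pairs $(g,\LM(b_g))$ with $g\in T$ rather than some $g\notin T$ or some non-leading term of $b_g$; it follows immediately from $\LM(\xb^\beta g)\le_0\LM(b_g g)\le_0 m$ with strictness outside $T$, but it is worth stating since that is precisely where the definition of the Schreyer order $<_1$ is being used. No gaps otherwise.
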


Both the module ordering $<_1$ and the Gr\"obner bases $\mathcal{S}(\Gc)$ depend on $<_0$ and on $(\Gc, \prec)$.
\begin{Lemma} \label{lem:lm}
With respect to $<_1$ we have
 $\LM(s(f,h))=\xb^{\gamma(f,h)-\alpha(f)}[f]$.
\end{Lemma}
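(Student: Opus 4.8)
The plan is to unwind the definition of $s(f,h)$ and of the pulled-back ordering $<_1$, and to show that the displayed term $\xb^{\gamma(f,h)-\alpha(f)}[f]$ dominates every other term appearing in $s(f,h)$. Recall from \eqref{eq:s(f,g)} that
\[
s(f,h)=\xb^{\gamma(f,h)-\alpha(f)}[f]- \xb^{\gamma(f,h)-\alpha(h)}[h]-\sum_{g \in \Gc} a_g^{(f,h)} [g],
\]
so there are three kinds of terms to compare against: the single term coming from $[f]$, the single term coming from $[h]$, and the terms (expanded into monomials) coming from $\sum_g a_g^{(f,h)}[g]$. For each I would apply the definition \eqref{orderpull} of $<_1$, which compares first the leading monomials $\LM(\xb^{\bullet}\,\bullet)$ in $F_{-1}$ and, only in case of a tie, falls back on the total order $\prec$ on $\Gc$.

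First I would handle the $[f]$ versus $[h]$ comparison. By construction $\LM(\xb^{\gamma(f,h)-\alpha(f)}f)=\xb^{\gamma(f,h)-\alpha(f)}\LM(f)=\xb^{\gamma(f,h)}[e]$ and likewise $\LM(\xb^{\gamma(f,h)-\alpha(h)}h)=\xb^{\gamma(f,h)}[e]$, so the two leading monomials in $F_{-1}$ are \emph{equal}; hence the tie-breaking clause of \eqref{orderpull} applies, and since $f\prec h$ we get $\xb^{\gamma(f,h)-\alpha(h)}[h]<_1\xb^{\gamma(f,h)-\alpha(f)}[f]$, as desired. Next, for the summation terms: since \eqref{eq:standard} is a standard representation, each $g$ with $a_g^{(f,h)}\neq 0$ satisfies $\LM\bigl(a_g^{(f,h)}g\bigr)\le_0 \LM(\spoly(f,h)) <_0 \xb^{\gamma(f,h)}[e]$ — the strict inequality being the defining property of $S$-polynomial cancellation, namely that the leading $\xb^{\gamma(f,h)}[e]$ terms of the two leading terms cancel in $\spoly(f,h)$. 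Therefore every monomial $\xb^{\delta}[g]$ occurring in $a_g^{(f,h)}[g]$ has $\LM(\xb^{\delta}g)<_0\xb^{\gamma(f,h)}[e]=\LM(\xb^{\gamma(f,h)-\alpha(f)}f)$, so by the first clause of \eqref{orderpull} it is strictly $<_1$-smaller than $\xb^{\gamma(f,h)-\alpha(f)}[f]$. Combining the two comparisons, $\xb^{\gamma(f,h)-\alpha(f)}[f]$ is the unique $<_1$-maximal monomial of $s(f,h)$, which is the claim.

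The only genuinely delicate point — the ``main obstacle'' such as it is — is the strict inequality $\LM(\spoly(f,h))<_0\xb^{\gamma(f,h)}[e]$, i.e. making sure the highest terms really cancel. This is immediate from the fact that both $\xb^{\gamma(f,h)-\alpha(f)}f$ and $\xb^{\gamma(f,h)-\alpha(h)}h$ have leading monomial exactly $\xb^{\gamma(f,h)}[e]$ with leading coefficient $1$ (we normalized leading coefficients to $1$), so their difference kills that monomial; one must also check that no \emph{other} monomial of either $\xb^{\gamma(f,h)-\alpha(f)}f$ or $\xb^{\gamma(f,h)-\alpha(h)}h$ is $\ge_0 \xb^{\gamma(f,h)}[e]$, which holds because $\LM$ is the genuine leading monomial and the module ordering is compatible with multiplication by monomials of $R$. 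A remark worth inserting is that this lemma is exactly the statement that Schreyer's Gr\"obner basis $\mathcal S(\Gc)$ has a predictable leading-term structure, and it is this explicit form of $\LM(s(f,h))$ that will be exploited repeatedly when iterating the construction to higher syzygies.
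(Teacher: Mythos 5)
Your proof is correct and takes essentially the same route as the paper's: compare the $[f]$ and $[h]$ terms via the tie-break clause of \eqref{orderpull} using $f\prec h$, and bound the remaining terms using the standard-representation property that $\LM(a_g^{(f,h)}g)\le_0\LM(\spoly(f,h))<_0\xb^{\gamma(f,h)}[e]$. Your version is merely a bit more explicit than the paper's two-line proof about why the individual summands $a_g^{(f,h)}[g]$ are dominated, but there is no substantive difference.
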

\begin{proof}
From the ``standard representation'' \eqref{eq:standard} we know
\[\LM(\sum_{g \in \Gc} a_g^{(f,h)} g)=\LM(\xb^{\gamma(f,h)-\alpha(f)}f- \xb^{\gamma(f,h)-\alpha(h)} h) \ <_0 \LM(\xb^{\gamma(f,h)-\alpha(f)}f)\]
and therefore from \eqref{orderpull} we obtain
\[\LM(\sum_{g \in \Gc} a_g^{(f,h)} [g])<_1 \xb^{\gamma(f,h)-\alpha(f)}[f] \ .\]
Moreover we have
\[\LM(\xb^{\gamma(f,h)-\alpha(h)} h) = \LM(\xb^{\gamma(f,h)-\alpha(f)}f)  \quad \text{and} \quad f \prec h  \ .\]
Again \eqref{orderpull} implies
\[ \xb^{\gamma(f,h)-\alpha(h)} [h] <_1 \xb^{\gamma(f,h)-\alpha(f)}[f]\ .\qed\]

\end{proof}

To read the Betti numbers for $M$ one needs to find a {\em minimal generating set} for the syzygy modules. In general the set $\mathcal{S}(\Gc)$ is far from being even a {\em minimal Gr\"obner bases}. One criterion to detect some of the redundant bases elements is given in the following lemma.

\begin{Lemma}
\label{lem:Chain criterion}
Let $\mathcal{S}(\Gc)$ be as in Theorem~\ref{thm:Schreyer}. Let $f_1 \prec f_2$ and $f_1 \prec f_3$ and
$
\LM(f_i)=\xb^{\alpha(f_i)} [e]
$
 for some $e \in E$ and $1 \leq i \leq 3$. If
$
\alpha(f_2) \leq \gamma(f_1, f_3)
$
then ${\mathcal{S}(\Gc)} \backslash \{s(f_1,f_3)\}$ is already a Gr\"obner bases for $(\syz(\Gc),<_1)$.
\end{Lemma}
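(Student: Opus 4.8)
The plan is to prove the chain criterion by the standard Gröbner-basis argument: show that $s(f_1,f_3)$ has a standard representation in terms of the remaining elements of $\mathcal{S}(\Gc)$, so that discarding it leaves a set whose leading terms still generate the leading-term module of $\syz(\Gc)$. First I would record the three relevant S-polynomial syzygies. By Lemma~\ref{lem:lm} we know $\LM(s(f_i,f_j)) = \xb^{\gamma(f_i,f_j)-\alpha(f_i)}[f_i]$ whenever $f_i \prec f_j$. In particular $\LM(s(f_1,f_3)) = \xb^{\gamma(f_1,f_3)-\alpha(f_1)}[f_1]$. The hypothesis $\alpha(f_2) \leq \gamma(f_1,f_3)$, combined with $\alpha(f_1) \leq \gamma(f_1,f_3)$ and $\alpha(f_3)\leq \gamma(f_1,f_3)$, means that all three exponent vectors $\alpha(f_1),\alpha(f_2),\alpha(f_3)$ lie below $\gamma(f_1,f_3)$ entrywise; hence $\gamma(f_1,f_3) = \max(\alpha(f_1),\alpha(f_2),\alpha(f_3))$ is simultaneously a common multiple of each pair. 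This is exactly the situation where the classical Buchberger chain criterion applies.

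Next I would write out the telescoping identity. Set $\delta = \gamma(f_1,f_3)$. For the pair $(f_1,f_2)$ we have $\gamma(f_1,f_2) \leq \delta$, so multiplying the syzygy $s(f_1,f_2)$ by the monomial $\xb^{\delta - \gamma(f_1,f_2)}$ gives an element of $\syz(\Gc)$ whose leading term (by Lemma~\ref{lem:lm} and compatibility of $<_1$ with multiplication) is $\xb^{\delta-\alpha(f_1)}[f_1]$, the same as $\LM(s(f_1,f_3))$. Similarly $\xb^{\delta-\gamma(f_2,f_3)} s(f_2,f_3)$ has leading term $\xb^{\delta-\alpha(f_2)}[f_2]$. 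The key algebraic point is the polynomial identity
\begin{equation*}
s(f_1,f_3) = \xb^{\delta-\gamma(f_1,f_2)}\, s(f_1,f_2) + \xb^{\delta-\gamma(f_2,f_3)}\, s(f_2,f_3) + (\text{terms involving the } [g], \ g\in\Gc),
\end{equation*}
which one checks by expanding both sides using the definition \eqref{eq:s(f,g)}: the $[f_1]$, $[f_2]$, $[f_3]$ components match because $\xb^{\delta-\alpha(f_1)} = \xb^{\delta-\gamma(f_1,f_2)}\xb^{\gamma(f_1,f_2)-\alpha(f_1)}$ and likewise for the middle $[f_2]$ terms, which cancel in pairs, while the residual $a_g$-terms are absorbed into the $[g]$ part and further rewritten via standard representations. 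Then I would argue that in this combination every summand other than $\xb^{\delta-\gamma(f_1,f_2)}s(f_1,f_2)$ has $<_1$-leading monomial strictly smaller than $\xb^{\delta-\alpha(f_1)}[f_1]$ — for $\xb^{\delta-\gamma(f_2,f_3)}s(f_2,f_3)$ this is because $f_1 \prec f_2$ forces $[f_2] <_1 [f_1]$ at equal $R$-leading monomial via \eqref{orderpull}, and the $[g]$-terms are handled by the standard-representation bound as in the proof of Lemma~\ref{lem:lm}. Hence $\LM(s(f_1,f_3))$ is divisible by $\LM\big(\xb^{\delta-\gamma(f_1,f_2)}s(f_1,f_2)\big)$, which is a multiple of $\LM(s(f_1,f_2))$.

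From this, the conclusion follows by the usual criterion for recognizing a Gröbner basis: $\mathcal{S}(\Gc)$ is a Gröbner basis of $\syz(\Gc)$ by Theorem~\ref{thm:Schreyer}, and since the leading monomial of the omitted generator $s(f_1,f_3)$ is divisible by the leading monomial of another generator $s(f_1,f_2)$ (where $f_1\prec f_2$ and $f_1\prec f_3$ guarantee $s(f_1,f_2)\in\mathcal{S}(\Gc)$), removing $s(f_1,f_3)$ does not change the leading-term submodule $\langle \LM(\syz(\Gc))\rangle$; therefore $\mathcal{S}(\Gc)\setminus\{s(f_1,f_3)\}$ still generates this submodule and hence is still a Gröbner basis for $(\syz(\Gc),<_1)$. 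I expect the main obstacle to be the bookkeeping in the telescoping identity: one must be careful that all the exponents $\delta - \gamma(f_i,f_j)$ are genuinely nonnegative (this is where the hypothesis $\alpha(f_2)\le\gamma(f_1,f_3)$ is used, together with $f_i\prec f_j$ to know which $\gamma$'s appear), and that the rewriting of the leftover $a_g$-terms into standard representations does not accidentally reintroduce a term with leading monomial as large as $\xb^{\delta-\alpha(f_1)}[f_1]$. Everything else is a direct unwinding of the definitions \eqref{orderpull}, \eqref{eq:standard}, \eqref{eq:s(f,g)} together with Lemma~\ref{lem:lm}.
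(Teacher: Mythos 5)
Your proof is correct, and it does ultimately isolate the single observation that the paper's proof rests on: by Lemma~\ref{lem:lm}, $\LM(s(f_1,f_2)) = \xb^{\gamma(f_1,f_2)-\alpha(f_1)}[f_1]$ divides $\LM(s(f_1,f_3)) = \xb^{\gamma(f_1,f_3)-\alpha(f_1)}[f_1]$, because the hypothesis $\alpha(f_2)\leq\gamma(f_1,f_3)$ (together with $\alpha(f_1)\leq\gamma(f_1,f_3)$) forces $\gamma(f_1,f_2)\leq\gamma(f_1,f_3)$. But the telescoping identity you build up in the first two paragraphs is doing work you do not need. Theorem~\ref{thm:Schreyer} already tells you $\mathcal{S}(\Gc)$ is a Gr\"obner basis, so you are not in the position of proving that fact by producing standard representations for S-polynomials (the classical role of Buchberger's chain criterion). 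Once the divisibility $\LM(s(f_1,f_2))\mid\LM(s(f_1,f_3))$ is in hand, the conclusion follows from the trivial fact that deleting from a Gr\"obner basis any element whose leading monomial is divisible by that of another element leaves the leading-term submodule unchanged, hence leaves a Gr\"obner basis. The paper's proof is exactly this one-line observation and nothing more. So: your argument is sound and reaches the same decisive inequality $\gamma(f_1,f_2)-\alpha(f_1)\leq\gamma(f_1,f_3)-\alpha(f_1)$, but you arrive there via a genuinely longer detour (verifying a rewriting identity, tracking the $[g]$-components, arguing strict inequality of the other terms under $<_1$); the paper buys economy by exploiting that $\mathcal{S}(\Gc)$ is already known to be a Gr\"obner basis, whereas your version re-derives more than the lemma requires.
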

\begin{proof}
The inequality implies $\gamma(f_1 ,f_2)-\alpha(f_1) \leq \gamma(f_1, f_3)-\alpha(f_1)$ which means \[\LM(s(f_1,f_2)) \mid \LM(s(f_1,f_3)) \ .\qed\]

\end{proof}

\begin{Remark}\label{rem:S(M)}
By repeatedly applying Lemma~\ref{lem:Chain criterion} we can find a subset $\Sc_{\min}(\Gc)$ of $\Sc(\Gc)$ which has the following properties:
\begin{itemize}
\item[(1)] $\Sc_{\min}(\Gc)$ forms a Gr\"obner bases for $(\syz(\Gc),<_1)$,
\item[(2)] there are no pair of elements $s(f,h),s(f,g) \in \Sc_{\min}(\Gc)$ such that \[\LM(s(f,h)) \mid \LM(s(f,g))\ .\]
\end{itemize}
In other words (see Lemma~\ref{lem:lm}) $\Sc_{\min}(\Gc)$ is a {\em minimal Gr\"obner bases} for $(\syz(\Gc),<_1)$.
\end{Remark}

\subsection{Free resolutions from Gr\"obner theory} \label{sec:freeres}
One can use Theorem~\ref{thm:Schreyer} to construct a graded free resolution of $M$ by induction on the homological degree. We summarize this procedure in Algorithm~\ref{alg:schr} which is due to Schreyer \cite{Schreyer} (also Spear \cite{Spear}, see, e.g., \cite{Eisenbud}).

\begin{algorithm}
\caption{Algorithm for computing a free resolution of $M$ (Schreyer's algorithm)}

\KwIn{\\ Graded polynomial ring $R=K[\xb]$ , \\
Monomial ordering $<$ on $R$ ,\\
Free $R$-module $F_{-1}$ generated by formal symbols $\{[e]\}_{e \in E}$ , \\
Graded $R$-submodule $M$ of $F_{-1}$ , \\
Module ordering $<_0$  on $F_{-1}$ extending the monomial ordering $<$ ,\\
Finite set $\Gc$ forming a homogeneous Gr\"obner bases for $(M,<_0)$ .\\
}
\BlankLine
\KwOut{\\ {\em A} graded free resolution: $ \cdots \rightarrow F_{i} \xrightarrow{\varphi_{i}} F_{i-1} \rightarrow \cdots \rightarrow F_0 \xrightarrow{\varphi_{0}} M \rightarrow 0$ .
}

\BlankLine

{\bf Initialization:}
\\
$\Gc_0:=\Gc$ ; \\

$F_0:=$ free $R$-module generated by formal symbols $\{[g]\}_{g \in \Gc_0}$ ; {\bf Output} $F_0$ ;\\

$\varphi_0: F_{0} \rightarrow M \subseteq F_{-1}$ defined by $[g] \mapsto g$  for each $g \in \Gc_{0}$ ;   {\bf Output} $\varphi_{0}$ ;\\

$i=0$ ;\\
\BlankLine

\While{$ F_i\ne 0$}{
\BlankLine
$\prec_{i}$ : arbitrary total ordering on $\Gc_{i}$ ;\\
\BlankLine
$<_{i+1} $ : module ordering on $F_{i}$ obtained from $<_{i}$ on $F_{i-1}$ (as in \eqref{orderpull}) ;\\
\BlankLine
$\Gc_{i+1}:=\Sc_{\min}(\Gc_{i}) \subset F_{i}$, a minimal Gr\"obner bases of $(\syz_{i+1}(\Gc), <_{i+1})$ (as in Theorem~\ref{thm:Schreyer} and Remark~\ref{rem:S(M)}) ;\\
\BlankLine
$F_{i+1}$:= free $R$-module generated by formal symbols $\{[u]\}_{u \in \Gc_{i+1}}$ ; {\bf Output} $F_{i+1}$ ;
\BlankLine
$\varphi_{i+1}: F_{i+1} \rightarrow F_{i}$ defined by $[u] \mapsto u$  for each $u \in \Gc_{i+1}$ ; {\bf Output} $\varphi_{i+1}$ ;
\BlankLine
$i \leftarrow i+1$ ;
}

\label{alg:schr}
\end{algorithm}

It follows immediately from definitions that the output sequence is exact and that we obtain a free resolution in this way. We note that the constructed free resolution is in general not minimal.

\medskip

\begin{Remark}
The map $\varphi_{i+1}: \, F_{i+1} \rightarrow F_{i}$ can be described more explicitly. Since $u \in \Gc_{i+1}=\Sc_{\min}(\Gc_{i})$ there are two elements $f \prec h$ in $\Gc_{i}$ such that
\[
u=s(f,h)=\xb^{\gamma(f,h)-\alpha(f)}[f]- \xb^{\gamma(f,h)-\alpha(h)}[h]-\sum_{g \in \Gc_{i}} a_g^{(f,h)} [g]  \ .
\]
In other words
\begin{equation}\label{phi_map}
\varphi_{i+1}([u])=\xb^{\gamma(f,h)-\alpha(f)}[f]- \xb^{\gamma(f,h)-\alpha(h)}[h]-\sum_{g \in \Gc_{i}} a_g^{(f,h)} [g]   \ .
\end{equation}

Since $\{[u] : \, u \in \Gc_{i+1}\}$ is the set of bases elements for $F_{i+1}$ and $\{[g] : \, g \in \Gc_{i}\}$ is the set of bases elements for $F_{i}$, the set of equalities in \eqref{phi_map}, as $u$ runs through the set $\Gc_{i}$, determines the map $\varphi_{i+1}$ completely. If we fix a labeling for the elements of $(\Gc_{i+1},\prec_{i+1})$ and $(\Gc_{i},\prec_{i})$ we can write down the corresponding matrix for $\varphi_{i+1}$ from this data.
\end{Remark}

\medskip

\begin{Remark} \label{rmk:schreyerminimal}
Although any total ordering $\prec_{i}$ on the sets $\Gc_{i}$ would work in Algorithm~\ref{alg:schr}, it follows from Lemma~\ref{lem:lm} and Lemma~\ref{lem:Chain criterion} that the ``quality of output'' very much depends on the choice of these total orderings; how far the free resolution produced by the algorithm is from being minimal depends on the choice of the total ordering in a crucial way.
\end{Remark}

\subsection{Minimal free resolutions and Betti numbers}

Let $R$ be a graded ring and $M$ be a graded $R$-module. Assume that
\[
\mathcal{F} \, : 0 \rightarrow \cdots \rightarrow F_{i} \xrightarrow{\varphi_{i}} F_{i-1} \rightarrow \cdots \rightarrow F_0 \xrightarrow{\varphi_{0}} M \rightarrow 0
\]
is a {\em minimal} graded free resolution (i.e., a graded free resolution such that $\varphi_{i+1}(F_{i+1}) \subseteq \mathfrak{m} F_i$ for all $i \geq 0$). The $i$-th {\em Betti number} $\beta_{i}(M)$ of $M$ is by definition the rank of $F_i$. The $i$-th {\em graded Betti number} in degree $\js \in \As$, denoted by $\beta_{i,\js}(M)$, is the rank of the degree $\js$ part of $F_i$. It is a consequence of Nakayama's lemma for graded rings that any finitely generated graded $R$-module has minimal free resolution, and that the numbers $\beta_{i,\js}(M)$ and $\beta_{i}(M)$ are independent of the choice of the minimal resolution.

\medskip

If $\Gc$ is a minimal set of homogeneous generators of $M$ then $\syz(M):= \syz(\Gc)$ is well defined up to graded isomorphism.
Similarly, setting $\syz_0(M):=M$, the $i$-th syzygy modules $\syz_i(M):=\syz(\syz_{i-1}(M))$ are well defined for all $i \geq 0$. In this case the $i$-th Betti number $\beta_{i}(M)$ is also the minimal number of generators of $\syz_i(M)$ and the graded Betti number $\beta_{i,\js}(M)$ is the minimal number of generators of the
$i$-th syzygy module $\syz_i(M)$ in degree $\js$.

\medskip

\begin{Theorem}\label{peeva}
Suppose that $\mathcal{F}$ is a minimal graded free resolution of $M$. Fix an $i \geq 0$. Let $E_i$ denote a bases for the free module $F_i$. Then $\{\varphi_{i}(f) : \, f\in E_i\}$ is a minimal system of homogeneous generators of $\syz_i(M)$.
\end{Theorem}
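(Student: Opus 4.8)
The plan is to reduce everything to the identification $\syz_i(M) \cong \Image(\varphi_i)$ and then pass to the residue field. First I would recall, using the uniqueness of minimal free resolutions up to graded isomorphism and the fact (noted just before the statement) that the iterated syzygy modules of a finitely generated graded module are well defined up to graded isomorphism, that the submodule $N_i := \Image(\varphi_i) \subseteq F_{i-1}$ — with the convention $N_0 := \Image(\varphi_0) = M$ — is graded-isomorphic to $\syz_i(M)$, compatibly with the corestricted surjection $\bar\varphi_i \colon F_i \twoheadrightarrow N_i$. Hence it suffices to prove that $\{\varphi_i(f) : f \in E_i\}$ is a minimal homogeneous generating set of $N_i$, where we take $E_i$ to be a homogeneous basis of the graded free module $F_i$. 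Since $\bar\varphi_i$ is a surjective graded homomorphism and $E_i$ generates $F_i$, the set $\{\varphi_i(f) : f \in E_i\}$ is automatically a homogeneous generating set of $N_i$; the only thing to prove is minimality.

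For minimality I would invoke the graded version of Nakayama's lemma, valid in our setting (Lemma~\ref{lem:picgrad}): a homogeneous generating set of a finitely generated graded module $N$ is minimal if and only if its image in $N/\mathfrak{m}N$ is a $K$-basis, equivalently — since it already spans $N/\mathfrak{m}N$ — is $K$-linearly independent there. So the key step is to understand $N_i/\mathfrak{m}N_i$. Consider the exact sequence
\[
F_{i+1} \xrightarrow{\ \varphi_{i+1}\ } F_i \xrightarrow{\ \bar\varphi_i\ } N_i \longrightarrow 0,
\]
where exactness at $F_i$ is exactness of $\mathcal{F}$ (namely $\ker \varphi_i = \Image \varphi_{i+1}$, still true when $F_{i+1}=0$) and exactness at $N_i$ is the surjectivity of $\bar\varphi_i$. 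Applying $-\otimes_R R/\mathfrak{m}$ and using right-exactness of the tensor product gives an exact sequence of $K$-vector spaces
\[
F_{i+1}/\mathfrak{m}F_{i+1} \longrightarrow F_i/\mathfrak{m}F_i \longrightarrow N_i/\mathfrak{m}N_i \longrightarrow 0 .
\]
Now minimality of $\mathcal{F}$ means precisely $\varphi_{i+1}(F_{i+1}) \subseteq \mathfrak{m}F_i$, so the first map above is the zero map; therefore the induced map $F_i/\mathfrak{m}F_i \to N_i/\mathfrak{m}N_i$ is an isomorphism. Because $F_i$ is free with homogeneous basis $E_i$, the classes $\{[f] \bmod \mathfrak{m}F_i : f \in E_i\}$ form a $K$-basis of $F_i/\mathfrak{m}F_i$, and their images under this isomorphism are exactly the classes of $\varphi_i(f)$ in $N_i/\mathfrak{m}N_i$. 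Hence $\{\varphi_i(f) : f \in E_i\}$ maps to a $K$-basis of $N_i/\mathfrak{m}N_i$, and by the Nakayama criterion it is a minimal homogeneous generating set of $N_i \cong \syz_i(M)$, as claimed.

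I do not expect a genuine obstacle here; the argument is the standard ``tensor the resolution with $K$'' computation. The only points requiring care are bookkeeping: (a) justifying the graded isomorphism $N_i \cong \syz_i(M)$ compatibly with the surjections, which rests on the uniqueness of minimal resolutions and well-definedness of syzygy modules recalled before the theorem; (b) making sure one uses the \emph{graded} form of Nakayama's lemma, which is available for all the gradings in \S\ref{sec:grade} by Lemma~\ref{lem:picgrad}; and (c) noting that the boundary cases $i=0$ (where $N_0 = M$ and minimality at $\varphi_1$ reads $\varphi_1(F_1)\subseteq \mathfrak{m}F_0$) and $F_{i+1}=0$ (where $N_i$ is free on $\{\varphi_i(f)\}$) are covered uniformly by the same computation.
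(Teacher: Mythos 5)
Your proof is correct. The paper does not give its own argument for this statement but simply cites \cite[Theorem~10.2]{peeva}, and your proof is precisely the standard one found there: identify $\syz_i(M)$ with $\Image(\varphi_i)$ via exactness, tensor the two-step exact sequence $F_{i+1}\to F_i\to N_i\to 0$ with $R/\mathfrak{m}$, use the minimality condition $\varphi_{i+1}(F_{i+1})\subseteq\mathfrak{m}F_i$ to conclude that $F_i/\mathfrak{m}F_i\to N_i/\mathfrak{m}N_i$ is an isomorphism, and finish with the graded Nakayama criterion.
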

For a proof see, for example, \cite[Theorem~10.2]{peeva}.

\begin{Remark} \label{minfreegen}
It follows from Theorem~\ref{peeva} that if for all $i \geq 1$ and all $u\in \Gc_{i}$ the coefficients appearing in the expression \eqref{phi_map} of $\varphi_i([u])$ in terms of $\{[g] : \, g \in \Gc_{i-1}\}$ are all nonunit elements of $R$ (i.e. if they belong to the ideal $\mathfrak{m}$), then the resolution is a  {\it minimal} free resolution of $M$. In this case it follows from Theorem~\ref{peeva} that the sets $\Gc_{i}$ are {\em minimal generating sets} of $\syz_i(M)$ for all $i \geq 0$.
\end{Remark}

\subsection{Betti numbers of $M$ and $\ini(M)$} \label{sec:BettiIn}

For a module ordering $<_0$ on $F_{-1}$ let $\ini(M)$ denote the ``leading module'' (i.e. the module generated by leading monomials) of $M$ with respect to $<_0$. The following theorem is well known and is a consequence of the fact that passing to $\ini(M)$ is a flat deformation (see, e.g., \cite[Theorem 8.29]{MillerSturmfels}).

\begin{Theorem}[Upper-semicontinuity]\label{thm:flat}
$\beta_{i, \js}(M)\leq \beta_{i, \js}(\ini(M))$ for all $i \geq 0$ and $\js \in \As$.
\end{Theorem}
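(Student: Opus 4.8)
\emph{Proof sketch.} The plan is to realize the passage from $M$ to $\ini(M)$ as the special fibre of a flat $\As$-graded one-parameter degeneration, and then to read off the inequality from the upper-semicontinuity of ranks of matrices with polynomial entries. I fix the $\As$-homogeneous Gr\"obner bases $\Gc$ of $(M,<_0)$ and argue one $\As$-degree $\js$ at a time.

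\emph{Building the family.} First I would choose a positive integer weight $w$ on the variables, together with integer shifts on the symbols $[e]$, refining $<_0$ on the finitely many module-monomials relevant to $\Gc$, so that $\ini_w(M)=\ini(M)$. Put $\widetilde F_{-1}:=F_{-1}\otimes_K K[t]$ and let $\widetilde M\subseteq\widetilde F_{-1}$ be generated by the $w$-homogenizations $\tilde g$ of the elements $g\in\Gc$. Declaring $\deg_\As(t)=0$ makes each $\tilde g$ $\As$-homogeneous of the same $\As$-degree as $g$, so $\widetilde M$ is an $\As$-graded $R[t]$-submodule of $\widetilde F_{-1}$ (declaring also $\deg(t)=1$ and $\deg(x_v)=w_v$ makes everything homogeneous for an auxiliary $\ZZ$-grading, which is what governs the degeneration). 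Then the standard facts about Gr\"obner degenerations apply: $\{\tilde g:g\in\Gc\}$ is a Gr\"obner bases of $\widetilde M$ for a suitable term order, the quotient $\widetilde N:=\widetilde F_{-1}/\widetilde M$ is a \emph{free} $K[t]$-module with basis the images of the standard monomials of $\ini(M)$, and — via the $\As$-graded rescaling automorphism $x_v\mapsto c^{w_v}x_v$ of $F_{-1}$ — the $\As$-graded $R$-module $M^{(c)}:=\widetilde M/(t-c)\widetilde M$ is isomorphic to $M$ when $c\in K^\times$ and to $\ini(M)$ when $c=0$.

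\emph{Resolving and specializing.} Next I would take a finite $\As$-graded free resolution $\mathcal G_\bullet\to\widetilde M$ over the polynomial ring $R[t]$. For every $c\in K$ the element $t-c$ is a nonzerodivisor on each $\mathcal G_i$ and on $\widetilde M$ (the latter being a submodule of the $K[t]$-free module $\widetilde F_{-1}$), so $\mathcal G^{(c)}_\bullet:=\mathcal G_\bullet\otimes_{R[t]}R[t]/(t-c)$ is an $\As$-graded free resolution of $M^{(c)}$ over $R\cong R[t]/(t-c)$ whose term-ranks do not depend on $c$. Tensoring $\mathcal G^{(c)}_\bullet$ with $K=R/\mathfrak m$ and restricting to $\As$-degree $\js$ gives a complex of finite-dimensional $K$-vector spaces whose term in homological degree $i$ has dimension $b_{i,\js}$ — the number of free generators of $\mathcal G_i$ of $\As$-degree $\js$, independent of $c$ — and whose $i$-th differential is the matrix $A_i(c)$ obtained from the differential of $\mathcal G_\bullet$ by the substitutions $x_v\mapsto0$ and $t\mapsto c$; its entries are polynomials in $c$. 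Hence
\[
\beta_{i,\js}(M^{(c)})=b_{i,\js}-\rank A_i(c)-\rank A_{i+1}(c)\qquad(c\in K).
\]

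\emph{Semicontinuity.} Finally, the rank of a matrix with polynomial entries drops exactly on the vanishing locus of its maximal minors, so it is lower-semicontinuous in $c$; therefore $c\mapsto\beta_{i,\js}(M^{(c)})$ is upper-semicontinuous. After base change to $\overline K$ — which changes neither $\beta_{i,\js}(M)$ nor $\beta_{i,\js}(\ini(M))$, since minimal free resolutions and reduced Gr\"obner bases are insensitive to field extension — this function is identically $\beta_{i,\js}(M)$ on the Zariski-dense set $\overline K^\times$, so its value at $c=0$, which is $\beta_{i,\js}(\ini(M))$, must be at least $\beta_{i,\js}(M)$. This is the desired inequality for all $i$ and all $\js$. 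The step I expect to be the main obstacle is setting up the degeneration in this generality — checking that $\widetilde M$ really is $\As$-graded and that $\widetilde N$ really is $K[t]$-free when $\As$ may have torsion, so that one cannot simply quote the positive-multigrading framework of \cite[Ch.~8]{MillerSturmfels}; granted the flat family, the rest is the routine rank-semicontinuity argument, the only wrinkle being the reduction to an infinite ground field before appealing to Zariski density.
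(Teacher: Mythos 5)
Your proof is correct and implements exactly the flat Gr\"obner degeneration argument that the paper invokes by citation (\cite[Theorem~8.29]{MillerSturmfels}) without spelling it out. You also rightly observe that the positive-multigrading hypothesis of that reference does not literally cover the $\Pic(G)$-grading with torsion; your direct construction of the $R[t]$-family and the rank-semicontinuity plus base-change-to-$\overline K$ step correctly sidesteps this, using only the Nakayama-type finiteness the paper establishes in Lemma~\ref{lem:picgrad}.
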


\medskip

In the setting of Algorithm~\ref{alg:schr}, there is a general sufficient condition for equality to hold. The following result gives a general criterion guaranteeing that, if it is satisfied, then the answer of \cite[Question~1.1]{conca} is positive.


\begin{Theorem}\label{thm:GBini}
If the output of Algorithm~\ref{alg:schr} is a {\em minimal} graded free resolution then $\beta_{i,\js}(M)=\beta_{i,\js}(\ini(M))$ for all $\ i \geq 0$ and $\js\in\As$.
\end{Theorem}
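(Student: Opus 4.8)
The plan is to run Algorithm~\ref{alg:schr} in parallel on $M$ and on $\ini(M)$, and to show that the two runs produce graded free modules of the same rank and the same twists in every homological degree. Granting this, the conclusion is immediate: by hypothesis the $M$-run outputs a \emph{minimal} resolution $\mathcal F$, so $\beta_{i,\js}(M)$ equals the rank of the degree-$\js$ part of $F_i$; the $\ini(M)$-run outputs \emph{some} graded free resolution $\mathcal F'$ of $\ini(M)$, and since the graded Betti numbers of a finitely generated graded module are bounded above by the graded ranks of any of its free resolutions (a standard consequence of Nakayama's lemma, which holds here by assumption), we obtain $\beta_{i,\js}(\ini(M)) \le \rank (F'_i)_\js = \rank (F_i)_\js = \beta_{i,\js}(M)$. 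The reverse inequality is exactly Theorem~\ref{thm:flat}, so equality follows for all $i\ge0$ and all $\js\in\As$.

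To set up the parallel run, I would first note that $\LM(\Gc)=\{\LM(g):g\in\Gc\}$ generates $\ini(M)$ and, being a set of monomials, is automatically a Gr\"obner basis of $\ini(M)$ with respect to $<_0$; this is the input I feed into Algorithm~\ref{alg:schr}. At every stage I use the bijection $g\leftrightarrow\LM(g)$ — and, inductively, the bijections between the higher Gr\"obner bases built along the way — to transport the total orderings $\prec_i$ and to make the very same choices in the minimization step $\Sc_{\min}$ of Remark~\ref{rem:S(M)}. The first thing to check is that the module orderings agree under these bijections: this is immediate from the definition \eqref{orderpull}, since $\LM(\xb^\beta h)=\xb^\beta\LM(h)$ for any module element $h$ while a monomial $m$ satisfies $\LM(\xb^\beta m)=\xb^\beta m$, so the two conditions defining $<_{i+1}$ become literally the same comparison of the matched leading monomials, with the same tie-breaking rule.

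The heart of the matter is an induction on the homological degree, with inductive claim: there is a bijection between $\Gc_i$ (in the $M$-run) and its analogue $\Gc_i'$ (in the $\ini(M)$-run) under which corresponding elements have the same $\As$-degree and leading monomials corresponding via the bijection at level $i-1$, and under which the chosen orderings $\prec_i$ and $<_i$ match. For the inductive step one forms the Schreyer generators $\Sc(\Gc_i)$ and $\Sc(\Gc_i')$ of Theorem~\ref{thm:Schreyer}: even though corresponding generators are in general different (their correction terms $\sum_g a_g^{(f,h)}[g]$ in \eqref{eq:s(f,g)} differ), Lemma~\ref{lem:lm} shows $\LM(s(f,h))=\xb^{\gamma(f,h)-\alpha(f)}[f]$ irrespective of those terms, and the degree computation preceding Theorem~\ref{thm:Schreyer} gives $\deg(s(f,h))=\deg(\gamma(f,h))+\deg([e])$; both expressions depend only on the leading-monomial data $\alpha(f),\alpha(h),e$, which agree across the two runs by the inductive hypothesis. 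Hence $\Sc(\Gc_i)$ and $\Sc(\Gc_i')$ are in degree- and leading-monomial-preserving bijection. Finally, the hypothesis $\alpha(f_2)\le\gamma(f_1,f_3)$ of the Chain criterion (Lemma~\ref{lem:Chain criterion}) involves only these exponent vectors, so carrying out $\Sc_{\min}$ with matched choices discards corresponding elements, and the bijection descends to $\Gc_{i+1}\leftrightarrow\Gc_{i+1}'$ with all the required properties; in particular $F_{i+1}$ and $F'_{i+1}$ have the same rank and twists. The base case $i=0$ is $\Gc_0=\Gc\leftrightarrow\LM(\Gc)=\Gc_0'$, where $\deg(\LM g)=\deg g$ by homogeneity and $<_0=<_0'$.

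I expect the main obstacle to be the bookkeeping of this parallel induction rather than any single deep step: one must check that ``making the same choices'' in the non-canonical step $\Sc_{\min}$ is legitimate (it is, because Lemma~\ref{lem:Chain criterion} only inspects leading monomials and exponent vectors) and that $<_0$ is compatible with the $\As$-grading, so that $\ini(M)$ and all of its higher syzygy modules are genuinely $\As$-graded with the claimed twists — which is the case for all the gradings of \S\ref{sec:grade}. Everything else, namely upper semicontinuity and the bound of Betti numbers by the ranks of an arbitrary free resolution, is soft.
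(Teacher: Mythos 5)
Your proposal is correct and follows essentially the same route as the paper's proof: both run Schreyer's algorithm on $\ini(M)$ in lockstep with the run on $M$, using Lemma~\ref{lem:lm} (leading monomials and degrees of $s(f,h)$ depend only on the leading-term data $\alpha(f),\alpha(h),e$) and the fact that the Chain criterion inspects only exponent vectors, to produce a graded free resolution of $\ini(M)$ with the same ranks and twists as the minimal resolution of $M$, then invoke upper-semicontinuity (Theorem~\ref{thm:flat}). The paper packages the "parallel run" as an explicit recursively defined bijection $\pi_i(s(f,h)) := s(\pi_{i-1}(f),\pi_{i-1}(h))$ on Gr\"obner basis elements, verifying injectivity and the Gr\"obner property of the image directly, but the underlying idea and every key ingredient are the same.
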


\begin{proof}
Let $\Gc'=\{\LM(g) : \, g \in \Gc\} \subset \ini(M)$. Since $\Gc$ forms a minimal Gr\"obner bases of $(M,<_0)$ the map
\[
\pi_0:\Gc\rightarrow \Gc' \subset \ini(M)\quad \text{ with }\quad \pi_{0}(g):=\LM(g)\quad {\rm for} \quad g\in \Gc
\]
is a bijection between $\Gc$ and $\Gc'$.
The proof for $i\geq 0$ is by induction on $i$. We show that for each $i \geq 0$ there is a free module $F'_i$ with bases elements
\[[s(\pi_{i-1}(f),\pi_{i-1}(h))]\] corresponding to bases elements $[s(f,h)]$ of $\Gc_i$. This extends to a natural bijective map from $F_i$ to $F'_i$. Moreover, this bijection induces maps
\[
\pi_i:\Gc_i\rightarrow \syz_{i}(\Gc') \quad \text{ with }\quad \pi_{i}(s(f,h)):=s({\pi_{i-1}(f),\pi_{i-1}(h)})\quad {\rm for} \quad f\prec h\ \text{ in }\ \Gc_{i-1}
\]  such that
\begin{itemize}
\item[(1)] $<_i$ is a term order on $F'_i$.
\item[(2)] $\LM(\pi_i(s(f,h)))=\xb^{\theta}[\pi_{i-1}(f)]$, where $\LM(s(f,h))=\xb^{\theta}[f]$.
\item[(3)] $\pi_i(\Gc_i)$ forms a minimal Gr\"obner bases of $(\syz_{i}(\Gc'),<_{i})$.
\end{itemize}

\medskip

We have already shown the case $i=0$.
Now assume that $i>0$ and the result holds for $i-1$.
Note that by the induction hypothesis $\pi_{i-1}$ is injective. This together with (2) and (3) for $i-1$ implies that
the elements $[s(\pi_{i-1}(f),\pi_{i-1}(h))]$ are pairwise distinct, since their leading terms are pairwise distinct.
Assume that $<_{i-1}$ is the term order on $F_{i-1}$. By the induction hypothesis we have the total order $\prec'_{i-1}$ on the elements $\pi_{i-1}(\Gc_i)$
such that
\begin{equation}\label{total order}
\pi_{i-1}(f)\prec'_{i-1} \pi_{i-1}(h)\quad\text{if and only if}\quad f\prec_{i-1} h\ .
\end{equation}
Since (2) and (3) hold for $i-1$, choosing the total order of (\ref{total order}) on the elements $\pi_{i-1}(\Gc_i)$
we will get the term order $<_i$ on $F'_i$ by (\ref{orderpull}).
To prove (2) for $i$, let $s(f,h)$ be an element of $\Gc_i$ with
\[\LM(f)=\xb^{\alpha(f)}[u]\ , \quad \LM(h)=\xb^{\alpha(h)}[u]\quad \text{ and }\quad \gamma(f,h)=\max(\alpha(f),\alpha(h))\]
for some bases element $[u]\in \Gc_{i-2}$.
Then by the induction hypothesis we have
\[
\LM(\pi_{i-1}(f))=\xb^{\alpha(f)}[\pi_{i-2}(u)]\quad\text{ and }\quad \LM(\pi_{i-1}(h))=\xb^{\alpha(h)}[\pi_{i-2}(u)]\ .
\]
Therefore Lemma~\ref{lem:lm} together with induction hypothesis implies
\[
\LM(\pi_{i}(s(f,h)))
=\LM(s({\pi_{i-1}(f),\pi_{i-1}(h)}))=\xb^{\gamma(f,h)-\alpha(f)}[\pi_{i-1}(f)]\
\]
which is (2). Now it follows that
$\pi_{i}$ is {\rm injective};
assume that $\pi_{i}(s(f,h))=\pi_{i}(s(p,q))$. Therefore $\LM(s(f,h))=\LM(s(p,q))=\xb^{\gamma(f,h)-\alpha(f)}[\pi_{i-1}(f)]$.
Now (2) implies that
\[
\LM(s(f,h))=\LM(s(p,q))=\xb^{\gamma(f,h)-\alpha(f)}[f]\ ,
\]
which is a contradiction by our assumption on $\Gc_i=\Sc_{\min}(\Gc_{i-1})$.
The fact that $\pi_i$ is injective implies that its extension from $F_i$ to $F'_i$ is a bijective map, as desired.

\medskip

Now we show that $\pi_i(\Gc_i)$ forms a Gr\"obner bases for $(\syz_{i}(\Gc'),<_{i})$.
For the sake of contradiction, assume that $\pi_i(\Gc_i)$ does not form a Gr\"obner bases for $(\syz_i({\Gc'}),<_{i})$. Then our induction hypothesis that
$\pi_{i-1}(\Gc_{i-1})$ forms a Gr\"obner bases for $(\syz_{i-1}({\Gc'}),<_{i-1})$,
implies that there exist elements $f$ and $h$ of $\Gc_{i-1}$ such that $f\prec_{i-1} h$ and $\LM(s(\pi_{i-1}(f),\pi_{i-1}(h)))$ is not divisible by the leading monomial of any element of $\pi_i(\Gc_i)$.
On the other hand, our assumption on $\Gc_{i}$ and the fact that $\LM(s(\pi_{i-1}(f),\pi_{i-1}(h)))=\LM(s(f,h))$ imply that there exists
an element $g$ in $\Gc_{i-1}$ such that $f\prec_{i-1} g$ and $\LM(s(f,g))$ divides $\LM(s(f,h))$. By the induction hypothesis $\pi_{i-1}(f)$ and $\pi_{i-1}(g)$ belong to $\syz_{i-1}(\Gc')$. Moreover by (2)
$\LM(s(\pi_{i-1}(f),\pi_{i-1}(g)))=\LM(s(f,g))$ divides
$\LM(s({\pi_{i-1}(f),\pi_{i-1}(h)}))$ which is a contradiction.
Thus $\pi_i(\Gc_{i})$ forms a Gr\"obner bases for $(\syz_i(\Gc'),<_i)$.

\medskip

Note that $\pi_i$ is a graded map of degree zero which preserves the degree of the elements $s(f,h)$ of $\Gc_i$. This together with the fact that $\pi_i(\Gc_i)$ forms a minimal Gr\"obner bases of $(\syz_i(\Gc'),<_i)$ implies that
$\beta_{i,\js}(\ini(M))\leq \beta_{i,\js}(M)$. Now Theorem~\ref{thm:flat} completes the proof.
\end{proof}



\section{Connected flags on graphs}
\label{sec:Flag}


\subsection{Connected flags, partial orientations, and divisors}

From now on we fix a pointed graph $(G,q)$ and we let $n=|V(G)|$. Consider the poset
\[
\mathfrak{C}(G,q):=\{U \subseteq V(G): \, q \in U \}
\]
ordered by inclusion. The following special chains of this poset arise naturally in our setting.
\begin{Definition}
Fix an integer $1 \leq k \leq n$. A {\em connected $k$-flag} of $(G,q)$ is a (strictly increasing) sequence $\Uc$ of subsets of $V(G)$
\[
 U_1 \subsetneq U_2 \subsetneq \cdots \subsetneq U_k =V(G)
\]
such that $q \in U_1$ and, for all $1 \leq i \leq k-1$, both $G[U_i]$ and $G[U_{i+1} \backslash U_{i}]$ are connected.
\end{Definition}

The set of all connected $k$-flags of $(G,q)$ will be denoted by $\Ff_k(G, q)$.
\begin{Remark}
For a complete graph, $\Ff_k(G, q)$ is simply the order complex of $\mathfrak{C}(G,q)$, but in general $\Ff_k(G, q)$ is not a simplicial complex.
\end{Remark}

\begin{Notation}
For convenience, whenever we use index $0$ on a vertex set (e.g. $U_0, V_0, W_0$, etc.) we mean the empty set.
\end{Notation}

\begin{Definition}\label{def:Du}
Given $\Uc \in \Ff_k(G, q)$ we define:
\begin{itemize}
\item[(a)] a ``partial orientation'' of $G$ by orienting edges {\em from} $U_{i}$ {\em to} $U_{i+1} \backslash U_{i}$ (for all $1 \leq i \leq k-1$) and leaving all other edges unoriented. We denote the resulting partially oriented graph by $G(\Uc)$.

\item[(b)] an effective divisor $D(\Uc) \in \Div(G)$ given by (see \eqref{DAB})
$D(\Uc):= \sum_{i=1}^{k-1}{D(U_{i+1} \backslash U_{i}, U_{i})}$.

\end{itemize}
\end{Definition}
Note that the partial orientation in (a) is always acyclic.

\begin{Example}\label{exam:G(U)}
Let $G$ be the following graph on the vertices $v_1,v_2,\ldots,v_5$.
We let $v_1$ be the distinguished vertex. Then the partial orientation associated to
\[
\Uc:\ \ \{v_1\}\subset \{v_1,\boldsymbol{v_2}\}\subset \{v_1,v_2,\boldsymbol{v_3},\boldsymbol{v_4}\} \subset \{v_1,v_2,v_3,v_4,\boldsymbol{v_5}\}
\]
is depicted  in the following figure. Note that
$D(\Uc)=(v_2)+(v_3)+(v_4)+2(v_5)$.

 \begin{figure}[h!]
\begin{center} \begin{tikzpicture}
[scale = .22, very thick = 15mm]

  \node (n1) at (5,11) [Cgray] {};
  \node (n2) at (1,6)  [Cgray] {};
  \node (n3) at (9,6)  [Cgray] {};
  \node (n4) at (3,1)  [Cgray] {};
  \node (n5) at (7,1)  [Cgray] {};
  \foreach \from/\to in {n1/n2,n2/n4, n1/n3, n3/n5, n4/n5}
    \draw[->] (\from) -- (\to);
\foreach \from/\to in {n3/n4}
    \draw[] (\from) -- (\to);

    \node(p1) at (3.5, 11.5) [C0] {$v_1$};
    \node(p2) at (-0.5, 6.5) [C0] {$v_2$};
        \node(p3) at (10.5, 6.5) [C0] {$v_3$};
    \node(p4) at (1.5, 0.5) [C0] {${v_4}$};
    \node(p5) at (8.5, 0.5) [C0] {$v_5$};

 \node(p7) at (5, -2.5) [C0] {$G(\Uc)$};

  \node (n1) at (20,11) [Cgray] {};
  \node (n2) at (16,6)  [Cgray] {};
  \node (n3) at (24,6)  [Cgray] {};
  \node (n4) at (18,1)  [Cgray] {};
  \node (n5) at (22,1)  [Cgray] {};
  \foreach \from/\to in {n1/n2,n2/n4, n1/n3,n3/n5,n3/n4}
    \draw[] (\from) -- (\to);
\foreach \from/\to in {n4/n5}
    \draw[] (\from) -- (\to);

    \node(p1) at (3.5, 11.5) [C0] {$v_1$};
    \node(p2) at (-0.5, 6.5) [C0] {$v_2$};
        \node(p3) at (10.5, 6.5) [C0] {$v_3$};
    \node(p4) at (1.5, 0.5) [C0] {${v_4}$};
    \node(p5) at (8.5, 0.5) [C0] {$v_5$};

 \node(p7) at (5, -2.5) [C0] {$G(\Uc)$};

	\node(p1) at (18.5, 11.5) [C0] {$v_1$};
    \node(p2) at (14.5, 6.5) [C0] {$v_2$};
        \node(p3) at (25.5, 6.5) [C0] {${v_3}$};
    \node(p4) at (16.5, 0.5) [C0] {${v_4}$};
    \node(p5) at (23.5, 0.5) [C0] {$v_5$};

  \node(p6) at (20, -2.5) [C0] {$G$};

\end{tikzpicture}
\end{center}\end{figure}
\end{Example}

\begin{Remark} \label{rmk:indeg}
It is easy to check that $D(\Uc) = \sum_{v \in V(G)}{(\indeg_{G(\Uc)}(v)) (v)} $, where $\indeg_{G(\Uc)}(v)$ denotes the number of oriented edges directed {\em to} $v$ in $G(\Uc)$.
\end{Remark}

\medskip


\subsection{Total ordering on $\Ff_k(G, q)$}

We endow each $\Ff_k(G, q)$ with a {\it total ordering} $\prec_{k}$ for all $1 \leq k \leq n$. 

Let $\preceq$ denote the ordering on $\mathfrak{C}^{\text{op}}(G,q)$ (the opposite poset of $\mathfrak{C}(G,q)$) given by reverse inclusion:
\[
U \preceq V \iff U \supseteq V \ .
\]

\begin{Definition}\label{def:prec1}
We fix, once and for all, a {\em total ordering} extending $\preceq$. By a slight abuse of notation, $\preceq$ will be used to denote this total ordering extension. In particular, $\prec$ will denote the associated strict total order.
\end{Definition}

We consider one of the natural ``lexicographic extensions'' (more precisely, the reverse lexicographic extension) of $\prec$ to the set of connected $k$-flags.

\begin{Definition}\label{def:prec2}

For $\Uc \ne \Vc$ in $\Ff_k(G,q)$ written as
\[
\Uc:\ \, U_1 \subsetneq U_2 \subsetneq \cdots \subsetneq U_k =V(G)
\]
\[
\Vc:\ \, V_1 \subsetneq V_2 \subsetneq \cdots \subsetneq V_k =V(G)
\]
we say $\Uc \prec_k \Vc$ if for the maximum $1 \leq \ell \leq k-1$ with $U_{\ell} \ne V_{\ell}$ we have
$\Uc_{\ell} \prec \Vc_{\ell}$.

As usual, we write $\Uc \preceq_k \Vc$ if and only if $\Uc \prec_k \Vc$ or $\Uc =\Vc$.
\end{Definition}

\begin{Lemma}
$(\Ff_k(G, q), \preceq_k)$ is a totally ordered set.
\end{Lemma}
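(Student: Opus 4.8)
The plan is to verify that $\prec_k$ as defined in Definition~\ref{def:prec2} satisfies the three axioms of a (strict) total order on $\Ff_k(G,q)$: irreflexivity (equivalently, trichotomy together with the other two), antisymmetry, and transitivity. The key observation throughout is that a connected $k$-flag $\Uc$ is entirely determined by the tuple $(U_1,\dots,U_{k-1})$ since $U_k=V(G)$ is fixed, so comparing $\Uc$ and $\Vc$ amounts to comparing these finite tuples entrywise using the fixed total order $\prec$ on $\mathfrak{C}(G,q)$ from Definition~\ref{def:prec1}, read from the top index downward (a reverse-lexicographic rule).

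First I would record the reduction: for distinct $\Uc,\Vc\in\Ff_k(G,q)$, the set $\{\,1\le i\le k-1 : U_i\ne V_i\,\}$ is nonempty (otherwise all entries agree and $\Uc=\Vc$), hence has a well-defined maximum $\ell$, and since $\preceq$ is a \emph{total} order on $\mathfrak{C}(G,q)$ we have exactly one of $U_\ell\prec V_\ell$ or $V_\ell\prec U_\ell$. This immediately gives trichotomy: for any $\Uc,\Vc$ exactly one of $\Uc\prec_k\Vc$, $\Vc\prec_k\Uc$, $\Uc=\Vc$ holds. Antisymmetry (that $\Uc\prec_k\Vc$ and $\Vc\prec_k\Uc$ cannot both hold) then follows because both would force, at the common index $\ell$, both $U_\ell\prec V_\ell$ and $V_\ell\prec U_\ell$, contradicting that $\prec$ is a strict order on $\mathfrak{C}(G,q)$.

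The substantive step is transitivity. Suppose $\Uc\prec_k\Vc$ with witnessing index $\ell=\max\{i: U_i\ne V_i\}$ and $\Vc\prec_k\Wc$ with witnessing index $m=\max\{i: V_i\ne W_i\}$, so $U_\ell\prec V_\ell$, $V_m\prec W_m$, and $U_i=V_i$ for $i>\ell$, $V_i=W_i$ for $i>m$. Set $p=\max(\ell,m)$. I would check that $p=\max\{i: U_i\ne W_i\}$ and that $U_p\prec W_p$, splitting into the three cases $\ell>m$, $\ell<m$, $\ell=m$: if $\ell>m$ then for $i\ge\ell$ we have $V_i=W_i$ so $U_\ell\ne V_\ell=W_\ell$ and $U_i=V_i=W_i$ for $i>\ell$, and $U_\ell\prec V_\ell=W_\ell$; symmetrically if $\ell<m$; if $\ell=m$ then $U_\ell\prec V_\ell\prec W_\ell$ and transitivity of $\prec$ on $\mathfrak{C}(G,q)$ gives $U_\ell\prec W_\ell$, while $U_i=V_i=W_i$ for $i>\ell$ shows $\ell$ is indeed the top index of disagreement (noting $U_\ell\ne W_\ell$ since $\prec$ is strict). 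In every case $\Uc\prec_k\Wc$.

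The only potential obstacle is a bookkeeping subtlety rather than a conceptual one: one must be careful that the ``maximum index of disagreement'' behaves well, in particular that in the case $\ell=m$ the indices strictly above $\ell$ really do all agree for the pair $(\Uc,\Wc)$ and that $U_\ell\ne W_\ell$ (which uses only that $\prec$ is irreflexive). Since $\preceq$ was fixed in Definition~\ref{def:prec1} to be a genuine total order extending reverse inclusion, all the needed properties of $\prec$ on $\mathfrak{C}(G,q)$ are available, and the argument goes through. I would conclude that $(\Ff_k(G,q),\preceq_k)$ is a totally ordered set, which is exactly the claim; the finiteness of $\Ff_k(G,q)$ (it is a set of chains in a finite poset) is immediate and needs no comment.
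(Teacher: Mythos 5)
Your proof is correct and takes essentially the same approach as the paper: both establish trichotomy from the fact that $\prec$ is a strict total order on $\mathfrak{C}(G,q)$ and then verify transitivity by the same three-way case split on the relative positions of the top indices of disagreement.
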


\begin{proof}
Let
\[
\Uc:\ \, U_1 \subsetneq U_2 \subsetneq \cdots \subsetneq U_k =V(G) \ ,
\]
\[
\Vc:\ \, V_1 \subsetneq V_2 \subsetneq \cdots \subsetneq V_k =V(G) \ ,
\]
\[
\Wc:\ \, W_1 \subsetneq W_2 \subsetneq \cdots \subsetneq W_k =V(G) \ .
\]

\medskip

If $\Uc \ne \Vc$ then there is an index $1 \leq \ell \leq k-1$ with $U_{\ell} \ne V_{\ell}$. Since $\prec$ is a strict total ordering we have either $U_{\ell} \prec V_{\ell}$ or $V_{\ell} \prec U_{\ell}$. It follows from the definition that if $\Uc \ne \Vc$ then either $\Uc \prec_k \Vc$ or $\Vc \prec_k \Uc$ (i.e., $\prec_k$ is trichotomous). It remains to show that $\prec_k$ is transitive. Assume that $\Uc \prec_k \Vc$ and $\Vc \prec_k \Wc$. Let $1 \leq \ell_1 \leq k-1$ and $1 \leq \ell_2 \leq k-1$ be such that
\[
U_{\ell_1} \prec V_{\ell_1} \text{ and } U_{i} = V_{i} \, \text{ for } \ell_1 <i \leq k \ ,
\]\[
V_{\ell_2} \prec W_{\ell_2} \text{ and } V_{i} = W_{i} \, \text{ for } \ell_2 <i \leq k \ .
\]
There are three cases:

$\bullet$  If $\ell_1=\ell_2$ we have
$
U_{\ell_1} \prec V_{\ell_1} \prec W_{\ell_1} \text{ and } U_{i} = W_{i} \text{ for } \ell_1 <i \leq k \ ,
$

$\bullet$  If $\ell_1<\ell_2$ we have
$
U_{\ell_2}=V_{\ell_2} \prec W_{\ell_2} \text{ and } U_{i} = W_{i} \text{ for } \ell_2 <i \leq k \ ,
$

$\bullet$  If $\ell_1>\ell_2$ we have
$
U_{\ell_1} \prec V_{\ell_1}=W_{\ell_1} \text{ and } U_{i} = W_{i} \text{ for } \ell_1 <i \leq k \ .
$

Therefore in any case $\Uc \prec_k \Wc$.
\end{proof}


\subsection{Equivalence relation on $\Ff_k(G,q)$}

It is easy to find two different connected $k$-flags having identical associated partially oriented graphs.

\begin{Example} \label{example:equiv}
If the connected $k$-flag
\[
\Uc:\ \, U_1 \subsetneq \cdots \subsetneq U_{i-1}  \subsetneq (U_{i-1} \cup A_{i}) \subsetneq (U_{i-1} \cup A_{i} \cup A_{i+1})\subsetneq\cdots \subsetneq U_k
\]
is such that $A_{i+1}$ is disjoint from $A_{i}$, and $d(A_{i+1},A_{i})=0$ (i.e., $A_{i+1}$ is not connected to $A_{i}$) then
\[
\Vc:\ \, U_1 \subsetneq \cdots \subsetneq U_{i-1}  \subsetneq (U_{i-1} \cup A_{i+1}) \subsetneq (U_{i-1} \cup A_{i+1} \cup A_{i}) \subsetneq\cdots \subsetneq U_k
\]
is a different connected $k$-flag, but $G(\Uc)$ and $G(\Vc)$ coincide.
\end{Example}

\begin{Example}
Let $G$ be the following graph on the vertices $v_1,v_2,\ldots,v_5$. We fix $v_1$ as the distinguished vertex. Consider the following connected flags: \begin{itemize}
\item $\Uc:\ \ \{v_1\}\subset \{v_1,\boldsymbol{v_2}\}\subset \{v_1,v_2,\boldsymbol{v_3}\} \subset \{v_1,v_2,v_3,\boldsymbol{v_4},\boldsymbol{v_5}\}$
\item $\Vc:\ \{v_1\}\subset \{v_1,\boldsymbol{v_3}\}\subset \{v_1,\boldsymbol{v_2},v_3\} \subset \{v_1,v_2,v_3,\boldsymbol{v_4 ,v_5}\}$\ .
\end{itemize}
Then, as we see, their associated graphs coincide.

\medskip

 \begin{center}
 \begin{tikzpicture}
[scale = .22, very thick = 15mm]

  \node (n1) at (5,11) [Cgray] {};
  \node (n2) at (1,6)  [Cgray] {};
  \node (n3) at (9,6)  [Cgray] {};
  \node (n4) at (3,1)  [Cgray] {};
  \node (n5) at (7,1)  [Cgray] {};
  \foreach \from/\to in {n1/n2,n2/n4, n1/n3, n3/n5, n3/n4}
    \draw[->] (\from) -- (\to);
\foreach \from/\to in {n4/n5}
    \draw[] (\from) -- (\to);

  \node (n1) at (20,11) [Cgray] {};
  \node (n2) at (16,6)  [Cgray] {};
  \node (n3) at (24,6)  [Cgray] {};
  \node (n4) at (18,1)  [Cgray] {};
  \node (n5) at (22,1)  [Cgray] {};
  \foreach \from/\to in {n1/n2,n2/n4, n1/n3,n3/n5,n3/n4}
    \draw[->] (\from) -- (\to);
\foreach \from/\to in {n4/n5}
    \draw[] (\from) -- (\to);

    \node(p1) at (3.5, 11.5) [C0] {$v_1$};
    \node(p2) at (-0.5, 6.5) [C0] {$v_2$};
        \node(p3) at (10.5, 6.5) [C0] {$v_3$};
    \node(p4) at (1.5, 0.5) [C0] {${v_4}$};
    \node(p5) at (8.5, 0.5) [C0] {$v_5$};

 \node(p7) at (5, -2.5) [C0] {$G(\Uc)$};

	\node(p1) at (18.5, 11.5) [C0] {$v_1$};
    \node(p2) at (14.5, 6.5) [C0] {$v_2$};
        \node(p3) at (25.5, 6.5) [C0] {${v_3}$};
    \node(p4) at (16.5, 0.5) [C0] {${v_4}$};
    \node(p5) at (23.5, 0.5) [C0] {$v_5$};

  \node(p6) at (20, -2.5) [C0] {$G(\Vc)$};

\end{tikzpicture} \end{center}
\end{Example}

This example motivates the following definition.

\begin{Definition}
Two $k$-flags $\Uc, \Vc \in \Ff_k(G, q)$ are called {\it equivalent} if the associated partially oriented graphs $G(\Uc)$ and $G(\Vc)$ coincide.
\end{Definition}

\medskip

\begin{Remark}
\begin{itemize}
\item[]
\item[(i)] This equivalence relation is easily seen to be the transitive closure of the equivalences described in Example~\ref{example:equiv}.
\item[(ii)] When $k=2$ each equivalence class contains a unique element.
\end{itemize}
\end{Remark}

\begin{Notation}
The set of all equivalence classes in $\Ff_k(G, q)$ will be denoted by $\Ef_{k}(G, q)$. The set $\Sf_k(G, q)$ will denote the set of minimal representatives of the classes in $\Ef_{k}(G, q)$ with respect to $\prec_k$.
\end{Notation}

\begin{Example}
Here we list all acyclic (partial) orientations associated to equivalence classes of connected flags on the $4$-cycle graph on the vertices $1,2,3,4$. The vertex $1$ is chosen as the distinguished vertex (see Figures 1, 2, 3).

\begin{figure}[h!]

\label{figure thm1} \begin{center}

\begin{tikzpicture} [scale = .18, very thick = 10mm]

  \node (n4) at (4,1)  [Cgray] {1};
  \node (n1) at (4,11) [Cgray] {4};
  \node (n2) at (1,6)  [Cgray] {2};
  \node (n3) at (7,6)  [Cgray] {3};
  \foreach \from/\to in {n4/n2,n1/n3}
    \draw[] (\from) -- (\to);
\foreach \from/\to in {n2/n1,n4/n3}
    \draw[blue][->] (\from) -- (\to);

    \node (n4) at (14,1)  [Cgray] {1};
  \node (n1) at (14,11) [Cgray] {4};
  \node (n2) at (11,6)  [Cgray] {2};
  \node (n3) at (17,6)  [Cgray] {3};
  \foreach \from/\to in {n4/n2,n3/n1}
   \draw[blue][->] (\from) -- (\to); {n3/n4,n2/n4}

     \foreach \from/\to in {n1/n2,n4/n3}
    \draw[] (\from) -- (\to);

    \node (n4) at (24,1)  [Cgray] {1};
  \node (n1) at (24,11) [Cgray] {4};
  \node (n2) at (21,6)  [Cgray] {2};
  \node (n3) at (27,6)  [Cgray] {3};
  \foreach \from/\to in {n1/n2,n1/n3}
    \draw[] (\from) -- (\to);

    \foreach \from/\to in {n4/n2,n4/n3}
    \draw[blue][->] (\from) -- (\to);

\end{tikzpicture}

\bigskip

\begin{tikzpicture} [scale = .18, very thick = 10mm]

  \node (n4) at (4,1)  [Cgray] {1};
  \node (n1) at (4,11) [Cgray] {4};
  \node (n2) at (1,6)  [Cgray] {2};
  \node (n3) at (7,6)  [Cgray] {3};
   \foreach \from/\to in {n4/n2,n4/n3}
    \draw[] (\from) -- (\to);

    \foreach \from/\to in {n2/n1,n3/n1}
    \draw[blue][->] (\from) -- (\to);

    \node (n4) at (14,1)  [Cgray] {1};
  \node (n1) at (14,11) [Cgray] {4};
  \node (n2) at (11,6)  [Cgray] {2};
  \node (n3) at (17,6)  [Cgray] {3};
 \foreach \from/\to in {n1/n3,n4/n3}
    \draw[] (\from) -- (\to);

    \foreach \from/\to in {n1/n2,n4/n2}
    \draw[blue][->] (\from) -- (\to);

    \node (n4) at (24,1)  [Cgray] {1};
  \node (n1) at (24,11) [Cgray] {4};
  \node (n2) at (21,6)  [Cgray] {2};
  \node (n3) at (27,6)  [Cgray] {3};
 \foreach \from/\to in {n1/n2,n4/n2}
    \draw[] (\from) -- (\to);

    \foreach \from/\to in {n1/n3,n4/n3}
    \draw[blue][->] (\from) -- (\to);

\end{tikzpicture}

\caption{Acyclic orientations corresponding to $2$-partitions of $C_4$}
\end{center}
\end{figure}
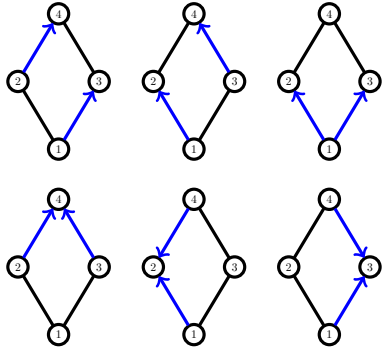


\begin{figure}[h!] \begin{center}

\begin{tikzpicture} [scale = .18, very thick = 10mm]

 \node (n4) at (-6,13)  [Cgray] {1};
  \node (n1) at (-6,23) [Cgray] {4};
  \node (n2) at (-9,18)  [Cgray] {2};
  \node (n3) at (-3,18)  [Cgray] {3};

\foreach \from/\to in {n2/n4}
   \draw[] (\from) -- (\to);
 \foreach \from/\to in {n4/n3,n2/n1,n3/n1}
    \draw[blue][->] (\from) -- (\to);

 \node (n4) at (4,13)  [Cgray] {1};

  \node (n1) at (4,23) [Cgray] {4};
  \node (n2) at (1,18)  [Cgray] {2};
  \node (n3) at (7,18)  [Cgray] {3};
 \foreach \from/\to in {n3/n4}
   \draw[] (\from) -- (\to);
 \foreach \from/\to in {n4/n2,n2/n1,n3/n1}
    \draw[blue][->] (\from) -- (\to);

 \node (n4) at (14,13)  [Cgray] {1};
  \node (n1) at (14,23) [Cgray] {4};

  \node (n2) at (11,18)  [Cgray] {2};
  \node (n3) at (17,18)  [Cgray] {3};

  \foreach \from/\to in {n1/n3}
    \draw[] (\from) -- (\to);

 \foreach \from/\to in {n4/n2,n4/n3, n2/n1}
    \draw[blue][->] (\from) -- (\to);

      \node (n4) at (24,13)  [Cgray] {1};
  \node (n1) at (24,23) [Cgray] {4};

  \node (n2) at (21,18)  [Cgray] {2};
  \node (n3) at (27,18)  [Cgray] {3};
   \foreach \from/\to in {n1/n2}
    \draw[] (\from) -- (\to);
    \foreach \from/\to in {n1/n3, n3/n4,n2/n4}
    \draw[blue][<-] (\from) -- (\to);
\end{tikzpicture}

\end{center}

\end{figure}

\vspace{-.5cm}


\begin{figure}[h!] \label{fig:o_j} \begin{center}

\begin{tikzpicture}  [scale = .18, very thick = 10mm]

 \node (n4) at (-6,13)  [Cgray] {1};
  \node (n1) at (-6,23) [Cgray] {4};
  \node (n2) at (-9,18)  [Cgray] {2};
  \node (n3) at (-3,18)  [Cgray] {3};
\foreach \from/\to in {n2/n1,n2/n4}
    \draw[blue][<-] (\from) -- (\to);

 \foreach \from/\to in {n4/n3}
    \draw[blue][->] (\from) -- (\to);

\foreach \from/\to in {n1/n3}
    \draw[] (\from) -- (\to);
 \node (n4) at (4,13)  [Cgray] {1};

  \node (n1) at (4,23) [Cgray] {4};
  \node (n2) at (1,18)  [Cgray] {2};
  \node (n3) at (7,18)  [Cgray] {3};
  \foreach \from/\to in {n2/n4}
    \draw[blue][<-] (\from) -- (\to);
\foreach \from/\to in {n2/n1}
    \draw[] (\from) -- (\to);
 \foreach \from/\to in {n4/n3,n1/n3}
    \draw[blue][->] (\from) -- (\to);

 \node (n4) at (14,13)  [Cgray] {1};
  \node (n1) at (14,23) [Cgray] {4};

  \node (n2) at (11,18)  [Cgray] {2};
  \node (n3) at (17,18)  [Cgray] {3};

  \foreach \from/\to in {n1/n2,n3/n1}
    \draw[blue][<-] (\from) -- (\to);

 \foreach \from/\to in {n4/n3}
    \draw[blue][->] (\from) -- (\to);
 \foreach \from/\to in {n2/n4}
    \draw[] (\from) -- (\to);

      \node (n4) at (24,13)  [Cgray] {1};
  \node (n1) at (24,23) [Cgray] {4};

  \node (n2) at (21,18)  [Cgray] {2};
  \node (n3) at (27,18)  [Cgray] {3};
  \foreach \from/\to in {n2/n1,n1/n3}
    \draw[blue][<-] (\from) -- (\to);

 \foreach \from/\to in {n4/n2}
    \draw[blue][->] (\from) -- (\to);
 \foreach \from/\to in {n3/n4}
    \draw[] (\from) -- (\to);

\end{tikzpicture}

\caption{Acyclic orientations corresponding to $3$-partitions of $C_4$}
\end{center}

\end{figure}
\medskip

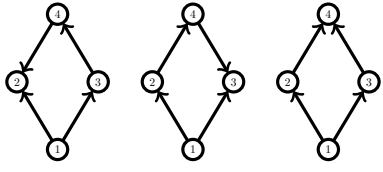
\begin{figure}[ht] \label{fig:o_j_1} \begin{center}

\begin{tikzpicture}  [scale = .18, very thick = 10mm]

 \node (n4) at (4,13)  [Cgray] {1};

  \node (n1) at (4,23) [Cgray] {4};
  \node (n2) at (1,18)  [Cgray] {2};
  \node (n3) at (7,18)  [Cgray] {3};
  \foreach \from/\to in {n2/n1,n2/n4}
    \draw[<-] (\from) -- (\to);

 \foreach \from/\to in {n4/n3,n3/n1}
    \draw[->] (\from) -- (\to);

 \node (n4) at (14,13)  [Cgray] {1};
  \node (n1) at (14,23) [Cgray] {4};

  \node (n2) at (11,18)  [Cgray] {2};
  \node (n3) at (17,18)  [Cgray] {3};

  \foreach \from/\to in {n1/n2,n3/n1}
    \draw[<-] (\from) -- (\to);

 \foreach \from/\to in {n4/n3,n4/n2}
    \draw[->] (\from) -- (\to);

      \node (n4) at (24,13)  [Cgray] {1};
  \node (n1) at (24,23) [Cgray] {4};

  \node (n2) at (21,18)  [Cgray] {2};
  \node (n3) at (27,18)  [Cgray] {3};
  \foreach \from/\to in {n1/n2,n1/n3, n3/n4,n2/n4}
    \draw[<-] (\from) -- (\to);
\end{tikzpicture}
\caption{Acyclic orientations corresponding to $4$-partitions of $C_4$}
\end{center}

\end{figure}

\end{Example}

\subsection{Main properties of $\Sf_k(G, q)$} \label{sec:technical}

This section, while elementary, is the most technical part of the paper. The reader is invited to draw graphs and Venn diagrams to follow the proofs. The main ingredients needed for our main theorems are Definition~\ref{def:U12}, Proposition~\ref{pro:well-def(a)}, Proposition~\ref{prop:minimal1-1}, Corollary~\ref{cor:injectivity}, Lemma~\ref{lem:KU1U2}, and Proposition~\ref{prop:max}. Other results are used to establish these main ingredients.

\begin{Lemma}\label{lem:S_k}
Let
\[
\Uc:\ \, U_1 \subsetneq U_2 \subsetneq \cdots \subsetneq V(G)
\]
be an element of $\Sf_k(G, q)$. Let
\[
\Wc:\ \, W_1 \subsetneq W_2 \subsetneq  \cdots \subsetneq V(G)
\]
be an element of $\Ff_{k'}(G, q)$.

\begin{itemize}

\item[(a)] If $k'=k-1$ and $W_i=U_{i+1}$ for $1 \leq i \leq k-1$ then $\Wc \in \Sf_{k-1}(G, q)$.

\medskip

\item[(b)] If $k'=k-1$ and $W_1= U_1 \cup (U_3 \backslash U_2)$ and $W_i=U_{i+1}$ for $2 \leq i \leq k-1$ then $\Wc \in \Sf_{k-1}(G, q)$.

\medskip

\item[(c)] If $k'=k$ and $W_1\subseteq U_1$ and $W_i=U_i$ for $2 \leq i \leq k$ then $\Wc \in \Sf_k(G, q)$.

\end{itemize}

\end{Lemma}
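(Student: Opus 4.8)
The statement has three parts, each asserting that a certain modification of an element $\Uc \in \Sf_k(G,q)$ (a minimal representative of its equivalence class) produces an element of $\Sf_{k'}(G,q)$. In each case there are two things to verify: first, that the new flag $\Wc$ is a genuine \emph{connected flag} (i.e. an element of $\Ff_{k'}(G,q)$ — the strictly increasing chain condition, $q \in W_1$, and connectivity of all the $G[W_i]$ and $G[W_{i+1}\setminus W_i]$); second, that $\Wc$ is the \emph{minimal representative} of its equivalence class under $\prec_{k'}$. The first verification is bookkeeping and is handled separately in each case; the real content is the minimality, which I would prove by contradiction: if some $\Wc' \prec_{k'} \Wc$ were equivalent to $\Wc$, I would ``lift'' $\Wc'$ to a connected $k$-flag $\Uc'$ equivalent to $\Uc$ with $\Uc' \prec_k \Uc$, contradicting $\Uc \in \Sf_k(G,q)$.

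\textbf{Part (c).} Here $W_1 \subseteq U_1$, $W_i = U_i$ for $i \geq 2$, and $k' = k$. First I would check $\Wc \in \Ff_k(G,q)$: we need $q \in W_1$ (this must be part of the hypothesis, or else follows since $\Wc$ is assumed to be in $\Ff_{k'}(G,q)$ in the lemma's framing — actually the lemma says ``let $\Wc$ be an element of $\Ff_{k'}(G,q)$'', so connectivity and $q\in W_1$ are \emph{given}; only minimality needs proof). For minimality: suppose $\Wc' \prec_k \Wc$ is equivalent to $\Wc$. Since $G(\Wc') = G(\Wc)$, and the only edges of $G(\Wc)$ not already present in $G(\Uc)$ come from the step $W_1 \subsetneq W_2 = U_2$ versus $U_1 \subsetneq U_2$ (the extra vertices $U_1 \setminus W_1$ now sit in $W_2\setminus W_1$ rather than in $U_1$) — I would argue that one can reverse this modification on $\Wc'$ to obtain $\Uc'$ with $G(\Uc') = G(\Uc)$ and $\Uc' \prec_k \Uc$, using the fact that $\prec_k$ is a reverse-lexicographic extension that only looks at the largest index where the flags differ (Definition~\ref{def:prec2}), so changing the bottom set $W_1$ does not affect comparisons at higher indices.

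\textbf{Parts (a) and (b).} These drop the flag from length $k$ to length $k-1$. In (a) we simply delete $U_1$ from the chain; in (b) we replace the bottom two steps $U_1 \subsetneq U_2 \subsetneq U_3$ by the single step $W_1 = U_1 \cup (U_3\setminus U_2) \subsetneq U_3$. For (b) I would first check connectivity: $G[W_1]$ connected requires that $G[U_1]$ and $G[U_3\setminus U_2]$ together with the edges between them form a connected graph — but wait, this may \emph{not} automatically hold, which is exactly why the hypothesis $\Uc \in \Sf_k$ (minimality) is used: if $d(U_3\setminus U_2, U_1) = 0$ and also $d(U_3 \setminus U_2, U_2 \setminus U_1) \neq 0$, the flag $\Uc$ would not be minimal in its class (one could move $U_3\setminus U_2$ down past $U_2\setminus U_1$ as in Example~\ref{example:equiv}, producing a $\prec_k$-smaller representative — here I'd need to check the direction of the inequality carefully against Definition~\ref{def:prec1}/\ref{def:prec2}). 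So minimality of $\Uc$ forces $G[W_1]$ to be connected. The connectivity of $G[U_2\setminus U_1]$... actually $U_2\setminus U_1$ has disappeared; we need $G[U_3 \setminus W_1] = G[U_2 \setminus U_1]$ connected, which it is. Then, for minimality of $\Wc$ in $\Sf_{k-1}$, I use the same contradiction scheme: a smaller equivalent $(k-1)$-flag $\Wc'$ lifts to a smaller equivalent $k$-flag $\Uc'$ by re-inserting an appropriate bottom step, contradicting $\Uc \in \Sf_k(G,q)$.

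\textbf{Main obstacle.} The delicate point is the lifting argument: given $\Wc' \prec \Wc$ equivalent to $\Wc$, I must produce $\Uc'$ that is simultaneously (i) a valid connected flag, (ii) equivalent to $\Uc$ (same partial orientation), and (iii) strictly $\prec$-smaller than $\Uc$. Verifying (iii) requires careful attention to where exactly $\Uc'$ and $\Uc$ first differ (from the top) and matching this against the reverse-lexicographic order in Definition~\ref{def:prec2}; the insertion/deletion of a bottom step shifts indices, so one must track indices precisely. I expect the bulk of the proof to be a case analysis on how $G(\Wc')$ decomposes and which equivalence move (in the sense of Example~\ref{example:equiv}) connects $\Wc'$ to $\Wc$, lifting each such move to a move on the length-$k$ flags. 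This is why the authors flag this section as ``the most technical part of the paper.''
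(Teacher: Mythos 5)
Your overall strategy — contradiction plus lifting an earlier-in-order equivalent flag $\Wc'$ to a smaller equivalent $\Uc'$ — matches the paper's approach exactly for all three parts. For (a) and (b) this is straightforward (reinsert $U_1$ at the bottom, using that $W_1=W'_1$ since both contain $q$ and $G(\Wc)=G(\Wc')$), and your sketch captures the essential move, though you do not carry out the index bookkeeping that shows $\Uc'\prec_k\Uc$. Your detour in (b) about whether $G[W_1]$ is connected is unnecessary — the lemma \emph{assumes} $\Wc\in\Ff_{k'}(G,q)$, so all connectivity is given as hypothesis; only minimality is to be proved (you correctly note this in (c) but seem to forget it in (b)).

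The genuine gap is in part (c), and it is exactly the ``main obstacle'' you flag but do not resolve. Your proposed lift — ``reverse the modification on $\Wc'$'' by putting $A := U_1\setminus W_1$ back into the bottom set — does not produce a valid $\Uc'$ in general, because $W'_i\ne W_i$ starting at some index $\ell\geq 2$: the piece $W_2\setminus W_1 = U_2\setminus W_1$ may appear at an \emph{arbitrary} position $t\geq 2$ in the chain $\Wc'$, not at position $2$. The correct lift must locate that position $t$ (where $W_2\setminus W_1 = W'_t\setminus W'_{t-1}$) and then set $U'_i = W'_i\cup A$ for $i\leq t-1$ and $U'_i = W'_i$ for $i\geq t$. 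Before this is even well-defined one must show $A\cap W'_i=\emptyset$ for $i\leq t-1$ and, to get equivalence of $\Uc'$ with $\Uc$, that there are no edges between $A$ and $W'_i\setminus W'_{i-1}$ for $2\leq i\leq t-1$ (both follow from $G(\Wc)=G(\Wc')$ by tracking where oriented edges out of $W_1$ land). Only then can one compare $\Uc'$ with $\Uc$ under $\prec_k$, splitting into the cases $\ell\geq t$ and $\ell=t-1$; the latter case also uses that adding vertices to a set moves it \emph{earlier} in the reverse-inclusion total order $\preceq$ of Definition~\ref{def:prec1}. Without this structure the claim ``changing the bottom set does not affect comparisons at higher indices'' is simply false, because the lift changes the intermediate sets $W'_2,\ldots,W'_{t-1}$ as well.
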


\begin{proof}
The strategy of the proof is similar for all three parts. Namely in each case, for the sake of contradiction, we assume that
\[
\Wc':\ \, W'_1 \subsetneq W'_2 \subsetneq \cdots \subsetneq V(G)
\]
is another connected flag equivalent to $\Wc$ which precedes $\Wc$ in the total ordering. We will then find a connected flag $\Uc'$ equivalent to $\Uc$ such that $\Uc' \prec_k \Uc$.

\medskip

We first note that in all cases, since $G(\Wc)$ and $G(\Wc')$ coincide and $q \in W_1,  W'_1$, we must have $W_1=W'_1$. Let $\ell \geq 2$ be such that $W'_{\ell} \prec W_{\ell}$ and $W'_{i} = W_{i}$ for $\ell < i $.

\medskip

(a)  The ordered collection $(W'_{j}\backslash W'_{j-1})_{j=2}^{k-1}$ is a permutation of the ordered collection $(W_{j}\backslash W_{j-1})_{j=2}^{k-1}=(U_{j}\backslash U_{j-1})_{j=3}^{k}$. Also $W'_1\backslash U_1=W_1\backslash U_1=U_2\backslash U_1$. It follows that
\[
\Uc':\ \, U_1 \subsetneq W'_1\subseteq W'_2\subseteq\cdots \subsetneq W'_{k-2} \subsetneq W'_{k-1}=V(G)
\]
is a connected $k$-flag equivalent to $\Uc$. But then $W'_{\ell} \prec W_{\ell}$ and $W'_{i} = W_{i}$ for $\ell < i \leq k-1$ implies $\Uc'\prec_k \Uc$, a contradiction.

\medskip

(b) The ordered collection $(W'_{j}\backslash W'_{j-1})_{j=2}^{k-1}$ is a permutation of the ordered collection $(W_{j}\backslash W_{j-1})_{j=2}^{k-1}=(U_{j}\backslash U_{j-1})_{ j=2,4,5,\ldots,k}$; here we used $W_2\backslash W_1= U_3\backslash(U_1 \cup (U_3 \backslash U_2))=U_2\backslash U_1$. Also $W'_1\backslash U_1=W_1 \backslash U_1=(U_1 \cup (U_3 \backslash U_2))\backslash U_1=U_3\backslash U_2$. It follows that
\[
\Uc':\ \, U_1 \subsetneq W'_1\subseteq W'_2\subseteq\cdots \subsetneq W'_{k-2} \subsetneq W'_{k-1}=V(G) \ .
\]
is a connected $k$-flag equivalent to $\Uc$. But then $W'_{\ell} \prec W_{\ell}$ and $W'_{i} = W_{i}$ for $\ell < i \leq k-1$ implies $\Uc'\prec_k \Uc$, a contradiction.

\medskip

(c) The ordered collection $(W'_{j}\backslash W'_{j-1})_{j=2}^k$ is a permutation of the ordered collection $(W_{j}\backslash W_{j-1})_{j=2}^k$.
Therefore $W_2\backslash W_1=W'_t\backslash W'_{t-1}$ for some $t \geq 2$. Note that if $t \geq 3$ then $W'_{t-1}\neq W_{t-1}$; for $t \geq 3$ we have $W_2 \subseteq W_{t-1}$ but $W_2\not\subset W'_{t-1}$. In particular we deduce that $\max(2,t-1) \leq \ell$.

\medskip

Now let $A=U_1\backslash W_1$.

$\bullet$  For $0 \leq i \leq t-1$ the sets $A$ and $W'_i$ are disjoint. This is because $A \subseteq W_2 \backslash W_1=W'_t \backslash W'_{t-1}$, so $A \cap W'_{t-1}=\emptyset$.

$\bullet$  For $2 \leq i \leq t-1$ there is no edge between $A$ and $W'_i\backslash W'_{i-1}$. This is because if there is an edge connecting $u \in W'_i\backslash W'_{i-1}$ and $v \in A$ then there is an oriented edge from $u \in W'_i\backslash W'_{i-1}$ to $v \in W'_t\backslash W'_{t-1}$ in $G(\Wc')$. But $G(\Wc')=G(\Wc)$ and the oriented edge from $u$ to $v$ cannot appear in $G(\Wc)$ because $v \in W_2\backslash W_1$ and $u \not \in W_1$.

 We now define
\[
\Uc':\ \, U'_1 \subsetneq U'_2 \subsetneq \cdots \subsetneq V(G)
\]
by letting
\[
U'_i=
\begin{cases}
W'_i\cup A, &\text{if $1 \leq i \leq t-1$;}\\
W'_i, &\text{if $t \leq i \leq k$.}
\end{cases}
\]

Note that

$\bullet$  $U'_1=W'_1\cup A=U_1$. This is because $A=U_1\backslash W_1$ and $W_1=W'_1$.

$\bullet$ All subgraphs $G[U'_i]$ are connected: for $1 \leq i \leq t-1$ since $W'_1\cup A=U_1$ is connected, we know that $A$ is connected to $W'_i$ at least via $W'_1 \subset W'_i$.

Moreover

$\bullet$  For $2 \leq i \leq t-1$ we have $U'_i \backslash U'_{i-1}=(W'_i\cup A) \backslash (W'_{i-1}\cup A)=W'_{i}\backslash W'_{i-1}$. This follows from the fact that $A$ is disjoint from $W'_{i-1}$ and $W'_{i}$.

$\bullet$  $U'_t \backslash U'_{t-1}=W'_t\backslash (W'_{t-1}\cup A)=(W_2\backslash W_1)\backslash A=(U_2\backslash W_1)\backslash (U_1\backslash W_1)=U_2\backslash U_1$.

$\bullet$  For $t+1 \leq i \leq k$ we have $U'_i \backslash U'_{i-1}=W'_i\backslash W'_{i-1}$.

Recall that the ordered collection $(W'_{j}\backslash W'_{j-1})_{j}$ is a permutation of the ordered collection $(W_{j}\backslash W_{j-1})_{j}$ and that $W_{j}\backslash W_{j-1}=U_{j} \backslash U_{j-1}$ for $3 \leq j \leq k$.

\medskip

It follows that $\Uc' \in \Ff_k(G,q)$ and that $(U'_{j}\backslash U'_{j-1})_{1}^k$ is a permutation of $(U_{j}\backslash U_{j-1})_{1}^k$.

\medskip

Moreover $\Uc'$ is equivalent to $\Uc$. To see this first observe that the only difference between $G(\Uc)$ and $G(\Wc)$ is that we orient the edges from $U_{1}$ to $U_{2} \backslash U_{1}$ in $G(\Uc)$ and we orient the edges from $W_{1}$ to $U_{2} \backslash W_{1}$ in $G(\Wc)$ (other oriented edges are identical). Similarly, since there is no edge between $A$ and $W'_i\backslash W'_{i-1}$ for $2 \leq i \leq t-1$, the only difference between $G(\Uc')$ and $G(\Wc')$ is that we orient the edges from $U'_{1}=U_1$ to $U'_{t} \backslash U'_{t-1}=U_2\backslash U_1$ in $G(\Uc')$ and we orient the edges from $W'_{1}=W_1$ to $W'_{t} \backslash W'_{t-1}=U_2 \backslash W_1$ in $G(\Wc)$ (other oriented edges are identical). Since $G(\Wc)$ and $G(\Wc')$ coincide it follows that $G(\Uc)$ and $G(\Uc')$ coincide.

\medskip

Finally we show that $\Uc'\prec_k\Uc$. Recall that $\ell \geq 2$ and $\ell \geq t-1$.

$\bullet$  If $\ell \geq t$, then $U'_{\ell}=W'_{\ell} \prec W_{\ell}=U_{\ell}$ and $U'_{i}=W'_{i} = W_{i}=U_{i}$ for $\ell <i \leq k$.

$\bullet$  If $\ell=t-1$, then $U'_{\ell}=W'_{\ell}\cup A\prec W'_{\ell}\prec W_{\ell}=U_{\ell}$ and $U'_{i}=W'_{i} = W_{i}=U_{i}$ for $\ell <i \leq k$.

Therefore in any case $\Uc'\prec_k\Uc$, a contradiction.
\end{proof}

\medskip

\begin{Proposition}\label{prop:well-def(b)}
Let
\[
 U_1 \subsetneq U_2 \subsetneq \cdots \subsetneq V(G)
\]
\[
 V_1 \subsetneq V_2 \subsetneq \cdots \subsetneq V(G)
\]
be two elements of $\Sf_{k}(G, q)$ such that $U_i=V_i$ for $2 \leq i \leq k$. Let
$A=U_2 \backslash  U_1$, $ B=V_2 \backslash V_1$, $C=U_1\cap V_1$, and assume at least one of the following holds:
\begin{itemize}
\item[(i)] $G[C]$ is not connected.
\item[(ii)] $G[A \cup B]$ is connected.
\end{itemize}

Then
\[
 W_1 \subsetneq W_2 \subsetneq \cdots \subsetneq V(G)
\]
is also an element of $\Sf_{k}(G, q)$, where $W_i=U_i$ for $2 \leq i \leq k$ and $W_1$ is (the vertex set of) the connected component of $G[C]$ that contains $q$.

\end{Proposition}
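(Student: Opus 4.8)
The plan is to fit this statement into the same mold as Lemma~\ref{lem:S_k}: assume for contradiction that the candidate flag $\Wc$ is not the minimal representative of its equivalence class, and then manufacture from a strictly smaller equivalent flag $\Wc'$ a flag $\Uc'$ equivalent to either $\Uc$ or $\Vc$ that beats it in $\prec_k$, contradicting minimality of $\Uc$ (or $\Vc$) in $\Sf_k(G,q)$. First I would record the basic structural facts: $W_1$ is connected by construction, $G[W_2\setminus W_1]$ needs to be checked connected (this is where hypotheses (i)/(ii) enter — I expect that when $G[C]$ is disconnected, the components of $G[C]$ other than the one containing $q$ get absorbed into $W_2\setminus W_1=U_2\setminus W_1$, and the edges from $A$ and from $B$ into those components, together with the hypothesis, force connectivity), and that $q\in W_1$. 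I would also observe the key orientation fact: $G(\Wc)$ differs from both $G(\Uc)$ and $G(\Vc)$ only on the edges incident to level $2$, since $W_i=U_i=V_i$ for $i\geq 2$; the oriented edges into $U_2\setminus U_1$ coming from $W_1\subseteq U_1$ are common to $G(\Uc)$ and $G(\Wc)$, and similarly for $\Vc$.

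Next, given a strictly smaller equivalent $\Wc'$, I would argue as in part (c) of Lemma~\ref{lem:S_k}. First $W'_1=W_1$ because $q\in W'_1\cap W_1$ and $G(\Wc')=G(\Wc)$ pins down the $q$-component of the unoriented part at level $1$. Set $A_U=U_1\setminus W_1$ and $A_V=V_1\setminus W_1$; these are exactly the "extra" vertices that must be reinserted at level $1$ to recover $\Uc$ or $\Vc$. Using that $G(\Wc')=G(\Wc)$ and that there are no oriented edges from outside $W_1$ into $U_2\setminus W_1$ in $G(\Wc)$ (because those vertices are oriented away from $W_1$), I would show that $A_U$ (resp.\ $A_V$) is disjoint from the lower levels $W'_0,\dots,W'_{t-1}$ of $\Wc'$ and has no edge to the "new vertices" $W'_i\setminus W'_{i-1}$ for $2\le i\le t-1$, where $t$ is the level at which $W_2\setminus W_1$ reappears as a difference set $W'_t\setminus W'_{t-1}$ in $\Wc'$. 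Then define $\Uc'$ by $U'_i=W'_i\cup A_U$ for $i<t$ and $U'_i=W'_i$ for $i\ge t$ (and analogously $\Vc'$ with $A_V$), check $\Uc'\in\Ff_k(G,q)$ is connected with $U'_1=W'_1\cup A_U=U_1$, that it is equivalent to $\Uc$ by the orientation-difference bookkeeping, and that $\Wc'\prec\Wc$ forces $\Uc'\prec_k\Uc$ exactly as in the proof of Lemma~\ref{lem:S_k}(c).

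The one genuinely new wrinkle, and the place where hypotheses (i) and (ii) are actually used, is the following: it is conceivable that no single insertion of $A_U$ recovers $\Uc$ — i.e.\ that the difference set $W_2\setminus W_1$ does not itself reappear intact among the $W'_j\setminus W'_{j-1}$, but is split across several levels of $\Wc'$. In Lemma~\ref{lem:S_k}(c) this could not happen because $W_2\setminus W_1=U_2\setminus W_1$ was known to be a single difference set of $\Uc$ up to a harmless reordering inside the equivalence class; here $U_2\setminus W_1 = A \cup (U_1\setminus W_1) \cup \text{(other stuff)}$ may genuinely be reorderable, and we need to know that any equivalent reordering still lets us peel off $U_1\setminus W_1$ at the bottom. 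This is exactly what (i) or (ii) guarantees: if $G[C]$ is disconnected, the non-$q$ components of $C$ are disjoint and unattached to $A$ in a way that makes them freely movable to level $2$; if $G[A\cup B]$ is connected, then $A$ and $B$ are forced to appear "together" at the level-$2$ step of any equivalent flag, which is what pins down where $W_1$ must sit. So the main obstacle — and the part I would spend the most care on — is the combinatorial analysis showing that under (i) or (ii), in any flag $\Wc'$ equivalent to $\Wc$, the block $U_2\setminus W_1$ (resp.\ $V_2\setminus W_1$) appears as a consecutive run of difference sets whose union is reorderable to put $U_1\setminus W_1$ (resp.\ $V_1\setminus W_1$) first, so that the reinsertion construction above goes through for at least one of $\Uc$, $\Vc$; the rest is the same routine orientation- and order-bookkeeping as in Lemma~\ref{lem:S_k}.
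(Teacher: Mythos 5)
You have correctly located where hypotheses (i) and (ii) enter: they are used to show that $G[W_2\setminus W_1]$ is connected, and your sketch of how (i) works (the non-$q$ components of $C$ land inside $W_2\setminus W_1$ and must be glued to both $A$ and $B$) matches the paper's argument. But from that point on the proposal goes off the rails. Once $G[W_2\setminus W_1]$ is connected you have $\Wc\in\Ff_k(G,q)$ with $W_1\subseteq U_1$ and $W_i=U_i$ for $i\ge 2$, and Lemma~\ref{lem:S_k}(c) applies verbatim with $\Uc$ as input to give $\Wc\in\Sf_k(G,q)$. There is no need to re-run the whole contradiction argument from scratch, and you should not reintroduce the hypotheses (i)/(ii) into that part — they have already done their job.

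Consequently the ``genuinely new wrinkle'' you propose to spend the most care on is illusory, and this is the real gap in the proposal. You worry that in an equivalent flag $\Wc'$ the block $W_2\setminus W_1=U_2\setminus W_1$ might not reappear intact but could be ``split across several levels.'' That cannot happen: once $\Wc$ is known to be a connected flag, its difference sets $W_1,\,W_2\setminus W_1,\,\dots,\,W_k\setminus W_{k-1}$ are exactly the connected components of the unoriented part of $G(\Wc)$, and any $\Wc'$ with $G(\Wc')=G(\Wc)$ must have the same collection of difference sets up to permutation — that is precisely the fact already used in the proof of Lemma~\ref{lem:S_k}. So $W_2\setminus W_1$ reappears as a single $W'_t\setminus W'_{t-1}$, exactly as in case (c), and the reinsertion of $A_U=U_1\setminus W_1$ goes through without further ado. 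Your proposal leaves this ``main obstacle'' unresolved, and the obstacle itself stems from not recognizing that the proposition reduces cleanly to the connectivity check plus a citation of Lemma~\ref{lem:S_k}(c).
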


\begin{proof}
It is enough to check that $G[W_2\backslash W_1]$ is connected. Then the assertion follows by Lemma~\ref{lem:S_k} (c),
since $W_1\subset U_1$.

\medskip

Write
\begin{equation}\label{eq:W21}
W_2\backslash W_1=A\cup B\cup (C\backslash W_1)\ ,
\end{equation}
and write $U_1$ and $V_1$ as the disjoint unions:
\begin{equation}\label{eq:disjoint}
U_1=(U_1\backslash V_1) \cup (C\backslash W_1) \cup W_1 \quad \text{and} \quad  V_1=(V_1\backslash U_1)\cup (C\backslash W_1) \cup W_1 \ .
\end{equation}

(i) Assume that $G[C]$ is not connected, that is, $C\backslash W_1 \ne \emptyset$. Since $G[U_1]$ is connected and there are no edges between $C\backslash W_1$ and $W_1$, it follows from \eqref{eq:disjoint} that there must be 
an edge between each connected component $C\backslash W_1$ and $U_1\backslash V_1$.
As $U_1\backslash V_1 \subseteq B$ we conclude that every connected component of $C \backslash W_1$ has an edge to $B$.
Similarly, every connected component of $C \backslash W_1$ has an edge to $A$.
Since $G[A]$ and $G[B]$ are both connected it follows from \eqref{eq:W21} that $G[W_2\backslash W_1]$ is connected.

(ii) Assume that $G[A \cup B]$ is connected. We may assume $C\backslash W_1 = \emptyset$, since otherwise the result follows from (i) above. But then \eqref{eq:W21} becomes $W_2\backslash W_1=A\cup B$ and therefore $G[W_2\backslash W_1]$ is connected.
\end{proof}

\medskip

Given an element in $\Sf_k(G, q)$ there is a canonical way to obtain two related elements in $\Sf_{k-1}(G, q)$. To state this result we first need a definition.
\begin{Definition} \label{def:U12}
Given $\Uc \in \Ff_k(G, q)$, the elements $\Uc^{(1)}, \Uc^{(2)}\in \Ff_{k-1}(G, q)$ are obtained from $\Uc$ by removing the first and second elements in the following appropriate sense. Let

\[
\Uc:\ \, U_1 \subsetneq U_2 \subsetneq \cdots \subsetneq V(G) \ .
\]

\begin{itemize}
\item[(a)] $\Uc^{(1)}$ will denote
\[
U_2 \subsetneq U_3 \subsetneq U_4 \subsetneq \cdots \subsetneq V(G) \ .
\]
\item[(b)] $\Uc^{(2)}$ will denote
\[
\begin{cases}
 U_1 \subsetneq U_3 \subsetneq U_4 \subsetneq \cdots \subsetneq V(G), &\text{if $G[U_3 \backslash U_1]$ is connected;}\\
\text{or} \\
(U_1 \cup (U_3 \backslash U_2)) \subsetneq U_3 \subsetneq U_4 \subsetneq \cdots \subsetneq V(G), & \text{if $G[U_3 \backslash U_1]$  is not connected.}
\end{cases}
\]
\end{itemize}
\end{Definition}

\medskip

{We remark that $\Uc^{(1)}$ and $\Uc^{(2)}$ are essential in the expression of our minimal free resolutions (see \eqref{eq:map}).

\begin{Remark}\label{rmk:1vs2cases}
For part (b) in Definition~\ref{def:U12} we note that $U_3 \backslash U_1=(U_3 \backslash U_2) \cup (U_2 \backslash U_1)$ and by assumption $G[U_3 \backslash U_2]$ and $G[U_2 \backslash U_1]$ are both connected and nonempty. So $G[U_3 \backslash U_1]$ is connected if and only if there are some edges connecting $U_3 \backslash U_2$ to $U_2 \backslash U_1$. Moreover, if there are no edges connecting $U_3 \backslash U_2$ to $U_2 \backslash U_1$ then there must be some edges connecting $U_3 \backslash U_2$ to $U_1$. Since $G[U_3]$ is connected and $U_3 =(U_3 \backslash U_2) \cup (U_2 \backslash U_1) \cup U_1$ we get  $G[U_1 \cup (U_3 \backslash U_2)]$ is connected.
\end{Remark}

\medskip

\begin{Proposition}\label{pro:well-def(a)}
If $\Uc \in \Sf_k(G, q)$ then
\begin{itemize}
\item[(a)] $\Uc^{(1)} \in \Sf_{k-1}(G, q)$.
\item[(b)] $\Uc^{(2)} \in \Sf_{k-1}(G, q)$.
\item[(c)] $\Uc^{(1)} \prec_{k-1} \Uc^{(2)}$.
\end{itemize}
\end{Proposition}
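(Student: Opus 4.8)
The plan is to verify parts (a), (b), (c) essentially by reducing (a) and (b) to the membership lemma Lemma~\ref{lem:S_k} and handling (c) by a direct comparison in the order $\prec_{k-1}$. First I would dispose of the two degenerate cases: if $k=2$ the statement is vacuous, and if $k=3$ some of the indices below need to be read with the convention $U_0=\emptyset$; I would check these are consistent with the general argument (or treat $k=3$ as a trivial base case where $\Uc^{(1)}$ and $\Uc^{(2)}$ are two-term flags and the claims are immediate from the fact that $q\in W_1$ forces $W_1$ to be a fixed connected set). So assume $k\ge 3$ (really the content is at $k\ge 4$, but the $k=3$ case is subsumed).

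For part (a): $\Uc^{(1)}$ is the flag $U_2\subsetneq U_3\subsetneq\cdots\subsetneq V(G)$, which is exactly the situation of Lemma~\ref{lem:S_k}(a) with $k'=k-1$ and $W_i=U_{i+1}$. Since $\Uc\in\Sf_k(G,q)$ (so in particular $\Uc$ is a minimal representative, hence lies in $\Sf_k(G,q)$), Lemma~\ref{lem:S_k}(a) gives $\Uc^{(1)}\in\Sf_{k-1}(G,q)$ directly. I should also note $\Uc^{(1)}$ is genuinely a connected $(k-1)$-flag: $G[U_2]$ is connected since $\Uc$ is a flag, and all successive differences $U_{i+1}\setminus U_i$ for $i\ge 2$ are connected for the same reason.

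For part (b): here the definition of $\Uc^{(2)}$ splits into two cases according to whether $G[U_3\setminus U_1]$ is connected. In the first case $\Uc^{(2)}$ is $U_1\subsetneq U_3\subsetneq\cdots\subsetneq V(G)$; by Remark~\ref{rmk:1vs2cases} the connectivity of $G[U_3\setminus U_1]$ is exactly what is needed for this to be a connected $(k-1)$-flag, and then Lemma~\ref{lem:S_k}(b) does not quite apply directly, so instead I would observe that this flag is obtained from $\Uc$ by ``deleting $U_2$'', i.e. it is the flag $W_1\subsetneq W_2\subsetneq\cdots$ with $W_1=U_1$, $W_i=U_{i+1}$ for $i\ge 2$ — this is covered by Lemma~\ref{lem:S_k}(a) applied not to $\Uc^{(1)}$ but after first noting $U_1\subsetneq U_3$; more carefully, I want a lemma that says ``$\Sf_k$ with the first two sets replaced by a single connected set squeezed between them stays in $\Sf$''. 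The cleanest route: in the connected case, $(U_1\subsetneq U_3\subsetneq\cdots)$ is the flag whose terms from index $2$ on agree with $\Uc^{(1)}$'s terms from index $2$ on, and whose first term is $U_1\subseteq U_2 = (\Uc^{(1)})_1$; so by part (a) $\Uc^{(1)}\in\Sf_{k-1}$, and then Lemma~\ref{lem:S_k}(c) applied to $\Uc^{(1)}$ with $W_1=U_1\subseteq U_2$ yields $\Uc^{(2)}\in\Sf_{k-1}(G,q)$ — provided $G[U_1]=G[W_1]$ is connected, which it is, and provided $G[U_3\setminus U_1]$ is connected, which is the standing case hypothesis. In the second (disconnected) case $\Uc^{(2)}$ is $(U_1\cup(U_3\setminus U_2))\subsetneq U_3\subsetneq\cdots$; by Remark~\ref{rmk:1vs2cases} the set $U_1\cup(U_3\setminus U_2)$ is connected, and this is precisely the flag of Lemma~\ref{lem:S_k}(b) with $W_1=U_1\cup(U_3\setminus U_2)$ and $W_i=U_{i+1}$ for $2\le i\le k-1$ (using $U_3\setminus(U_1\cup(U_3\setminus U_2))=U_2\setminus U_1$), so Lemma~\ref{lem:S_k}(b) gives $\Uc^{(2)}\in\Sf_{k-1}(G,q)$.

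For part (c): I must compare $\Uc^{(1)}=(U_2\subsetneq U_3\subsetneq\cdots)$ and $\Uc^{(2)}$ in $\prec_{k-1}$. Re-index both as $(k-1)$-flags $X_1\subsetneq\cdots\subsetneq X_{k-1}=V(G)$ and $Y_1\subsetneq\cdots\subsetneq Y_{k-1}=V(G)$: then $X_j=U_{j+1}$ and $Y_j=U_{j+1}$ for all $j\ge 2$, while $X_1=U_2$ and $Y_1$ is either $U_1$ or $U_1\cup(U_3\setminus U_2)$. In both cases $Y_1\subsetneq X_1=U_2$ (strict since $U_1\subsetneq U_2$ and $U_1\cup(U_3\setminus U_2)\subsetneq U_3$ but also $\ne U_2$ as it misses $U_2\setminus U_1\ne\emptyset$), so $Y_1\subsetneq X_1$, i.e. $X_1\subsetneq Y_1$ in reverse-inclusion order, meaning $Y_1\prec X_1$ under $\preceq$ (recall $U\preceq V\iff U\supseteq V$, so the larger-by-inclusion set $X_1$ is $\preceq$-smaller... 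I must be careful here with the direction). The maximum index $\ell$ at which $X_\ell\ne Y_\ell$ is $\ell=1$, and at that index $X_1$ and $Y_1$ differ with $Y_1\subsetneq X_1$; by Definition~\ref{def:prec1}–\ref{def:prec2}, $\Uc^{(1)}\prec_{k-1}\Uc^{(2)}$ holds iff $X_1\prec Y_1$ in the total order $\preceq$ extending reverse inclusion, and since $X_1\supsetneq Y_1$ we have $X_1\prec Y_1$ under $\preceq$ by definition of $\preceq$ as an extension of reverse inclusion. Hence $\Uc^{(1)}\prec_{k-1}\Uc^{(2)}$.

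The main obstacle I anticipate is bookkeeping, not mathematics: getting the reindexing of the $(k-1)$-flags exactly right (especially the $k=3$ edge case and the $U_0=\emptyset$ convention), making sure the correct variant of Lemma~\ref{lem:S_k} is invoked in each of the two cases of (b), and — most error-prone — tracking the direction of the order $\preceq$ (it extends \emph{reverse} inclusion, so a set that is \emph{larger} under $\subseteq$ is \emph{smaller} under $\preceq$), so that the inequality in (c) comes out with the right sign. Once those conventions are pinned down, each individual step is a one-line application of an already-proven lemma.
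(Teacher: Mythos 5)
Parts (a) and (b) of your proposal track the paper's proof essentially line for line: (a) is Lemma~\ref{lem:S_k}(a), and for (b) you correctly split on the connectivity of $G[U_3\setminus U_1]$, using part (a) plus Lemma~\ref{lem:S_k}(c) in the connected case and Lemma~\ref{lem:S_k}(b) in the disconnected case. That is the intended route.

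Part (c) contains a genuine error in the disconnected case. You claim that in both cases the first term $Y_1$ of $\Uc^{(2)}$ satisfies $Y_1\subsetneq X_1=U_2$, but when $G[U_3\setminus U_1]$ is not connected, $Y_1=U_1\cup(U_3\setminus U_2)$, which contains all of $U_3\setminus U_2$ and hence is \emph{not} a subset of $U_2$: the two sets $U_2$ and $U_1\cup(U_3\setminus U_2)$ are incomparable under inclusion (each contains one of the disjoint nonempty sets $U_2\setminus U_1$, $U_3\setminus U_2$ that the other lacks). The parenthetical you give only shows $Y_1\neq U_2$, which does not deliver containment. So reverse inclusion alone does not decide whether $X_1\prec Y_1$ or $Y_1\prec X_1$ in the arbitrary total extension $\preceq$, and your conclusion $\Uc^{(1)}\prec_{k-1}\Uc^{(2)}$ does not follow from ``definitions'' in this case.

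The paper closes this gap differently, and the difference is substantive: it uses the standing hypothesis $\Uc\in\Sf_k(G,q)$ (minimality among representatives). Namely, if $G[U_3\setminus U_1]$ is disconnected and one assumes for contradiction that $\Uc^{(2)}\prec_{k-1}\Uc^{(1)}$, i.e.\ $U_1\cup(U_3\setminus U_2)\prec U_2$, then the $k$-flag $\Uc'\colon U_1\subsetneq U_1\cup(U_3\setminus U_2)\subsetneq U_3\subsetneq\cdots\subsetneq V(G)$ is a connected $k$-flag equivalent to $\Uc$ (no edges between $U_2\setminus U_1$ and $U_3\setminus U_2$) with $\Uc'\prec_k\Uc$, contradicting minimality. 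Your outline does not invoke minimality anywhere in part (c), which is the missing idea; without it, the inequality in (c) is not forced in the disconnected case, since the total extension $\preceq$ could in principle order the two incomparable sets either way.
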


\begin{proof}
Let
\[
\Uc:\ \, U_1 \subsetneq U_2 \subsetneq \cdots \subsetneq V(G) \ .
\]

Part (a) is exactly Lemma~\ref{lem:S_k} (a). For part (b), if $G[U_3 \backslash U_1]$ is not connected then the result follows from Lemma~\ref{lem:S_k} (b). If $G[U_3 \backslash U_1]$ is connected the result follows from part (a) and Lemma~\ref{lem:S_k} (c).

For part (c) we first note that if  $G[U_3 \backslash U_1]$ is connected $\Uc^{(1)} \prec_{k-1} \Uc^{(2)}$ follows directly from definitions. If  $G[U_3 \backslash U_1]$ is not connected assume that $\Uc^{(2)}\prec_{k-1} \Uc^{(1)}$. Then one can easily see that
\[
\Uc':\ \,  U_1 \subsetneq U_1 \cup (U_3\backslash U_2) \subsetneq U_3 \subsetneq U_4 \subsetneq \cdots \subsetneq V(G)
\]
is a connected $k$-flag equivalent to $\Uc$ with $\Uc'\prec_{k}\Uc$ which is a contradiction.
\end{proof}

\medskip

There is a nice converse to Proposition~\ref{pro:well-def(a)} which is our next result.


\begin{Proposition}
\label{prop:well-def(a)converse}
Assume that $\Uc \in \Ff_k(G, q)$ and the following three conditions hold:
\begin{itemize}
\item[(i)] $\Uc^{(1)} \in \Sf_{k-1}(G, q)$.
\item[(ii)] $\Uc^{(2)} \in \Sf_{k-1}(G, q)$.
\item[(iii)] $\Uc^{(1)} \prec_{k-1} \Uc^{(2)}$.
\end{itemize}
 Then $\Uc \in \Sf_{k}(G, q)$.
\end{Proposition}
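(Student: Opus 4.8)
The plan is to argue by contradiction. Suppose $\Uc \notin \Sf_k(G,q)$; then there is a connected $k$-flag $\Uc' : U_1' \subsetneq U_2' \subsetneq \cdots \subsetneq V(G)$ which is equivalent to $\Uc$ (that is, $G(\Uc') = G(\Uc)$) and satisfies $\Uc' \prec_k \Uc$. Since the equivalence relation is generated by the exchanges of Example~\ref{example:equiv}, each of which leaves the first set of a flag unchanged (exactly as in the proof of Lemma~\ref{lem:S_k}), we have $U_1' = U_1$. Let $\ell$ be the largest index with $U_\ell' \ne U_\ell$; then $2 \le \ell \le k-1$ and $U_\ell' \prec U_\ell$. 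We may assume $k \ge 3$, the case $k = 2$ being immediate since equivalence classes in $\Ff_2(G,q)$ are singletons, so $\Sf_2(G,q) = \Ff_2(G,q)$.

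The first, easy case is $U_2' = U_2$ (which forces $\ell \ge 3$, and in particular $k \ge 4$). Passing from a flag to its $(1)$-shortening merges the first two blocks; concretely, $G(\Uc^{(1)})$ is obtained from $G(\Uc)$ by un-orienting precisely the edges between $U_1$ and $U_2 \backslash U_1$, and likewise $G(\Uc'^{(1)})$ is obtained from $G(\Uc')$ by un-orienting the edges between $U_1' = U_1$ and $U_2' \backslash U_1 = U_2 \backslash U_1$. Since $G(\Uc) = G(\Uc')$, we conclude $G(\Uc^{(1)}) = G(\Uc'^{(1)})$, i.e.\ $\Uc^{(1)}$ and $\Uc'^{(1)}$ are equivalent. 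As $\Uc$ and $\Uc'$ first disagree at index $\ell \ge 3$ with $U_\ell' \prec U_\ell$, this gives $\Uc'^{(1)} \prec_{k-1} \Uc^{(1)}$, contradicting condition (i) (that $\Uc^{(1)}$ is the $\prec_{k-1}$-minimal representative of its class).

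The substantial case is $U_2' \ne U_2$; here I would compare against $\Uc^{(2)}$. Using Definition~\ref{def:U12} and Remark~\ref{rmk:1vs2cases}, one checks that $G(\Uc^{(2)})$ is obtained from $G(\Uc)$ by merging two consecutive blocks — after first swapping the blocks $U_2 \backslash U_1$ and $U_3 \backslash U_2$ when (and only when) $G[U_3 \backslash U_1]$ is disconnected — i.e.\ by un-orienting a prescribed set of edges lying inside $G[U_3 \backslash U_1]$. The core of the argument is to carry out the analogous reduction starting from $\Uc'$: this produces a connected $(k-1)$-flag $\Wc$ with $G(\Wc) = G(\Uc^{(2)})$ (so $\Wc$ is equivalent to $\Uc^{(2)}$) and $\Wc \prec_{k-1} \Uc^{(2)}$, contradicting condition (ii) — \emph{unless} $\Uc'$ is equivalent to the flag $U_1 \subsetneq (U_1 \cup (U_3 \backslash U_2)) \subsetneq U_3 \subsetneq \cdots \subsetneq V(G)$, which can occur only in the disconnected case of Definition~\ref{def:U12}(b). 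But that flag is equivalent to $\Uc$ and precedes $\Uc$ exactly when $U_1 \cup (U_3 \backslash U_2) \prec U_2$, and this is forbidden by condition (iii): since $\Uc^{(1)}$ and $\Uc^{(2)}$ agree in all sets of index $\ge 2$, the inequality $\Uc^{(1)} \prec_{k-1} \Uc^{(2)}$ is precisely $U_2 \prec U_1 \cup (U_3 \backslash U_2)$. Thus (iii) disposes of the single configuration that escapes (i) and (ii), and the proof is complete.

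I expect the main obstacle to be the bookkeeping in this last case: producing $\Wc$ from $\Uc'$ and verifying the two claims about it. One must track how the blocks of $\Uc'$ sit relative to $U_1, U_2, U_3$ — for instance whether $U_2' \subseteq U_3$, and how $U_2' \backslash U_1$ and $U_3 \backslash U_2'$ partition $U_3 \backslash U_1$ — and then check, using $G(\Uc) = G(\Uc')$, that the edge set one un-orients to build $G(\Wc)$ genuinely coincides with the one defining $G(\Uc^{(2)})$. This is close in spirit to the three-part argument of Lemma~\ref{lem:S_k} and I would expect it to split into further subcases according to the value of $\ell$ and the connectivity of $G[U_3 \backslash U_1]$, with conditions (i) and (ii) applied to $\Uc^{(1)}$ and $\Uc^{(2)}$ serving as the essential input (much as they did in Proposition~\ref{pro:well-def(a)}).
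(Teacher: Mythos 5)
Your Case 1 (with $U'_2 = U_2$) is a correct and rather cleaner version of a subcase of the paper's argument: passing to the $(1)$-shortening un-orients precisely the edges between $U_1$ and $U_2\backslash U_1$ on both sides, so $G(\Uc^{(1)})=G(\Uc'^{(1)})$ follows at once from $G(\Uc)=G(\Uc')$, and the ordering inequality is immediate. The paper does not split on whether $U_2'=U_2$ and hence builds a modified flag $\Wc$ from $\Uc'$ rather than taking $\Uc'^{(1)}$; your version is the special subcase $t=2$ of that construction, where $t$ is the position of $U_2\backslash U_1$ in $\Uc'$.

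The real gap is in Case~2, and it is more than bookkeeping: when $U'_2 \ne U_2$ you still need condition~(i), which your sketch never invokes there. Concretely, let $A=U_2\backslash U_1$ and $B=U_3\backslash U_2$ sit at positions $t$ and $s$ in $\Uc'$, and form the natural $(k{-}1)$-flag $\Wc$ equivalent to $\Uc^{(2)}$ in the disconnected case by merging $B$ into the first $s-1$ sets of $\Uc'$. If the last index of disagreement is $\ell = s-1$, then $\Wc$ and $\Uc^{(2)}$ agree at every index $\geq s-1$, so the comparison falls to a smaller index where the one inequality you possess, $U'_\ell \prec U_\ell$, gives no control. For instance, with five parts and $\Uc'$ ordered $A_1,A_4,A_2,A_3,A_5$ (equivalent to $\Uc$ as soon as $A_4$ is not adjacent to $A_2$ or $A_3$, and $A_2$ is not adjacent to $A_3$), the sole disagreement is $U'_3 = A_1\cup A_2\cup A_4 \prec U_3$ at $\ell=3=s-1$, yet the $\Wc$ versus $\Uc^{(2)}$ comparison reduces to $A_1\cup A_3\cup A_4$ versus $A_1\cup A_2\cup A_3$, on which the fixed total order is unconstrained — and indeed (ii) forces $\Uc^{(2)}\prec_{k-1}\Wc$ here, so no contradiction can come from (ii). The paper avoids this trap by first comparing against $\Uc^{(1)}$: it builds a flag from $\Uc'$ equivalent to $\Uc^{(1)}$ (merging $A$ into the first $t-1$ sets) and applies (i) to conclude $\ell = t-1$, hence $t>s$; only then does it invoke $\Uc^{(2)}$ in the disconnected subcase, while in the connected subcase of $G[U_3\backslash U_1]$ it finishes with a direct orientation contradiction along an $A$--$B$ edge, not a $\Uc^{(2)}$ comparison at all. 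So in Case~2 you need (i) as well as (ii), and the split on connectivity of $G[U_3\backslash U_1]$ requires two genuinely different finishes.
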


\begin{proof}
Let
\[
\Uc:\ \, U_1 \subsetneq U_2 \subsetneq \cdots \subsetneq V(G) \ .
\]
To simplify the notation we let $A=U_2\backslash U_1$ and $B=U_3\backslash U_2$. For the sake of contradiction, we assume that
\[
\Uc':\ \, U'_1 \subsetneq U'_2 \subsetneq \cdots \subsetneq V(G)
\]
is another connected flag equivalent to $\Uc$ which precedes $\Uc$ in the total ordering $\prec_k$. We will then find a connected flag equivalent to $\Uc^{(j)}$ (for $j=1$ or $2$) preceding it in the total ordering $\prec_{k-1}$.

\medskip

We note that since $G(\Uc)$ and $G(\Uc')$ coincide and $q \in U_1$ and $q \in U'_1$, we must have $U_1=U'_1$.
Let $\ell \geq 2$ be such that $U'_{\ell} \prec U_{\ell}$ and $U'_{i} = U_{i}$ for $i>\ell$.

\medskip

We first show that $\ell>2$. If $\ell=2$, then $U'_1=U_1$, $U'_2=U_1\cup (U_3\backslash U_2)$ and $U'_i=U_i$ for all $i>2$.
If there is at least one edge between $A$ and $B$, then it is oriented from $B$ to $A$ in $G(\Uc')$. But that edge must be oriented from $A$ to $B$ in $G(\Uc)$ which contradicts $G(\Uc)=G(\Uc')$.
If there exists no edge between $A$ and $B$, then
\[
\Uc^{(2)}:\ \, (U_1 \cup (U_3 \backslash U_2)) \subsetneq U_3 \subsetneq U_4 \subsetneq \cdots \subsetneq V(G) \ .
\]
Now our assumption that $\Uc^{(1)}\prec_{k-1} \Uc^{(2)}$ implies that $U_2\prec U_1\cup (U_3\backslash U_2)$. Thus $\Uc\prec_k \Uc'$ which is a contradiction. Thus we must have $\ell>2$.

\medskip

The ordered collection $(U'_{j}\backslash U'_{j-1})_{j=2}^k$ is a permutation of the ordered collection $(U_{j}\backslash U_{j-1})_{j=2}^k$.
Then $A=U'_t\backslash U'_{t-1}$ and $B=U'_{s}\backslash U'_{s-1}$ for some $t,s \geq 2$. Note that if $t \geq 3$ then $U'_{t-1}\neq U_{t-1}$; for $t \geq 3$ we have $U_2 \subseteq U_{t-1}$ but $U_2\not\subset U'_{t-1}$ (since $A\not\subset U'_{t-1}$). On the other hand, if $s \geq 4$ then $U'_{s-1}\neq U_{s-1}$; for $s \geq 4$ we have
$U_3 \subseteq U_{s-1}$ but $U_3\not\subset U'_{s-1}$ (since $B\not\subset U'_{s-1}$). In particular we deduce that $\ell \geq \max(s-1,t-1)$.

\medskip

We first show that $\Uc^{(1)} \in \Sf_{k-1}(G, q)$ implies $t>s$.
Since all edges connecting $A$ and $V(G)\backslash U_2$ are oriented from $A$ to $V(G)\backslash U_2$ in $G(\Uc)$, and $G(\Uc)=G(\Uc')$ we conclude that there is no edge between $A$ and $U'_{t-1}\backslash U'_1$ if $t>2$. Now we define
\[
\Wc:\ \, W_1 \subsetneq W_2 \subsetneq \cdots \subsetneq W_{k-1}=V(G)
\]
by letting
\[
W_i=\begin{cases}
U'_i\cup A, &\text{if $1 \leq i \leq t-1$;}\\
U'_{i+1}, &\text{if $t \leq i \leq k-1$.}
\end{cases}
\]

Note that

$\bullet$  $W_1=U'_1\cup A=U_2$. This is because $A=U_2\backslash U_1$ and $U_1=U'_1$.

$\bullet$  The subgraphs $G[W_i]$ are all connected; for $1 \leq i \leq t-1$ since $U'_1\cup A=U_2$ is connected, we know that $A$ is connected to $U'_i$ at least via $U'_1 \subset U'_i$.

Moreover

$\bullet$  For $2 \leq i \leq t-1$ we have $W_i \backslash W_{i-1}=(U'_i\cup A) \backslash (U'_{i-1}\cup A)=U'_{i}\backslash U'_{i-1}$. This follows from the fact that $A$ is disjoint from $U'_{i-1}$ and $U'_{i}$.

$\bullet$  $W_t \backslash W_{t-1}=U'_{t+1}\backslash (U'_{t-1}\cup A)=U'_{t+1}\backslash U'_{t}$.

$\bullet$  For $t+1 \leq i \leq k-1$ we have $W_i\backslash W_{i-1}=U'_{i+1} \backslash U'_{i}$.

This implies that the ordered collection $(W_i \backslash W_{i-1})_{1}^{k-1}$ is a permutation of the ordered collection $U_2\cup (U'_{j}\backslash U'_{j-1})_{j\in \{2,\cdots, k\} \backslash \{t\}}$ which is a permutation of ordered collection $U_2\cup (U_{j}\backslash U_{j-1})_{j=3}^{k}$ of $\Uc^{(1)}$. We show that $\Wc$ is equivalent to $\Uc^{(1)}$:
first note that the only difference between $G(\Uc)$ and $G(\Uc^{(1)})$ is that we orient the edges from $U_{1}$ to $U_{2} \backslash U_{1}$ in $G(\Uc)$ but we keep these edges unoriented in $G(\Uc^{(1)})$ (other oriented edges are identical).
If $t=2$ then $U'_2=U_2$ and all oriented edges from $A$ to $V(G)\backslash U_2$ in $G(\Uc^{(1)})$ are
also in $G(\Wc)$. Other edges are identical, since $G(\Uc)$ and $G(\Uc')$ coincide.
If $t>2$ then there is no edge between $A$ and $U'_c\backslash U'_1$ for $2\leq c\leq t-1$. Therefore the only difference between $G(\Uc')$ and $G(\Wc)$ is that we orient the edges from $U'_{1}=U_1$ to $U'_{t} \backslash U'_{t-1}=U_2\backslash U_1$ in $G(\Uc')$ and we keep these edges unoriented in $G(\Wc)$. Other edges are identical, since $G(\Uc)$ and $G(\Uc')$ coincide. Thus it follows that $G(\Uc^{(1)})$ and $G(\Wc)$ coincide.

\medskip

Note that the $i$th element in $\Uc^{(1)}$ is $U_{i+1}$ for all $i$. If $\ell>t-1$, then $W_{\ell-1}=U'_\ell\prec U_\ell$ and $W_i=U_{i+1}$ for $i>\ell$. Thus $\Wc\prec_{k-1} \Uc^{(1)}$ which is a contradiction by our assumption that $\Uc^{(1)}$ belongs to $\Sf_{k-1}(G,q)$. Therefore we have $\ell=t-1$. This also implies that $t>s$.

\medskip

Now we consider two cases:

$\bullet$  If $G[U_3 \backslash U_1]$ is connected we have
\[
\Uc^{(2)}:\ \, U_1 \subsetneq U_3 \subsetneq U_4 \subsetneq \cdots \subsetneq V(G) \ .
\]
In this case $\Uc^{(2)} \in \Sf_{k-1}(G,q)$ implies that $U_3\backslash U_1=(U_3\backslash U_2)\cup (U_2\backslash U_1)=A\cup B$ is connected. 
So there is at least one edge between $A$ and $B$. This edge must have opposite orientations in  $G(\Uc)$ and $G(\Uc')$, which is a contradiction because $G(\Uc)=G(\Uc')$.

\medskip

$\bullet$  If $G[U_3 \backslash U_1]$ is not connected we have
\[
\Uc^{(2)}:\ \, (U_1 \cup (U_3 \backslash U_2)) \subsetneq U_3 \subsetneq U_4 \subsetneq \cdots \subsetneq V(G) \ .
\]
In this case we need to do more work.
We define
\[
\Wc':\ \, W'_1 \subsetneq W'_2 \subsetneq \cdots \subsetneq W'_{k-1}=V(G)
\]
by letting
\[
W'_i=\begin{cases}
U'_i\cup B, &\text{if $1 \leq i \leq s-1$;}\\
U'_{i+1}, &\text{if $s \leq i \leq k-1$.}
\end{cases}
\]

Note that
\begin{itemize}
\item $W'_1=U'_1\cup B=U_1\cup (U_3\backslash U_2)$. This is because $B=U_3\backslash U_2$ and $U_1=U'_1$.

\item The subgraphs $G[W'_i]$ are all connected; for $1 \leq i \leq s-1$ since $U'_1\cup B$ is connected (by our assumption that $\Uc^{(2)}$ belongs to $\Sf_{k-1}(G, q)$), we know that $B$ is connected to $U'_i$ at least via $U'_1 \subset U'_i$.
\end{itemize}

Moreover

\begin{itemize}
\item For $2 \leq i \leq s-1$ we have $W'_i \backslash W'_{i-1}=(U'_i\cup B) \backslash (U'_{i-1}\cup B)=U'_{i}\backslash U'_{i-1}$. This follows from the fact that $B$ is disjoint from $U'_{i-1}$ and $U'_{i}$.

\item $W'_s \backslash W'_{s-1}=U'_{s+1}\backslash (U'_{s-1}\cup B)=U'_{s+1}\backslash U'_{s}$.

\item For $s+1 \leq i \leq k-1$ we have $W'_i\backslash W'_{i-1}=U'_{i+1} \backslash U'_{i}$.

\end{itemize}
This implies that the ordered collection $(W'_i \backslash W'_{i-1})_{1}^{k-1}$ is a permutation of the ordered collection
$(U_1\cup B)\cup (U'_{j}\backslash U'_{j-1})_{j\in \{2,\cdots, k\} \backslash \{s\}} $ which is a permutation of the ordered collection for $\Uc^{(2)}$.
Now we check that $\Wc'$ is equivalent to $\Uc^{(2)}$:
the only difference between $G(\Uc)$ and $G(\Uc^{(2)})$ is that we orient the edges from $U_{1}$ to $U_{3}\backslash U_{2}$ in $G(\Uc)$ but we keep these edges unoriented in $G(\Uc^{(2)})$ (other oriented edges are identical).
Similarly, since there is no edge between $B$ and $U'_c\backslash U'_2$ for $2\leq c\leq s-1$, the only difference between $G(\Uc')$ and $G(\Wc')$ is that we orient edges from $U'_{1}=U_1$ to $U'_{s} \backslash U'_{s-1}=U_3\backslash U_2$ in $G(\Uc')$ and we keep these edges unoriented in $G(\Wc')$ (other oriented edges are identical). Since $G(\Uc)$ and $G(\Uc')$ coincide it follows that $G(\Uc^{(2)})$ and $G(\Wc')$ coincide.

\medskip
Now our assumption that $U'_\ell\prec U_\ell$ and $\ell>s-1$ implies that $U'_\ell=W'_{\ell-1}\prec U_\ell$ and $W'_i=U_{i+1}$ for $i>\ell-1$. Note that the $i$th element in $\Uc^{(2)}$ is $U_{i+1}$ for $i>1$. Thus $\Wc'\prec_{k-1}\Uc^{(2)}$ which is a contradiction.
\end{proof}

\begin{Proposition}\label{prop:minimal1-1}
Let $X_1, X_2, Y_1, Y_2$ be four nonempty subsets of $V(G)$ such that:
\begin{itemize}
\item[(1)] $G[X_1]$, $G[X_2]$, $G[Y_1]$, and $G[Y_2]$ are connected.
\item[(2)] $X_1 \cap X_2 =\emptyset$ and $Y_1 \cap Y_2=\emptyset$,
\item[(3)] $X_1 \cup X_2 =Y_1 \cup Y_2$,
\item[(4)] $X_2 \cap Y_2 \ne \emptyset$.
\end{itemize}
Then $D(X_1,X_2)\leq D(Y_1,Y_2)$ implies $X_1=Y_1$ and $X_2=Y_2$.
\end{Proposition}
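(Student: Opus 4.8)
The plan is to compare the two partitions of $S := X_1 \cup X_2 = Y_1 \cup Y_2$ through their common refinement and then read off, coefficient by coefficient, what the inequality $D(X_1,X_2) \le D(Y_1,Y_2)$ forces. Concretely, set $P := X_1 \cap Y_1$, $Q := X_1 \cap Y_2$, $R := X_2 \cap Y_1$, and $T := X_2 \cap Y_2$, so that by (2) and (3) we obtain disjoint decompositions $X_1 = P \sqcup Q$, $X_2 = R \sqcup T$, $Y_1 = P \sqcup R$, and $Y_2 = Q \sqcup T$. Hypothesis (4) says $T \ne \emptyset$, and the desired conclusion $X_1 = Y_1$, $X_2 = Y_2$ is precisely the assertion that $Q = R = \emptyset$.

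First I would translate the divisor inequality into combinatorial conditions on these four cells. By \eqref{DAB}, for a vertex $v$ the coefficient of $(v)$ in $D(X_1,X_2)$ is the number of edges from $v$ to $X_2$ when $v \in X_1$ and is $0$ otherwise, and likewise for $D(Y_1,Y_2)$ with $Y_1$ and $Y_2$ in place of $X_1$ and $X_2$. Evaluating the inequality at a vertex $v \in Q$ (which lies in $X_1$ but not in $Y_1$) forces the number of edges from $v$ to $X_2 = R \cup T$ to be $0$; hence there is no edge between $Q$ and $R \cup T$. Evaluating it at a vertex $v \in P$ (which lies in both $X_1$ and $Y_1$) and cancelling the common term counting edges from $v$ to $T$ shows that the number of edges from $v$ to $R$ is at most the number of edges from $v$ to $Q$.

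Next I would use connectivity of the induced subgraphs twice. Since $G[Y_2]$ is connected and $Y_2 = Q \sqcup T$ with $T \ne \emptyset$, if $Q$ were nonempty there would have to be an edge joining $Q$ and $T$, contradicting the previous step; hence $Q = \emptyset$, so that $X_1 = P$ and $Y_2 = T$. Feeding $Q = \emptyset$ back into the inequality for vertices of $P$ now shows that there is no edge between $P = X_1$ and $R$. But $G[Y_1]$ is connected and $Y_1 = X_1 \sqcup R$ with $X_1$ nonempty; if $R$ were nonempty there would have to be an edge joining $X_1$ and $R$, again a contradiction. Therefore $R = \emptyset$ as well, and we conclude $X_1 = P = Y_1$ and $X_2 = T = Y_2$.

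The argument is essentially bookkeeping. The only point that requires a little care is keeping straight which of the four cells $P, Q, R, T$ a given vertex belongs to when comparing the two divisors, together with the elementary fact that a connected graph whose vertex set is the disjoint union of two nonempty sets must contain an edge between them. I do not expect any real obstacle here.
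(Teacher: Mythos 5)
Your proof is correct and follows the same line of reasoning as the paper's: first use the inequality at vertices of $X_1\setminus Y_1$ together with connectivity of $G[Y_2]$ and the hypothesis $X_2\cap Y_2\neq\emptyset$ to show $X_1\cap Y_2=\emptyset$ (i.e.\ $Y_2\subseteq X_2$), then use the inequality at vertices of $X_1$ together with connectivity of $G[Y_1]$ to show $X_2\setminus Y_2=\emptyset$. Naming the four cells $P,Q,R,T$ up front is a clean bit of bookkeeping, but the two arguments are the same in substance.
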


\begin{proof}
First we show that we must have $Y_2 \subseteq X_2$. Assume that $Y_2 \not \subseteq X_2$. Then for any $v \in X_1\backslash Y_1$ we have $D(X_1,X_2)(v) \leq D(Y_1,Y_2)(v)=0$ because $D(Y_1,Y_2)$ is supported on $Y_1$. Therefore there is no edge between $X_1 \cap Y_2$ (i.e. the subset  $X_1\backslash Y_1 \subseteq X_1$) and $X_2 \cap Y_2$ (a subset of $X_2$). Note that $Y_2=(X_1 \cap Y_2) \cup (X_2 \cap Y_2)$. Since $X_2 \cap Y_2 \ne \emptyset$, the assumption $X_1\backslash Y_1\ne \emptyset$ results in $G[Y_2]$ being disconnected which is a contradiction.

\medskip

So we may assume $Y_2 \subseteq X_2$. Then for any $v \in X_1 \subseteq Y_1$ the set of all edges connecting $v$ to a vertex in $X_2$ contains the set of all edges connecting $v$ to a vertex in $Y_2$. Therefore we have $D(Y_1,Y_2)(v) \leq D(X_1,X_2)(v)$. Comparing this with the inequality in the assumption we get $D(Y_1,Y_2)(v) = D(X_1,X_2)(v)$ for all $v \in X_1$.
    This means that there cannot be any edge connecting $X_1$ to $X_2 \backslash Y_2$. Since $Y_1=X_1 \cup (X_2 \backslash Y_2)$, if $X_2 \backslash Y_2 \ne \emptyset$ then $G[Y_1]$ would be disconnected. So we must have $Y_2 = X_2$ and so $Y_1=X_1$.
\end{proof}

\begin{Corollary}\label{cor:injectivity}
Let
\[
\Uc:\ \, U_1 \subsetneq U_2 \subsetneq \cdots \subsetneq V(G)
\]
\[
\Vc:\ \, V_1 \subsetneq V_2 \subsetneq \cdots \subsetneq V(G)
\]
be two elements of $\Sf_{k}(G, q)$.  If for some $2\leq i \leq k$ \[ U_i=V_i \quad \text{ and } \quad D(U_i\backslash U_{i-1},U_{i-1}) \leq D(V_i\backslash V_{i-1},V_{i-1})\] then $U_{i-1}=V_{i-1}$.
\end{Corollary}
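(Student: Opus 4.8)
The plan is to derive the statement as an immediate application of Proposition~\ref{prop:minimal1-1}. Set
\[
X_1 = U_i \backslash U_{i-1}, \qquad X_2 = U_{i-1}, \qquad Y_1 = V_i \backslash V_{i-1}, \qquad Y_2 = V_{i-1},
\]
so that the hypothesis of the corollary becomes $D(X_1,X_2) \leq D(Y_1,Y_2)$ and the desired conclusion $U_{i-1}=V_{i-1}$ is exactly the assertion $X_2=Y_2$. The one thing to be careful about is the order of the two arguments of $D(\cdot,\cdot)$: since $D(A,B)$ is supported on $A$, the ``newly added'' set $U_i\backslash U_{i-1}$ has to play the role of $X_1$ and the ``old'' set $U_{i-1}$ the role of $X_2$ (and similarly on the $\Vc$ side).

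With this choice I would then verify the four hypotheses of Proposition~\ref{prop:minimal1-1}. All four sets are nonempty: $U_{i-1}$ and $V_{i-1}$ contain $q$ because $q\in U_1\subseteq U_{i-1}$ and $q\in V_1\subseteq V_{i-1}$, while $U_i\backslash U_{i-1}$ and $V_i\backslash V_{i-1}$ are nonempty since the flags are strictly increasing. Condition~(1) is precisely the connectivity built into the definition of the connected $k$-flags $\Uc$ and $\Vc$ (both $G[U_{i-1}]$ and $G[U_i\backslash U_{i-1}]$ are connected, and likewise for $\Vc$). Condition~(2) is immediate from $U_{i-1}\subsetneq U_i$ and $V_{i-1}\subsetneq V_i$. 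Condition~(3), $X_1\cup X_2=Y_1\cup Y_2$, is just the assumption $U_i=V_i$. Condition~(4), $X_2\cap Y_2\ne\emptyset$, holds because $q\in U_{i-1}\cap V_{i-1}$.

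Once these are in place, Proposition~\ref{prop:minimal1-1} applied to $D(X_1,X_2)\leq D(Y_1,Y_2)$ gives $X_1=Y_1$ and $X_2=Y_2$, and in particular $U_{i-1}=X_2=Y_2=V_{i-1}$. I do not expect a genuine obstacle here: all of the real work sits inside Proposition~\ref{prop:minimal1-1}, and the corollary only needs the bookkeeping above. In fact, the argument uses nothing about minimality of the representatives, only that $\Uc$ and $\Vc$ are connected flags whose first terms contain $q$.
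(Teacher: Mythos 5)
Your proof is correct and matches the paper's own argument exactly: the paper also applies Proposition~\ref{prop:minimal1-1} with the same identification $X_1 = U_i\backslash U_{i-1}$, $X_2 = U_{i-1}$, $Y_1 = V_i\backslash V_{i-1}$, $Y_2 = V_{i-1}$, noting $q\in U_{i-1}\cap V_{i-1}$ for condition~(4). Your verification of the remaining hypotheses is just a slightly more explicit spelling-out of the same bookkeeping.
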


\begin{proof}

Let $X_1=U_i\backslash U_{i-1}$, $X_2=U_{i-1}$, $Y_1=V_i\backslash V_{i-1}$, and $Y_2=V_{i-1}$ in  Proposition~\ref{prop:minimal1-1}. Note that $X_2 \cap Y_2 \ne \emptyset$ because $q\in U_1 \subseteq U_{i-1}$ and similarly $q\in V_1 \subseteq V_{i-1}$.
\end{proof}

\begin{Definition}\label{def:KVW}
Write $\Wc, \Vc \in \Sf_{k}(G, q)$ as
\[
\Wc:\ \, W_1 \subsetneq W_2 \subsetneq \cdots \subsetneq W_k=V(G) \ ,
\]
\[
\Vc:\ \, V_1 \subsetneq V_2 \subsetneq \cdots \subsetneq V_k=V(G) \ .
\]

Assume that $W_i=V_i$ for $i\geq 2$. We define an effective divisor

\[
\K(\Wc,\Vc):=\max(D(W_2\backslash W_1,W_1),D(V_2\backslash V_1,V_1)) \ ,
\]
where $\max$ denotes the entry-wise maximum.
\end{Definition}

We remark that the notion $\K(\Wc,\Vc)$ is essential in the study of our ideals and modules using Gr\"obner theory (see proofs of Theorem~\ref{thm:Cori} and Theorem~\ref{thm:GB}).

\medskip

The following lemma gives an alternate formula for computing $\K(\Wc,\Vc)$ which is sometimes more convenient.
\begin{Lemma}\label{lem:KVW}
For $\Wc, \Vc \in \Sf_{k}(G, q)$ as in Definition~\ref{def:KVW}, we have the following alternate formula:
\[
\begin{aligned}
\K(\Wc,\Vc)&=\max(D(W_2\backslash(W_1\cup V_1),W_1),D(W_2\backslash(W_1\cup V_1),V_1))\\
& \quad +D(V_1\backslash W_1,W_1)+D(W_1\backslash V_1,V_1) \ .
\end{aligned}
\]
\end{Lemma}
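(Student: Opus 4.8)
The plan is to verify the asserted identity of effective divisors coefficient by coefficient, after a convenient disjoint decomposition of $W_2=V_2$. Since $W_i=V_i$ for $i\geq 2$ we have in particular $W_2=V_2$, and since $W_1,V_1\subseteq W_2$ we may write $W_2$ as the disjoint union
\[
W_2 = (W_1\cap V_1)\ \sqcup\ (W_1\backslash V_1)\ \sqcup\ (V_1\backslash W_1)\ \sqcup\ \bigl(W_2\backslash(W_1\cup V_1)\bigr).
\]
Abbreviate $Q=W_1\backslash V_1$, $S=V_1\backslash W_1$, and $P=W_2\backslash(W_1\cup V_1)$. Then $W_2\backslash W_1 = S\sqcup P$ and $V_2\backslash V_1 = W_2\backslash V_1 = Q\sqcup P$, and $P,S,Q$ are pairwise disjoint subsets of $W_2$.

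First I would record the evident additivity of $D(\,\cdot\,,B)$ in its first argument: if $A=A'\sqcup A''$ then $D(A,B)=D(A',B)+D(A'',B)$, immediately from \eqref{DAB} (the support of $D(A',B)$ lies in $A'$ and that of $D(A'',B)$ in $A''$). Applying this to the two decompositions above gives
\[
D(W_2\backslash W_1,W_1)=D(P,W_1)+D(S,W_1),\qquad D(V_2\backslash V_1,V_1)=D(P,V_1)+D(Q,V_1),
\]
so $\K(\Wc,\Vc)=\max\!\bigl(D(P,W_1)+D(S,W_1),\,D(P,V_1)+D(Q,V_1)\bigr)$, and it remains to show this equals $\max\!\bigl(D(P,W_1),D(P,V_1)\bigr)+D(S,W_1)+D(Q,V_1)$, which is exactly the claimed formula.

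The second step is the coefficient-wise comparison. The relevant fact is that $D(P,W_1)$ and $D(P,V_1)$ are supported inside $P$, $D(S,W_1)$ inside $S$, and $D(Q,V_1)$ inside $Q$, while $P,S,Q$ are pairwise disjoint and contained in $W_2$. Evaluating at a vertex $v$: if $v\in P$ both sides equal $\max\bigl(D(P,W_1)(v),D(P,V_1)(v)\bigr)$; if $v\in S$ both sides equal $D(S,W_1)(v)$ (here one uses $D(S,W_1)(v)\geq 0$ so the outer maximum on the right picks up the nonnegative term, the other maximand being $0$); if $v\in Q$ both sides equal $D(Q,V_1)(v)$; and if $v$ lies in none of $P,S,Q$ — that is, $v\in W_1\cap V_1$ or $v\notin W_2$ — then every term on both sides is $0$. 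This establishes the identity.

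There is no real obstacle here: the argument is entirely a bookkeeping exercise with the four-block partition of $W_2$. The one point deserving a moment's care is that $S$ consists precisely of the vertices of $W_2\backslash W_1$ that do not lie in $W_2\backslash V_1$ (and symmetrically for $Q$), which is exactly why on $S$ the second maximand vanishes and on $Q$ the first one does; this is what allows the common part $D(P,\,\cdot\,)$ to be pulled out of the outer maximum.
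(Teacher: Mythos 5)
Your proof is correct and follows essentially the same route as the paper's: both rely on the four-block partition $W_2=(W_1\cap V_1)\sqcup(W_1\backslash V_1)\sqcup(V_1\backslash W_1)\sqcup(W_2\backslash(W_1\cup V_1))$ and a coefficient-wise check on each block. Your preliminary additivity step makes the reduction a bit more explicit, but it is the same verification in the end.
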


\begin{proof}
Let
\[
\K=\max(D(W_2\backslash W_1,W_1),D(V_2\backslash V_1,V_1)) \ ,
\]
\[
\K'=\max(D(W_2\backslash(W_1\cup V_1),W_1),D(W_2\backslash(W_1\cup V_1),V_1))+D(V_1\backslash W_1,W_1)+D(W_1\backslash V_1,V_1)\ .
\]
Note that $W_2$ is the disjoint union of sets $W_1 \cap V_1$, $W_1 \backslash V_1$, $V_1 \backslash W_1$, and $W_2 \backslash (W_1 \cup V_1)$.

$\bullet$  If $v\in W_1 \cap V_1$ then $\K(v)=\K'(v)=0$.

$\bullet$  If $v\in W_1 \backslash V_1$ then $\K(v)=\K'(v)=D(W_1\backslash V_1,V_1)(v)$.

$\bullet$  If $v\in V_1 \backslash W_1$ then $\K(v)=\K'(v)=D(V_1\backslash W_1,W_1)(v)$.

$\bullet$  If $v\in W_2 \backslash (W_1 \cup V_1)$ then \[\K(v)=\K'(v)=\max(D(W_2\backslash(W_1\cup V_1),W_1),D(W_2\backslash(W_1\cup V_1),V_1))(v) \ . \]
\end{proof}

\begin{Lemma}\label{lem:KU1U2}
For $\Uc \in \Sf_{k}(G, q)$ of the form
\[
\Uc:\ \, U_1 \subsetneq U_2 \subsetneq \cdots \subsetneq U_k=V(G)
\]
we have
\[
\K(\Uc ^{(1)} ,\Uc ^{(2)})=D(U_2 \backslash U_1, U_1)+D(U_3 \backslash U_2, U_2) \ .
\]
\end{Lemma}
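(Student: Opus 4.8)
The plan is to unwind the definitions of $\Uc^{(1)}$, $\Uc^{(2)}$, and $\K(\cdot,\cdot)$, and then verify the claimed identity by a short vertex-by-vertex computation, splitting according to the two cases in Definition~\ref{def:U12}(b). By Proposition~\ref{pro:well-def(a)} we have $\Uc^{(1)},\Uc^{(2)}\in\Sf_{k-1}(G,q)$ and $\Uc^{(1)}\prec_{k-1}\Uc^{(2)}$; moreover, writing $\Uc^{(1)}$ as $W_1\subsetneq W_2\subsetneq\cdots$ and $\Uc^{(2)}$ as $V_1\subsetneq V_2\subsetneq\cdots$, one has $W_i=V_i=U_{i+1}$ for all $i\geq 2$, so the hypothesis $W_i=V_i$ ($i\geq 2$) of Definition~\ref{def:KVW} is satisfied and $\K(\Uc^{(1)},\Uc^{(2)})=\max\bigl(D(W_2\backslash W_1,W_1),\,D(V_2\backslash V_1,V_1)\bigr)$ is defined. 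Here $W_1=U_2$ and $W_2=U_3$, so $D(W_2\backslash W_1,W_1)=D(U_3\backslash U_2,U_2)$, while $V_2=U_3$ and $V_1$ equals $U_1$ or $U_1\cup(U_3\backslash U_2)$ according to whether $G[U_3\backslash U_1]$ is connected.

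First I would treat the case where $G[U_3\backslash U_1]$ is connected, so $V_1=U_1$ and $D(V_2\backslash V_1,V_1)=D(U_3\backslash U_1,U_1)$. The goal becomes $\max\bigl(D(U_3\backslash U_2,U_2),\,D(U_3\backslash U_1,U_1)\bigr)=D(U_2\backslash U_1,U_1)+D(U_3\backslash U_2,U_2)$, and both sides vanish outside $U_3\backslash U_1=(U_2\backslash U_1)\cup(U_3\backslash U_2)$. On a vertex $v\in U_2\backslash U_1$ the divisor $D(U_3\backslash U_2,U_2)$ vanishes and $D(U_3\backslash U_1,U_1)(v)=D(U_2\backslash U_1,U_1)(v)$, so both sides agree there; on a vertex $v\in U_3\backslash U_2$ the divisor $D(U_2\backslash U_1,U_1)$ vanishes and $D(U_3\backslash U_1,U_1)(v)\leq D(U_3\backslash U_2,U_2)(v)$ because $U_1\subseteq U_2$ (the edges from $v$ into $U_1$ are among those from $v$ into $U_2$), so again both sides agree. (Note the connectedness hypothesis is not actually needed here; it only governs which flag $\Uc^{(2)}$ is.)

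Next I would treat the case where $G[U_3\backslash U_1]$ is not connected, so $V_1=U_1\cup(U_3\backslash U_2)$ and $V_2\backslash V_1=U_3\backslash\bigl(U_1\cup(U_3\backslash U_2)\bigr)=U_2\backslash U_1$, giving $D(V_2\backslash V_1,V_1)=D\bigl(U_2\backslash U_1,\,U_1\cup(U_3\backslash U_2)\bigr)$. By Remark~\ref{rmk:1vs2cases}, in this case there are no edges between $U_2\backslash U_1$ and $U_3\backslash U_2$, so this divisor equals $D(U_2\backslash U_1,U_1)$. Since $D(U_3\backslash U_2,U_2)$ is supported on $U_3\backslash U_2$ and $D(U_2\backslash U_1,U_1)$ is supported on the disjoint set $U_2\backslash U_1$, at every vertex at least one of the two is zero, so their entry-wise maximum equals their sum, which is exactly the right-hand side. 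This finishes the argument. There is no serious obstacle; the only points that require attention are invoking Remark~\ref{rmk:1vs2cases} in the disconnected case and using $U_1\subseteq U_2$ in the connected case. One could alternatively derive everything from the formula of Lemma~\ref{lem:KVW}, but the direct computation above seems cleaner.
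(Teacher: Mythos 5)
Your proof is correct and follows essentially the same route as the paper's: unwind Definitions~\ref{def:U12} and~\ref{def:KVW}, split into the two cases according to whether $G[U_3\backslash U_1]$ is connected, and verify that the maximum equals the claimed sum. Your vertex-by-vertex check is slightly more explicit than the paper's version (which compresses the comparison into a single pair of divisor identities/inequalities), and your remark that the connectedness hypothesis plays no role in the first case is a fair observation, but the underlying argument is the same.
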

\begin{proof}
From Definition~\ref{def:U12}, Remark~\ref{rmk:1vs2cases}, and Definition~\ref{def:KVW} we need to compute $\max(\alpha, \beta)$ where

\[
\alpha=D(U_3\backslash U_2, U_2) \quad \text{ and }\quad
\beta=\begin{cases}
D(U_3\backslash U_1, U_1), &\text{if $G[U_3 \backslash U_1]$ is connected;}\\
\text{or} \\
D(U_2\backslash U_1, U_1), & \text{if $G[U_3 \backslash U_1]$  is not connected.}
\end{cases}
\]
Since $D(U_3\backslash U_1, U_1)=D(U_3\backslash U_2, U_1)+D(U_2\backslash U_1, U_1)$ and 
$D(U_2\backslash U_1, U_1)\geq D(U_3 \backslash U_2, U_2) $, it follows that in either case
\[\max(\alpha, \beta)=D(U_2\backslash U_1, U_1) +D(U_3 \backslash U_2, U_2) \ .\]
\end{proof}


We end this section by the following result which uses (and generalizes) many results of this section. This result plays a crucial role in the proof of Theorem~\ref{thm:GB}.

\begin{Proposition}\label{prop:max}
Fix $\Wc \in \Sf_{k}(G, q)$ and define
\[
\mathfrak{N}_\Wc=\{\Vc\in\Sf_{k}(G, q): \, \Vc^{(1)} = \Wc^{(1)} \text{ and }\ \Wc\prec_{k}\Vc\}\ .
\]

For any $\Vc\in \mathfrak{N}_\Wc$ there exists a $\Wc' \in \mathfrak{N}_\Wc$ such that
\begin{itemize}
\item[(i)] $\K(\Wc,\Wc') \leq \K(\Wc,\Vc)$,
\item[(ii)] $\Uc^{(1)}=\Wc$ and $\Uc^{(2)}=\Wc'$ for some $\Uc \in \Sf_{k+1}(G, q)$.
\end{itemize}
\end{Proposition}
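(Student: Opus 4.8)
The plan is to build the flag $\Uc\in\Sf_{k+1}(G,q)$ explicitly from $\Vc$ and then take $\Wc':=\Uc^{(2)}$. Since $\Vc^{(1)}=\Wc^{(1)}$, the two flags share the same tail, so write $\Wc=(W_1\subsetneq W_2\subsetneq\cdots\subsetneq W_k)$ and $\Vc=(V_1\subsetneq W_2\subsetneq\cdots\subsetneq W_k)$; from $\Wc\prec_k\Vc$ we get $W_1\prec V_1$, hence $W_1\not\subseteq V_1$ and $W_1\cap V_1\subsetneq W_1$. Any $\Uc\in\Sf_{k+1}(G,q)$ with $\Uc^{(1)}=\Wc$ is forced to have the shape $\Uc=(Z_1\subsetneq W_1\subsetneq W_2\subsetneq\cdots\subsetneq W_k)$, so everything comes down to choosing a set $Z_1\subsetneq W_1$ with $q\in Z_1$ and $G[Z_1]$, $G[W_1\setminus Z_1]$ connected, and then checking (i) and $\Uc\in\Sf_{k+1}(G,q)$.

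The first thing I would do is reduce (i) to a single divisor inequality. By Lemma~\ref{lem:KU1U2} applied to $\Uc$ (whose first three sets are $Z_1\subsetneq W_1\subsetneq W_2$) one has $\K(\Wc,\Wc')=\K(\Uc^{(1)},\Uc^{(2)})=D(W_2\setminus W_1,W_1)+D(W_1\setminus Z_1,Z_1)$, irrespective of which branch of Definition~\ref{def:U12}(b) computes $\Uc^{(2)}$. Computing $\K(\Wc,\Vc)$ by Lemma~\ref{lem:KVW} (common tail $W_2=V_2$) and splitting $D(W_2\setminus W_1,W_1)=D(V_1\setminus W_1,W_1)+D(W_2\setminus(W_1\cup V_1),W_1)$ (legitimate since $V_1\subseteq W_2$), one cancels $D(V_1\setminus W_1,W_1)$, observes that the remaining terms live on the disjoint sets $W_2\setminus(W_1\cup V_1)$ and $W_1$, and so (i) is seen to be equivalent to
\[
D(W_1\setminus Z_1,Z_1)\ \le\ D(W_1\setminus V_1,V_1)\ .
\]

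For the construction, let $Z$ be the vertex set of the connected component of $G[W_1\cap V_1]$ containing $q$; then $Z\subseteq W_1\cap V_1\subsetneq W_1$. If $G[W_1\setminus Z]$ is connected put $Z_1:=Z$; otherwise pick a connected component $D$ of $G[W_1\setminus Z]$ and put $Z_1:=W_1\setminus D$, which is connected since $G[W_1]$ is connected and every component of $G[W_1\setminus Z]$ is joined to $Z$ (because $Z$ is a full component of $G[W_1\cap V_1]$), while $G[W_1\setminus Z_1]=G[D]$ is connected. In either case $W_1\setminus Z_1$ is a union of components of $G[W_1\setminus Z]$, so every edge from $W_1\setminus Z_1$ to $Z_1$ lands in $Z$; combined with $Z\subseteq V_1$ and $Z$ being a full component of $G[W_1\cap V_1]$, this gives, for each $v\in W_1\setminus Z_1$, that its $Z_1$-valence equals its $Z$-valence, which is $0=D(W_1\setminus V_1,V_1)(v)$ if $v\in V_1$ and is $\le$ its $V_1$-valence if $v\notin V_1$ — that is, the displayed inequality, hence (i).

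It remains to verify $\Uc:=(Z_1\subsetneq W_1\subsetneq\cdots\subsetneq W_k)\in\Sf_{k+1}(G,q)$; granting this, Proposition~\ref{pro:well-def(a)} yields $\Wc':=\Uc^{(2)}\in\Sf_k(G,q)$ with $\Wc=\Uc^{(1)}\prec_k\Uc^{(2)}$, and $\Wc'^{(1)}=\Wc^{(1)}$ is automatic, so $\Wc'\in\mathfrak{N}_\Wc$ and (ii) holds. I would establish $\Uc\in\Sf_{k+1}(G,q)$ from Proposition~\ref{prop:well-def(a)converse}: $\Uc^{(1)}=\Wc$ is given to be in $\Sf_k(G,q)$, so it suffices to check $\Uc^{(2)}\in\Sf_k(G,q)$ and $\Wc\prec_k\Uc^{(2)}$. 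When $G[W_2\setminus Z_1]$ is connected this is painless: $\Uc^{(2)}=(Z_1\subsetneq W_2\subsetneq\cdots\subsetneq W_k)$ with $Z_1\subseteq W_1$, so $\Uc^{(2)}\in\Sf_k(G,q)$ by Lemma~\ref{lem:S_k}(c), and $W_1\supsetneq Z_1$ forces $W_1\prec Z_1$. The case $G[W_2\setminus Z_1]$ disconnected is where the real work sits and is the step I expect to be the main obstacle: by Remark~\ref{rmk:1vs2cases} the components are exactly $W_1\setminus Z_1$ and $W_2\setminus W_1$, every edge of $W_2\setminus W_1$ into $W_1$ falls inside $Z_1$, and $\Uc^{(2)}=(Z_1\cup(W_2\setminus W_1)\subsetneq W_2\subsetneq\cdots\subsetneq W_k)$; one has to show that this is the $\preceq_k$-minimal representative of its class and that $W_1\prec Z_1\cup(W_2\setminus W_1)$, using the same layer-swap/minimality technique as in the proofs of Lemma~\ref{lem:S_k} and Proposition~\ref{prop:well-def(a)converse} — an equivalent flag preceding $\Uc^{(2)}$, or the inequality $Z_1\cup(W_2\setminus W_1)\prec W_1$, would produce a connected flag equivalent to $\Wc$ (respectively to $\Uc$, via the flag $(Z_1, Z_1\cup(W_2\setminus W_1), W_2,\ldots,W_k)$) that precedes it, contradicting minimality; resolving this cleanly may require refining the choice of $Z_1$ or exploiting that here $\Uc^{(2)}$ is tightly linked to $\Vc$ (both having tail $W_2\subsetneq\cdots\subsetneq W_k$ and $W_1\prec V_1$ already known).
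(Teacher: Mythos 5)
Your strategy is genuinely different from the paper's.  The paper proves the existence of $\Wc'$ by an \emph{extremal} argument: it forms the set $Q$ of values $\K(\Wc,\Vc')$ for all $\Vc'\in\mathfrak{N}_\Wc$ lying below $\K(\Wc,\Vc)$, picks a $\Wc'$ realizing a $\leq$-minimal element of $Q$, and then among those the $\prec_k$-largest.  All the subsequent control comes from that double extremality: the key ``Claim'' in the paper (if the associated $\Xc$ lands in $\Sf_k$ then $W_1'\subsetneq W_1$) is a pure minimality argument, and Proposition~\ref{prop:well-def(b)} together with the Claim forces either $W_1\cap W_1'=W_1'$ (so $\Uc^{(2)}=\Wc'$ immediately) or $W_2=W_1\cup W_1'$ (the disconnected case, where $\Uc^{(2)}=\Wc'$ then drops out by a short computation).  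You instead try to \emph{construct} $\Wc'$ directly from the given $\Vc$, by setting $Z_1$ equal to the component of $G[W_1\cap V_1]$ at $q$ (possibly enlarged to $W_1\setminus D$ to keep $G[W_1\setminus Z_1]$ connected), and declaring $\Wc':=\Uc^{(2)}$.  The algebraic part of your argument is in good shape: the reduction of (i) to $D(W_1\setminus Z_1,Z_1)\leq D(W_1\setminus V_1,V_1)$ (really a \emph{sufficient} condition, not an equivalence, since you drop a nonnegative term supported on $W_2\setminus(W_1\cup V_1)$, but that is harmless), and the verification of that inequality for your $Z_1$, are both correct.

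The gap is the one you flag yourself: you do not establish $\Uc\in\Sf_{k+1}(G,q)$ in the case $G[W_2\setminus Z_1]$ disconnected, and this is exactly where the construction is in danger.  By Remark~\ref{rmk:1vs2cases}, disconnectedness here forces $E(W_1\setminus Z_1,W_2\setminus W_1)=\emptyset$ and gives $\Uc^{(2)}=(Z_1\cup(W_2\setminus W_1),W_2,\ldots)$.  In your ``case~1'' ($Z_1=Z$, and one checks $G[W_1\cap V_1]$ must then be connected so that $Z=W_1\cap V_1$, and $W_2=W_1\cup V_1$ using $G[W_2\setminus V_1]$ connected), this first set collapses to $V_1$ and $\Uc^{(2)}=\Vc$, so you are safe.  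But in your ``case~2'' ($Z_1=W_1\setminus D$ with $D$ a chosen component of $G[W_1\setminus Z]$, disjoint from $V_1$), the first set is $W_2\setminus D$, which strictly contains $V_1$ and need not equal $V_1$, and there is no known flag of $\Sf_k(G,q)$ with larger first set and the same tail into which you could slot it via Lemma~\ref{lem:S_k}(c) — that lemma only shrinks the first layer, and the only generalization that enlarges it (Lemma~\ref{lem:S_k}(b)) already presupposes $\Uc\in\Sf_{k+1}$, which is what you are trying to prove.  So the minimality of $\Uc^{(2)}$ is genuinely open here, and Proposition~\ref{prop:well-def(a)converse} cannot yet be invoked.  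This is not a cosmetic omission: it is precisely the point where the paper's extremal choice of $\Wc'$ (minimal $\K$, then latest $\prec_k$) earns its keep, because it hands you $W_1'\subsetneq W_1$ directly and forecloses the problematic subcase.  To complete your approach you would need either to show the case cannot occur for your specific $Z_1$, or to fall back on an extremal choice after all — which would essentially reproduce the paper's argument.
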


\begin{proof}
Fix $\Vc\in \mathfrak{N}_\Wc$. Consider the following subset of $\Div(G)$ containing $\K(\Wc,\Vc)$:
\[
Q= \{ \K(\Wc,\Vc'): \, \Vc' \in \mathfrak{N}_\Wc \quad \text{and} \quad \K(\Wc,\Vc') \leq \K(\Wc,\Vc)\}\ .
\]

 This is a nonempty finite set of effective divisors, so it has some minimal elements with respect to the partial ordering $\leq$ on $\Div(G)$. Choose the largest (with respect to the total ordering $\prec_k$) element $\Wc' \in \mathfrak{{N_\Wc}}$ such that $\K(\Wc,\Wc')$ is a minimal element of $Q$. Write
\[
\Wc:\ \,  W_1 \subsetneq W_2 \subsetneq \cdots \subsetneq W_k=V(G)
\]
\[
\Wc':\ \,  W'_1 \subsetneq W_2 \subsetneq \cdots \subsetneq W_k=V(G) \ ,
\]
and to simplify the notation let $A=W_2\backslash W_{1}$, $B=W_{2}\backslash W'_{1}$, and $C=W_1\cap W'_1$. Let $X$ be (the vertex set of) the connected component of $G[C]$ containing $q$, and define the (auxiliary) $k$-flag (i.e. increasing sequence of subsets with no connectivity assumption)
\[
\Xc:\ \, X \subsetneq W_{2}\subsetneq W_3\subsetneq \cdots \subsetneq W_{k}=V(G)\ .
\]

\medskip

{\bf Claim.} If $\Xc \in \Sf_{k}(G, q)$ then $W_1' \subsetneq W_1$.

Assume that $\Xc \in \Sf_{k}(G, q)$. Since $\Wc \prec_k \Wc'$ we have $W_1 \ne W_1'$ and therefore $X \subsetneq W_1$. It follows  that $\Wc \prec_k \Xc$ and $\Xc \in \mathfrak{{N_\Wc}}$. We also have $X \subseteq W_1'$ and $\Wc' \preceq_k \Xc$.

We will show that
\[
\K(\Wc,\Xc) \leq \K(\Wc,\Wc')\ .
\]

Once this is shown, the claim is proved; if $\K(\Wc,\Xc) < \K(\Wc,\Wc')$ then $\K(\Wc,\Wc')$ is not minimal which is a contradiction. If $\K(\Wc,\Xc) = \K(\Wc,\Wc')$ then $\Xc$ also realizes a minimal divisor and $\Wc' \preceq_k \Xc$. This contradicts the definition of $\Wc'$ unless $\Wc' = \Xc$, which means $X=W_1'$ and hence $W_1' \subsetneq W_1$.

By the formula in Lemma~\ref{lem:KVW} we obtain
\[
\begin{aligned}
\K(\Wc,\Xc)&=
\max(D(W_2\backslash(W_1\cup X),W_1),D(W_2\backslash(W_1\cup X),X))\\
& \quad +D(X\backslash W_1,W_1)+D(W_1\backslash X,X) \\
&= \max(D(W_2\backslash W_1,W_1),D(W_2\backslash W_1,X))+D(W_1\backslash X,X)\\
&= D(W_2\backslash W_1,W_1)+D(W_1\backslash X,X)\\
&= D(W_2\backslash (W_1\cup W'_1),W_1)+D(W'_1\backslash W_1,W_1)+D(W_1\backslash X,X)\ .
\end{aligned}
\]
Compare this with
\[
\begin{aligned}
\K(\Wc,\Wc')&=\max(D(W_2\backslash(W_1\cup W'_1),W_1),D(W_2\backslash(W_1\cup W'_1),W'_1))\\
& \quad +D(W'_1\backslash W_1,W_1)+D(W_1\backslash W'_1,W'_1) \ .
\end{aligned}
\]
Since there is no edge between $C \backslash X$ and $X$ we have
\[D(W_1\backslash X,X)= D(W_1\backslash C,X)=D(W_1\backslash W_1',X) \leq D(W_1\backslash W'_1,W'_1) \ .\]
 We get $\K(\Wc,\Xc)\leq \K(\Wc,\Wc')$, and the claim is proved.

\medskip

We now define
\[
\Uc:\ \, C \subsetneq W_1 \subsetneq W_2 \subsetneq \cdots \subsetneq W_k=V(G) \ .
\]
By definition $\Uc^{(1)}=\Wc$. We will show that $\Uc \in \Sf_{k+1}(G, q)$ and $\Uc^{(2)}=\Wc'$. Since $\Wc' \in \mathfrak{{N_\Wc}}$ implies $\Wc\prec_{k}\Wc'$, by Proposition~\ref{prop:well-def(a)converse} it suffices to prove $\Uc \in \Ff_{k+1}(G,q)$ and $\Uc^{(2)}=\Wc'$. Recall from Proposition~\ref{prop:well-def(b)} that if $G[C]$ is not connected
 then we have $\Xc \in \Sf_{k}(G, q)$. By {\bf Claim} above, we then must have $C=W_1 \cap W_1'=W_1'$ which is connected. Therefore, we assume that $G[C]$ is connected.
Therefore to show $\Uc \in \Ff_{k+1}(G,q)$ we only need to check that $G[W_1 \backslash C]$ is connected;
all other connectivities are guaranteed by the assumption that $\Wc \in \Sf_{k}(G,q)$.

We need to consider two cases:

\medskip

$\bullet$ If $G[A \cup B]$ is not connected: First we note that since $G[A]$ and $G[B]$ are both connected, if $A \cap B \ne \emptyset$ then $A$ would be connected to $B$ via $A \cap B$, contradicting the fact that $G[A \cup B]$ is not connected. So we must have $A \cap B = \emptyset$ or equivalently $W_2 = W_1 \cup W_1'$. Now
\[W_1 \backslash C=W_1 \backslash W_1'=W_2 \backslash W_1' \]
which is connected since $\Wc' \in \Sf_{k}(G, q)$.

Now $\Wc'=\Uc^{(2)}$ by noticing that
\[W'_1=(W_1\cap W'_1)\cup (W_2\backslash W_1)=U_1\cup (U_3\backslash U_2) \ . \]

\medskip

$\bullet$ If $G[A \cup B]$ is connected:
Proposition~\ref{prop:well-def(b)} implies that $\Xc \in \Sf_{k}(G, q)$. Now by the claim above, we then must have $C=W_1 \cap W_1'=W_1'$ and so
\[
\Uc:\ \, W_1' \subsetneq W_1 \subsetneq W_2 \subsetneq \cdots \subsetneq W_k=V(G)\ .
\]
Thus $\Uc^{(2)}=\Wc'$ by definition.

\medskip

If $G[W_1\backslash W'_1]$ is not connected, let $Y \ne \emptyset$ be the vertex set of one of the connected components of $G[W_1\backslash W'_1]$, and consider the flag
\[
\Yc:\ \, W'_1\cup Y\subsetneq W_{2}\subsetneq W_3\subsetneq \cdots \subsetneq W_{k}=V(G) \ .
\]

Then $\Yc \in \Ff_{k}(G,q)$ because
\begin{itemize}
\item $G[W'_1\cup Y]$ is connected: $W_1$ is connected and is the disjoint union $W'_1 \cup Y \cup W_1\backslash (W'_1\cup Y)$. Since there are no edges between $Y$ and $W_1\backslash (W'_1\cup Y)$, there must be some edges connecting $Y$ to $W'_1$.
\item $G[W_2\backslash (W'_1\cup Y)]$ is connected: $W_2\backslash W'_1$  is connected and is the disjoint union $(W_2\backslash W_1)\cup Y\cup(W_1\backslash (W'_1\cup Y))$. Since there are no edges between $Y$ and $W_1\backslash (W'_1\cup Y)$, there must be some edges connecting $W_2\backslash W_1$ to $W_1\backslash (W'_1\cup Y)$. So $(W_2\backslash W_1) \cup (W_1\backslash (W'_1\cup Y))=W_2\backslash (W'_1\cup Y)$ is connected.
\end{itemize}

Since $W'_1\cup Y \subseteq W_1$ by comparing $\Yc$ and $\Wc$ and using Lemma~\ref{lem:S_k}(c) we get $\Yc \in \Sf_{k}(G, q)$. We also note that $\Wc\prec_{k}\Yc$. Consequently $\Yc \in \mathfrak{{N_\Wc}}$.

However $\K(\Wc,\Yc) < \K(\Wc,\Wc')$ which contradicts the definition of $\Wc'$. To see this compare
\[
\begin{aligned}
\K(\Wc,\Xc)&
= D(W_2\backslash W_1,W_1)+D(W_1\backslash (W'_1\cup Y),W'_1\cup Y)\\
&= D(W_2\backslash (W_1\cup W'_1),W_1)\\
&\quad +D(W'_1\backslash W_1,W_1)+D(W_1\backslash (W'_1\cup Y),W'_1 \cup Y)
\end{aligned}
\]
with
\[
\begin{aligned}
\K(\Wc,\Wc')&=\max(D(W_2\backslash(W_1\cup W'_1),W_1),D(W_2\backslash(W_1\cup W'_1),W'_1))\\
& \quad +D(W'_1\backslash W_1,W_1)+D(W_1\backslash W'_1,W'_1) \ .
\end{aligned}
\]
Here we have
\[
\begin{aligned}
D(W_1\backslash (W'_1\cup Y),W'_1 \cup Y) &= D(W_1\backslash (W'_1\cup Y), W'_1)\\
&< D(W_1\backslash (W'_1\cup Y), W'_1) + D(Y, W'_1)\\
& = D(W_1\backslash W'_1,W'_1) \ .
\end{aligned}
\]

The first equality is because there are no edges between $W_1\backslash (W'_1\cup Y)$ and $Y$. The strict inequality is because we have shown above that there are edges connecting $Y$ to $W'_1$.

Therefore $G[W_1\backslash W'_1]$ is connected, and $\Uc \in \Ff_{k+1}(G,q)$ which is what we wanted.
\end{proof}




\section{Syzygies and free resolutions for $I_G$ and $\ini(I_G)$}
\label{sec:Syzygy}


Let $K$ be a field and let $R=K[\xb]$ be the polynomial ring in $n$ variables $\{x_v: v \in V(G)\}$. Recall from \S\ref{sec:grade} that $K[\xb]$ has a natural $\As$-grading, where $\As$ can be replaced by $\ZZ$, $\Div(G)$, or $\Pic(G)$. Recall that for $\As=\ZZ$ and $\As=\Pic(G)$ the ideal $I_G$ is graded.

Let the monomial ordering $<$ on $R$ be as in Definition~\ref{Def:MonomialOrder}. Recall that this ordering depends on the choice of the fixed vertex $q$. The following theorem is essentially in \cite[Theorem~14]{CoriRossinSalvy02}. Here we state and prove the theorem in a language that suggests a generalization.

\begin{Theorem}\label{thm:Cori}
Fix a pointed graph $(G,q)$ and let $\As=\ZZ$ or $\As=\Pic(G)$. A minimal $\As$-homogeneous Gr\"obner bases of $(I_G, <)$ is
\[
\Gb(G,q)=\{\xb^{D(U_2\backslash U_1, U_1)}-\xb^{D(U_1, U_2\backslash U_1)} : \, U_1 \subsetneq U_2=V(G) \text{ is in } \Sf_2(G, q) \} \ .
\]

Moreover
$\LM(\xb^{D(U_2\backslash U_1, U_1)}-\xb^{D(U_1, U_2\backslash U_1)})=\xb^{D(U_2\backslash U_1, U_1)} $.

\end{Theorem}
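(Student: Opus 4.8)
The plan is to prove three things: (1) every binomial in $\Gb(G,q)$ belongs to $I_G$; (2) the set of leading monomials $\{\xb^{D(U_2 \backslash U_1, U_1)} : U_1 \subsetneq U_2 = V(G) \text{ in } \Sf_2(G,q)\}$ generates $\ini(I_G)$, so that $\Gb(G,q)$ is a Gr\"obner basis; and (3) this Gr\"obner basis is minimal, together with the identification of the leading monomial. For (1), given $U_1 \subsetneq V(G)$, the divisor $D = D(U_2 \backslash U_1, U_1)$ counts, at each $v \in U_2 \backslash U_1$, the edges from $v$ into $U_1$, while $D' = D(U_1, U_2 \backslash U_1)$ counts at each $w \in U_1$ the edges from $w$ into $U_2 \backslash U_1$. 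Firing the set $U_1$ (i.e. applying the Laplacian to $\chi_{U_1}$, or rather to $-\chi_{U_1}$ with the right sign convention) sends $D'$ to $D$: every edge crossing the cut between $U_1$ and $U_2 \backslash U_1 = V(G) \backslash U_1$ loses a chip on the $U_1$-side and gains one on the other side, and there are no other vertices. Hence $D \sim D'$ and both are effective, so $\xb^D - \xb^{D'} \in I_G$ by Definition~\ref{Def:BinomialIdeal}.

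For the leading-term claim, I would compare $D = D(U_2 \backslash U_1, U_1)$ and $D' = D(U_1, U_2 \backslash U_1)$ under the degree reverse lexicographic order of Definition~\ref{Def:MonomialOrder}. Note $\deg(D) = \deg(D') = d(U_1, V(G) \backslash U_1)$, the size of the cut, so the two monomials have the same total degree and we must use the reverse-lex tie-breaking. The support of $D$ lies in $U_2 \backslash U_1 = V(G) \backslash U_1$, while the support of $D'$ lies in $U_1$. Since $q \in U_1$ and the variable order is compatible with distance from $q$ (so the smallest variables, in particular $x_q$, lie in $U_1$ "near" $q$), one shows that $\xb^{D'}$ is divisible by a higher power of some small variable — more precisely, degrevlex picks the monomial in which the \emph{smallest} variable appearing has the \emph{smaller} exponent as the leading one, so the monomial supported away from the small vertices near $q$ wins. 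I would make this precise: writing the two monomials, their "last differing variable" in the reverse order is a variable in $U_1$ (which divides $\xb^{D'}$ but not $\xb^{D}$, since $\supp(D) \cap U_1 = \emptyset$), and degrevlex declares the term \emph{not} divisible by that small variable to be larger, giving $\LM(\xb^D - \xb^{D'}) = \xb^D$. (This is exactly the normal-form/reducedness phenomenon underlying the $q$-reduced divisor theory, and is essentially \cite[Theorem~14]{CoriRossinSalvy02}.)

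For (2), that $\Gb(G,q)$ is a Gr\"obner basis, I would invoke the standard criterion: it suffices to show that every binomial $\xb^{E_1} - \xb^{E_2}$ with $E_1 \sim E_2$ effective reduces to $0$ modulo $\Gb(G,q)$, equivalently that $\ini(I_G)$ is generated by the stated monomials. The key point is that if $\xb^{E_1}$ is not reducible by any $\xb^{D(U_2 \backslash U_1, U_1)}$ then $E_1$ is the $q$-reduced representative of its class (firing no nonempty $U_1 \ni q$ keeps effectivity violated on the far side), and $q$-reduced representatives are unique in each linear equivalence class; hence two irreducible monomials in the same class are equal, which is exactly Buchberger's criterion in binomial form. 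Finally, for minimality (3): I must check no leading monomial $\xb^{D(U_2 \backslash U_1, U_1)}$ divides another $\xb^{D(U_2' \backslash U_1', U_1')}$ for distinct flags in $\Sf_2(G,q)$. Here I would use Corollary~\ref{cor:injectivity} (with $k=2$, $i=2$): if $D(V(G) \backslash U_1, U_1) \le D(V(G) \backslash U_1', U_1')$ and $U_2 = U_2' = V(G)$, then $U_1 = U_1'$. Divisibility of monomials is exactly the inequality $\le$ on the corresponding divisors, so distinct generators have incomparable leading monomials, giving minimality of the Gr\"obner basis (and, since each relation is a binomial difference of two monomials of the same degree with distinct supports, also minimality as a generating set). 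The main obstacle is the careful bookkeeping in step (2) — relating irreducibility under $\Gb(G,q)$ to $q$-reducedness and invoking uniqueness of the $q$-reduced divisor — but since this is \cite[Theorem~14]{CoriRossinSalvy02} restated, I expect the argument to go through cleanly; the leading-term computation in step (1) is the other place requiring genuine care with the degrevlex convention.
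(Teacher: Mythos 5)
Your proposal is correct in substance but takes a genuinely different route for the Gr\"obner basis step. The paper first shows that the \emph{larger} set $\Gb'(G,q)$ indexed by \emph{all} $2$-flags $U_1 \subsetneq V(G)$ with $q\in U_1$ (no connectivity assumption) is a Gr\"obner basis, by running Buchberger's criterion explicitly: the $s$-polynomial of the binomials for $U_1$ and $V_1$ is computed in chip-firing terms and reduced to zero using the binomials for $U_1\cup \bar{V_1}$ and $\bar{U_1}\cup V_1$. It then prunes $\Gb'$ to $\Gb(G,q)$ by showing each non-connected $2$-flag's leading monomial is divisible by a connected one's (taking components of $U_1$ or of $\bar{U_1}$), and finishes minimality via Corollary~\ref{cor:injectivity}, exactly as you do. Your alternative for step (2) --- irreducibility equals $q$-reducedness, uniqueness of the $q$-reduced representative in each class, hence well-defined normal forms and equality of Hilbert functions --- is cleaner and more conceptual, and is the idea underlying \cite{CoriRossinSalvy02}; it avoids the explicit chip-firing reduction.

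However, there is a real gap in the ordering of your argument that you flag but do not close. The equivalence ``$\xb^{E_1}$ irreducible $\iff E_1$ is $q$-reduced'' holds when ``irreducible'' is taken with respect to \emph{all} monomials $\xb^{D(\bar{U_1},U_1)}$, $q\in U_1\subsetneq V(G)$, because $q$-reducedness quantifies over all $A\subseteq V(G)\setminus\{q\}$. You invoke it for irreducibility with respect to $\Gb(G,q)$ only, i.e.\ for connected $U_1$ with connected complement. The bridge --- if $D(\bar{U_1},U_1)\le E_1$ for some $U_1$, then there is a connected $2$-flag $V_1\subsetneq V(G)$ in $\Sf_2(G,q)$ with $D(\bar{V_1},V_1)\le E_1$, obtained by replacing $U_1$ by the component of $q$, or $\bar{U_1}$ by one of its components --- is precisely the first bullet of the paper's pruning argument. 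You need to run that bridge \emph{before} you can say that $\Gb(G,q)$-irreducibility implies $q$-reducedness; the clean fix is to structure your proof as the paper does, proving the Gr\"obner property for the larger set $\Gb'(G,q)$ first and then pruning. A smaller point: your leading-term argument asserts that the smallest discriminating variable lies in $U_1$; to justify this you should observe that for any $v\in\supp D(\bar{U_1},U_1)$ a shortest $q$--$v$ path must cross the cut, producing $w\in\supp D(U_1,\bar{U_1})$ with $\dist(w,q)<\dist(v,q)$, so the smallest variable in the union of supports lies in $\supp D(U_1,\bar{U_1})\subseteq U_1$ and degrevlex then selects $\xb^{D(\bar{U_1},U_1)}$. (The paper is equally terse here, so this is a shared omission rather than a flaw unique to you.) Finally, your remark that the given set is also a minimal generating set of $I_G$ is not part of this theorem and does not follow from the observation you give about supports; in the paper it is a consequence of the full resolution analysis.
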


\begin{proof}
To simplify the notation for a subset $A \subseteq V(G)$ we use $\bar{A}=V(G) \backslash A$. Since $q \in U_1$ it follows from the definition of $<$ that $\xb^{D(U_1, \bar{U_1})} < \xb^{D(\bar{U_1}, U_1)}$.

We first prove
\[
\Gb'(G,q)=\{\xb^{D(U_2\backslash U_1, U_1)}-\xb^{D(U_1, U_2\backslash U_1)} : \, U_1 \subsetneq U_2=V(G) 
, q \in U_1 \}
\]
forms a Gr\"obner bases of $I_G$. We will call a sequence of subsets $U_1 \subsetneq U_2=V(G)$ with $q \in U_1$ a $2$-flag of $(G,q)$. Note that for a $2$-flag there is no connectivity assumption on $G[U_1]$ or on $G[U_2 \backslash U_1]$.

As usual, we use Buchberger's criterion. Let
$f=\xb^{D(\bar{U}, U)}-\xb^{D(U, \bar{U})}$ and $g=\xb^{D(\bar{V}, V)}-\xb^{D(V, \bar{V})}$ be two elements of $\Gb'(G,q)$. Define the effective divisor $D' \in \Div(G)$ by
\[D' = \max(D(\bar{U}, U), D(\bar{V}, V)) = \K(\Uc, \Vc) \ .\]

In the language of chip-firing games, $D'$ is the minimal divisor that allows one to ``fire'' either the set $\bar{U}$ or the set $\bar{V}$ and still have an effective divisor as outcome, that is,
\[
D'-\Delta(\chi_{\bar{U}}) \geq 0 \text{ and } D'-\Delta(\chi_{\bar{V}}) \geq 0 \ .
\]

Buchberger's $s$-polynomial is
\[
\spoly(f,g)=\xb^{D'-D(\bar{U}, U)}f-\xb^{D'-D(\bar{V}, V)}g=\xb^{D_1}-\xb^{D_2} \ ,
\]
where $D_1=D'-D(\bar{V}, V)+D(V, \bar{V})=D'-\Delta(\chi_{\bar{V}})$ is the effective divisor obtained from $D'$ by firing the set $\bar{V}$. Similarly $D_2=D'-D(\bar{U}, U)+D(U, \bar{U})=D'-\Delta(\chi_{\bar{U}})$ is the effective divisor obtained from $D'$ by firing the set $\bar{U}$. It follows from this interpretation that
\begin{equation} \label{eq:chip}
D_1-\Delta(\chi_{\bar{U} \backslash \bar{V}})=D_2-\Delta(\chi_{\bar{V} \backslash \bar{U}}) =D' - \Delta(\chi_{\bar{U} \cup \bar{V}})\geq 0 \ .
\end{equation}
The reason is the net effect of firing first the set $\bar{V}$ and then the set $\bar{U} \backslash \bar{V}$ is the same as firing the set $\bar{U} \cup \bar{V}$; chips going along edges connecting $\bar{V}$ and $\bar{U} \backslash \bar{V}$ cancel each other.

Without loss of generality we assume that $\LM(\spoly(f,g))=\xb^{D_1}$. It follows from \eqref{eq:chip} that we can reduce it by $h_1=\xb^{D(\bar{U}\backslash \bar{V}, U \cup \bar{V})}-\xb^{D(U \cup \bar{V}, \bar{U}\backslash\bar{V})} \in \Gb'(G,q)$ associated to the $2$-flag $(U\cup \bar{V}) \subsetneq V(G)$, and get:

\[
\begin{aligned}
\spoly(f,g) - \xb^{D_1-D(\bar{U}\backslash \bar{V}, U \cup \bar{V})}h_1&=\xb^{D_1-\Delta(\chi_{\bar{U} \backslash \bar{V}})}-\xb^{D_2}\\
&=\xb^{D' - \Delta(\chi_{\bar{U} \cup \bar{V}})}-\xb^{D'-\Delta(\chi_{\bar{U}})} \ .
\end{aligned}
\]

The leading monomial is now $\xb^{D'-\Delta(\chi_{\bar{U}})}$. Again, it follows from \eqref{eq:chip} that we can reduce this by $h_2=\xb^{D(\bar{V}\backslash \bar{U}, \bar{U} \cup V)}-\xb^{D(\bar{U} \cup V, \bar{V}\backslash\bar{U})} \in \Gb'(G,q)$ associated to the $2$-flag $(\bar{U}\cup V) \subsetneq V(G)$, and get:
\[
\begin{aligned}
\spoly(f,g) - \xb^{D_1-D(\bar{U}\backslash \bar{V}, U \cup \bar{V})}h_1 -\xb^{D_2-D(\bar{V}\backslash \bar{U}, \bar{U} \cup V)}h_2
&= \xb^{D' - \Delta(\chi_{\bar{U} \cup \bar{V}})}-\xb^{D' - \Delta(\chi_{\bar{U} \cup \bar{V}})} \\
&=0
\end{aligned}
\]
completing the proof that $\Gb'(G,q)$ is a Gr\"obner basis.

\medskip

Finally we show that if we only consider the flags in $\Sf_2(G,q)$ then we get a minimal Gr\"obner bases. We show this by successively removing the binomials which are not coming from connected $2$-flags. There are two steps:

$\bullet$  If $U_1 \subsetneq V(G)$ is a $2$-flag which is not in $\Sf_2(G,q)$ then there exists another  $2$-flag $V_1 \subsetneq V(G)$ such that $\xb^{D(\bar{V_1}, V_1)}  \mid \xb^{D(\bar{U_1},U_1)}$.

\medskip

\begin{itemize}
\item If $U_1$ is not connected let $V_1$ be the connected component of $U_1$ containing $q$; then $\bar{V_1}$ is the union of $\bar{U_1}$ and $U_1 \backslash V_1$ (i.e. other connected components of $U_1$). There is no edge between $V_1$ and other connected components of $U_1$ so for $v \in U_1 \backslash V_1$ we have $0=D(\bar{V_1}, V_1)(v) \leq D(\bar{U_1}, U_1)(v)$. Since $\bar{U_1} \subseteq \bar{V_1}$,  for $v \in \bar{U_1}$ we have $D(\bar{V_1}, V_1)(v) \leq D(\bar{U_1}, U_1)(v)$.
\item If $\bar{U_1}$ is not connected let $V_1$ be the complement of any connected component of $\bar{U_1}$. In this case $D(\bar{V_1}, V_1)(v) = D(\bar{U_1}, U_1)(v)$ for all $v \in \bar{V_1}$.
\end{itemize}

\medskip

$\bullet$  If $U_1 \subsetneq V(G)$ is in $\Sf_2(G,q)$ then its binomial cannot be removed. Otherwise,
there exists a different $2$-flag $V_1 \subsetneq V(G)$ in $\Sf_2(G,q)$ such that $\xb^{D(\bar{V_1}, V_1)}  \mid \xb^{D(\bar{U_1},U_1)}$ which is a contradiction by Corollary~\ref{cor:injectivity}.

\medskip

Homogeneity with respect to the $\ZZ$ and $\Pic(G)$ gradings is obvious.
\end{proof}

\medskip

\begin{Remark}
It is easy to check with examples (e.g. a path) that $\Gb(G,q)$ is generally not the {\em reduced} Gr\"obner bases for $(I_G, <)$.
\end{Remark}

Theorem~\ref{thm:Cori} can be rephrased as having a bijection between $\Sf_2(G,q)$ and $\Gb(G,q)$. The following theorem gives a generalization of this fact.

\medskip


\begin{Theorem}\label{thm:GB}
Fix a pointed graph $(G,q)$ and let $\As=\ZZ$ or $\As=\Pic(G)$. For each $k \geq 0$ there exists a natural injection
\[\psi_k:\Sf_{k+2}(G, q)\hookrightarrow \syz_{k}(\Gb(G,q))\]
such that
\begin{itemize}
\item[(i)] For some module ordering $<_k$, the set $\Gb_k(G,q):=\Image(\psi_k)$ forms a minimal $\As$-homogeneous Gr\"obner bases of $(\syz_{k}(\Gb(G,q)), <_k)$,
\item[(ii)] For $\Uc \in \Sf_{k+2}(G,q)$ of the form $U_1 \subsetneq U_2 \subsetneq \cdots \subsetneq V(G)$
we have
\begin{equation}\label{eq:LMpsi}
\LM(\psi_k(\Uc))=\xb^{D(U_2\backslash U_1, U_1)}[\psi_{k-1}(\Uc^{(1)})] \ .
\end{equation}
\end{itemize}
\end{Theorem}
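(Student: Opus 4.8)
The plan is an induction on $k$. For $k=0$, Theorem~\ref{thm:Cori} (and the bijection $\Sf_2(G,q)\leftrightarrow\Gb(G,q)$ it provides) gives $\psi_0$; property (i) is that theorem with $<_0$ equal to $<$ and $\syz_0(\Gb(G,q))=I_G$, and \eqref{eq:LMpsi} holds under the convention that $\psi_{-1}$ sends the unique $1$-flag $\Uc^{(1)}=(V(G))$ to the free generator of $F_{-1}=R$. For the inductive step I would assume $\psi_{k-1}$ is injective and satisfies (i) and (ii), set $\Gb_{k-1}:=\Image(\psi_{k-1})$, and equip it with the total ordering $\prec_{k-1}$ transported from $(\Sf_{k+1}(G,q),\prec_{k+1})$ along the bijection $\psi_{k-1}$; this fixes the module ordering $<_k$ on $F_{k-1}$ via \eqref{orderpull}, and $\Gb_k(G,q)$ will be produced as $\Sc_{\min}(\Gb_{k-1})$.

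By the induction hypothesis (ii), $\LM(\psi_{k-1}(\Wc))$ and $\LM(\psi_{k-1}(\Vc))$ involve the same basis element of $F_{k-2}$ exactly when $\Wc^{(1)}=\Vc^{(1)}$, and in that case the exponent $\gamma$ of \eqref{eq:standard} equals $\K(\Wc,\Vc)$ by Definition~\ref{def:KVW}. Hence Theorem~\ref{thm:Schreyer} describes the Schreyer generators of $\syz_k(\Gb(G,q))$ as the $s(\psi_{k-1}(\Wc),\psi_{k-1}(\Vc))$ with $\Wc\prec_{k+1}\Vc$ in $\Sf_{k+1}(G,q)$ and $\Wc^{(1)}=\Vc^{(1)}$. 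I would then define
\[
\psi_k(\Uc):=s\bigl(\psi_{k-1}(\Uc^{(1)}),\psi_{k-1}(\Uc^{(2)})\bigr)\qquad(\Uc\in\Sf_{k+2}(G,q)),
\]
which is legitimate: Proposition~\ref{pro:well-def(a)} gives $\Uc^{(1)},\Uc^{(2)}\in\Sf_{k+1}(G,q)$ with $\Uc^{(1)}\prec_{k+1}\Uc^{(2)}$, and $(\Uc^{(1)})^{(1)}=(\Uc^{(2)})^{(1)}$ follows at once from Definition~\ref{def:U12}, so the pair is of the required shape. By Lemma~\ref{lem:lm}, $\LM(\psi_k(\Uc))=\xb^{\gamma-\alpha}[\psi_{k-1}(\Uc^{(1)})]$ with $\gamma=\K(\Uc^{(1)},\Uc^{(2)})$ and $\alpha$ the leading exponent of $\psi_{k-1}(\Uc^{(1)})$; Lemma~\ref{lem:KU1U2} gives $\gamma=D(U_2\backslash U_1,U_1)+D(U_3\backslash U_2,U_2)$, while (ii) applied to $\Uc^{(1)}$ gives $\alpha=D(U_3\backslash U_2,U_2)$, so $\gamma-\alpha=D(U_2\backslash U_1,U_1)$ and \eqref{eq:LMpsi} holds. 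Injectivity of $\psi_k$ follows: $\psi_k(\Uc)=\psi_k(\Uc')$ forces, by \eqref{eq:LMpsi} and injectivity of $\psi_{k-1}$, that $\Uc^{(1)}=\Uc'^{(1)}$ (hence $U_i=U'_i$ for $i\geq2$) and $D(U_2\backslash U_1,U_1)=D(U'_2\backslash U'_1,U'_1)$, whence $U_1=U'_1$ by Corollary~\ref{cor:injectivity}.

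Next I would show $\Gb_k(G,q):=\Image(\psi_k)$ is a Gr\"obner basis, i.e.\ that every Schreyer generator not lying in $\Gb_k(G,q)$ is redundant. Fix such a generator $s(\psi_{k-1}(\Wc),\psi_{k-1}(\Vc))$; then $\Vc$ lies in the set $\mathfrak{N}_{\Wc}$ of Proposition~\ref{prop:max} (applied with ``$k$'' there replaced by $k+1$) but is not $\Uc^{(2)}$ for any $\Uc\in\Sf_{k+2}(G,q)$ with $\Uc^{(1)}=\Wc$. Proposition~\ref{prop:max} supplies $\Wc'\in\mathfrak{N}_{\Wc}$ with $\K(\Wc,\Wc')\leq\K(\Wc,\Vc)$ and $\Wc'=\Uc^{(2)}$ for some such $\Uc$, so $s(\psi_{k-1}(\Wc),\psi_{k-1}(\Wc'))=\psi_k(\Uc)\in\Gb_k(G,q)$. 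Since the leading exponent $D(W'_2\backslash W'_1,W'_1)$ of $\psi_{k-1}(\Wc')$ satisfies $D(W'_2\backslash W'_1,W'_1)\leq\K(\Wc,\Wc')\leq\K(\Wc,\Vc)=\gamma(\psi_{k-1}(\Wc),\psi_{k-1}(\Vc))$, Lemma~\ref{lem:Chain criterion} lets us drop $s(\psi_{k-1}(\Wc),\psi_{k-1}(\Vc))$; the witness $\psi_k(\Uc)$ is itself never dropped, so all such generators can be removed simultaneously, leaving exactly $\Gb_k(G,q)$.

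Finally, $\Gb_k(G,q)$ is minimal: if $\LM(\psi_k(\Uc))\mid\LM(\psi_k(\Uc'))$ then by \eqref{eq:LMpsi} and injectivity of $\psi_{k-1}$ the two leading basis elements agree, so $\Uc^{(1)}=\Uc'^{(1)}=:\Wc$, and writing $\Wc$ as $W_1\subsetneq W_2\subsetneq\cdots$ we have $\Uc=(Z\subsetneq W_1\subsetneq W_2\subsetneq\cdots)$ and $\Uc'=(Z'\subsetneq W_1\subsetneq W_2\subsetneq\cdots)$ with $Z,Z'\subsetneq W_1$; then \eqref{eq:LMpsi} reads $\LM(\psi_k(\Uc))=\xb^{D(W_1\backslash Z,Z)}[\psi_{k-1}(\Wc)]$, and the divisibility becomes $D(W_1\backslash Z,Z)\leq D(W_1\backslash Z',Z')$. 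Applying Proposition~\ref{prop:minimal1-1} with $X_1=W_1\backslash Z$, $X_2=Z$, $Y_1=W_1\backslash Z'$, $Y_2=Z'$ (all nonempty and connected since $\Uc,\Uc'$ are connected flags, disjoint by construction, with $X_1\cup X_2=W_1=Y_1\cup Y_2$ and $q\in Z\cap Z'$) yields $Z=Z'$, so $\Uc=\Uc'$. Since each $s(f,h)$ is homogeneous, $\Gb_k(G,q)$ is $\As$-homogeneous, completing the induction. The one genuinely difficult input is Proposition~\ref{prop:max}: it is exactly what makes the pruning step produce the image of $\psi_k$, and everything else is bookkeeping built on the leading-term formula \eqref{eq:LMpsi} and Schreyer's theorem.
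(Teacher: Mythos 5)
Your proposal is correct and follows essentially the same route as the paper's own proof: induction on $k$ starting from Theorem~\ref{thm:Cori}, the definition $\psi_k(\Uc)=s(\psi_{k-1}(\Uc^{(1)}),\psi_{k-1}(\Uc^{(2)}))$, verification of \eqref{eq:LMpsi} via Lemma~\ref{lem:lm} and Lemma~\ref{lem:KU1U2}, injectivity from Corollary~\ref{cor:injectivity}, the Gr\"obner-basis property from Proposition~\ref{prop:max} together with the chain criterion, and minimality from the same divisibility argument (you invoke Proposition~\ref{prop:minimal1-1} directly where the paper cites its Corollary~\ref{cor:injectivity}, but that is the identical input).
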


\medskip

\begin{proof}
For consistency in the notation we define $\syz_{-1}(\Gb(G,q))=\{0\}$ and the map
\[\psi_{-1}:\Sf_{1}(G, q)\hookrightarrow \{0\}\]
sends the canonical connected $1$-flag $V(G)$ to $0$.

\medskip

The proof is by induction on $k \geq 0$.

\medskip

{\bf Base case.} For $k=0$ the result is proved in Theorem~\ref{thm:Cori}. Here $\Gb_0(G,q)=\Gb(G,q)$ and $<_0$ is $<$, and
\[
\psi_{0}:\Sf_{2}(G, q)\hookrightarrow \syz_{0}(\Gb(G,q))=I_G
\]
\[
(U_1 \subsetneq U_2) \mapsto (\xb^{D(U_2\backslash U_1, U_1)}-\xb^{D(U_1, U_2\backslash U_1)})[0] \ ,
\]
and
 $\LM(\psi_k(\Uc))=\xb^{D(U_2\backslash U_1, U_1)}[0]$.

\medskip

{\bf Induction hypothesis.} Now let $k>0$ and assume that there exists a bijection
\[\psi_{k-1}:\Sf_{k+1}(G, q)\rightarrow \Gb_{k-1}(G,q)\subseteq \syz_{k-1}(\Gb(G,q))\]
such that $\Gb_{k-1}(G,q)$ forms a minimal homogeneous Gr\"obner bases of $\syz_{k-1}(\Gb(G,q))$ with respect to $<_{k-1}$, and \eqref{eq:LMpsi} holds for the leading monomials.

\medskip

Via the bijection $\psi_{k-1}$, the set $\Gb_{k-1}(G,q)$ inherits a total ordering $\prec'_{k-1}$ from the total ordering $\prec_{k+1}$ on $\Sf_{k+1}(G, q)$, that is
\[f\prec'_{k-1} h \quad \text{in} \quad \Gb_{k-1}(G,q) \quad \Leftrightarrow \quad \psi_{k-1}^{-1}(f)\prec_{k+1} \psi_{k-1}^{-1}(h) \quad \text{in} \quad \Sf_{k+1}(G, q) .\]

\medskip

{\bf Inductive step.} Given $\Uc \in \Sf_{k+2}(G, q)$ let $\Uc^{(1)}$ and $\Uc^{(2)}$ be as defined in Definition~\ref{def:U12}. We {\bf \em define}
\begin{equation} \label{eq:map}
\psi_k:\Sf_{k+2}(G, q)\rightarrow \syz_{k}(\Gb(G,q))
\end{equation}
\[
\Uc \mapsto s(\psi_{k-1}(\Uc^{(1)}),\psi_{k-1}(\Uc^{(2)})) \ .
\]

\medskip

In the following $\Uc, \Vc \in \Sf_{k+2}(G, q)$ are of the form
\[
U_1 \subsetneq U_2 \subsetneq \cdots \subsetneq V(G)
\]
\[
V_1 \subsetneq V_2 \subsetneq \cdots \subsetneq V(G) \ .
\]

The result follows from a series of claims. 

\medskip

\noindent{\bf Claim 1.} $\psi_k$ is a well-defined.

\medskip

By Proposition~\ref{pro:well-def(a)}
\[
\Uc^{(1)} \in \Sf_{k+1}(G, q) , \quad \Uc^{(2)} \in \Sf_{k+1}(G, q), \quad \Uc^{(1)}  \prec_{k+1} \Uc^{(2)} \ .
\]
So by the induction hypothesis
\[\psi_{k-1}(\Uc^{(1)}),\psi_{k-1}(\Uc^{(2)}) \in \Gb_{k-1}(G,q)\]
 and by the definition of the total ordering on $\Gb_{k-1}(G,q)$ we have
\[
\psi_{k-1}(\Uc^{(1)}) \prec'_{k-1} \psi_{k-1}(\Uc^{(2)}) \ .
\]

Let $\Uc^{(1,1)}:=(\Uc^{(1)})^{(1)}$ and $\Uc^{(2,1)}:=(\Uc^{(2)})^{(1)}$. It is apparent from Definition~\ref{def:U12} that
\[
\Uc^{(1,1)} = \Uc^{(2,1)}.
\]
By the induction hypothesis and \eqref{eq:LMpsi}, $\LM(\psi_{k-1}(\Uc^{(1)}))$  and $\LM(\psi_{k-1}(\Uc^{(2)}))$ are both multiples of the same free bases element $[\psi_{k-2}(\Uc^{(1,1)})] = [\psi_{k-2}((\Uc^{(2,1)})]$. It follows that
\[
s(\psi_{k-1}(\Uc^{(1)}),\psi_{k-1}(\Uc^{(2)})) \in \Sc(\Gb_{k-1}(G,q)) \subset \syz_{k}(\Gb(G,q)) \
\]
is well-defined (see Theorem~\ref{thm:Schreyer}).

\medskip

\noindent{\bf Claim 2.} $\Gb_k(G,q):=\Image(\psi_k)$ consists of homogeneous elements.

\medskip

Since $\psi_{k-1}(\Uc^{(1)})$ and $\psi_{k-1}(\Uc^{(2)})$ are homogeneous by the induction hypothesis, it follows that $s(\psi_{k-1}(\Uc^{(1)}),\psi_{k-1}(\Uc^{(2)}))$ is also homogeneous.

\medskip

\noindent{\bf Claim 3.} $\LM(\psi_k(\Uc))=\xb^{D(U_2\backslash U_1, U_1)}[\psi_{k-1}(\Uc^{(1)})]$ .

\medskip

From Lemma~\ref{lem:lm} it suffices to show that $D(U_2\backslash U_1, U_1)=\max(\alpha, \beta)-\alpha$ where
\[
\LM(\psi_{k-1}(\Uc^{(1)}))=\xb^{\alpha}[\psi_{k-2}(\Uc^{(1,1)})] \quad , \quad\LM(\psi_{k-1}(\Uc^{(2)}))=\xb^{\beta}[\psi_{k-2}(\Uc^{(2,1)})] \ .
\]
But this is precisely Lemma~\ref{lem:KU1U2}.

\medskip

\noindent{\bf Claim 4.} $\psi_k$ is injective.

\medskip

If $\Uc, \Vc \in \Sf_{k+2}(G, q)$ are such that $\psi_k(\Uc)=\psi_k(\Vc)$ then their leading monomials should be equal:
\[
\xb^{D(U_2\backslash U_1, U_1)}[\psi_{k-1}(\Uc^{(1)})]=\xb^{D(V_2\backslash V_1, V_1)}[\psi_{k-1}(\Vc^{(1)})] \ .
\]
Therefore $\psi_{k-1}(\Uc^{(1)})=\psi_{k-1}(\Vc^{(1)})$ and $D(U_2\backslash U_1, U_1)=D(V_2\backslash V_1, V_1)$. By the induction hypothesis $\psi_{k-1}$ is injective which implies $\Uc^{(1)}=\Vc^{(1)}$ and $D(U_2\backslash U_1, U_1)=D(V_2\backslash V_1, V_1)$. It follows from Corollary~\ref{cor:injectivity} that $U_1=V_1$ and $\Uc = \Vc$.

\medskip

Our last claim below will finish the inductive step.
\medskip

\noindent{\bf Claim 5.} $\Image(\psi_k)$ forms a minimal homogeneous Gr\"obner bases of $\syz_{k}(\Gb(G,q))$ with respect to $<_k$ obtained from $<_{k-1}$ according to \eqref{orderpull}.

\medskip

We have already shown in the proof of Claim 1 that $\Image(\psi_k) \subseteq \Sc(\Gc_{k-1}(G,q))$. By Theorem~\ref{thm:Schreyer} and Remark~\ref{rem:S(M)} it remains to show that
\begin{itemize}
\item[{\bf (I)}] $0 \not \in \Image(\psi_k)$.
\item[{\bf (II)}] For any element $s(f,h) \in \Sc(\Gc_{k-1}(G,q))$ there exists an element $g \in \Image(\psi_k)$ such that $\LM(g) \mid \LM(s(f,h))$.
\item[{\bf (III)}] For any two elements $g, g' \in \Image(\psi_k)$, if $\LM(g)\mid\LM(g')$ then $g=g'$.
\end{itemize}

\medskip

{\bf (I)} follows immediately from Claim 3 above.

{\bf Proof of (II).} By the induction hypothesis $f=\psi_{k-1}(\Wc)$ and $h=\psi_{k-1}(\Vc)$ for two $\Wc \prec_{k+1}\Vc$ in $\Sf_{k+1}(G,q)$ such that $\Wc^{(1)}=\Vc^{(1)}$. We need to find $\Uc \in \Sf_{k+2}(G,q)$ such that \[\LM(s(\Uc^{(1)},\Uc^{(2)}))\mid \LM(s(\psi_{k-1}(\Wc),\psi_{k-1}(\Vc))) \ .\]

We use Proposition~\ref{prop:max}. From the previous paragraph it follows
\[\Vc \in \mathfrak{{N_\Wc}} = \{\Xc\in\Sf_{k+1}(G, q): \, \Wc^{(1)} = \Xc ^{(1)} \text{ and }\ \Wc\prec_{k+1}\Xc\}\ .\]

Hence there exists a $\Wc' \in \mathfrak{{N_\Wc}}$ such that $\K(\Wc,\Wc') \leq \K(\Wc,\Vc)$, and $\Uc^{(1)}=\Wc$ and $\Uc^{(2)}=\Wc'$ for some $\Uc \in \Sf_{k+1}(G, q)$.

By \eqref{eq:LMpsi} and Lemma~\ref{lem:KU1U2} (or Claim 3 above) we have
\[
\LM(\psi_k(\Uc))=\xb^{\K(\Wc,\Wc')-\alpha}[\psi_{k-1}(\Wc)] \ ,
\]
\[
\LM(s(\psi_{k-1}(\Wc),\psi_{k-1}(\Vc)))=\xb^{\K(\Wc,\Vc)-\alpha}[\psi_{k-1}(\Wc)] \ ,
\]
where $\alpha=D(U_3\backslash U_2, U_2)$. Therefore
\[
\LM(\psi_k(\Uc)) \mid \LM(s(\psi_{k-1}(\Wc),\psi_{k-1}(\Vc))) \ .
\]

\medskip

{\bf Proof of (III).} We need to show that for any $\Uc, \Vc \in \Sf_{k+2}(G, q)$ with $\Uc ^{(1)}=\Vc^{(1)}$, if $\LM(\psi_k(\Uc) \mid\LM(\psi_k(\Vc))$ then $\Uc=\Vc$.

From \eqref{eq:LMpsi}  $\LM(\psi_k(\Uc) \mid\LM(\psi_k(\Vc))$ is equivalent to $D(U_2\backslash U_1, U_1) \leq D(V_2\backslash V_1, V_1)$. This together with $\Uc ^{(1)}=\Vc^{(1)}$ implies $\Uc=\Vc$ by Corollary~\ref{cor:injectivity}.
\end{proof}

\medskip

\begin{Remark}\label{rmk:inithm}
In Theorem~\ref{thm:GB} if we replace $\Gb(G,q)$ with
\[\{\xb^{D(U_2\backslash U_1, U_1)}: \, U_1 \subsetneq U_2=V(G) \text{ is in } \Sf_2(G, q) \} \ ,
\]
(i.e. the initial terms of the Gr\"obner bases constructed in Theorem~\ref{thm:Cori}) and replace $\psi_{0}$ with
\[
\Sf_{2}(G, q)\hookrightarrow \ini(I_G)
\]
\[
(U_1 \subsetneq U_2) \mapsto \xb^{D(U_2\backslash U_1, U_1)}[0] \ ,
\]
then the exact same statement and proof are correct for the case of $\ini(I_G)$. As a corollary the exact same recipe gives a free resolution for $\ini(I_G)$ as well.
\end{Remark}


\section{Minimality of the resolution of $I_G$ and $\ini(I_G)$} \label{sec:minimality}

In Theorem~\ref{thm:GB} and Remark~\ref{rmk:inithm} we constructed free resolutions for the ideals $I_G$ and $\ini(I_G)$. In this section we take a close look at \eqref{eq:map} to show that the constructed resolutions are indeed minimal. Note that the basis elements of the free module $F_{k}$ correspond to elements of $\Sf_{k+2}(G,q)$. We show that for any $\Uc \in \Sf_{k+2}(G,q)$ the corresponding basis element $[\psi_{k}(\Uc)]$ maps to a combination of basis elements $[\psi_{k-1}(\Vc)]$, where each $\Vc$ is obtained from $\Uc$ by {\em merging} (\S\ref{sec:Mergeable}) appropriate connected parts. Moreover, the coefficients appearing in this combination are all non-units and, therefore, the constructed resolution is minimal (Theorem~\ref{thm:betti_binom}).

\subsection{Contraction map}

To understand merging, we first need to study contraction maps.

\begin{Definition}\label{def:U^k}
Assume that $\Uc \in \Sf_{k}(G, q)$. Let $G_{/\Uc}$ be the graph obtained from $G$ by contracting the unoriented edges of $G(\Uc)$ and let $\phi: G\rightarrow G_{/\Uc}$ be the contraction map.
More precisely, $G_{/\Uc}$ is the graph on the vertices $u_1,\ldots,u_{k}$ corresponding to the collection $(U_i\backslash U_{i-1})_{i=1}^{k}$, i.e. $u_i=\phi(U_i\backslash U_{i-1})$. For any edge between $U_i\backslash U_{i-1}$ and $U_j\backslash U_{j-1}$ there is an edge between $u_i$ and $u_j$.
\end{Definition}

\begin{Example}\label{exam:contract}
Let $G$ be the graph in Example~\ref{exam:G(U)}. For
\[
\Uc:\ \ \{v_1\}\subset \{v_1,\boldsymbol{v_2}\}\subset \{v_1,v_2,\boldsymbol{v_3},\boldsymbol{v_4}\} \subset \{v_1,v_2,v_3,v_4,\boldsymbol{v_5}\}
\]
the graph $G_{/\Uc}$ depicted in the following figure in which $u_1=v_1$, $u_2=v_2$, the vertex $u_3$ corresponds to $U_3\backslash U_2=\{v_3,v_4\}$, and $u_4$ corresponds to $U_4\backslash U_3=\{v_5\}$.

 \begin{figure}[h!]
\begin{center} \begin{tikzpicture}
[scale = .22, very thick = 15mm]

  \node (n1) at (5,11) [Cgray] {};
  \node (n2) at (1,6)  [Cgray] {};
  \node (n3) at (9,6)  [Cgray] {};
  \node (n5) at (5,1)  [Cgray] {};
  \foreach \from/\to in {n1/n2,n2/n3, n1/n3}
    \draw[-] (\from) -- (\to);

 \path[-] (n3) edge [bend right = 15] node {} (n5);
  \path[-] (n3) edge [bend left = 15] node {} (n5);
    \node(p1) at (3.5, 11.5) [C0] {$u_1$};
    \node(p2) at (-0.5, 6.5) [C0] {$u_2$};
        \node(p3) at (10.5, 6.5) [C0] {$u_3$};
    \node(p5) at (6.5, 0.5) [C0] {$u_4$};

 \node(p7) at (5, -2.5) [C0] {$G_{/\Uc}$};


\end{tikzpicture}
\end{center}\end{figure}
\end{Example}

\begin{Remark}\label{rem:unique divisor}
The contraction map $\phi: G\rightarrow G_{/\Uc}$ induces the map
\[
\phi_{\ast}:\Div(G)\rightarrow \Div(G_{/\Uc})\ \ \text{ with }\ \
\phi_{\ast}(\sum_{v\in V(G)}a_v (v))=
\sum_{v\in V(G)}a_v (\phi(v)) \ .
\]
If the indices $i$ and $j$ are given, we obtain two divisors
\[D'(u_i,u_j) \in \Div(G_{/\Uc}) \quad \text{and} \quad D(U_i\backslash U_{i-1},U_j\backslash U_{j-1}) \in \Div(G)\]
 which are related by the map $\phi_{\ast}$ (see \eqref{DAB}). Here we use the notation $D'( \cdot, \cdot)$ for divisors on $G_{/\Uc}$ and $D( \cdot, \cdot)$ for divisors on $G$.

In particular, an ordering on the vertices of $G_{/\Uc}$ gives an  ordering on the collection of subsets $(U_{i}\backslash U_{i-1})_{i=1}^k$ of $V(G)$. By Definition~\ref{def:Du} we get a divisor $D'$ on $G_{/ \Uc}$ and a divisor $D$ on $G$, and $\phi_{\ast}(D) = D'$.
\end{Remark}

\begin{Remark}\label{rem:contract}
We also have the map $\phi^{\ast}:\Ff_{s}(G_{/\Uc},u_1)\rightarrow \Ff_{s}(G,q)$ induced by sending each vertex of $G_{/\Uc}$ to its preimage under $\phi$. The map $\phi$ and the total ordering $\preceq$ on $\mathfrak{C}^{\text{op}}(G,q)$  (as in Definition~\ref{def:prec1}) give a total ordering $\preceq'$ on $\mathfrak{C}^{\text{op}}(G_{/ \Uc},u_1)$. The ordering $\preceq'$ induces a strict total ordering $\prec'_\ell$ on $\Ff_{\ell}(G_{/\Uc}, u_1)$ compatible with the total ordering on connected flags on $(G,q)$; that is, $\Xc \prec'_\ell \Yc$ if and only if $\phi^{\ast}(\Xc)\prec_\ell \phi^{\ast}(\Yc)$.  Therefore, we get a map

\begin{equation} \label{phistar}
\phi^{\ast}:\Sf_{s}(G_{/\Uc},u_1)\rightarrow \Sf_{s}(G,q) \ .
\end{equation}

This gives a one-to-one correspondence between the elements $\Vc' \in \Sf_{s}(G_{/\Uc},u_1)$ and the elements $\Vc \in \Sf_{s}(G,q)$. Under this correspondence, for any $u_i \in V'_j \backslash V'_{j-1}$ we have $U_i\backslash U_{i-1}\subseteq V_j \backslash V_{j-1}$ and thus $V_j=\bigcup_{u_i\in V_j'} (U_i\backslash U_{i-1})$. For any element $\Vc$ in the image of $\phi^{\ast}$ the preimage $\Vc'$ is obtained by $V'_j=\{u_i: \, u_i = \phi(U_i\backslash U_{i-1})  \text{ and }  U_i\backslash U_{i-1} \subseteq V_j  \}$. In particular, $\Uc$ itself is in the image of $\phi^{\ast}$.

\end{Remark}

The following example explains the notation introduced in the above remarks.

\begin{Example}
In Example~\ref{exam:contract} the ordering $u_1,u_2,u_3,u_4$ on $V(G_{/\Uc})$ induces the ordering 
$U_1\backslash U_0, U_2\backslash U_1, U_3\backslash U_2, U_4\backslash U_3$  on the collection $(U_{i}\backslash U_{i-1})_{i=1}^4$ of $V(G)$ which corresponds to $\Uc$. Also corresponding to the ordering $u_1,u_2,u_3,u_4$ on $V(G_{/\Uc})$ we get the divisor $D'=(u_2)+2(u_3)+2(u_4)$ on 
$G_{/ \Uc}$, where $\phi_{\ast}(D')=D(\Uc)$.

We consider 
\[
\Vc':\ \{u_1\}\subset \{u_1,\boldsymbol{u_3}\}\subset \{u_1,\boldsymbol{u_2},u_3,\boldsymbol{u_4}\}
\]
 in 
$\Sf_{3}(G_{/\Uc},u_1)$. Then $\phi^{\ast}(\Vc')=\Vc$, where 
\[
\Vc:\ \{v_1\}\subset \{v_1,\boldsymbol{v_3,v_4}\}\subset \{v_1,\boldsymbol{v_2},v_3,v_4,\boldsymbol{v_5}\}\ .
\]
More precisely, $V_1=U_1\backslash U_0, V_2=(U_1\backslash U_0)\cup (U_3\backslash U_2)$,  and  $V_3=(U_2\backslash U_1)\cup (U_4\backslash U_3)$.
\end{Example}

\medskip

\subsection{Mergeable parts}
\label{sec:Mergeable}
Given $\Uc \in \Sf_{k}(G,q)$ of the form
\[
 U_1 \subsetneq U_2 \subsetneq \cdots \subsetneq U_k =V(G)
\]
it is sometimes more convenient to work with the connected partition given by $A_\ell:= U_{\ell}\backslash U_{\ell-1}$ (for $1 \leq \ell \leq k$).

\medskip
 
Recall for any $\Uc \in \Sf_{k}(G,q)$ we get a partial orientation of $G$ which we denoted by $G(\Uc)$ in Definition~\ref{def:Du}. This partial orientation is acyclic with unique source on the underlying partition graph $G_{/\Uc}$ (Definition~\ref{def:U^k}). This means that the underlying partition graph does not contain any directed cycle and it has a unique source on the vertex corresponding to $A_1$. More generally, we say a partial orientation is {\em acyclic} if the associated oriented partition graph (obtained by contracting all unoriented edges) is acyclic. Equivalently, a partial orientation is acyclic if replacing every undirected edge with two antiparallel edges yields an acyclic directed graph. Recall that associated to each partial orientation we get a divisor as in Remark~\ref{rmk:indeg}. \

\medskip

\begin{Definition}\label{def:o_j}
Let $\Uc \in \Sf_{k}(G,q)$ and $A_\ell:= U_{\ell}\backslash U_{\ell-1}$ (for $1 \leq \ell \leq k$) as before. We set $o_0(\Uc):=G(\Uc)$. For $j>0$ the partial orientation $o_j(\Uc)$ is defined inductively as follows: we obtain $o_j(\Uc)$ from $o_{j-1}(\Uc)$ by reversing the orientation of the edges between $A_j$ and $V(G)\backslash A_j$ in $o_{j-1}(\Uc)$.
\end{Definition}
\medskip

Note that, when all oriented edges are directed away from $A_j$, reversing the orientation of the edges between $A_j$ and $V(G)\backslash A_j$ in $o_{j-1}(\Uc)$ is equivalent to performing a chip-firing move, in which all vertices in $A_j$ borrow chips from their neighbors in $V(G)\backslash A_j$.
Note that $o_j(\Uc)$ is well-defined since all edges are directed away from $A_j$ in $o_{j-1}(\Uc)$.

\begin{Definition}
Let $\mathfrak{c}(\Uc)$ denote the set consisting of all partial orientations $o_j(\Uc)$ of $G$.
\end{Definition}

\medskip

\begin{Example}\label{exam:C4} {Let $G$ be the $4$-cycle on the vertices $1,2,3,4$ such that $1$ is the distinguished vertex. Let $\Uc$ be the connected flag
$\Uc:\ \{1\}\subset\{1,2\}\subset\{1,2,3\}\subset\{1,2,3,4\}$. Then in the following we depict the graph corresponding to
$o_j(\Uc)$ for all $j$ (see Definition~\ref{def:o_j}). Note that $o_4(\Uc)=G(\Uc)$. }
\medskip

\begin{figure}[h!] \label{fig:o_j_2} \begin{center}

\begin{tikzpicture}  [scale = .22, very thick = 13mm]

 \node (n4) at (-6,13)  [Cgray] {1};
  \node (n1) at (-6,23) [Cgray] {4};
  \node (n2) at (-9,18)  [Cgray] {2};
  \node (n3) at (-3,18)  [Cgray] {3};
\foreach \from/\to in {n2/n1,n2/n4}
    \draw[<-] (\from) -- (\to);

 \foreach \from/\to in {n4/n3,n1/n3}
    \draw[brown][->] (\from) -- (\to);
     \node(m1) at (-6,10.3) [C0] {$o_3(\Uc)$};

 \node (n4) at (4,13)  [Cgray] {1};
 \node(m1) at (4,10.3) [C0] {$o_2(\Uc)$};
  \node (n1) at (4,23) [Cgray] {4};
  \node (n2) at (1,18)  [Cgray] {2};
  \node (n3) at (7,18)  [Cgray] {3};
  \foreach \from/\to in {n2/n1,n2/n4}
    \draw[red][<-] (\from) -- (\to);

 \foreach \from/\to in {n3/n4,n3/n1}
    \draw[->] (\from) -- (\to);

 \node (n4) at (14,13)  [Cgray] {1};
  \node (n1) at (14,23) [Cgray] {4};
  \node(m1) at (14,10.3) [C0] {$o_1(\Uc)$};
  \node (n2) at (11,18)  [Cgray] {2};
  \node (n3) at (17,18)  [Cgray] {3};
  \foreach \from/\to in {n1/n2,n1/n3}
    \draw[<-] (\from) -- (\to);

 \foreach \from/\to in {n3/n4,n2/n4}
    \draw[blue][->] (\from) -- (\to);

      \node (n4) at (24,13)  [Cgray] {1};
  \node (n1) at (24,23) [Cgray] {4};
  \node(m1) at (24,10.3) [C0] {$G(\Uc)$};
  \node (n2) at (21,18)  [Cgray] {2};
  \node (n3) at (27,18)  [Cgray] {3};
  \foreach \from/\to in {n1/n2,n1/n3, n3/n4,n2/n4}
    \draw[<-] (\from) -- (\to);
\end{tikzpicture}
\end{center}

\end{figure}

\end{Example}

\medskip

\bigskip

For disjoint subsets $A, B \subset V(G)$ let $E(A,B)$ denote the set of edges between $A$ and $B$.

\begin{Definition}
Let $\Uc \in \Sf_{k}(G,q)$ and $A_\ell:= U_{\ell}\backslash U_{\ell-1}$ (for $1 \leq \ell \leq k$) as before, and assume that there are some edges connecting $A_i$ and $A_j$, that is, $E(A_i,A_j) \ne \emptyset$.
\begin{itemize}
\item[(i)] We say $A_i$ is {\em mergeable} with $A_j$ {\em in $G(\Uc)$}  if  all edges in $E(A_i,A_j)$ are oriented from $A_i$ to $A_j$ and the partial orientation obtained from $G(\Uc)$ by removing the orientations on $E(A_i,A_j)$ is acyclic.

\medskip

Note that in this case $i<j$. We let $\Merge(\Uc;A_i,A_j) \in \Sf_{k-1}(G,q)$ denote the corresponding unique connected $(k-1)$-flag whose connected parts are $A_\ell$ (for $\ell \ne i,j$) and $A_i \cup A_j$.

\medskip

\item[(ii)] We say $A_i$ is {\em mergeable} with $A_j$ {\em in $o_j(\Uc)$} where $i>j>0$, if the partial orientation obtained by removing the orientations on $E(A_i,A_j)$ in $o_j(\Uc)$ results in an acyclic partial orientation. Note that $E(A_i,A_j)$ are oriented from $A_i$ to $A_j$ in $o_j(\Uc)$.

\medskip

Let $\Merge(o_j(\Uc);A_i,A_j) \in \Sf_{k-1}(G,q)$ denote the connected partition of $G$ whose connected parts are $A_\ell$ (for $\ell \ne i,j$) and $A_i \cup A_j$, together with the acyclic (partial) orientation obtained from $o_j(\Uc)$ by removing the orientations on $E(A_i,A_j)$.
As usual one obtains an associated divisor by reading the indegrees in this new partial orientation. This gives a {\em maximal reduced divisor} (see \S\ref{sec:Connection}) on the associated graph of partitions via the map $\phi$ (see Remark~\ref{rem:unique divisor}). This maximal reduced divisor gives a total ordering on the vertices of the graph of partitions (e.g., by performing {\em Dhar's algorithm} -- see \S\ref{sec:Connection} and \cite{FarbodMatt12}).
Consider the induced partial orientation of $G$ obtained in this way, and let $\Merge(\mathfrak{c}(\Uc);A_i,A_j) \in \Sf_{k-1}(G,q)$ denote the associated connected flag.

\end{itemize}

\end{Definition}


\begin{Definition}
For $\Uc \in \Sf_{k}(G,q)$ we associate two subsets of $\Sf_{k-1}(G,q)$ as follows:
\begin{itemize}
\item[(i)] $\Ifr(\Uc):=\{\Wc : \, \Wc=\Merge(\Uc; A_i, A_j), \text{ for } A_i, A_j \text{ mergeable in } G(\Uc) \}$.
\item[(ii)] $\Bf(\Uc):=\{\Wc : \, \Wc=\Merge(\mathfrak{c}(\Uc); A_i, A_j), \text{ for } A_i, A_j \text{  mergeable in $o_j(\Uc)$ or $G(\Uc)$}\}$.
\end{itemize}
\end{Definition}

It immediately follows from the definitions that $\Ifr(\Uc) \subseteq \Bf(\Uc)$.
As we will see soon, $\Bf(\Uc)$ is related to the differential maps in our resolution of the binomial ideal $I_G$ and $\Ifr(\Uc)$ is related to the differential maps  in our resolution of the monomial ideal $\ini(I_G)$.

\medskip

\begin{Example} \label{exam:I(U)}
Let $G$ be of the $4$-cycle on the vertices $1,2,3,4$ in which we fix $1$ be the distinguished vertex.  Let $\Uc$ be the connected flag
$\Uc:\ \{1\}\subset\{1,2\}\subset\{1,2,3\}\subset\{1,2,3,4\}$. Here we list the elements of $\Ifr(\Uc)$.

\begin{figure}[ht] \label{fig:o_j_3} \begin{center}

\begin{tikzpicture} [scale = .22, very thick = 13mm]

 \node (n4) at (-6,13)  [Cgray] {1};
  \node (n1) at (-6,23) [Cgray] {4};
  \node (n2) at (-9,18)  [Cgray] {2};
  \node (n3) at (-3,18)  [Cgray] {3};
\foreach \from/\to in {n1/n2,n2/n4}
    \draw[<-] (\from) -- (\to);
 \foreach \from/\to in {n4/n3}
    \draw[brown][->] (\from) -- (\to);
     \node(m1) at (-6,8.3) [C1] {$\Merge(\Uc; 3, 4)$};
\foreach \from/\to in {n1/n3}
    \draw[brown][] (\from) -- (\to);
 \node (n4) at (4,13)  [Cgray] {1};
 \node(m1) at (4,8.3) [C1] {$\Merge(\Uc; 2, 4)$};
  \node (n1) at (4,23) [Cgray] {4};
  \node (n2) at (1,18)  [Cgray] {2};
  \node (n3) at (7,18)  [Cgray] {3};
  \foreach \from/\to in {n2/n4}
    \draw[red][<-] (\from) -- (\to);
\foreach \from/\to in {n2/n1}
    \draw[red][] (\from) -- (\to);
 \foreach \from/\to in {n4/n3,n3/n1}
    \draw[->] (\from) -- (\to);

 \node (n4) at (14,13)  [Cgray] {1};
  \node (n1) at (14,23) [Cgray] {4};
  \node(m1) at (14,8.3) [C1] {$\Merge(\Uc;1,2)$};
  \node (n2) at (11,18)  [Cgray] {2};
  \node (n3) at (17,18)  [Cgray] {3};
  \foreach \from/\to in {n1/n2,n1/n3}
    \draw[<-] (\from) -- (\to);

 \foreach \from/\to in {n4/n3}
    \draw[blue][->] (\from) -- (\to);
 \foreach \from/\to in {n2/n4}
    \draw[blue][] (\from) -- (\to);

      \node (n4) at (24,13)  [Cgray] {1};
  \node (n1) at (24,23) [Cgray] {4};
  \node(m1) at (24,8.3) [C1] {$\Merge(\Uc;1,3)$};
  \node (n2) at (21,18)  [Cgray] {2};
  \node (n3) at (27,18)  [Cgray] {3};
  \foreach \from/\to in {n1/n2,n1/n3}
    \draw[<-] (\from) -- (\to);

 \foreach \from/\to in {n4/n2}
    \draw[blue][->] (\from) -- (\to);
 \foreach \from/\to in {n3/n4}
    \draw[blue][] (\from) -- (\to);

\end{tikzpicture}
\end{center}

\end{figure}
\end{Example}

\begin{Example}\label{exam:running example}
Returning to Example~\ref{exam:C4}, i.e., for $\Uc:\ \{1\}\subset\{1,2\}\subset\{1,2,3\}\subset\{1,2,3,4\}$, we list
the acyclic orientations $\Merge(\mathfrak{c}(\Uc);A_i,A_j)$ corresponding to the elements of $\Bf(\Uc)\backslash \Ifr(\Uc)$ which have been obtained from connected partitions $\Merge(o_j(\Uc);A_i,A_j)$.

\begin{figure}[h!] \label{fig:o_j_4} \begin{center}

\begin{tikzpicture} [scale = .22, very thick = 13mm]

\node (n4) at (-6,13)  [Cgray] {1};
  \node (n1) at (-6,23) [Cgray] {4};
  \node (n2) at (-9,18)  [Cgray] {2};
  \node (n3) at (-3,18)  [Cgray] {3};
\foreach \from/\to in {n2/n1,n2/n4}
    \draw[<-] (\from) -- (\to);
 \foreach \from/\to in {n4/n3}
    \draw[brown][->] (\from) -- (\to);
     \node(m1) at (-6,7.5) [C1] {$\Merge(o_3(\Uc); 4, 3)$};
\foreach \from/\to in {n1/n3}
    \draw[brown][] (\from) -- (\to);
 \node (n4) at (4,13)  [Cgray] {1};
 \node(m1) at (4,7.5) [C1] {$\Merge(o_2(\Uc); 4,2)$};
  \node (n1) at (4,23) [Cgray] {4};
  \node (n2) at (1,18)  [Cgray] {2};
  \node (n3) at (7,18)  [Cgray] {3};
  \foreach \from/\to in {n2/n4}
    \draw[red][<-] (\from) -- (\to);
\foreach \from/\to in {n2/n1}
    \draw[red][] (\from) -- (\to);
 \foreach \from/\to in {n3/n4,n3/n1}
    \draw[->] (\from) -- (\to);

 \node (n4) at (14,13)  [Cgray] {1};
  \node (n1) at (14,23) [Cgray] {4};
  \node(m1) at (14,7.5) [C1] {$\Merge(o_1(\Uc);2,1)$};
  \node (n2) at (11,18)  [Cgray] {2};
  \node (n3) at (17,18)  [Cgray] {3};
  \foreach \from/\to in {n1/n2,n1/n3}
    \draw[<-] (\from) -- (\to);

 \foreach \from/\to in {n3/n4}
    \draw[blue][->] (\from) -- (\to);
 \foreach \from/\to in {n2/n4}
    \draw[blue][] (\from) -- (\to);

      \node (n4) at (24,13)  [Cgray] {1};
  \node (n1) at (24,23) [Cgray] {4};
  \node(m1) at (24,7.5) [C1] {$\Merge(o_1(\Uc);3,1)$};
  \node (n2) at (21,18)  [Cgray] {2};
  \node (n3) at (27,18)  [Cgray] {3};
  \foreach \from/\to in {n1/n2,n1/n3}
    \draw[<-] (\from) -- (\to);

 \foreach \from/\to in {n2/n4}
    \draw[blue][->] (\from) -- (\to);
 \foreach \from/\to in {n3/n4}
    \draw[blue][] (\from) -- (\to);

\end{tikzpicture}

\end{center}

\end{figure}


\begin{figure}[ht] \label{fig:o_j_5} \begin{center}

\begin{tikzpicture} [scale = .22, very thick = 13mm]

 \node (n4) at (-6,13)  [Cgray] {1};
  \node (n1) at (-6,23) [Cgray] {4};
  \node (n2) at (-9,18)  [Cgray] {2};
  \node (n3) at (-3,18)  [Cgray] {3};
\foreach \from/\to in {n2/n1,n2/n4}
    \draw[<-] (\from) -- (\to);
 \foreach \from/\to in {n4/n3}
    \draw[brown][->] (\from) -- (\to);
     \node(m1) at (-6,7.7) [C1] {$\Merge(\mathfrak{c}(\Uc); 4,3)$};
\foreach \from/\to in {n1/n3}
    \draw[brown][] (\from) -- (\to);
 \node (n4) at (4,13)  [Cgray] {1};
 \node(m1) at (4,7.7) [C1] {$\Merge(\mathfrak{c}(\Uc); 4,2)$};
  \node (n1) at (4,23) [Cgray] {4};
  \node (n2) at (1,18)  [Cgray] {2};
  \node (n3) at (7,18)  [Cgray] {3};
  \foreach \from/\to in {n2/n4}
    \draw[red][<-] (\from) -- (\to);
\foreach \from/\to in {n2/n1}
    \draw[red][] (\from) -- (\to);
 \foreach \from/\to in {n4/n3,n1/n3}
    \draw[->] (\from) -- (\to);

 \node (n4) at (14,13)  [Cgray] {1};
  \node (n1) at (14,23) [Cgray] {4};
  \node(m1) at (14,7.7) [C1] {$\Merge(\mathfrak{c}(\Uc);2,1)$};
  \node (n2) at (11,18)  [Cgray] {2};
  \node (n3) at (17,18)  [Cgray] {3};
  \foreach \from/\to in {n1/n2,n3/n1}
    \draw[<-] (\from) -- (\to);

 \foreach \from/\to in {n4/n3}
    \draw[blue][->] (\from) -- (\to);
 \foreach \from/\to in {n2/n4}
    \draw[blue][] (\from) -- (\to);

      \node (n4) at (24,13)  [Cgray] {1};
  \node (n1) at (24,23) [Cgray] {4};
  \node(m1) at (24,7.7) [C1] {$\Merge(\mathfrak{c}(\Uc);3,1)$};
  \node (n2) at (21,18)  [Cgray] {2};
  \node (n3) at (27,18)  [Cgray] {3};
  \foreach \from/\to in {n2/n1,n1/n3}
    \draw[<-] (\from) -- (\to);

 \foreach \from/\to in {n4/n2}
    \draw[blue][->] (\from) -- (\to);
 \foreach \from/\to in {n3/n4}
    \draw[blue][] (\from) -- (\to);

\end{tikzpicture}

\end{center}

\end{figure}

\end{Example}

\begin{Lemma}\label{lem:contractible}
Let $\Uc\in \Sf_{k}(G,q)$ and assume that $E(A_i,A_j) \ne \emptyset$.
\begin{itemize}
\item[(a)] Assume that $\Merge(\Uc;A_i,A_j) \in \Ifr(\Uc)$. Then there exists $B_i \supseteq A_i$ such that
\[\Merge(\Uc^{(1)}; B_i,A_j) \in  \Ifr(\Uc^{(1)})\quad \textit{or}\quad \Merge(\Uc^{(2)}; B_i,A_j) \in \Ifr(\Uc^{(2)})\ .\]

\item[(b)] Assume that $\Merge(o_j(\Uc);A_i,A_j) \in \Bf(\Uc)\backslash \Ifr(\Uc)$ for $j>0$.
Then
\begin{itemize}
\item $\Merge(o_{j-1}(\Uc^{(1)});A_i,A_j) \in \Bf(\Uc^{(1)})$, or
\item there exist $\Wc \in \Ifr(\Uc)$, $B_i \supseteq A_i$ and $B_j \supseteq A_j$ such that
\[
\Merge(o_1(\Wc);B_i,B_j) \in \Bf(\Wc) \ .
\]
\end{itemize}
\end{itemize}
\end{Lemma}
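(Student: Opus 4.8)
The plan is to translate ``mergeability'' into a condition on the contracted partition graphs and then argue by a case analysis governed by the positions of $i$ and $j$ relative to the first few parts $A_1,A_2,A_3$.

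\textbf{Set-up.} First I would record a reformulation: for any acyclic partial orientation $o$ of $G$, a part $P$ is mergeable with a part $Q$ precisely when $E(P,Q)\neq\emptyset$, all edges of $E(P,Q)$ point from $P$ to $Q$, and identifying the two vertices of the partition graph of $o$ corresponding to $P$ and $Q$ keeps the partition graph acyclic; equivalently (the partition graph already being acyclic) there is no directed path of length at least $2$ from $[P]$ to $[Q]$. Next I would record the structural dictionary used repeatedly: the partition graph of $\Uc^{(1)}$ is $G_{/\Uc}$ with $u_1$ and $u_2$ identified, and the partition graph of $\Uc^{(2)}$ is $G_{/\Uc}$ with $u_2,u_3$ identified (when $G[U_3\backslash U_1]$ is connected) or with $u_1,u_3$ identified (otherwise); that $E(A_1,A_2)\neq\emptyset$ because $G[U_2],G[A_1],G[A_2]$ are all connected; that $E(A_2,A_3)\neq\emptyset$ exactly when $G[U_3\backslash U_1]$ is connected (Remark~\ref{rmk:1vs2cases}); and, for part (b), the effect of the reorientation $o_\ell(\cdot)$ on the edges incident to $A_\ell$ together with the identity that performing $o_1$ then $o_2$ on $G(\Uc)$ agrees, on the edges leaving $A_1\cup A_2$, with performing $o_1$ on $G(\Uc^{(1)})$.

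\textbf{Part (a).} Given $A_i$ mergeable with $A_j$ in $G(\Uc)$ --- so $i<j$ and there is no directed path of length at least $2$ from $u_i$ to $u_j$ in $G_{/\Uc}$ --- I would distinguish cases according to where $i$ (and, secondarily, $j$) sits. If $i\geq 3$, a directed path out of $u_i$ never meets $u_1$ (the source of $G_{/\Uc}$) or $u_2$ (reachable only from $u_1$), so such paths coincide in $G_{/\Uc}$ and in the partition graph of $\Uc^{(1)}$; taking $B_i=A_i$ gives $\Merge(\Uc^{(1)};A_i,A_j)\in\Ifr(\Uc^{(1)})$. If $i\leq 2$ one absorbs $A_i$ into whichever merged part contains it: $B_i=A_1\cup A_2$ working in $\Uc^{(1)}$ when $i=1$ or when $j=3$; $B_i=A_2\cup A_3$ working in $\Uc^{(2)}$ when $i=2$, $j\geq4$ and $G[U_3\backslash U_1]$ is connected, and $B_i=A_2$ working in $\Uc^{(2)}$ in the disconnected subcase; and $B_i=A_1\cup A_3$ in $\Uc^{(2)}$ when $(i,j)=(1,2)$ and $G[U_3\backslash U_1]$ is disconnected (the case $(i,j)=(1,2)$ with $G[U_3\backslash U_1]$ connected being trivial, since then $\Merge(\Uc;A_1,A_2)=\Uc^{(1)}$ itself). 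In each case I would verify the no-long-path condition for $[B_i]$ and $[A_j]$ in the relevant child partition graph by lifting a hypothetical long path back to $G_{/\Uc}$ and prepending an edge $u_1\to u_2$ (available since $E(A_1,A_2)\neq\emptyset$) or $u_2\to u_3$ (available in the connected subcase) to reach a contradiction with the hypothesis.

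\textbf{Part (b).} Here $o_j(\Uc)$ is $G(\Uc)$ with the edges around $A_1,\dots,A_j$ reversed in turn, so that in $o_j(\Uc)$ all edges incident to $A_j$ point into $A_j$ and the merge with $A_i$ ($i>j$) makes sense. Using the $o_1$-then-$o_2$ versus $o_1$-on-$\Uc^{(1)}$ identity from the set-up, in the configurations where the merged parts interact with $A_1\cup A_2$ compatibly with that identity the same merge transfers to $o_{j-1}(\Uc^{(1)})$, yielding the first alternative. In the remaining configurations one of $A_i,A_j$ lies among the first parts in an incompatible way, and I would first invoke part (a) to merge the relevant first part(s), producing $\Wc\in\Ifr(\Uc)$ in which the leftover reorientation is a single $o_1$-move; then, with $B_i,B_j$ the images of $A_i,A_j$ in $\Wc$, one checks $\Merge(o_1(\Wc);B_i,B_j)\in\Bf(\Wc)$ by the same path-lifting technique, now inside the partition graph of $o_1(\Wc)$. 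Throughout, the canonicalization step (passing from a merged partial orientation to its connected-flag representative via a maximal reduced divisor and Dhar's algorithm) is transparent because that representative depends only on the partial orientation modulo these moves, which is exactly the information encoded by $\Bf$.

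\textbf{Main obstacle.} The hard part is part (b): the reversal operators $o_\ell$ do not commute cleanly with the asymmetric contraction $\Uc\mapsto\Uc^{(2)}$, and even with $\Uc\mapsto\Uc^{(1)}$ they disagree on the internal $A_1$--$A_2$ edge, so keeping the relevant partition graph acyclic while simultaneously reversing edges, identifying vertices, and re-canonicalizing requires a careful and somewhat lengthy bookkeeping; one must pin down exactly which configurations of $(i,j)$ and which connectivity of $G[U_3\backslash U_1]$ put us in the first versus the second alternative, confirm that the $\Wc\in\Ifr(\Uc)$ produced by part (a) admits precisely one further $o_1$-reversal compatible with the intended merge, and check in every boundary case that the chosen $B_i$ (and $B_j$) genuinely contain $A_i$ (and $A_j$) and that all intermediate merges really land in $\Sf_{k-1}(G,q)$, respectively $\Sf_{k-2}(G,q)$.
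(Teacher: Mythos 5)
Your part (a) is essentially the same case analysis as the paper's proof: the paper splits on $A_i\in\{A_1\},\{A_2\}$, or neither, absorbing $A_i$ into the merged part $A_1\cup A_2$ of $\Uc^{(1)}$ (when $A_i=A_1$), or into $A_2\cup A_3$ or $A_2$ of $\Uc^{(2)}$ (when $A_i=A_2$, depending on whether $E(A_2,A_3)$ is empty). Your subdivision by $i\ge3$ vs.\ $i\le2$ and the further branch on $j=3$ and on connectivity of $G[U_3\setminus U_1]$ reaches the same conclusions; and your use of the ``no directed path of length $\ge 2$'' criterion to verify mergeability after absorbing parts is the right reason the merges stay acyclic. (You and the paper both sidestep the degenerate subcase $(i,j)=(1,2)$, where $\Merge(\Uc;A_1,A_2)=\Uc^{(1)}$ and $A_2$ may fail to be a part of either child; this is glossed over in the source as well, so I won't hold it against you.)

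For part (b), however, there is a real gap. You propose, in the ``incompatible'' configurations, to ``invoke part (a) to merge the relevant first part(s), producing $\Wc\in\Ifr(\Uc)$ in which the leftover reorientation is a single $o_1$-move.'' This is not what happens, for two reasons. First, part (a) does not produce elements of $\Ifr(\Uc)$; it maps an element of $\Ifr(\Uc)$ to an element of $\Ifr(\Uc^{(1)})\cup\Ifr(\Uc^{(2)})$, so the direction is reversed. Second, and more importantly, the intermediate flag $\Wc\in\Ifr(\Uc)$ that the paper constructs is \emph{not} obtained by merging any of the ``first parts'' $A_1,A_2,A_3$. The paper treats the star-graph case $G_{/\Uc}$ separately and, in the general case, sets $\Wc=\Merge(\Uc;A_r,A_s)$ where $r$ is the minimal index $p$ such that some edge set $E(A_p,A_\ell)$ has the \emph{same} orientation in $G(\Uc)$ as in $o_j(\Uc)$, and $s$ is the minimal $\ell$ realizing this for $p=r$. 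These indices $r,s$ are governed by which edges escape the chain of reversals $o_1,\dots,o_j$, and they are typically not among $\{1,2,3\}$; choosing the wrong pair to merge will not make the leftover reorientation a single $o_1$-move. This minimizing choice of $(r,s)$ is the key construction your plan is missing, and without it the path-lifting check at the end has no specific target to verify. Your own ``main obstacle'' paragraph correctly identifies that this is the hard part, but the plan as written does not supply the idea needed to overcome it.
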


\begin{proof}
(a) Since $A_1$ is a source in the partial orientation and cannot appear in any directed cycle we have the following:
\begin{itemize}
\item[(i)] if $A_i \ne A_1, A_2$ then $\Merge(\Uc^{(1)};A_i,A_j) \in \Ifr(\Uc^{(1)})$.
\item[(ii)] if $A_i=A_1$ then $\Merge(\Uc^{(1)};A_1 \cup A_2,A_j) \in \Ifr(\Uc^{(1)})$.
\item[(iii)] if $A_i=A_2$
\begin{itemize}
\item if $E(A_2,A_3)=\emptyset$ then $\Merge(\Uc^{(2)}; A_2,A_j) \in \Ifr(\Uc^{(2)})$.
\item if $E(A_2,A_3) \ne \emptyset$ then $\Merge(\Uc^{(2)}; A_2 \cup A_3 , A_j) \in \Ifr(\Uc^{(2)})$.
\end{itemize}
\end{itemize}

\medskip

In other words, in each case, we can find a $B_i \supseteq A_i$ such that $\Merge(\Uc^{(1)}; B_i,A_j) \in  \Ifr(\Uc^{(1)})$ or $\Merge(\Uc^{(2)}; B_i,A_j) \in \Ifr(\Uc^{(2)})$.
\medskip

(b) Assume that $\Merge(o_j(\Uc);A_i,A_j) \in \Bf(\Uc)\backslash \Ifr(\Uc)$. First assume that $G_{/\Uc}$ is a star graph, i.e., for each pair $\ell_1,\ell_2>1$ of indices $E(A_{\ell_1},A_{\ell_2})=\emptyset$. Then $A_j=A_1$ and $A_\ell$ is mergeable with $A_j$ in $G(\Uc)$ for all $\ell>1$. In particular, $A_2$ is mergeable with $A_1\cup A_3$ in $\Uc^{(2)}$ and $A_i$ is mergeable with $A_1\cup A_2$ in $\Uc^{(1)}$ for $i>2$ which is what we want.

\medskip

Now we assume that $E(A_{\ell_1},A_{\ell_2})\ne\emptyset$ for some $\ell_1,\ell_2$. Then we define
\[
r:=\min\{p:\ E(A_{p},A_\ell)\ne \emptyset\ \textit{have the same orientations in $G(\Uc)$ and $o_j(\Uc)$ for some $\ell$}\}
\]
and
\[
s:=\min\{\ell:\ E(A_{r},A_\ell)\ne \emptyset\ \textit{have the same orientations in $G(\Uc)$ and $o_j(\Uc)$}\}\ .
\]

Note that our assumption on $E(A_{\ell_1},A_{\ell_2})$ shows that these sets are nonempty. We also have $A_j \ne A_r$ since the outdegree of each vertex of $A_j$ in $o_j(\Uc)$ is zero, but there are some edges going out from $A_r$ to $A_s$. We set $\Wc=\Merge(\Uc;A_r,A_s)$.
In the following we consider all possible cases to show that $\Wc\in \Ifr(\Uc)$ with the desired properties:

\begin{itemize}
\item[(i)] $A_i\neq A_1$: then the edges between $A_1$ and $A_2$ are oriented from $A_1$ to $A_2$ in $o_j(\Uc)$ and $G(\Uc)$. We first reverse the orientation of the edges between $A_1\cup A_2$ and $V(G)\backslash (A_1\cup A_2)$. If $A_j=A_2$ then $A_i$ is mergeable with $A_1\cup A_2$ in $o_1(\Uc^{(1)})$ and so $\Merge(o_1(\Uc^{(1)});A_1\cup A_2,A_i) \in \Bf(\Uc^{(1)})$.
    Assume that $A_j\neq A_2$. Then we let the vertices of $A_{3}$ borrow from their neighbors in $V(G)\backslash A_3$ in order to get $o_2(\Uc^{(1)})$ which differs with $o_3(\Uc)$ just by merging the parts $A_1$ and $A_2$. We continue the same process of chip firing on the parts of $A_4,\ldots, A_{j}$ step-by-step in order to get $o_{j-1}(\Uc^{(1)})$ which can be obtained from $o_j(\Uc)$ just by merging the parts $A_1$ and $A_2$. This shows that $A_i$ is mergeable with $A_j$ in $o_{j-1}(\Uc^{(1)})$ as well, and so
$\Merge(o_{j-1}(\Uc^{(1)});A_i,A_j) \in \Bf(\Uc^{(1)})$.

\item[(ii)] $A_i=A_1$: then the same argument as case (i) shows the following cases can happen:

\begin{itemize}
\item if $A_j \ne A_r, A_s$ then
$\Merge(o_1(\Wc);A_1,A_j) \in \Bf(\Wc)$.

\item if $A_j=A_r$ or $A_j=A_s$  then
$\Merge(\mathfrak{c}(\Wc);A_1,A_r \cup A_s) \in \Bf(\Wc)$. \qed

\end{itemize}
\end{itemize}
\end{proof}

\medskip

There is a nice converse to Lemma~\ref{lem:contractible}(a). Our next result shows that the mergeable parts of $\Uc$ can be obtained from mergeable parts of the canonical flags $\Uc^{(1)}$  and $\Uc^{(2)}$.

\begin{Lemma}\label{lem:U1,U2}
There exists a one-to-one correspondence between elements $\Ifr(\Uc^{(1)})\cup \Ifr(\Uc^{(2)})$ and elements of $\Ifr(\Uc)$.
\end{Lemma}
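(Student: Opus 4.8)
I want to exhibit an explicit bijection between $\Ifr(\Uc^{(1)}) \cup \Ifr(\Uc^{(2)})$ (a disjoint union, since these live in different subsets of $\Sf_{k-1}(G,q)$ unless they happen to coincide — I should check the two pieces are actually disjoint, or at least set up the correspondence so that each $\Wc \in \Ifr(\Uc)$ hits exactly one of them) and $\Ifr(\Uc)$. Write $\Uc:\ U_1 \subsetneq U_2 \subsetneq \cdots \subsetneq V(G)$ with connected parts $A_\ell = U_\ell \backslash U_{\ell-1}$. Lemma~\ref{lem:contractible}(a) already gives a map in one direction: it sends a mergeable pair $(A_i, A_j)$ in $G(\Uc)$ to a mergeable pair $(B_i, A_j)$ in either $G(\Uc^{(1)})$ or $G(\Uc^{(2)})$, where $B_i \supseteq A_i$. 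The content of this lemma is that this map is a bijection. So the bulk of the work is (1) constructing the inverse map explicitly, and (2) checking the two maps are mutually inverse.

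\textbf{Constructing the inverse.} The parts of $\Uc^{(1)}$ are $A_2, A_3, \dots, A_k$ with $A_2$ the source; the parts of $\Uc^{(2)}$ are $A_1$ (or $A_1 \cup A_3'$-type amalgam, following Definition~\ref{def:U12}), then $A_3, A_4, \dots$. Given a mergeable pair in $\Ifr(\Uc^{(1)})$, say parts $P$ and $A_j$ of $\Uc^{(1)}$ with $P$ mergeable with $A_j$: if $P = A_i$ with $i \geq 3$ and $i \neq$ the part that swallowed $A_1, A_2$, then I claim $(A_i, A_j)$ is already mergeable in $G(\Uc)$ and I map back via $\Merge(\Uc; A_i, A_j)$. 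If $P = A_1 \cup A_2$ (the image of the source of $\Uc$ merged into $\Uc^{(1)}$), I need to decide whether to ``undo'' to $(A_1, A_j)$ or $(A_2, A_j)$ in $G(\Uc)$ — here the edge set $E(A_1, A_j)$ versus $E(A_2, A_j)$ determines which; exactly one of them is nonempty and yields a mergeable pair in $G(\Uc)$ (acyclicity is preserved because we are removing orientations from an already-acyclic orientation and $A_1$ is the global source). The $\Ifr(\Uc^{(2)})$ side is handled symmetrically, tracking the $A_3 \backslash U_1$ amalgamation case from Definition~\ref{def:U12}(b). The key structural fact I will lean on repeatedly: in $G(\Uc)$ the part $A_1$ is the unique source and $A_2$ is a source of $G(\Uc^{(1)})$, so ``mergeability with $A_j$'' is governed purely by which $E(A_i, A_j)$ are nonempty and by acyclicity, and removing one oriented edge-class from an acyclic partial orientation whose only source structure is controlled cannot create a cycle through $A_1$.

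\textbf{Main obstacle.} The delicate point — and where I expect the real work to be — is the case analysis around $A_2$ in $\Uc$ and the amalgamated first part of $\Uc^{(2)}$ (the $G[U_3 \backslash U_1]$ connected vs. disconnected dichotomy of Definition~\ref{def:U12}(b)). When $A_2$ is mergeable with some $A_j$ in $G(\Uc)$, the target pair in $\Uc^{(2)}$ may involve $A_2$ alone or $A_2 \cup A_3$, exactly as in branch (iii) of the proof of Lemma~\ref{lem:contractible}(a), and I must verify these outputs genuinely land in $\Ifr(\Uc^{(2)})$ and that the inverse correctly recovers $(A_2, A_j)$ in $G(\Uc)$ — i.e.\ that the amalgamation data doesn't cause two distinct mergeable pairs of $\Uc$ to collapse to the same pair downstairs, nor leave a pair downstairs unhit. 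I will organize this as a finite case split on (a) whether $i = 1$, $i = 2$, or $i \geq 3$, and (b) whether $E(A_2, A_3) = \emptyset$, verifying in each cell that Lemma~\ref{lem:contractible}(a)'s forward map and my inverse compose to the identity; the acyclicity bookkeeping in each cell is routine once the source structure is pinned down, so I will state it and not grind through every edge. The equivalence-relation subtlety (distinct connected flags with the same partial orientation) does not intervene here because $\Ifr$ is defined in terms of $G(\Uc)$ directly and $\Merge$ outputs a well-defined element of $\Sf_{k-1}(G,q)$.
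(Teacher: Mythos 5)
Your overall plan is the same as the paper's: you build the explicit map $\Ifr(\Uc^{(1)})\cup\Ifr(\Uc^{(2)})\to\Ifr(\Uc)$ by cases on which parts of $\Uc^{(1)}$ or $\Uc^{(2)}$ are being merged, and you cite Lemma~\ref{lem:contractible}(a) for the reverse direction; the case split you announce (on $i=1,2,\geq 3$ and on whether $E(A_2,A_3)=\emptyset$, following the two branches of Definition~\ref{def:U12}(b)) is exactly how the paper organizes its argument. Two of your side remarks, however, contain errors that would derail the case analysis if you wrote it out.

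First, the claim ``acyclicity is preserved because we are removing orientations from an already-acyclic orientation and $A_1$ is the global source'' is false, and it is precisely the point where the case work is needed. Removing the orientations on $E(A_1,A_j)$ forces you to contract $A_1$ and $A_j$ into one part, and that contraction can create a directed cycle even though $A_1$ was the unique source. For example, in $K_4$ with $\Uc:\{1\}\subset\{1,2\}\subset\{1,2,3\}\subset\{1,2,3,4\}$, contracting $A_1\cup A_3$ produces the $2$-cycle $A_1\cup A_3 \to A_2 \to A_1\cup A_3$ via the edges $12$ and $23$, so $\Merge(\Uc;A_1,A_3)\notin\Ifr(\Uc)$. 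The correct statement is that $\Merge(\Uc;A_1,A_j)$ is acyclic if and only if $E(A_m,A_j)=\emptyset$ for all $1<m<j$; this is why the paper's decision rule is ``prefer $\Merge(\Uc;A_2,A_j)$ when $E(A_2,A_j)\neq\emptyset$, otherwise take $\Merge(\Uc;A_1,A_j)$,'' rather than any automatic-acyclicity argument. Relatedly, your claim that ``exactly one of $E(A_1,A_j)$, $E(A_2,A_j)$ is nonempty'' is wrong (in $K_4$ both are nonempty); the dichotomy is that exactly one of the two proposed merges lies in $\Ifr(\Uc)$, and deciding which requires the acyclicity test, not just edge-nonemptiness.

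Second, your worry about disjointness of $\Ifr(\Uc^{(1)})$ and $\Ifr(\Uc^{(2)})$ as subsets of $\Sf_{k-2}(G,q)$ is well founded and you should resolve it rather than leave it as a parenthetical: they are \emph{not} always disjoint. In the running $C_4$ example with $\Uc:\{1\}\subset\{1,2\}\subset\{1,2,3\}\subset\{1,2,3,4\}$, the flags $\Merge(\Uc^{(1)};A_1\cup A_2,A_3)$ and $\Merge(\Uc^{(2)};A_1\cup A_3,A_2)$ have identical underlying partial orientations and hence coincide as elements of $\Sf_2(G,q)$, while $|\Ifr(\Uc)|=4$. So the stated ``one-to-one correspondence'' must be read with $\Ifr(\Uc^{(1)})\cup\Ifr(\Uc^{(2)})$ as a disjoint union, or equivalently as a bijection between mergeable \emph{pairs} of parts of $\Uc^{(1)}$ together with those of $\Uc^{(2)}$ and mergeable pairs of $\Uc$; the paper's own phrasing (``corresponding to each pair of mergeable parts in $G(\Uc^{(1)})$ or $G(\Uc^{(2)})$'') implicitly adopts this. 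Once you fix these two points, the argument you sketch is the paper's argument.
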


\begin{proof}
Let $\Uc\in \Sf_{k}(G,q)$. Corresponding to each pair of mergeable parts in $G(\Uc^{(1)})$ or $G(\Uc^{(2)})$ we will find a unique pair of mergeable parts in $G(\Uc)$.
Assume that $\Wc\in \Ifr(\Uc^{(1)})\cup \Ifr(\Uc^{(2)})$. Then we consider the following cases:

$\bullet$ $\Wc\in \Ifr(\Uc^{(1)})$: Since $A_1$ is a source in the partial orientation and cannot appear in any directed cycle we have the following:
\begin{itemize}
\item[(i)] if $\Merge(\Uc^{(1)};A_i,A_j) \in \Ifr(\Uc^{(1)})$ then $A_i$ is mergeable with $A_j$ in $G(\Uc)$.
\item[(ii)] if $\Merge(\Uc^{(1)};A_1\cup A_2,A_i) \in \Ifr(\Uc^{(1)})$ then
\begin{itemize}
\item
$\Merge(\Uc;A_2,A_i) \in \Ifr(\Uc)$ if $E(A_2,A_i)\ne \emptyset$.
\item
$\Merge(\Uc;A_1,A_i) \in \Ifr(\Uc)$ if $E(A_2,A_i)=\emptyset$.
\end{itemize}
\end{itemize}

\medskip

$\bullet$ $\Wc\in \Ifr(\Uc^{(2)})$: First of all note that $\Merge(\Uc^{(2)};A_1,A_2) \in \Ifr(\Uc)$.
Then we have the following cases:
\begin{itemize}
\item[(i)] if $\Merge(\Uc^{(2)};A_i,A_j) \in \Ifr(\Uc^{(2)})$ then $\Merge(\Uc;A_i,A_j) \in \Ifr(\Uc)$.
\item[(ii)] if $\Merge(\Uc^{(2)};A_2\cup A_3,A_i) \in \Ifr(\Uc^{(2)})$ then
\begin{itemize}
\item $\Merge(\Uc;A_3,A_i) \in \Ifr(\Uc)$ if $E(A_3,A_i)\ne \emptyset$.
\item $\Merge(\Uc;A_2,A_i) \in \Ifr(\Uc)$ if $E(A_3,A_i)=\emptyset$.
\end{itemize}

\item[(iii)] if $\Merge(\Uc^{(2)};A_1\cup A_3,A_i) \in \Ifr(\Uc^{(2)})$ then
\begin{itemize}
\item $\Merge(\Uc;A_3,A_i) \in \Ifr(\Uc)$ if $E(A_3,A_i)\ne \emptyset$.
\item $\Merge(\Uc;A_1,A_i) \in \Ifr(\Uc)$ if $E(A_3,A_i)=\emptyset$.
\end{itemize}
\end{itemize}
In Lemma~\ref{lem:contractible}(a) we have already shown that each element $\Merge(\Uc;A_i,A_j)$ of $\Ifr(\Uc)$ corresponds to an element of $\Ifr(\Uc^{(1)})\cup \Ifr(\Uc^{(2)})$.
\end{proof}

\medskip

\subsection{Incidence function}
Signs of the summands in the image of the basis elements under differential maps can be read from incidence functions as follows.

Assume $\Uc \in \Sf_{k}(G,q)$ for $3 \leq k \leq n$. For $\Wc \in \Bf(\Uc)$ we want to define an {\em incidence value} $\epsilon(\Uc, \Wc) \in \{-1,+1\}$. For this we look at two set of natural permutations on parts of $\Uc$ and on parts of $\Wc$. Let  $\Wc=\Merge(\mathfrak{c}(\Uc); A_i, A_j)$.

\begin{itemize}
\item Let $\delta(\Uc)=(A_1,A_2, \cdots, A_k)$ and $\delta(\Wc)=(A_{\ell_1},A_{\ell_2}, \cdots, A_i \cup A_j, \cdots , A_{\ell_{k-1}})$ denote the permutations corresponding to the fixed ordering of parts (as fixed by the choice of minimal representatives of the classes in $\Ef_{k}(G, q)$ with respect to $\prec_k$).
\item Let $\alpha(\Uc)=(A_i,A_j, A_{s_1},\cdots, A_{s_{k-2}})$ be an arbitrary permutation which fixes $A_i$ and $A_j$ at the beginning and is arbitrary otherwise. Correspondingly, we define $\alpha(\Wc)=(A_i \cup A_j, A_{s_1},\cdots, A_{s_{k-2}})$ compatible with $\alpha(\Uc)$.
\end{itemize}

\begin{Definition} \label{def:sign}
We define
\begin{itemize}
\item[(i)] $\epsilon(\Uc, \Wc) = \sgn(\delta(\Uc),\alpha(\Uc))\sgn(\delta(\Wc),\alpha(\Wc))$,
where $\sgn(\cdot , \cdot)$ denote the standard sign function for permutations.
\item[(ii)]  $\theta(\Uc,\Wc)=D(A_j,A_i)$.
\end{itemize}
\end{Definition}

The definition of  $\epsilon(\Uc, \Wc)$ is easily seen to be independent of the choice of $\alpha(\Uc)$, because if $\alpha(\Uc)$ is replaced with $\alpha'(\Uc)=(A_i,A_j, A_{t_1},\cdots, A_{t_{k-2}})$ then
\[
\sgn(\alpha'(\Uc), \alpha(\Uc))=\sgn(\alpha'(\Wc), \alpha(\Wc))=\sgn((A_{t_1},\cdots, A_{t_{k-1}}), (A_{s_1},\cdots, A_{s_{k-2}}))
\]
 and  $\epsilon(\Uc, \Wc)$ is multiplied by $\sgn(\alpha'(\Uc), \alpha(\Uc))^2=1$. It is easy to see that $\theta(\Uc,\Wc)$ is also well-defined, and is independent of the choice acyclic orientation on $\mathfrak{c}(\Uc)$ where $A_i$ and $A_j$ are mergeable.

\medskip

\begin{Proposition}\label{prop:sign}
Fix $3 \leq k \leq n$ and let $\Uc \in \Sf_{k}(G,q)$. For any $\Wc \in \Bf(\Uc)$ and $\Xc \in \Bf(\Wc)$ there exists a unique $\Wc' \in \Bf(\Uc)$ such that $\Xc \in \Bf(\Wc')$ and
\[
\epsilon(\Uc,\Wc)\epsilon(\Wc,\Xc) =-\epsilon(\Uc,\Wc')\epsilon(\Wc',\Xc)
\]
\[
\theta(\Uc,\Wc)+\theta(\Wc,\Xc) =\theta(\Uc,\Wc')+\theta(\Wc',\Xc)  \ .
\]
Moreover, if $\Wc \in \Ifr(\Uc)$ and $\Xc \in \Ifr(\Wc)$ we have $\Wc' \in \Ifr(\Uc)$ and $\Xc \in \Ifr(\Wc')$.

In particular we have
\[
\sum_{\Wc\in \Bf(\Uc)\atop \Xc\in \Bf(\Wc)} \epsilon(\Uc,\Wc) \epsilon(\Wc,\Xc) \xb^{\theta(\Uc,\Wc)} \xb^{\theta(\Wc,\Xc)}[\psi(\Xc)]=0
\]
and
\[
\sum_{\Wc\in \Ifr(\Uc)\atop \Xc\in \Ifr(\Wc)} \epsilon(\Uc,\Wc) \epsilon(\Wc,\Xc) \xb^{\theta(\Uc,\Wc)} \xb^{\theta(\Wc,\Xc)}[\psi(\Xc)]=0\ .
\]
\end{Proposition}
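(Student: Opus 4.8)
The plan is to establish the first assertion — the existence and uniqueness of $\Wc'$ together with the sign and divisor identities — and then deduce the two displayed vanishing identities formally. Once the first assertion is known, $(\Wc,\Xc)\mapsto(\Wc',\Xc)$ is a fixed-point-free involution of the finite index set $\{(\Wc,\Xc):\Wc\in\Bf(\Uc),\,\Xc\in\Bf(\Wc)\}$: it has no fixed point because the (unique) merged part of $\Wc$ differs from that of $\Wc'$, and it is an involution by uniqueness. On each orbit the divisor identity forces $\xb^{\theta(\Uc,\Wc)}\xb^{\theta(\Wc,\Xc)}=\xb^{\theta(\Uc,\Wc')}\xb^{\theta(\Wc',\Xc)}$, while the sign identity forces $\epsilon(\Uc,\Wc)\epsilon(\Wc,\Xc)=-\epsilon(\Uc,\Wc')\epsilon(\Wc',\Xc)$, so the two corresponding summands cancel; summing over orbits yields $0$, and the $\Ifr$-identity follows in the same way from the ``moreover'' clause. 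It is also convenient to reduce to the case in which every part $A_\ell=U_\ell\setminus U_{\ell-1}$ of $\Uc$ is a single vertex: by Remark~\ref{rem:contract} the contraction $\phi\colon G\to G_{/\Uc}$ transports $G(\Uc)$, the family $\mathfrak c(\Uc)$, the sets $\Bf(\Uc)$ and $\Ifr(\Uc)$ and the ordering to the analogous data for the all-singleton connected flag of $(G_{/\Uc},u_1)$, and carries the divisors $\theta$ along $\phi_\ast$. After this reduction $G(\Uc)$ is the acyclic orientation pointing every edge from the lower- to the higher-indexed vertex, and $o_j(\Uc)$ is the orientation obtained after successively borrowing at $A_1,\dots,A_j$ — equivalently, the acyclic orientation of the cyclically shifted order $(v_{j+1},\dots,v_k,v_1,\dots,v_j)$ of the vertices of $G_{/\Uc}$.

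Given such $\Uc$, and given $\Wc\in\Bf(\Uc)$ (a merge of a pair of parts of $\Uc$) and $\Xc\in\Bf(\Wc)$ (a merge of a pair of parts of $\Wc$), there are two possibilities: in the \emph{disjoint} case the two merges involve four distinct parts $\{A_a,A_b\}$ and $\{A_c,A_d\}$; in the \emph{nested} case they involve three parts, the second merge absorbing the part created by the first, so that $\Xc$ has the part $A_a\cup A_b\cup A_c$. In the disjoint case I take $\Wc'$ to be the flag obtained by merging $\{A_c,A_d\}$ first and then $\{A_a,A_b\}$. Since the two pairs share neither a part nor an edge, every mergeability/acyclicity condition for one pair is insensitive to performing the other, so $\Wc'\in\Bf(\Uc)$ and $\Xc\in\Bf(\Wc')$; moreover (a $\Bf$-operation only merges, never splits, parts) any flag reaching $\Xc$ in two steps must carry out exactly these two merges, so $\Wc'$ is the unique alternative to $\Wc$. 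The divisor identity is immediate, $\theta(\Uc,\Wc)$ and $\theta(\Wc,\Xc)$ being the $D(\cdot,\cdot)$-divisors attached to the two pairs, merely interchanged along the other path. For the sign, I would write out $\epsilon(\Uc,\Wc)\epsilon(\Wc,\Xc)$ and $\epsilon(\Uc,\Wc')\epsilon(\Wc',\Xc)$ from Definition~\ref{def:sign} using compatible auxiliary orderings ($\alpha$ beginning $(A_a,A_b,A_c,A_d,\dots)$ along one path and $(A_c,A_d,A_a,A_b,\dots)$ along the other): all of the factors $\sgn(\delta(\cdot),\alpha(\cdot))$ agree between the two paths except a single transposition of the two merged blocks at the level of $\Xc$, which supplies the sign $-1$.

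The nested case is the heart of the matter, and I expect it to be the main obstacle. Here one analyses the partial orientation induced on $\{A_a,A_b,A_c\}$ by the relevant member of $\mathfrak c(\Uc)$; being acyclic, it is a transitive triangle on whichever of the three pairs carry edges. Of the at most three ways to merge the three parts into one by two successive binary merges, the connectivity and acyclicity constraints leave exactly two admissible — $\Wc$ and one $\Wc'$, which I take as the partner (when all three pairs carry edges, the inadmissible order is the one whose first step is the ``extreme'' pair, since that creates a directed $2$-cycle through the middle part). The delicate verifications are: (a) that the reordering stays admissible with respect to the parts of $\Uc$ outside $\{A_a,A_b,A_c\}$; and (b), in the $\Bf$-case, that one can identify the correct member of $\mathfrak c(\Uc)$ (the correct $o_j$) in which the new pair of merges is carried out and that the resulting $(k-1)$- and $(k-2)$-flags again lie in $\Bf(\Uc)$ and $\Bf(\Wc')$ — this is the two-step refinement of the single-step statements Lemma~\ref{lem:contractible}(a),(b) and Lemma~\ref{lem:U1,U2}, and is precisely where the bookkeeping of \S\ref{sec:Mergeable} is used. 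The divisor identity follows because $\theta(\Uc,\Wc)+\theta(\Wc,\Xc)$ records, edge by edge, exactly the edges of $G$ absorbed into the common part $A_a\cup A_b\cup A_c$, each counted at its head relative to $G(\Uc)$, a datum depending only on $\Uc$ and $\Xc$. Uniqueness of $\Wc'$ then follows from the abstract constraint that any further admissible flag would make at least three unit coefficients sum to zero in the displayed identity (all with the same monomial, by the divisor identity), which is impossible. The sign identity is checked as in the disjoint case, now noting that the relevant permutations of $\{A_a,A_b,A_c\}$ and of their images differ by an even permutation (a $3$-cycle) at the intermediate stages but by a transposition at the level of $\Xc$.

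It remains to see that the same involution restricts to the $\Ifr$-index set. Every merge occurring in an element of $\Ifr$ is performed in $G(\Uc)$ itself (not in a proper $o_j$), and the constructions of $\Wc'$ above — swapping the order of two disjoint merges, or replacing the first merge of a nested triple by the other admissible one — only ever reorder merges that take place in $G(\cdot)$. Hence if $\Wc\in\Ifr(\Uc)$ and $\Xc\in\Ifr(\Wc)$ then $\Wc'\in\Ifr(\Uc)$ and $\Xc\in\Ifr(\Wc')$, so the involution preserves the $\Ifr$-index set, and the second vanishing identity follows exactly as the first. The main obstacle throughout is the nested case in the $\Bf$-setting: tracking which member of $\mathfrak c(\Uc)$ each of the two merges lives in, proving that precisely two orderings are admissible, and matching the signs — in effect, upgrading Lemma~\ref{lem:contractible} and Lemma~\ref{lem:U1,U2} from single merges to pairs of merges.
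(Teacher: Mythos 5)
Your proposal follows the paper's proof closely: the same split into a ``disjoint'' case (four distinct parts merged across the two steps) and a ``nested'' case (three parts), the same identification of the partner $\Wc'$ by reordering the two merges, the same sign computation via a single transposition of blocks in the auxiliary orderings of Definition~\ref{def:sign}, and the same fixed-point-free sign-reversing involution to obtain the two vanishing identities. The reduction to singleton parts via contraction along $G\to G_{/\Uc}$ is a pleasant streamlining the paper does not use at this point (it is deployed instead in the proof of Theorem~\ref{thm:betti_binom}), and your transitive-triangle picture for the nested case is a cleaner way to phrase what the paper does by branching concretely on whether $E(A_r,A_i)\ne\emptyset$.

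Two points deserve attention. The first is a genuine flaw: your fallback argument for uniqueness of $\Wc'$ in the nested case --- ``any further admissible flag would make at least three unit coefficients sum to zero in the displayed identity, which is impossible'' --- is circular, since the vanishing identity is precisely what the proposition is establishing and cannot be assumed when counting paths. You must rely entirely on the primary combinatorial count (of the three orderings of a nested triple, exactly one is killed by acyclicity), which is also what the paper asserts; drop the secondary argument. The second is the place you flag as ``delicate bookkeeping'': checking that the reordered pair of merges is actually witnessed in $\Bf(\Uc)$ and $\Bf(\Wc')$. Note that the two merges along a given path are not witnessed in a single member of $\mathfrak{c}(\Uc)$ --- the first lives in $\mathfrak{c}(\Uc)$ and the second in $\mathfrak{c}(\Wc)$, a different family --- so the transitive triangle you analyse is not sitting inside one $o_j(\Uc)$. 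The paper's proof is also terse here (it simply asserts the alternative decompositions and then verifies the signs); this is the real remaining work, in line with Lemma~\ref{lem:contractible}(b).
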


\begin{proof}
Let the connected parts of $\Uc$ be $A_\ell$ for $1 \leq \ell \leq k$. Assume that $\Wc=\Merge(\mathfrak{c}(\Uc); A_i, A_j)$ and $\Xc=\Merge(\mathfrak{c}(\Wc); B_r, B_s)$. Since connected parts $B_r$ and $B_s$ of $\Wc$ are among $A_\ell$ (for $\ell \ne i,j$) and $A_i \cup A_j$, we need to consider three cases.

\medskip

$\bullet$ $B_r=A_r$, $B_s=A_s$: In this case $A_r$ and $A_s$ are mergeable in $\mathfrak{c}(\Uc)$ and we let $\Wc'=\Merge(\mathfrak{c}(\Uc); A_r, A_s)$.
{Then clearly} $\Xc=\Merge(\mathfrak{c}(\Wc'); A_i, A_j)$.
It follows that
\[
\theta(\Uc,\Wc)+\theta(\Wc,\Xc) =\theta(\Uc,\Wc')+\theta(\Wc',\Xc)=D(A_j,A_i)+D(A_s,A_r) \ .
\]

{There is a unique $\Wc' \ne \Wc$ with this property because there are only two ways to merge $A_i$ with $A_j$ and $A_r$ with $A_s$ .}
 Let
\[\alpha(\Uc)=(A_i,A_j,A_r,A_s, \cdots) \quad , \quad  \alpha'(\Uc)=(A_r,A_s,A_i,A_j, \cdots) ,\]
\[\alpha(\Wc)=(A_i \cup A_j,A_r,A_s, \cdots) \quad , \quad \alpha'(\Wc')=(A_r \cup A_s,A_i,A_j, \cdots) ,\]
\[\beta(\Wc)=(A_r,A_s, A_i \cup A_j , \cdots) \quad , \quad \beta'(\Wc')=(A_i, A_j, A_r \cup A_s , \cdots) ,\]
\[\beta(\Xc)=(A_r \cup A_s, A_i \cup A_j , \cdots) \quad , \quad \beta'(\Xc)=(A_i \cup A_j, A_r \cup A_s , \cdots) .\]
 From Definition~\ref{def:sign} we know
\[
\epsilon(\Uc,\Wc)\epsilon(\Wc,\Xc) =\sgn(\delta(\Uc),\alpha(\Uc))\sgn(\delta(\Wc), \alpha(\Wc))  \sgn(\delta(\Wc),\beta(\Wc))\sgn(\delta(\Xc), \beta(\Xc))
\]
\[
\epsilon(\Uc,\Wc')\epsilon(\Wc',\Xc) =\sgn(\delta(\Uc),\alpha'(\Uc))\sgn(\delta(\Wc'), \alpha'(\Wc'))  \sgn(\delta(\Wc'),\beta'(\Wc'))\sgn(\delta(\Xc), \beta'(\Xc)).
\]
The result follows from
\[\sgn(\delta(\Wc), \alpha(\Wc))  \sgn(\delta(\Wc),\beta(\Wc))=\sgn(\alpha(\Wc), \beta(\Wc))=1 , \]
\[\sgn(\delta(\Wc'), \alpha'(\Wc'))  \sgn(\delta(\Wc'),\beta'(\Wc'))= \sgn(\alpha'(\Wc'), \beta'(\Wc'))=1 ,\]
\[\sgn(\delta(\Uc),\alpha(\Uc)) \sgn(\delta(\Uc),\alpha'(\Uc))=\sgn(\alpha(\Uc),\alpha'(\Uc))=1 ,\]
\[\sgn(\delta(\Xc), \beta(\Xc))\sgn(\delta(\Xc), \beta'(\Xc))=\sgn(\beta(\Xc)), \beta'(\Xc))=-1 .\]

\medskip

$\bullet$ $B_r=A_r$, $B_s=A_i \cup A_j$: There are two cases:

\medskip

(1)  $E(A_r,A_i) \ne \emptyset$ in which case we let $\Wc'=\Merge(\mathfrak{c}(\Uc); A_r, A_i)$, and we have $\Xc=\Merge(\mathfrak{c}(\Wc'); A_r\cup A_i, A_j)$,

(2)  $E(A_r,A_i) = \emptyset$ in which case we must have $E(A_r,A_j) \ne \emptyset$ and we let $\Wc'=\Merge(\mathfrak{c}(\Uc); A_r, A_j)$. We then have $\Xc=\Merge(\mathfrak{c}(\Wc'); A_i, A_r \cup A_j)$.

\medskip

In each case it follows that
\[
\theta(\Uc,\Wc)+\theta(\Wc,\Xc) =\theta(\Uc,\Wc')+\theta(\Wc',\Xc)=D(A_j,A_i)+D(A_i,A_r)+D(A_j , A_r) \ .
\]
There is a unique $\Wc' \ne \Wc$ with this property because there are only two ways to merge $A_i$, $A_j$ and $A_r$.

We now verify the equality for the incidence function $\epsilon$ in case (1). Let

\[\alpha(\Uc)=(A_i,A_j,A_r,\cdots) \quad , \quad  \alpha'(\Uc)=(A_r,A_i,A_j, \cdots) ,\]
\[\alpha(\Wc)=(A_i \cup A_j,A_r, \cdots) \quad , \quad \alpha'(\Wc')=(A_r \cup A_i,A_j, \cdots) ,\]
\[\beta(\Wc)=(A_r, A_i \cup A_j , \cdots) \quad , \quad \beta'(\Wc')=\alpha'(\Wc') ,\]
\[\beta(\Xc)=(A_r \cup A_i \cup A_j , \cdots) \quad , \quad \beta'(\Xc)=\beta(\Xc) .\]
 From Definition~\ref{def:sign} we know
\[
\epsilon(\Uc,\Wc)\epsilon(\Wc,\Xc) =\sgn(\delta(\Uc),\alpha(\Uc))\sgn(\delta(\Wc), \alpha(\Wc))  \sgn(\delta(\Wc),\beta(\Wc))\sgn(\delta(\Xc), \beta(\Xc)),
\]
\[
\epsilon(\Uc,\Wc')\epsilon(\Wc',\Xc) =\sgn(\delta(\Uc),\alpha'(\Uc))\sgn(\delta(\Wc'), \alpha'(\Wc'))  \sgn(\delta(\Wc'),\beta'(\Wc'))\sgn(\delta(\Xc), \beta'(\Xc)).
\]
The result follows from
\[\sgn(\delta(\Wc), \alpha(\Wc))  \sgn(\delta(\Wc),\beta(\Wc))=\sgn(\alpha(\Wc), \beta(\Wc))=-1 , \]
\[\sgn(\delta(\Wc'), \alpha'(\Wc'))  \sgn(\delta(\Wc'),\beta'(\Wc'))= \sgn(\alpha'(\Wc'), \beta'(\Wc'))=1 ,\]
\[\sgn(\delta(\Uc),\alpha(\Uc)) \sgn(\delta(\Uc),\alpha'(\Uc))=\sgn(\alpha(\Uc),\alpha'(\Uc))=1 ,\]
\[\sgn(\delta(\Xc), \beta(\Xc))\sgn(\delta(\Xc), \beta'(\Xc))=\sgn(\beta(\Xc)), \beta'(\Xc))=1 .\]

\medskip

For case (2) this verification is completely analogous. Let

\[\alpha(\Uc)=(A_i,A_j,A_r,\cdots) \quad , \quad  \alpha'(\Uc)=(A_r,A_j,A_i, \cdots) ,\]
\[\alpha(\Wc)=(A_i \cup A_j,A_r, \cdots) \quad , \quad \alpha'(\Wc')=(A_r \cup A_j,A_i, \cdots) ,\]
\[\beta(\Wc)=(A_r, A_i \cup A_j , \cdots) \quad , \quad \beta'(\Wc')=(A_i, A_r \cup A_j, \cdots) ,\]
\[\beta(\Xc)=(A_r \cup A_i \cup A_j , \cdots) \quad , \quad \beta'(\Xc)=\beta(\Xc) .\]
 From Definition~\ref{def:sign} we know
\[
\epsilon(\Uc,\Wc)\epsilon(\Wc,\Xc) =\sgn(\delta(\Uc),\alpha(\Uc))\sgn(\delta(\Wc), \alpha(\Wc))  \sgn(\delta(\Wc),\beta(\Wc))\sgn(\delta(\Xc), \beta(\Xc)),
\]
\[
\epsilon(\Uc,\Wc')\epsilon(\Wc',\Xc) =\sgn(\delta(\Uc),\alpha'(\Uc))\sgn(\delta(\Wc'), \alpha'(\Wc'))  \sgn(\delta(\Wc'),\beta'(\Wc'))\sgn(\delta(\Xc), \beta'(\Xc)).
\]
The result follows from
\[\sgn(\delta(\Wc), \alpha(\Wc))  \sgn(\delta(\Wc),\beta(\Wc))=\sgn(\alpha(\Wc), \beta(\Wc))=-1 , \]
\[\sgn(\delta(\Wc'), \alpha'(\Wc'))  \sgn(\delta(\Wc'),\beta'(\Wc'))= \sgn(\alpha'(\Wc'), \beta'(\Wc'))=-1 ,\]
\[\sgn(\delta(\Uc),\alpha(\Uc)) \sgn(\delta(\Uc),\alpha'(\Uc))=\sgn(\alpha(\Uc),\alpha'(\Uc))=-1 ,\]
\[\sgn(\delta(\Xc), \beta(\Xc))\sgn(\delta(\Xc), \beta'(\Xc))=\sgn(\beta(\Xc)), \beta'(\Xc))=1 .\]

$\bullet$ $B_s=A_r$, $B_r=A_i \cup A_j$: This case is proved precisely as the previous case by permuting the indices.

\medskip

In all cases  if $\Wc \in \Ifr(\Uc)$ and $\Xc \in \Ifr(\Wc)$ the constructed $\Wc'$ is in $\Ifr(\Uc)$.
\end{proof}

\medskip


\subsection{Differential maps and minimality of the free resolutions}

We are now ready to use \eqref{eq:map} and induction to give a precise description of the differential maps constructed in Theorem~\ref{thm:GB} (respectively, Remark~\ref{rmk:inithm}) for $I_G$ (respectively, $\ini(I_G)$). The minimality of the constructed resolutions follows from this explicit description, as no units appear in the described differential maps. To simplify the notation we use $\psi$ instead of $\psi_k$ for all $k$ (as defined in Theorem~\ref{thm:GB} and Remark~\ref{rmk:inithm}).

\begin{Theorem}\label{thm:betti_binom}
Let $k \geq 0$ and $\Uc \in \Sf_{k+2}(G,q)$.

For $I_G$ the differential maps given by \eqref{eq:map} are of the form
\[
\varphi_{k}([\psi(\Uc)])=\sum_{\Wc\in \Bf(\Uc)} \epsilon(\Uc,\Wc) \xb^{\theta(\Uc,\Wc)}[\psi(\Wc)]\ .
\]

In particular, the set $\psi(\Sf_{k+2}(G, q))$ minimally generates $\syz_{k}(\Gb(G,q))$.
\end{Theorem}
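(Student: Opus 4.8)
The plan is to prove the displayed formula for $\varphi_k$ by induction on $k$, using the recursive definition \eqref{eq:map} of $\psi_k$ together with the standard-representation bookkeeping from Schreyer's algorithm. The base case $k=0$ is immediate: by the definition of $\psi_0$ in the proof of Theorem~\ref{thm:GB} and of $\Bf(\Uc)$ for a $2$-flag $\Uc: U_1\subsetneq U_2=V(G)$ (where the only mergeable pair is $A_1=U_1$, $A_2=U_2\backslash U_1$, with $\Merge$ producing the trivial $1$-flag $V(G)$), the formula $\varphi_0([\psi(\Uc)])=\xb^{D(U_2\backslash U_1,U_1)}[\psi(V(G))]$ with the other term $\xb^{D(U_1,U_2\backslash U_1)}[\psi(V(G))]$ reproduces precisely $\sum_{\Wc\in\Bf(\Uc)}\epsilon(\Uc,\Wc)\xb^{\theta(\Uc,\Wc)}[\psi(\Wc)]$ once the two signs are checked against Definition~\ref{def:sign}.

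For the inductive step, fix $\Uc\in\Sf_{k+2}(G,q)$. By \eqref{eq:map}, $\psi(\Uc)=s(\psi(\Uc^{(1)}),\psi(\Uc^{(2)}))$, so by \eqref{phi_map} the map $\varphi_k$ sends $[\psi(\Uc)]$ to
\[
\xb^{\gamma-\alpha(f)}[\psi(\Uc^{(1)})]-\xb^{\gamma-\alpha(h)}[\psi(\Uc^{(2)})]-\sum_{g}a_g\,[g]\ ,
\]
where $f=\psi(\Uc^{(1)})$, $h=\psi(\Uc^{(2)})$, and $\gamma=\gamma(f,h)$. The first two terms are controlled: by Lemma~\ref{lem:KU1U2} and Claim~3 of Theorem~\ref{thm:GB} we have $\xb^{\gamma-\alpha(f)}=\xb^{D(U_2\backslash U_1,U_1)}$ and $\xb^{\gamma-\alpha(h)}=\xb^{\theta}$ for the appropriate divisor, and $\Uc^{(1)},\Uc^{(2)}\in\Bf(\Uc)$ correspond to the mergeable pair $\{A_1,A_2\}$ realized in the two ways recorded in Definition~\ref{def:U12} — these are exactly the $j=0$ contribution $\Merge(\Uc;A_1,A_2)\in\Ifr(\Uc)$ and the $j=1$ contribution. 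The real work is identifying the terms $\xb^{\gamma-\alpha(f)-\text{(mon.)}}\cdot a_g^{(f,h)}$ coming from the standard representation of $\spoly(f,h)$: one must show that, after expanding each $a_g$ recursively via the inductive formula for $\varphi_{k-1}$, the net result is exactly the sum over the remaining $\Wc\in\Bf(\Uc)$ of $\epsilon(\Uc,\Wc)\xb^{\theta(\Uc,\Wc)}[\psi(\Wc)]$, with the correct signs.

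The key structural input is Lemma~\ref{lem:contractible}: part~(a) guarantees that every mergeable pair of $\Uc$ descends to a mergeable pair of $\Uc^{(1)}$ or $\Uc^{(2)}$ (enlarging the first part if necessary), and part~(b) handles the $o_j$-mergeable pairs for $j>0$; conversely Lemma~\ref{lem:U1,U2} gives the bijection $\Ifr(\Uc^{(1)})\cup\Ifr(\Uc^{(2)})\leftrightarrow\Ifr(\Uc)$. Combining these with the inductive hypothesis applied to $\varphi_{k-1}([\psi(\Uc^{(1)})])$ and $\varphi_{k-1}([\psi(\Uc^{(2)})])$, one expands each coefficient $a_g^{(f,h)}$ and collects terms indexed by $\Wc\in\Bf(\Uc)$. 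At this point Proposition~\ref{prop:sign} is exactly what is needed: it shows that the ``spurious'' double-merge contributions (those where a term appears via two different intermediate flags $\Wc,\Wc'$) cancel in pairs, since $\epsilon(\Uc,\Wc)\epsilon(\Wc,\Xc)=-\epsilon(\Uc,\Wc')\epsilon(\Wc',\Xc)$ while the monomial degrees $\theta(\Uc,\Wc)+\theta(\Wc,\Xc)=\theta(\Uc,\Wc')+\theta(\Wc',\Xc)$ agree — so the standard representation, which is forced by $\varphi_k\circ\varphi_{k+1}=0$ (i.e. $\varphi_k$ must be a chain map), has no choice but to be the claimed sum. I would make this precise by checking that the right-hand side $\sum_{\Wc\in\Bf(\Uc)}\epsilon(\Uc,\Wc)\xb^{\theta(\Uc,\Wc)}[\psi(\Wc)]$ does lie in $\syz_{k-1}(\Gb(G,q))$ (a computation using Proposition~\ref{prop:sign} applied one level down, $\varphi_{k-1}$ of it being a telescoping sum that vanishes) and has the correct leading term $\xb^{D(U_2\backslash U_1,U_1)}[\psi(\Uc^{(1)})]$ matching Lemma~\ref{lem:lm}; by uniqueness of the reduction in Schreyer's algorithm the two agree.

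**Main obstacle.** The hardest part is the sign bookkeeping: verifying that the signs $\epsilon(\Uc,\Wc)$ produced abstractly by Definition~\ref{def:sign} are consistent with the signs that fall out of iterating \eqref{phi_map}, and in particular that the cancellation in Proposition~\ref{prop:sign} exactly matches the redundancies in the standard representation. A clean way to sidestep a brute-force expansion is to argue indirectly: show (i) the proposed differential $\widetilde{\varphi}_k$ defined by the formula satisfies $\widetilde{\varphi}_{k-1}\circ\widetilde{\varphi}_k=0$ (this is the content of the two vanishing sums at the end of Proposition~\ref{prop:sign}), (ii) its leading terms agree with those of the Schreyer differential by Lemma~\ref{lem:lm} and Claim~3, and (iii) both $\widetilde{\varphi}_k([\psi(\Uc)])$ and $\varphi_k([\psi(\Uc)])$ are standard expressions (no leading monomial of any $\psi(\Wc)$ divides a lower term); then their difference is a syzygy whose every term is non-leading, forcing it to be $0$ by the defining property of reduced standard representations modulo the Gröbner basis $\Gb_{k-1}(G,q)$. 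Once the formula for $\varphi_k$ is established, the final assertion — that $\psi(\Sf_{k+2}(G,q))$ minimally generates $\syz_k(\Gb(G,q))$ — is immediate from Remark~\ref{minfreegen}, since every coefficient $\epsilon(\Uc,\Wc)\xb^{\theta(\Uc,\Wc)}$ appearing in $\varphi_k$ is a non-unit of $R$ (indeed $\theta(\Uc,\Wc)=D(A_j,A_i)$ is a nonzero effective divisor because $E(A_i,A_j)\neq\emptyset$ whenever $A_i,A_j$ are mergeable), so the resolution is minimal and Theorem~\ref{peeva} applies.
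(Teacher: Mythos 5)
Your overall strategy---expand $s(\psi(\Uc^{(1)}),\psi(\Uc^{(2)}))$ via the recursive definition and use Proposition~\ref{prop:sign} to control cancellations---captures part of the paper's argument, but you are missing the key organizing idea. The paper's induction is on $n=|V(G)|$, not on $k$. For $0\le k<n-2$ it contracts the unoriented edges of $G(\Uc)$ to get $G_{/\Uc}$, a graph on $k+2<n$ vertices, transfers the problem along the maps $\phi^{\ast}$, $\phi_{\ast}$ of Remarks~\ref{rem:unique divisor}--\ref{rem:contract}, and applies the inductive hypothesis there. Only the top case $k=n-2$---where every part $U_i\backslash U_{i-1}$ is a singleton---is handled by a direct division-algorithm computation, and even then it uses the result for $k-1=n-3$, which was obtained by contraction. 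Your pure induction on $k$ would force you to run the division algorithm directly for arbitrary $k$ with non-singleton parts; the paper's detailed case analysis in the $k=n-2$ step (recovering $\Vc$ from the divisor $E'$, deciding which $V_1\subset V_2$ is possible, showing certain cases cannot occur) leans heavily on the parts being single vertices, and there is no indication in your sketch of how to carry this out in general. That is the genuine gap: without the contraction to $G_{/\Uc}$, the hard step is neither done nor reduced to a case where it is tractable.

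There is also a concrete conceptual error in your description of the first two terms. You write that ``$\Uc^{(1)},\Uc^{(2)}\in\Bf(\Uc)$ correspond to the mergeable pair $\{A_1,A_2\}$ realized in the two ways recorded in Definition~\ref{def:U12}.'' In fact $\Uc^{(1)}=\Merge(\Uc;A_1,A_2)$, but $\Uc^{(2)}$ is $\Merge(\Uc;A_2,A_3)$ when $G[U_3\backslash U_1]$ is connected and $\Merge(\Uc;A_1,A_3)$ otherwise---both are merges involving $A_3$, not a second realization of $\{A_1,A_2\}$. The paper's $k=n-2$ computation makes this explicit: $\theta(\Uc,\Uc^{(2)})$ equals $D(v_3,v_2)$ or $D(v_3,v_1)$, consistent with Lemma~\ref{lem:KU1U2}, not $D(v_1,v_2)$. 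Finally, your ``indirect'' variant, comparing $\widetilde{\varphi}_k$ and $\varphi_k$ as reduced standard expressions, needs more care: $s(f,h)$ is defined via a standard representation of $\spoly(f,h)$, which is not unique, and the element $s(f,h)$ itself contains the terms $\xb^{\gamma-\alpha(f)}[f]$ and $\xb^{\gamma-\alpha(h)}[h]$ that are multiples of basis symbols; the uniqueness argument you invoke would require identifying a canonical reduced form and showing both sides achieve it, which is not established in your sketch.
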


\begin{Remark} \label{rmk:betti_ini}
For $\ini(I_G)$ the differential maps given by \eqref{eq:map} and the initial condition described in Remark~\ref{rmk:inithm} are of the form
\[
\varphi_{k}([\psi(\Uc)])=\sum_{\Wc\in \Ifr(\Uc)} \epsilon(\Uc,\Wc) \xb^{\theta(\Uc,\Wc)}[\psi(\Wc)]\ .
\]
The proof is completely analogous to the binomial case, and is skipped here. We will not use this description when we discuss the Betti numbers because we instead appeal to Theorem~\ref{thm:GBini}.
\end{Remark}

\begin{proof}
For $\Uc \in \Sf_{k+2}(G, q)$, from \eqref{eq:map}, we need to show that

\begin{equation} \label{eq:inmap}
s([\psi(\Uc^{(1)})],[\psi(\Uc^{(2)})])=\sum_{\Wc\in \Bf(\Uc)} \epsilon(\Uc,\Wc) \xb^{\theta(\Uc,\Wc)}[\psi(\Wc)]\ .
\end{equation}

Note that this would prove the minimality of the resolution because $\Wc$ is a connected flag and therefore $\theta(\Uc,\Wc) \ne 0$.

The proof is by induction on the number of vertices of the graph. The result is obvious for a graph with $2$ vertices. Suppose the result holds for graphs with less than $n$ vertices, and consider the graph $G$ with $n$ vertices. We need to show that all maps $\varphi_k$ (for $0 \leq k \leq n-2$) are of the form \eqref{eq:inmap}. Fix a $\Uc \in \Sf_{k+2}(G,q)$. We consider two cases:

\medskip

$\bullet$ $0 \leq k < n-2$.
We consider the graph $G_{/\Uc}$ on the vertex set $\{u_1,u_2,\ldots,u_{k+2}\}$. This graph has fewer than $n$ vertices because $k +2< n$.
Let $\Uc' \in \Sf_{k+2}(G_{/\Uc},u_1)$ be the inverse image of $\Uc$ under the map $\phi^{\ast}$ as described  in Remark~\ref{rem:contract}.
By the induction hypothesis we have
\begin{equation} \label{induc-claim}
\psi(\Uc')=\sum_{\Wc' \in \Bf(\Uc')} \epsilon(\Uc',\Wc')\xb^{\theta(\Uc',\Wc')}[\psi(\Wc')] \ .
\end{equation}

For each $\Wc'$, let $\Wc$ be the inverse image under the map $\phi^{\ast}$. Then $\theta(\Uc,\Wc)$ and $\theta(\Uc',\Wc')$ are related by the map $\phi_{\ast}$ of Remark~\ref{rem:unique divisor}, and there is a one-to-one correspondence between elements of $\mathcal{B}(\Uc')$ and elements of $\mathcal{B}(\Uc)$. We claim that
\begin{equation}\label{our claim}
\psi(\Uc)=\sum_{\Wc \in \Bf(\Uc)}\epsilon(\Uc,\Wc)\xb^{\theta(\Uc,\Wc)}[\psi(\Wc)] \ ,
\end{equation}
where $c(\Wc)=c(\Wc')$.
To see this, we need to show that
\begin{equation} \label{our claim2}
\sum_{\Wc \in \Bf(\Uc)} \epsilon(\Uc,\Wc)\xb^{\theta(\Uc,\Wc)}\psi(\Wc) = 0 \ .
\end{equation}

First note that \eqref{induc-claim} is equivalent to
\begin{equation} \label{our claim 22}
\sum_{\Wc' \in \Bf(\Uc')} \epsilon(\Uc',\Wc')\xb^{\theta(\Uc',\Wc')}\psi(\Wc') = 0 \ .
\end{equation}

We again use the induction hypothesis for $\psi(\Xc')$ to write
\begin{equation}\label{contraction maps}
\sum_{\Wc' \in \Bf(\Uc') \atop \Xc' \in \Bf(\Wc')} \epsilon(\Uc',\Wc')\epsilon(\Wc',\Xc')\xb^{\theta(\Uc',\Wc')} \xb^{\theta(\Wc',\Xc')}  [\psi(\Xc')] = 0 \ .
\end{equation}

Equation (\ref{contraction maps}) implies that
corresponding to each term $\xb^{\theta(\Uc',\Wc'_1)} \xb^{\theta({\Wc'_1},\Xc'_1)}[\psi(\Xc'_1)]$ there exists a term
$\xb^{\theta(\Uc',\Wc'_2)} \xb^{\theta(\Wc'_2,\Xc'_2)}[\psi(\Xc'_2)]$ with opposite sign, with which it cancels. Therefore
\begin{equation}\label{E}
E':=\theta(\Uc',\Wc'_1)+\theta(\Wc'_1,\Xc'_1)=\theta(\Uc',\Wc'_2)+\theta(\Wc'_2,\Xc'_2) \quad \text{and} \quad [\psi(\Xc'_1)]=[\psi(\Xc'_2)] \ .
\end{equation}
Since $\psi$ is injective (Theorem~\ref{thm:GB}) we get $\Xc'_1=\Xc'_2$.
By the induction hypothesis we know
\[
\theta(\Uc',\Wc'_1)=D'(u_i,u_j)
\]
\[
\theta({\Wc'_1},\Xc'_1) =
\begin{cases}
D'(u_r,u_{s}) &\text{or}\\
D'(\{u_i,u_j\},u_r)&\text{or}\\
D'(u_r,\{u_i,u_j\})\\
\end{cases}
\]
for some distinct $i,j,r,s$ (depending on what parts of $\Wc'$ are merged to get $\Xc'_1=\Xc'_2$).

Since $E'$ can be written as a sum of $D'(u_a, u_b)$'s, once we recognize all $a$'s and $b$'s appearing in the sum, we can use Remark~\ref{rem:unique divisor} and lift $E'$ to some $E$ as a sum of  $D(U_a \backslash U_{a-1}, U_b \backslash U_{b-1})$'s. Then the same cancellations as in \eqref{our claim 22} occur in the left-hand side of \eqref{our claim2} and we get zero.

\medskip

$\bullet$ If $\Wc'_1=\Wc'_2$, then $\theta(\Uc',\Wc'_1)=\theta(\Uc',\Wc'_2)$ and it follows from \eqref{E} that $\theta(\Wc'_1,\Xc'_1)=\theta(\Wc'_2,\Xc'_2)$. By looking at $D'(u_i,u_j)$ we can recognize $u_i$. By looking at the unique part in $\Wc'_1=\Wc'_2$ that contains two elements, we recognize $u_j$. By looking at the vertices where $\theta({\Wc'_1},\Xc'_1)=\theta({\Wc'_2},\Xc'_2)$ is nonzero we can recognize $u_r$ since $\Xc'_1=\Xc'_2$.

\medskip

$\bullet$ If $\Wc'_1 \ne \Wc'_2$, then we consider the following cases:

\medskip

\begin{itemize}
\item[(1)] $\theta({\Wc'_1},\Xc'_1)=D'(u_r,u_s)$: we have  $E'=D'(u_i,u_j)+D'(u_r,u_s)$. The places where $E'$ is nonzero determine $\{u_i, u_r\}$. By looking at the two parts in $\Xc'_1=\Xc'_2$ which contain precisely two vertices, we can distinguish $\{\{u_i,u_j\},\{u_r,u_s\}\}$.

\item[ ]

\item [(2)] $\theta({\Wc'_1},\Xc'_1)=D'(\{u_i,u_j\},u_r)$: we have $E'=D'(u_i,u_j)+D'(u_i,u_r)+D'(u_j,u_r)$. Since we know $\Xc'_1=\Xc'_2$ we know $\{u_i,u_j,u_r\}$.
\begin{itemize}
\item if $u_i$ and $u_r$ are not adjacent: then $u_r$ is the vertex where $E'$ is zero. The vertex $u$ such that $E'(u)$ is equal to the number of edges between $u$ and $\{u_i,u_j,u_r\} \backslash u$ is $u_i$. The other vertex where $E'$ is nonzero is $u_j$.

\item if $u_j$ and $u_r$ are not adjacent: then $u_i$ is the unique vertex where $E'$ is nonzero. We do not need to distinguish between $u_j$ and $u_r$ because $E'$ is of the form $E'=D'(u_i,u_j)+D'(u_i,u_r)$.

\end{itemize}

\item[ ]

\item [(3)] $\theta({\Wc'_1},\Xc'_1)=D'(u_r,\{u_i,u_j\})$: we have $E'=D'(u_i,u_j)+D'(u_r,u_i)+D'(u_r,u_j)$. Since we know $\Xc'_1=\Xc'_2$ we know $\{u_i,u_j,u_r\}$. This case reduces to (2) by permuting the indices.

\end{itemize}


Therefore \eqref{our claim2} holds. {Note that $\Uc^{(1)}$ and $\Uc'^{(1)}$ (respectively, $\Uc^{(2)}$ and $\Uc'^{(2)}$) are related by the map $\phi^{\ast}$ \eqref{phistar}.} On the other hand by Remark~\ref{rem:contract} for each $\ell$ the total ordering $\prec_\ell$ corresponding to $G$ and the total ordering $\prec'_\ell$ corresponding to $G/\Uc$ (and so the term orderings $<_\ell$ and $\ell'$) are compatible.
Hence it follows from the discussion above and Remark~\ref{rem:contract} that \eqref{our claim2} is precisely coming from the $s$-polynomial computation.

\medskip

$\bullet$ $k=n-2$.
Let $U_i\backslash U_{i-1}=\{v_{i}\}$ and for simplicity, $x_{v_i}=x_i$.
By Theorem~\ref{thm:GB} $\psi(\Uc)=s(\psi(\Uc^{(1)}),\psi(\Uc^{(2)}))$. We directly apply the division algorithm to describe $s(\psi(\Uc^{(1)}),\psi(\Uc^{(2)}))$.

From the proof of Lemma~\ref{lem:KU1U2} the coefficient of $[\psi(\Uc^{(1)})]$ is $\xb^{\theta(\Uc,\Uc^{(1)})}=\xb^{D(v_2,v_1)}$ and the coefficient of $[\psi(\Uc^{(2)})]$ is $\xb^{\theta(\Uc,\Uc^{(2)})}$, where
\[
\theta(\Uc,\Uc^{(2)})=\begin{cases}
D(v_3,v_2), &\text{if $v_2$ and $v_3$ are adjacent;}\\
D(v_3,v_1), &\text{if $v_2$ and $v_3$ are not adjacent.}
\end{cases}
\]

Now assume that
\[
M:=\LM(\spoly(\psi(\Uc^{(1)}),\psi(\Uc^{(2)})))\ .
\]
Since $\Image(\psi)$ forms a minimal Gr\"obner bases of $(\syz_{n-3}(\Gb(G,q)),<_{n-3})$ there exists an element
$\Vc\in \Sf_{n-1}(G, q)$ such that $\LM(\psi(\Vc))$ divides $M$.
We know from \eqref{eq:LMpsi} that $\LM(\psi(\Vc))=\xb^{D(V_{2}\backslash V_1,V_1)}[\psi(\Vc^{(1)})]$. Hence

\[
M=\xb^{\theta(\Uc,\Vc)+D(V_{2}\backslash V_1,V_1)}[\psi(\Vc^{(1)})] \quad \text{for some} \quad \theta(\Uc,\Vc) \geq 0 \ .
\]

In the first step of the division algorithm we obtain
\[
\xb^{\theta(\Uc,\Uc^{(1)})}\psi(\Uc^{(1)})-\xb^{\theta(\Uc,\Uc^{(2)})}\psi(\Uc^{(2)}) + \epsilon(\Uc,\Vc) \xb^{\theta(\Uc,\Vc)}\psi(\Vc)\ .
\]
The division algorithm proceeds by finding the leading monomial of the above expression and continuing similarly.
To show \eqref{eq:inmap} we show that in each step $\Vc$ belongs to $\Bf(\Uc)$ and $\theta(\Uc,\Vc)$ is of the form $D(v,v')$ for some vertices $v$ and $v'$ ({and that all such terms appear}). This is clearly correct for $\Vc = \Uc^{(1)}$ and $\Vc = \Uc^{(2)}$ as discussed above. Assume we are in $i^{\rm th}$ step. Recursively we may assume the leading monomial of the existing expression is a monomial of $\xb^{D(v_i,v_j)}[\psi(\Wc)]$.

Since $\Wc \in \Sf_{n-1}(G,q)$ we can use the result of the previous case for $k=n-3<n-2$ to write

\[
\psi(\Wc)=\sum_{\Xc \in \Bf(\Wc)} \epsilon(\Wc,\Xc)\xb^{\theta(\Wc,\Xc)}[\psi(\Xc)]\ ,
\]
where $\theta(\Wc,\Xc)=D(W_r\backslash W_{r-1},W_{s}\backslash W_{s-1})$ for some $r,s$.

\medskip

Now let $M$ be the term $\xb^{D(v_i,v_j)}\xb^{\theta(\Wc,\Xc)}[\psi(\Xc)]$ which is divisible by
\[
\LM(\psi(\Vc))=\xb^{D(V_2\backslash V_1,V_1)}[\psi(\Vc^{(1)})]\ .
\]
Set $E':=D(v_i,v_j)+\theta(\Wc,\Xc)=D(v_{i}, v_j)+D(W_r\backslash W_{r-1},W_s\backslash W_{s-1})$. Let $\{v,v'\}$ be the unique part of $\Vc$ which contains two vertices. It is enough to show that
$E'=D(V_2\backslash V_1,V_1)+D(v, v')$ and $\Vc\in \Bf(\Uc)$.
Depending on which parts of $\Wc$ are merged to get $\Xc$ we have the following cases:

\medskip

Case 1. $\theta(\Wc,\Xc)=D(v_{r},v_s)$: {note that $\{v_r,v_s\}$ and $\{v_i,v_j\}$ are two disjoint parts of $\Xc$.}
Since $V_2$ has at least two elements we have $V_2=\{v_i, v_j\}$ or $V_2=\{v_r, v_s\}$. Now by noting that $v_1\in V_2$ we can recognize $V_2$. With no loss of generality assume that
$V_2=\{v_i, v_j\}$. Then we must have
$v_j=v_1$; since $\xb^{D(V_{2}\backslash V_1,V_1)}$ divides $\xb^{E'}$. Therefore $E'-D(V_2\backslash V_1,V_1)=D(v_r,v_s)$. Note that $\Xc=\Vc^{(1)}$ implies that
$\{v_r, v_s\}$ is the unique part of $\Vc$ containing two vertices and so  $D(v_r,v_s)=\theta(\Uc,\Vc)$.

\medskip


Case 2. $\theta(\Wc,\Xc)=D(v_r,\{v_i,v_j\})$: {the ordered collection of $(X_i\backslash X_{i-1})_{i=1}^{n-2}$ is a permutation of the sets $\{v_t\}_{t\neq i,j,r}$ and $\{v_i,v_j,v_r\}$, since $\Wc$ is obtained from $\Uc$ by merging $v_i,v_j$ and $\Xc$ is obtained from $\Wc$ by merging $v_r,\{v_i,v_j\}$ where $v_r$ appears before $\{v_i,v_j\}$ in $\Wc$.}
So we have $V_2=\{v_i,v_j,v_r\}$ and $v_1\in V_2$. Therefore the following cases may occur:
\begin{itemize}
\item $V_1=\{v_i,v_j\}$: then $D(V_2\backslash V_1,V_1)={D(v_r,\{v_i,v_j\})}$ and $E'-D(V_2\backslash V_1,V_1)={D(v_{i}, v_j)}$. Therefore $\theta(\Uc,\Vc)=D(v_{i}, v_j)$.

\item $V_1=\{v_j,v_r\}$: then $D(V_2\backslash V_1,V_1)={D(v_{i},\{v_j,v_r\})}$ and $v_j$ is adjacent to $v_r$.
On the other hand, since $\LM(\psi(\Vc))$ divides $M$ we should have no edge between $v_i$ and $v_r$. Therefore $E'-D(V_2\backslash V_1,V_1)={D(v_r, v_j)}$ which is equal to $\theta(\Uc,\Vc)$.

\item $V_1=\{v_j\}$: then $D(V_2\backslash V_1,V_1)={D(\{v_i,v_r\},v_j)}$ and $v_i$ is adjacent to $v_r$.
Therefore $\theta(\Uc,\Vc)=D(v_r,v_i)$.

\item $v_j\in V_2\backslash V_1$ and $v_i\in V_1$: the fact that $v_i$ and $v_j$ are adjacent implies that $x_j$ divides $M$ which is impossible since $\theta(\Uc,\Wc)(v_j)=0$.

\item $V_1=\{v_r\}=\{v_1\}$: then $D(V_2\backslash V_1,V_1)=D(\{v_j,v_{i}\},v_r)$ and two vertices $v_i$ and $v_r$ are adjacent.
The number of edges between $v_i$ and $v_r$ is less than the number of edges between $v_i$ and $v_j$.
On the other hand, since $\LM(\psi(\Vc))$ divides $M$ we should have no edge between $v_j$ and $v_r$.

Now we will show that this case cannot happen. First we note that $M$ is equal to the term $\xb^{D(v_1,v_i)}\xb^{\theta(\Wc',\Xc)}[\psi(\Xc)]$ for some $\Wc'\in\Bf(\Uc)$ since $M$ is a term corresponding to a summand of an element which was added in the previous steps of the division algorithm. This term is obtained by merging the parts $\{v_1\}$ and $\{v_i\}$, i.e., removing the orientations on the edges between $v_1$ and $v_i$. Thus
$\{v_1,v_i\}$ is
the unique part of $\Wc'$ which contains two vertices. This implies that $\xb^{\theta(\Uc,\Wc')}[\psi(\Wc')]$ has not been added in the previous steps of the division algorithm. Otherwise this term has been canceled with its corresponding dual term coming from Proposition~\ref{prop:sign}. On the other hand $M'=\xb^{\theta(\Uc,\Wc')}\LM([\psi(\Wc')])$ is not among the previous terms added in the division algorithm till this step (since otherwise $M'$ could be the leading term). Note that $W'_1=\{v_1,v_i\}$. Now assume that $W'_2\backslash W'_1=\{v_s\}$. If $v_1$ and $v_s$ are adjacent then the dual element of $M'$ coming from Proposition~\ref{prop:sign}, is obtained by merging the parts $\{v_1\}$ and $\{v_s\}$ (in order to get $\Wc''\in \mathcal{A}(\Uc)$) and then merging the parts $\{v_i\}$ and $\{v_1,v_s\}$ in $\Wc''$. Note that by Lemma~\ref{lem:contractible} we know that $\{v_1\}$ and $\{v_s\}$ are mergeable in $G(\Uc^{(1)})$ or $G(\Uc^{(2)})$ which implies that $\xb^{\theta(\Uc,\Wc'')}[\psi(\Wc'')]$ has been already added in the previous steps of the division algorithm. However the term $M'$ of this element has not been canceled with its dual term which is a summand of $\xb^{\theta(\Uc,\Wc')}[\psi(\Wc')]$ which is a contradiction since $M'>_{\rm revlex}M$.
\end{itemize}

\medskip

Case 3. $\theta(\Wc,\Xc)=D(\{v_i,v_j\},v_r)$: we have $E'=D(v_i,v_j)+D(v_i,v_r)+D(v_j,v_r)$. This case reduces to (2) by changing the indices.

\medskip

\medskip

As we see in all cases we add the term $\xb^{\theta(\Uc,\Vc)}\psi(\Vc)$ to the $s$-polynomial which has the desired properties.

By Proposition~\ref{prop:sign} corresponding to each summand
$\xb^{\theta(\Uc,\Wc)}\xb^{\theta(\Wc,\Xc)}[\psi(\Xc)]$ where $\Wc \in \Bf(\Uc)$ and $\Xc \in \Bf(\Wc)$ there exists a unique $\Wc' \in \Bf(\Uc)$ such that $\Xc \in \Bf(\Wc')$ and so there exists a unique
element $\xb^{\theta(\Uc,\Wc')}\xb^{\theta(\Wc',\Xc)}[\psi(\Xc)]$ with different sign. On the other hand, by Lemma~\ref{lem:U1,U2}
all terms coming from elements of $\Ifr(\Uc)$ are appearing in $s(\psi(\Uc^{(1)}),\psi(\Uc^{(2)}))$. By Lemma~\ref{lem:contractible}(b)
corresponding to each element $\Merge(o_j(\Uc);\{v_i\},\{v_j\})$ of $\Bf(\Uc)$ there exists a $\Wc\in \Ifr(\Uc)$ with a mergeable edge corresponding to $E(v_i,v_j)$. Our previous argument implies that the term associated to $\Wc$ has been already added in the division algorithm.
Therefore all terms corresponding to elements of $\Bf(\Uc)$
are added in the division algorithm as well, which completes the proof.
\end{proof}

\begin{Corollary}
The Betti numbers of the ideals $I_G$ and $\ini(I_G)$ are independent of the characteristic of the base field $K$.
\end{Corollary}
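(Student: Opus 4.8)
The plan is to observe that the construction of the minimal free resolution of $I_G$ carried out in Theorem~\ref{thm:GB} and made explicit in Theorem~\ref{thm:betti_binom} is purely combinatorial and makes no reference whatsoever to the ground field $K$. Indeed, the term order $<$ of Definition~\ref{Def:MonomialOrder} depends only on $(G,q)$; the minimal Gr\"obner basis $\Gb(G,q)$ of Theorem~\ref{thm:Cori}, the sets $\Sf_{k+2}(G, q)$, the injections $\psi_k$, the module orderings $<_k$, and the differential maps
\[
\varphi_{k}([\psi(\Uc)])=\sum_{\Wc\in \Bf(\Uc)} \epsilon(\Uc,\Wc)\, \xb^{\theta(\Uc,\Wc)}[\psi(\Wc)]
\]
of Theorem~\ref{thm:betti_binom} are all defined in terms of $G$ and $q$ alone, the coefficients $\epsilon(\Uc,\Wc)\in\{-1,+1\}$ and the exponent divisors $\theta(\Uc,\Wc)$ being independent of $\chara K$. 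Since by Theorem~\ref{thm:betti_binom} this resolution is a \emph{minimal} graded free resolution, and minimal graded free resolutions are unique up to graded isomorphism by the graded Nakayama lemma (Lemma~\ref{lem:picgrad}), I would conclude
\[
\beta_k(I_G) = |\Sf_{k+2}(G, q)| \qquad\text{and}\qquad \beta_{k,\js}(I_G) = \#\{\Uc \in \Sf_{k+2}(G, q) : \deg([\psi(\Uc)]) = \js\},
\]
and the right-hand sides involve no field data, hence are independent of $\chara K$.

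For $\ini(I_G)$ I would argue in either of two equivalent ways. First, Remark~\ref{rmk:inithm} together with Remark~\ref{rmk:betti_ini} gives exactly the same combinatorial recipe --- the resolution with $F_k$ free on $\Sf_{k+2}(G, q)$ and differentials supported on $\Ifr(\Uc)$ with coefficients $\pm\xb^{\theta(\Uc,\Wc)}$ --- so the identical argument applies verbatim, yielding $\beta_{k,\js}(\ini(I_G)) = \#\{\Uc \in \Sf_{k+2}(G, q) : \deg([\psi(\Uc)]) = \js\}$ as well. Alternatively, since the resolution of $I_G$ produced by Algorithm~\ref{alg:schr} is minimal, Theorem~\ref{thm:GBini} directly gives $\beta_{i,\js}(I_G) = \beta_{i,\js}(\ini(I_G))$ for all $i$ and $\js$, so the characteristic-independence for $I_G$ transfers automatically to $\ini(I_G)$.

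There is essentially no obstacle here, as the entire content has already been established in the preceding sections; the corollary is the formal observation that a combinatorially described minimal resolution has combinatorially described, hence field-independent, Betti numbers. The one point deserving a word of care is that one is comparing resolutions over the different rings $K[\xb]$ as $K$ varies: this is harmless because the generating data $\Gb(G,q)$, $\Sf_{k+2}(G, q)$, the exponent vectors $D(U_2\backslash U_1, U_1)$ and $\theta(\Uc,\Wc)$, and the signs $\epsilon(\Uc,\Wc)$ are literally the same symbols for every $K$, so the chain complexes of free modules are obtained from one another by the base change $\ZZ \to K$ applied to a resolution defined with integer coefficients, and the ranks of the free modules --- being cardinalities of $\Sf_{k+2}(G, q)$ and its graded pieces --- are unaffected.
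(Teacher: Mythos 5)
Your proposal is correct and matches the paper's intended argument exactly: the Corollary is stated immediately after Theorem~\ref{thm:betti_binom} precisely because the explicit combinatorial description of the minimal free resolution (with ranks $|\Sf_{k+2}(G,q)|$ and differentials given by signs $\epsilon(\Uc,\Wc)$ and monomials $\xb^{\theta(\Uc,\Wc)}$, none of which reference $K$) makes the field-independence of the Betti numbers immediate, and the transfer to $\ini(I_G)$ via Theorem~\ref{thm:GBini} or Remark~\ref{rmk:betti_ini} is the same route the paper takes. Your remark about base change from $\ZZ$ when comparing across different coefficient fields is a sensible point of care that the paper leaves implicit.
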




\begin{Remark} \label{thm:betti}
Note that in the proof of Theorem~\ref{thm:GB} we actually construct a free resolution for $I_G$ according to Algorithm~\ref{alg:schr}. Once we know that this resolution is minimal it follows that $\Gb_k(G,q):=\Image(\psi_k)$ indeed forms a {\em minimal generating set} of $\syz_{k}(\Gb(G,q))$ by Remark~\ref{minfreegen}. Then Theorem~\ref{thm:GBini} implies that the same statement is true for $\ini(I_G)$ as well.
\end{Remark}

\begin{Remark}
As described in \S\ref{sec:Introduction}, the constructed minimal free resolutions are in fact supported on a cellular complex. In \cite{FarbodFatemeh2} we describe this geometric picture in detail.
\end{Remark}

The following example sums up all our notions by giving the explicit minimal free resolution for our running example, the $4$-cycle graph.
\begin{Example}

Returning to Example~\ref{exam:C4}, (and Examples \ref{exam:I(U)}, \ref{exam:running example}) by Theorems~\ref{thm:GB} and \ref{thm:betti} we have that

\[
\begin{aligned}
\varphi_{2}([\psi(\Uc)])&=x_2[\psi(\Merge(\Uc; 1, 2))]-x_3[\psi(\Merge(\Uc; 1,3))]+x_4[\psi(\Merge(\Uc; 3, 4))]\\&-x_4[\psi(\Merge(\Uc; 2,4))]
+x_1[\psi(\Merge(\mathfrak{c}(\Uc); 2, 1))]-x_1[\psi(\Merge(\mathfrak{c}(\Uc); 3, 1))]\\&-x_2[\psi(\Merge(\mathfrak{c}(\Uc); 4, 2))]+
x_3[\psi(\Merge(\mathfrak{c}(\Uc); 4, 3))] \ .
\end{aligned}
\]
Moreover for the ideal $I_{C_4}$ the minimal free resolution $R/I_{C_4}$ is
\[
0\rightarrow R(-4)^3 \xrightarrow{\varphi_2} R(-3)^8 \xrightarrow{\varphi_1} R(-2)^6\xrightarrow{\varphi_0} R\ .
\] 

The matrix for the first differential map is
\[
\varphi_0:\ \ \left( \begin{array}{cccccc}
  x_3x_4-x_1x_2 & x_2x_4-x_1x_3 & x_{2}x_{3}-x_1^2 & x_{4}^{2}-x_2x_3 & x_{3}^{2}-x_1x_4 & x_{2}^{2}-x_1x_4 \end{array}\right)
\]
in which the columns correspond to the generators of $I_{C_4}$ listed in the same order $x_3x_4-x_1x_2, x_2x_4-x_1x_3,\ldots,x_{2}^{2}-x_1x_4$.
\end{Example}
The second differential map is presented by the matrix
\[
\varphi_1:\ \ \left( \begin{array}{cccccccc}
   - x_{4} & 0 & x_{2} & 0 &  - x_{3} & 0 & 0 &  - x_{1} \\
  0 &  - x_{4} & 0 & x_{3} & 0 &  - x_{2} &  - x_{1} & 0 \\
  0 & 0 &  - x_{4} &  - x_{4} & 0 & 0 &  - x_{3} &  - x_{2} \\
  x_{3} & x_{2} & 0 & 0 &  - x_{1} &  - x_{1} & 0 & 0 \\
   - x_{2} & 0 & 0 &  - x_{1} & x_{4} & 0 & x_{2} & 0 \\
  0 &  - x_{3} &  - x_{1} & 0 & 0 & x_{4} & 0 & x_{3} \end{array}\right)
\]
where the columns of the above matrix correspond to the bases elements associated to connected flags
\[
\Merge(\Uc; 1, 2), \Merge(\Uc; 1,3), \Merge(\Uc; 3, 4), \Merge(\Uc; 2,4)
\] 
listed in Example~\ref{exam:I(U)} and the connected flags
\[ \Merge(\mathfrak{c}(\Uc);2, 1), \Merge(\mathfrak{c}(\Uc); 3, 1), \Merge(\mathfrak{c}(\Uc); 4, 2), \Merge(\mathfrak{c}(\Uc); 4, 3)\]
listed in Example~\ref{exam:running example}.

The last differential map is presented by the matrix
\[
\varphi_2:\ \ \left( \begin{array}{ccc}
  x_{2} & 0 & -x_1 \\
   - x_{3} &  - x_{3} & 0 \\
  x_4 & 0 & 0 \\
   - x_{4} &  0 & 0 \\
  x_1 &   x_{2} & 0 \\
  -x_1 & 0 &  x_{3} \\
  -x_2 &  - x_{4} & x_2 \\
  x_3 & x_3 & -x_{4} \end{array}\right)
\]
in which the first column corresponds to \[\Uc: \{1\}\subset\{1,2\}\subset\{1,2,3\}\subset\{1,2,3,4\}, \]
the second and third columns correspond to
\[\Uc_2:\ \{1\}\subset\{1,2\}\subset\{1,2,4\}\subset\{1,2,3,4\}\ {\rm and }\ \Uc_3:\ \{1\}\subset\{1,3\}\subset\{1,3,4\}\subset\{1,2,3,4\}.\] Their corresponding acyclic orientations are listed in Figure~3.



\section{Betti numbers} \label{sec:betti}

Let $\As=\ZZ$ or $\As=\Pic(G)$. We let $\beta_{i}$ and $\beta_{i, \js}$ denote $\beta_{i}(R/I_G)$ and $\beta_{i,\js}(R/I_G)$ respectively
(for $ i \geq 0$ and $\js \in \As$). Note that by Remark~\ref{thm:betti} one might replace $I_G$ with $\ini(I_G)$.

\begin{Proposition}\label{prop:Betti_I}
For all $i \geq 0$,
$
\beta_{i}=|\Sf_{i+1}(G, q)|=|\Ef_{i+1}(G, q)| \ .
$
\end{Proposition}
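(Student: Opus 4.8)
The plan is to read the Betti numbers straight off the minimal free resolution that was constructed in Section~\ref{sec:Syzygy} and shown to be minimal in Section~\ref{sec:minimality}. First I would recall that Theorem~\ref{thm:GB} (applying Algorithm~\ref{alg:schr} with the Gr\"obner basis $\Gb(G,q)$ of Theorem~\ref{thm:Cori} as input) produces a graded free resolution $\cdots \to F_k \xrightarrow{\varphi_k} F_{k-1} \to \cdots \to F_0 \xrightarrow{\varphi_0} I_G \to 0$ of $I_G$ \emph{as an $R$-module}, in which the free module $F_k$ has a basis indexed by $\Sf_{k+2}(G,q)$ through the injection $\psi_k$; hence $\rank F_k = |\Sf_{k+2}(G,q)|$.

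Second, I would invoke Theorem~\ref{thm:betti_binom}: the differential $\varphi_k$ sends each basis element $[\psi(\Uc)]$ to $\sum_{\Wc\in\Bf(\Uc)}\epsilon(\Uc,\Wc)\,\xb^{\theta(\Uc,\Wc)}[\psi(\Wc)]$, and since every $\Wc\in\Bf(\Uc)$ is a genuine connected flag we have $\theta(\Uc,\Wc)\ne 0$, so all coefficients lie in $\mathfrak{m}$. Therefore the resolution is minimal (Nakayama's lemma being available for both the $\ZZ$- and $\Pic(G)$-gradings by Lemma~\ref{lem:picgrad}, and the total ranks $\beta_k$ being the same in either case), and we conclude $\beta_k(I_G)=|\Sf_{k+2}(G,q)|$ for all $k\geq 0$.

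Third, I would pass from $I_G$ to $R/I_G$ by prepending the augmentation, i.e.\ replacing $F_0\twoheadrightarrow I_G$ by $F_0\to R\twoheadrightarrow R/I_G$; this remains a minimal resolution because $\varphi_0(F_0)=I_G\subseteq \mathfrak{m}R$. Hence $\beta_0(R/I_G)=\rank R=1$ and $\beta_i(R/I_G)=\beta_{i-1}(I_G)=|\Sf_{i+1}(G,q)|$ for $i\geq 1$. Since $\Sf_1(G,q)$ is the singleton $\{V(G)\}$ consisting of the unique connected $1$-flag, the formula $\beta_i=|\Sf_{i+1}(G,q)|$ also holds at $i=0$, and so for all $i\geq 0$. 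The second equality $|\Sf_{i+1}(G,q)|=|\Ef_{i+1}(G,q)|$ is then immediate from the definitions: $\Sf_k(G,q)$ is, by the Notation preceding Section~\ref{sec:technical}, the set of $\prec_k$-minimal representatives of the classes in $\Ef_k(G,q)$, so selecting minimal representatives is a bijection $\Ef_k(G,q)\xrightarrow{\ \sim\ }\Sf_k(G,q)$.

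The argument involves no real difficulty once the earlier theorems are in hand; the one point demanding care — and the closest thing here to an obstacle — is the homological degree shift: Theorem~\ref{thm:GB} resolves $I_G$, not $R/I_G$, so its $k$-th term records $\syz_k(I_G)$ and thus contributes to $\beta_{k+1}(R/I_G)$, and one must check that the indices in $\Sf_{k+2}$, $\Sf_{i+1}$, $\Ef_{i+1}$ line up and that the base case $i=0$ (where $\Sf_1$ is a singleton) is treated separately.
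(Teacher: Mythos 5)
Your proposal is correct and mirrors the paper's own proof: both extract $\rank F_k=|\Sf_{k+2}(G,q)|$ from Theorem~\ref{thm:GB}, both use Theorem~\ref{thm:betti_binom} (the nonzero divisors $\theta(\Uc,\Wc)$ forcing all coefficients into $\mathfrak m$) to conclude the resolution is minimal, and both apply the homological shift $\beta_i(R/I_G)=\beta_{i-1}(I_G)$ together with the trivial bijection $\Ef_{k}\leftrightarrow\Sf_{k}$ given by picking minimal representatives. The paper's one-line proof happens to phrase the shift via $\ini(I_G)$, but by Remark~\ref{thm:betti} this is equivalent, and your care over the base case $i=0$ (noting $|\Sf_1|=1$) fills in a detail the paper leaves implicit.
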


\begin{proof}
The assertion follows by Theorem~\ref{thm:GB}, Remark~\ref{rmk:inithm}, and Theorem~\ref{thm:betti_binom} and the fact that $\beta_{i}(R/\ini(I_G))=\beta_{i-1}(\ini(I_G))$.
\end{proof}

\begin{Remark}
It follows from Proposition~\ref{prop:Betti_I} that $|\Sf_{i+1}(G, q)|$ is independent of $q$. It is a nice combinatorial exercise to show this directly.
\end{Remark}

Recall from \S\ref{sec:grade} that for $D\in \Div(G)$ we define
\[
\deg_{\As}(D)=\deg_{\As}(\xb^D)=
\begin{cases}
\deg(D), &\text{ if $\As=\ZZ$;}\\
D, &\text{ if $\As=\Div(G)$;}\\
[D], &\text{ if $\As=\Pic(G)$.}
\end{cases}
\]

\begin{Definition}
 For $k \geq 1$ and $\js \in \As$ define
\[
\Sf_{k,\js}(G, q)= \{ \Uc \in \Sf_{k}(G, q) : \, \deg_{\As}(D(\Uc))=\js\}
\]
where $D(\Uc)$ is defined in Definition~\ref{def:Du}.
\end{Definition}

\medskip

We now strengthen Proposition~\ref{prop:Betti_I} as follows.
\begin{Proposition}\label{prop:GBetti_I}
For $\As=\ZZ$ or $\As=\Pic(G)$
\[
\beta_{i,\js}=|\Sf_{i+1,\js}(G, q)|
\]
for all $i \geq 0$ and $\js \in \As$.
\end{Proposition}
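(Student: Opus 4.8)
The plan is to read off the $\As$-degrees of the basis elements of the minimal free resolution of $R/I_G$ constructed in \S\ref{sec:Syzygy}--\S\ref{sec:minimality} and to identify them with $\deg_{\As}(D(\Uc))$. Recall that this resolution, $\cdots\to F_i\xrightarrow{\varphi_i}F_{i-1}\to\cdots\to F_1\xrightarrow{\varphi_1}F_0=R\to R/I_G\to 0$, is minimal by Theorem~\ref{thm:betti_binom} and, by Theorem~\ref{thm:GB}(i), is $\As$-graded for $\As=\ZZ$ or $\As=\Pic(G)$, with $F_i$ free on $\{[\psi_{i-1}(\Uc)]:\Uc\in\Sf_{i+1}(G,q)\}$ for $i\geq 1$ and $F_0=R$; this rests on the fact that $I_G$, hence $R/I_G$, is homogeneous for both gradings, as observed after Definition~\ref{Def:BinomialIdeal}, which is exactly why $\As=\Div(G)$ must be excluded. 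Since Nakayama's lemma holds for the $\Pic(G)$-grading (Lemma~\ref{lem:picgrad}) and trivially for the $\ZZ$-grading, graded Betti numbers are well defined, and by their very definition (see \S\ref{sec:grade} and the minimal-resolution discussion in \S\ref{sec:comalg}) one gets $\beta_{i,\js}=|\{\Uc\in\Sf_{i+1}(G,q):\deg([\psi_{i-1}(\Uc)])=\js\}|$ for $i\geq 1$, while $\beta_{0,\js}$ is $1$ for $\js=0$ and $0$ otherwise. Thus the whole proposition reduces to the single identity $\deg([\psi_k(\Uc)])=\deg_{\As}(D(\Uc))$ for all $k\geq -1$ and all $\Uc\in\Sf_{k+2}(G,q)$.

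I would prove this identity by induction on $k$. For $k=-1$ the only $1$-flag is $V(G)$, with $D(V(G))=0$ (an empty sum in Definition~\ref{def:Du}) and $\psi_{-1}(V(G))=0$, so both sides are $0$. For $k=0$, Theorem~\ref{thm:Cori} gives $\psi_0(\Uc)=\xb^{D(U_2\backslash U_1,U_1)}-\xb^{D(U_1,U_2\backslash U_1)}$, which is $\As$-homogeneous because $D(U_2\backslash U_1,U_1)\sim D(U_1,U_2\backslash U_1)$, and its degree is $\deg_{\As}(D(U_2\backslash U_1,U_1))=\deg_{\As}(D(\Uc))$ since for a $2$-flag $D(\Uc)=D(U_2\backslash U_1,U_1)$. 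For $k\geq 1$, equation \eqref{eq:LMpsi} in Theorem~\ref{thm:GB}(ii) gives $\LM(\psi_k(\Uc))=\xb^{D(U_2\backslash U_1,U_1)}[\psi_{k-1}(\Uc^{(1)})]$; since $\psi_k(\Uc)$ is $\As$-homogeneous (Theorem~\ref{thm:GB}(i)), its degree equals that of its leading monomial, namely $\deg_{\As}(D(U_2\backslash U_1,U_1))+\deg([\psi_{k-1}(\Uc^{(1)})])$, which by the induction hypothesis applied to $\Uc^{(1)}\in\Sf_{k+1}(G,q)$ (Proposition~\ref{pro:well-def(a)}(a)) equals $\deg_{\As}(D(U_2\backslash U_1,U_1))+\deg_{\As}(D(\Uc^{(1)}))$. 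It remains to note, directly from Definition~\ref{def:Du} and Definition~\ref{def:U12}(a), that $D(\Uc)=D(U_2\backslash U_1,U_1)+D(\Uc^{(1)})$, since $\Uc^{(1)}$ is the flag $U_2\subsetneq U_3\subsetneq\cdots$ whose associated divisor is $\sum_{i\geq 2}D(U_{i+1}\backslash U_i,U_i)$. This closes the induction.

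Combining the two steps, for $i\geq 1$ we obtain $\beta_{i,\js}=|\{\Uc\in\Sf_{i+1}(G,q):\deg_{\As}(D(\Uc))=\js\}|=|\Sf_{i+1,\js}(G,q)|$, and for $i=0$ the unique flag $V(G)\in\Sf_1(G,q)$ has $D(V(G))=0$, so $|\Sf_{1,\js}(G,q)|$ is $1$ for $\js=0$ and $0$ otherwise, matching $\beta_{0,\js}$. Summing over $\js\in\As$ recovers Proposition~\ref{prop:Betti_I}, as it must, since $\Sf_{i+1}(G,q)=\bigsqcup_{\js}\Sf_{i+1,\js}(G,q)$. I do not expect a serious obstacle here once Theorem~\ref{thm:GB} and Theorem~\ref{thm:betti_binom} are available: the only points requiring care are the homological-degree bookkeeping $F_i\leftrightarrow\Sf_{i+1}(G,q)$ and checking that the resolution is genuinely $\Pic(G)$-graded so that $\Pic(G)$-graded Betti numbers are meaningful, both supplied by \S\ref{sec:grade} and Lemma~\ref{lem:picgrad}. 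As an alternative to the leading-monomial induction one could read the degree identity off the explicit differential in Theorem~\ref{thm:betti_binom}, using $\theta(\Uc,\Wc)=D(A_j,A_i)$ together with $D(\Uc)=D(A_j,A_i)+D(\Wc)$ for $\Wc\in\Bf(\Uc)$; but the argument above is cleaner since it invokes only \eqref{eq:LMpsi} and homogeneity.
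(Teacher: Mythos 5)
Your proof is correct and follows essentially the same route as the paper's: identify the graded Betti number with the count of basis elements of each $\As$-degree in the minimal resolution provided by Theorems~\ref{thm:GB} and~\ref{thm:betti_binom}, and then show $\deg(\psi_{k}(\Uc))=\deg_{\As}(D(\Uc))$ by induction on $k$ using \eqref{eq:LMpsi}, homogeneity, and the additivity $D(\Uc)=D(U_2\backslash U_1,U_1)+D(\Uc^{(1)})$. Your treatment of the base cases ($k=-1$, $k=0$, and the $\beta_{0,\js}$ bookkeeping) is spelled out a bit more carefully than in the paper, but the argument is the same.
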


\begin{proof}

By Theorem~\ref{thm:GB}, Theorem~\ref{thm:betti_binom}, and Remark~\ref{minfreegen} the set $\psi_i(\Sf_{i+2}(G, q))$ minimally generates the module $\syz_i(\ini(I_G))$ for each $i \geq 0$, and we have
\[
\beta_{i,\js}=\beta_{i-1,\js}(I_G)=|\{\psi_{i-1}(\Uc) :\, \deg(\psi_{i-1}(\Uc))=\js\, \text{ for }\, \Uc \in \Sf_{i+1,\js}(G, q)\}|\ .
\]

We first note that for $\Uc \in \Sf_{i+1,\js}(G, q)$ we have
\[D(\Uc)=\sum_{\ell=1}^{i+1}{D(U_{\ell} \backslash U_{\ell-1}, U_{\ell-1})} \quad , \quad \xb^{D(\Uc)}=\prod_{\ell=1}^{i+1}\xb^{D(U_{\ell}\backslash U_{\ell-1},U_{\ell-1})}\]
and
\begin{equation}\label{deg(D)}
\deg_{\As}(D(\Uc))=\deg_{\As}(\xb^{D(\Uc)})=\sum_{i=1}^{k}\deg_{\As}(\xb^{D(U_{i}\backslash U_{i-1},U_{i-1})}) \ .
\end{equation}

We need to show that $\deg(\psi_{i-1}(\Uc))=\deg_{\As}(D(\Uc))$.
The proof is by induction on $i\geq 0$. For $i=0$ there is nothing to prove. Since $\psi_{i-1}(\Uc)$ is homogeneous, by \eqref{eq:LMpsi} and \eqref{deg(D)} we obtain
\[
\begin{aligned}
\deg_\As(\psi_{i-1}(\Uc))&=\deg_\As(\LM(\psi_{i-1}(\Uc)))\\
&=\deg(\xb^{D(U_2\backslash U_1, U_1)}[\psi_{k-1}(\Uc^{(1)})])\\
&=\deg_\As(\xb^{D(U_2\backslash U_1, U_1)})+\deg_\As(\psi_{k-1}(\Uc^{(1)}))\\
&=\deg_\As(\xb^{D(U_2\backslash U_1, U_1)})+\deg_\As(D(\Uc^{(1)}))\\
&=\deg_\As(D(\Uc))\ .
\end{aligned}
\]\qed

\end{proof}

\begin{Example}\label{ex:acyc}
It follows from above descriptions that for the $\ZZ$-grading of $R$, $\beta_{i,j}$ can take nonzero values only if $0 \leq i \leq n-1$ and $0 \leq j \leq m$ where $n=|V(G)|$ and $m=|E(G)|$. Clearly $\beta_0=1$. Moreover $\beta_{n-1}=\beta_{n-1,m}$ which is equal to the number of acyclic orientations of $G$ with unique source at $q$ (see also Lemma~\ref{lem:deltaD}). Since both $R/I_G$ and $R/\ini(I_G)$ are Cohen-Macaulay (see, e.g., \cite[Section~11.1]{FarbodFatemeh2}), it follows that the {\em Castelnuovo-Mumford regularity} of both $R/I_G$ and $R/\ini(I_G)$ is equal to $g=m-n+1$ (see, e.g., \cite[page 69]{EisenbudSyz}). 
\end{Example}
\begin{Example}\label{exam:complete}
Let $G=K_n$ be the complete graph on $n$ vertices. Let $\{A_1, A_2, \cdots , A_k\}$ be any $k$-partition of $V(G)$ with $q\in A_1$.
Then corresponding to each permutation $\delta=(i_1,i_2,\ldots,i_{k-1})$ of $(2,3,\ldots,n)$ the strictly increasing $k$-flag
\[
\Uc_\delta:\ \, U_1\subsetneq U_2\subsetneq \cdots\subsetneq U_k=V(G)
\]
is an element of $\Sf_{k}(G,q)$ where $U_j=A_1\cup A_{i_1}\cup\cdots\cup A_{i_{j-1}}$ for each $j$.
Therefore $|\Sf_{k}(G,q)|=(k-1)! \, S(n,k)$ where $S(n,k)$ denotes the Stirling number of the second kind (i.e. the number of ways to partition a set of $n$ elements into $k$ nonempty subsets). In other words $\beta_i= i! \, S(n,i+1)$. See http://oeis.org/A028246 for other interpretations of these numbers.
\end{Example}

\begin{Example}\label{exam:tree}
Let $G$ be a tree on $n$ vertices. Let $\Uc\in \Sf_{k}(G,q)$. For each $i>1$ the part $U_i\backslash U_{i-1}$ is connected by exactly one edge to only one part $U_j\backslash U_{j-1}$ with $j<i$; otherwise we get a cycle in the graph. Therefore each element $\Uc \in \Sf_{k}(G,q)$ is determined by the $k-1$ edges (of $n-1$ edges of $G$) between the partitions $(U_i\backslash U_{i-1})$'s of $\Uc$ and $|\Sf_{k}(G,q)|={n-1\choose k-1}$. The fact that each edge contributes $1$ to the degree of $\psi(\Uc)$ means that
$\beta_{i}=\beta_{i,i}={n-1 \choose i}$.
\end{Example}

\begin{Example}\label{prop:cycle}
Let $G=C_n$ be the cycle on $n$ vertices. For simplicity of notation let $V(G)=[n]$. Then we will show, by induction on $n$, that for $k \geq 2$
\[|\Sf_{k}(C_n,q)|=(k-1)\times{n\choose k}.\]
One can easily check the formula for $k=2$ and $k=3$. So we may assume that $k\geq 4$. Let $1\leq i_1<i_2<\cdots<i_k\leq n$. Then
we consider the parts
\[
A_1:=\{i_k,i_{k}+1,\ldots,n\}\cup\{1,\ldots,i_1-1\}\ \text{ and }\  A_{t+1}:=\{i_t,i_t+1,\ldots,i_{t+1}-1\}\ \text{ for }\ 1\leq t\leq k-1
\]
of the graph. Then there are three types of elements $\Uc$ of $\Sf_{k}(G,q)$ such that $(U_i\backslash U_{i-1})_{i=2}^k$ is a permutation of $A_2,\ldots,A_k$:
\begin{itemize}
\item[(1)] $U_1=A_1$ and $U_i=U_{i-1}\cup A_i$ for each $1<i\leq k$.

\item[(2)] $U_1=A_1$ and $U_i=U_{i-1}\cup A_{k-i+2}$ for each $1<i\leq k$.

\item[(3)] $U_1=A_1$, $U_2\backslash U_1=A_2$, $U_3\backslash U_2=A_k$: Then the number of $k$-connected flags of $C_n$ with this partition set is equal to the number of
$(k-3)$-connected flags of $C_{k-2}$ on the vertex set $u_1,u_2,\ldots,u_{k-2}$ where
$q=u_{k-3}$ is associated to $A_1\cup A_2\cup A_k$ and $u_i$ is associated to the part $A_i$ for each $3\leq i\leq k-1$. This number equals to $(k-3)\times {k-3\choose k-3}$ by the induction hypothesis.

\item[(4)] $U_1=A_1$, $U_2\backslash U_1=A_k$, $U_3\backslash U_2=A_2$: Similar to the previous case the number of $k$-connected flags of $C_n$ with this partition set is equal to $(k-3)\times {k-3\choose k-3}$.
\end{itemize}

\medskip

Now note that since $A_2$ and $A_k$ are not adjacent just one of the two elements
\[\Uc:\ \, A_1\subsetneq A_1\cup A_2\subsetneq A_1\cup A_2\cup A_k\subsetneq U_4\subsetneq \cdots\subsetneq U_k\]
and
\[\Uc':\ \, A_1\subsetneq A_1\cup A_k\subsetneq A_1\cup A_2\cup A_k\subsetneq U_4\subsetneq \cdots\subsetneq U_k\]
will be in $\Sf_{k}(G,q)$.
This implies that $|\Sf_{k}(G,q)|=(1+1+(k-3))\times {n\choose k}=(k-1)\times{n\choose k}$. We get $\beta_i=\beta_{i,i+1} = i \, {n \choose i+1}$ for $i \geq 1$.

\medskip

For example
for $G=C_5$ we have
\[
\beta_0= 1, \ \beta_1=10, \ \beta_2=20, \ \beta_3=15, \ \beta_4=4\ .
\]
\end{Example}

\begin{Example}
It follows from Proposition~\ref{prop:GBetti_I} that adding or removing parallel edges will not change $\beta_{i}$, since this process does not add/remove any element to/from the set $\Sf_{i+1}(G, q)$. However, the graded Betti numbers $\beta_{i,\js}$ do change by adding or removing parallel edges.
For example, consider the theta graph $G$ with two vertices $u$ and $v$ connected by $m$ edges. Then $\Sf_2(G,u)$ has the unique element $\{v\}\subsetneq\{u,v\}$ which implies that $\beta_{1}=\beta_{1,m}=1$.
\end{Example}


\subsection{Relation to maximal reduced divisors}
\label{sec:Connection}

Recall the definition of reduced divisors.

\begin{Definition}
Let $(\Gamma, v_0)$ be a pointed graph. A divisor $D \in \Div(\Gamma)$ is called {\em $v_0$-reduced} if it satisfies the following two conditions:
\begin{itemize}
\item[(i)] $D(v) \geq 0$ for all $v \in V(\Gamma)\backslash \{v_0\}$.
\item[(ii)] For every nonempty subset $A \subseteq V(\Gamma)\backslash \{v_0\}$, there exists a vertex $v \in A$ such that $D(v) < \outdeg_A(v)$.
\end{itemize}
\end{Definition}
These divisors arise precisely from the normal forms with respect to the Gr\"obner bases given in Theorem~\ref{thm:Cori}. There is a well-known algorithm due to Dhar for checking whether a given divisor is reduced (see, e.g., \cite{FarbodMatt12} and references therein).

\medskip

Recall from Definition~\ref{def:U^k} that given $\Uc \in \Sf_{k}(G, q)$ we obtain a graph $G_{/\Uc}$ from $G$ by contracting all the unoriented edges of $G(\Uc)$. The contraction map $\phi: G\rightarrow G_{/\Uc}$ induces the map
\[
\phi_{\ast}:\Div(G)\rightarrow \Div(G_{/\Uc})\ \ \text{ with }\ \
\phi_{\ast}(\sum_{v\in V(G)}a_v (v))=
\sum_{v\in V(G)}a_v (\phi(v)).
\]

Assume $U_1$ is the part of $\Uc$ containing $q$ and let $q'=\phi(U_1)\in V(G_{/\Uc})$.
\begin{Lemma}
$\phi_{\ast}(D(\Uc))=E+ \mathbf{1}$, where $E$ is a maximal $q'$-reduced divisor and $\mathbf{1}$ is the all-one divisor.
\end{Lemma}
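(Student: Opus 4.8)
The strategy is to unwind the definitions on both sides and to identify $\phi_{\ast}(D(\Uc))$ with the divisor read off from a particular acyclic orientation of $G_{/\Uc}$ with unique source $q'$, then to invoke the standard dictionary between such orientations and maximal reduced divisors. First I would recall from Remark~\ref{rmk:indeg} that $D(\Uc)=\sum_{v\in V(G)}\indeg_{G(\Uc)}(v)(v)$, and that pushing forward along $\phi$ collapses each part $U_i\backslash U_{i-1}$ to a vertex $u_i$ of $G_{/\Uc}$, with all unoriented edges of $G(\Uc)$ contracted. Since $\phi_{\ast}$ simply sums coefficients over each part, $\phi_{\ast}(D(\Uc))(u_i)$ equals the total number of oriented edges of $G(\Uc)$ entering the part $U_i\backslash U_{i-1}$ from earlier parts, which is exactly $\indeg_{G_{/\Uc}(\Uc')}(u_i)$ for the induced orientation $\Uc'$ of $G_{/\Uc}$ (here $\Uc'$ is the preimage flag from Remark~\ref{rem:contract}, which in $G_{/\Uc}$ is just $\{u_1\}\subsetneq\{u_1,u_2\}\subsetneq\cdots$, so every edge of $G_{/\Uc}$ is oriented). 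Thus $\phi_{\ast}(D(\Uc))$ is the indegree divisor of an acyclic orientation of $G_{/\Uc}$ whose unique source is $q'$ (acyclicity and the source property are exactly the content of the remark after Definition~\ref{def:Du} together with Definition~\ref{def:U^k}).

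Next I would use the well-known correspondence (in the form used throughout \S\ref{sec:betti}, e.g. Example~\ref{ex:acyc} and \cite{FarbodMatt12}): if $\mathcal{O}$ is an acyclic orientation of a graph $\Gamma$ with unique source at $v_0$, then the divisor $E_{\mathcal{O}}:=\sum_{v\neq v_0}(\indeg_{\mathcal{O}}(v)-1)(v)+(\indeg_{\mathcal O}(v_0))(v_0)$ is $v_0$-reduced, and in fact is a \emph{maximal} $v_0$-reduced divisor — maximal in the sense that it has the largest degree among all $v_0$-reduced divisors in its linear class, equivalently it is the unique $v_0$-reduced divisor $E$ with $E(v_0)=\indeg_{\mathcal O}(v_0)$ which is obtained by running Dhar's burning algorithm from $v_0$ and recording the order in which vertices burn. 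Applying this with $\Gamma=G_{/\Uc}$, $v_0=q'$, and $\mathcal{O}$ the orientation $G_{/\Uc}(\Uc')$ above, we get that $E:=\phi_{\ast}(D(\Uc))-\mathbf{1}$ (where $\mathbf 1$ is subtracted only at the non-source vertices, but since the source is acyclic-unique and has indegree counted honestly, one checks $\indeg_{q'}$ also fits the pattern; this bookkeeping at $q'$ is the one delicate point) is a maximal $q'$-reduced divisor, which is precisely the claim $\phi_{\ast}(D(\Uc))=E+\mathbf 1$.

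I expect the main obstacle to be the treatment of the base vertex $q'$: the formula $\phi_{\ast}(D(\Uc))=E+\mathbf 1$ asserts the shift by the all-one divisor \emph{uniformly}, including at $q'$, whereas a $q'$-reduced divisor has no sign constraint at $q'$, so one must verify that the orientation-to-reduced-divisor correspondence produces exactly $\indeg_{G_{/\Uc}(\Uc')}(u_i)-1$ at \emph{every} $u_i$, and that this is genuinely the maximal (not merely \emph{a}) $q'$-reduced divisor. For this I would argue that maximal $q'$-reduced divisors are in bijection with acyclic orientations having unique source $q'$ (a classical fact; each such orientation yields the maximal reduced divisor in the corresponding equivalence class, and degree-maximality follows because adding a chip anywhere would let the burning algorithm fail, i.e. violate condition (ii) of Definition~\ref{def:U^k}'s reduced-divisor definition), and then observe that $G_{/\Uc}(\Uc')$ is precisely such an orientation by the remark after Definition~\ref{def:Du}. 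A clean way to package the indegree computation is via a telescoping over the flag $\{u_1\}\subsetneq\{u_1,u_2\}\subsetneq\cdots$ in $G_{/\Uc}$: the edges entering $u_i$ are exactly those between $u_i$ and $\{u_1,\dots,u_{i-1}\}$, which under $\phi_{\ast}$ matches $D(U_i\backslash U_{i-1},U_{i-1})$ summed into the $u_i$-coordinate, confirming $\phi_{\ast}(D(\Uc))=\sum_i \indeg_{G_{/\Uc}(\Uc')}(u_i)(u_i)$, after which the reduced-divisor statement is immediate from the cited correspondence.
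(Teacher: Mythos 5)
Your approach matches the paper's: push forward via Remark~\ref{rmk:indeg} to get the indegree divisor of the contracted acyclic orientation on $G_{/\Uc}$ (which has unique source $q'$), then invoke the Dhar correspondence with maximal $q'$-reduced divisors. The ``delicate point'' you flag at $q'$ is resolved simply by stating the correspondence uniformly, as the paper does: $E = \sum_{v\in V(\Gamma)}(\indeg(v)-1)(v)$ over \emph{all} vertices, so $E(q')=-1$ (the unique source has indegree $0$) and $\phi_{\ast}(D(\Uc))(q')=0=E(q')+1$. Your written formula, which assigns $\indeg(v_0)$ rather than $\indeg(v_0)-1$ at the source, would give $E(q')=0$ and break the shift by $\mathbf 1$ at $q'$; with the uniform formula the shift holds at every vertex with no special-casing, and the rest of your argument coincides with the paper's.
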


\begin{proof}
This follows from the well-known fact that Dhar's algorithm gives a one-to-one correspondence between acyclic orientations with unique source at $v_0$ and maximal $v_0$-reduced divisors; given such an acyclic orientation the corresponding $v_0$-reduced divisor is $\sum_{v \in V(\Gamma)}{(\indeg(v)-1)}(v)$ (see, e.g., \cite{BensonTetali08}). The result now follows from Remark~\ref{rmk:indeg}.
\end{proof}

Since different acyclic orientations with unique source at $q'$ give rise to inequivalent $q'$-reduced divisors we deduce that if $\Uc,\Vc \in \Sf_{k}(G, q)$ and the graphs $G_{/\Uc}$ and $G_{/\Vc}$ coincide, then $\phi_{\ast}(D(\Uc))- \mathbf{1}$ and $\phi_{\ast}(D(\Vc))- \mathbf{1}$ are two inequivalent maximal reduced divisors. These observations lead to the following formula for Betti numbers which, in an equivalent form, was conjectured in \cite{perkinson} for $I_G$:
\[
\begin{aligned}
\beta_i &= \sum_{G_{/\Uc}}{ |\{D: \, D \text{ is a maximal $v_0$-reduced divisor on } G_{/\Uc}\}|}\\
&= \sum_{G_{/\Uc}}{ | \{\text{acyclic orientations of $G_{/\Uc}$ with unique source at $v_0$} \}|}
\end{aligned}
\]
where the sum is over all {\em distinct} contracted graphs $G_{/\Uc}$ as $\Uc$ varies in $\Sf_{i+1}(G, q)$, and $v_0$ is an arbitrary vertex of $G_{/\Uc}$.

\medskip

Here is another connection with reduced divisors. Hochster's formula for computing the Betti numbers topologically (see, e.g., \cite[Theorem 9.2]{MillerSturmfels}), when applied to $I_G$ and the ``nice'' grading by $\Pic(G)$, says that for each $\js \in \Pic(G)$ the graded Betti number $\beta_{i,\js}(R / I_G)$ is the dimension of the $i ^{\rm th}$ reduced homology of the simplicial complex
\[
\Delta_{\js} = \{\supp(E): \, 0 \leq E \leq D' \in |\, \js \,|\}
\]
where $|\, \js \,|$ denotes the linear system of $\js \in \Pic(G)$. One can use this to give an alternate proof for the highest graded Betti numbers. The following is a simplification of the proof of \cite[Theorem 7.7]{perkinson} (see also Example~\ref{ex:acyc}).

\begin{Lemma} \label{lem:deltaD}
For $\js \in \Pic(G)$, we have $\beta_{n-1,\js}(R/I_G)=1$ if and only if \[\js \sim E + \mathbf{1}\] where $E$ is a maximal $q$-reduced divisor.
\end{Lemma}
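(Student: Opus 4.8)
The plan is to unpack Hochster's formula via the simplicial complex $\Delta_\js$ and combine it with the combinatorial description of the top Betti numbers obtained in Example~\ref{ex:acyc} and Proposition~\ref{prop:GBetti_I}. First I would note that $\beta_{n-1}(R/I_G) = |\Sf_n(G,q)|$, which by the discussion in Example~\ref{ex:acyc} equals the number of acyclic orientations of $G$ with unique source at $q$; moreover $\beta_{n-1,\js}=\beta_{n-1}$ concentrated in the single coarse degree $j=m$, and $\beta_{n-1,\js'}=|\Sf_{n,\js'}(G,q)|$ in the $\Pic(G)$-grading by Proposition~\ref{prop:GBetti_I}. An element of $\Sf_n(G,q)$ is a maximal chain $U_1\subsetneq\cdots\subsetneq U_n=V(G)$, so every part $U_i\backslash U_{i-1}$ is a single vertex and the associated partial orientation $G(\Uc)$ is a genuine acyclic orientation with unique source $q$; by Remark~\ref{rmk:indeg} its divisor is $D(\Uc)=\sum_v \indeg_{G(\Uc)}(v)\,(v)$. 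Comparing with the correspondence (Dhar's algorithm / \cite{BensonTetali08}) between such acyclic orientations and maximal $q$-reduced divisors $E$ via $E=\sum_v(\indeg(v)-1)(v)$, we get $D(\Uc)=E+\mathbf{1}$ for the corresponding maximal reduced $E$. Hence $\Sf_{n,\js}(G,q)$ is nonempty precisely when $\js=\deg_\As(D(\Uc))=[E+\mathbf 1]$ for some maximal $q$-reduced $E$, and in that case $|\Sf_{n,\js}(G,q)|=1$ because distinct acyclic orientations with the same source give inequivalent reduced divisors (the observation already used right before the statement of this Lemma). This shows $\beta_{n-1,\js}(R/I_G)=1$ exactly when $\js\sim E+\mathbf 1$ with $E$ maximal $q$-reduced, and is $0$ otherwise.

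To present this cleanly, I would first establish the ``only if'' direction: if $\beta_{n-1,\js}(R/I_G)=1$ then by Proposition~\ref{prop:GBetti_I} (using $\beta_{n-1,\js}(R/I_G)=\beta_{n-2,\js}(I_G)=|\Sf_{n,\js}(G,q)|$) there is some $\Uc\in\Sf_n(G,q)$ with $\deg_{\Pic(G)}(D(\Uc))=\js$; then $D(\Uc)=E+\mathbf 1$ for the maximal $q$-reduced divisor $E$ attached to the acyclic orientation $G(\Uc)$, giving $\js=[D(\Uc)]=[E+\mathbf 1]$, i.e. $\js\sim E+\mathbf 1$. For the ``if'' direction, given a maximal $q$-reduced $E$, run the correspondence backwards to produce the acyclic orientation with unique source $q$, read off the maximal chain $\Uc\in\Sf_n(G,q)$ (the chain of ``downsets'' of the acyclic orientation, rooted at $q$), and check $D(\Uc)=E+\mathbf 1$, so $\deg_{\Pic(G)}(D(\Uc))=\js$ and $\Sf_{n,\js}(G,q)\ne\emptyset$; uniqueness of the reduced divisor in its class forces $|\Sf_{n,\js}(G,q)|=1$, whence $\beta_{n-1,\js}(R/I_G)=1$.

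Alternatively, and perhaps more in the spirit of the paragraph preceding the Lemma, one can argue topologically through $\Delta_\js$: a top homology class in degree $\js$ forces $\Delta_\js$ to be (homotopy equivalent to) a sphere $S^{n-2}$ on the $n$ vertices, which happens iff the linear system $|\js|$ contains a divisor $D'$ whose support misses exactly one vertex at a time in all $n$ ways while $\sum_v D'(v)$ is forced to be $m$ — this pins down $D'$ up to the choice of acyclic orientation, and identifies $\js$ with $[E+\mathbf 1]$. I would likely keep the direct combinatorial argument as the main proof and mention the topological one as a remark. The main obstacle, and the only step requiring genuine care, is the bookkeeping in the backward direction: verifying that the maximal chain $\Uc$ recovered from an acyclic orientation with unique source $q$ indeed lies in $\Sf_n(G,q)$ (i.e. is the chosen minimal representative of its equivalence class) and that $D(\Uc)=E+\mathbf 1$ holds on the nose; but since every part is a singleton, the equivalence classes in $\Ff_n(G,q)$ are singletons (all edges are oriented, so no re-ordering as in Example~\ref{example:equiv} is possible), so $\Sf_n(G,q)=\Ff_n(G,q)$ and this subtlety essentially evaporates. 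Everything else is a direct citation of Proposition~\ref{prop:GBetti_I}, Remark~\ref{rmk:indeg}, and the standard reduced-divisor/acyclic-orientation dictionary.
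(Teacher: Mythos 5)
Your argument is correct, but it takes a genuinely different route from the one in the paper. You deduce the statement from Proposition~\ref{prop:GBetti_I}, which gives $\beta_{n-1,\js}(R/I_G)=|\Sf_{n,\js}(G,q)|$, and then identify $\Sf_n(G,q)$ with the set of acyclic orientations of $G$ having unique source at $q$; by the Dhar-type dictionary each such orientation corresponds to a distinct (hence pairwise inequivalent) maximal $q$-reduced divisor $E$, with $D(\Uc)=E+\mathbf{1}$. The paper instead argues topologically through Hochster's formula: $\beta_{n-1,\js}=1$ iff $\Delta_{\js}$ is a sphere of top dimension, which translates into the two conditions $|\js-\mathbf 1|=\emptyset$ and $|\js-\mathbf 1+(v)|\neq\emptyset$ for all $v$, and these are then unwound in terms of the $q$-reduced representative of $\js-\mathbf 1$. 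The paper's route is deliberately self-contained --- it is explicitly presented as an \emph{alternate} proof of the top Betti numbers that bypasses the resolution --- whereas yours reads the answer off the already-established $\Sf$-formula. Both are valid; yours is arguably the more natural corollary of the machinery developed earlier in the paper, and the paper's version is valuable precisely because it is independent of that machinery.

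One factual slip is worth flagging, although it does not break the argument: you assert that for maximal flags ``the equivalence classes in $\Ff_n(G,q)$ are singletons (all edges are oriented, so no re-ordering \dots is possible), so $\Sf_n(G,q)=\Ff_n(G,q)$.'' This is false in general. Even when every edge is oriented, the topological order of an acyclic orientation need not be unique: two consecutive singleton parts with no edge between them can be swapped exactly as in Example~\ref{example:equiv}. For a star on $n\geq 3$ vertices with center $q$ there is a single acyclic orientation with unique source $q$ but $(n-1)!$ maximal connected flags, so $|\Sf_n(G,q)|=1$ while $|\Ff_n(G,q)|=(n-1)!$. What your proof actually requires --- and what is true --- is only that $\Uc\mapsto G(\Uc)$ restricts to a bijection from $\Sf_n(G,q)$ onto the acyclic orientations with unique source $q$. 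This holds because $\Sf_n(G,q)$ is by definition a transversal of the equivalence classes, every such orientation arises from some topological order (which is automatically a connected flag, since the unique-source condition forces each new vertex to have an in-neighbour among those already placed), and the fibers of $\Uc\mapsto G(\Uc)$ are exactly the equivalence classes. With that correction in the stated justification, your proof is complete.
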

\begin{proof}
$\beta_{n-1,\js}(R/I_G)=1$ if and only if $\Delta_{\js}$ is homotopy equivalent to an $(n-1)$-sphere. This is equivalent to the following two conditions.
\begin{itemize}
\item[(1)] $|\, \js -\mathbf{1}\,| = \emptyset$,
\item[(2)] $|\, \js -\mathbf{1} + (v)\,| \ne \emptyset$ for any $v \in V(G)$.
\end{itemize}
Let $E$ be the unique $q$-reduced divisor equivalent to $\js - \mathbf{1}$. Then (1) is equivalent to saying $E(q) \leq -1$. But (2) for $v=q$ would require $E(q)=-1$, and for $v \ne q$ would require that $E$ be a maximal $q$-reduced divisor. This is because for all maximal reduced divisors the values of vertices $v\ne q$ add up to the same number $g= |E(G)|-|V(G)|+1$.
\end{proof}

\begin{Remark}\label{rmk:mania}
\begin{itemize}
\item[]
\item[(i)] By Remark~\ref{thm:betti} one can use Proposition~\ref{prop:GBetti_I} to read all dimensions of the reduced homologies of $\Delta_{\js}$. Although we now know all the Betti numbers, giving an explicit bijection between connected flags and the bases of the reduced homologies of $\Delta_{\js}$ is an
intriguing problem.

\item[(ii)] In a recent work, Mania \cite{Horia} studies the number of
connected components of $\Delta_{\js}$. This gives an alternate proof that 
$\beta_1=|\Sf_2(G,q)|$.
\end{itemize}
\end{Remark}


\bibliographystyle{plain}
\bibliography{Betti2012}

\begin{thebibliography}{10}

\bibitem{BN1}
Matthew Baker and Serguei Norine.
\newblock Riemann-{R}och and {A}bel-{J}acobi theory on a finite graph.
\newblock {\em Adv. Math.}, 215(2):766--788, 2007.

\bibitem{FarbodMatt12}
Matthew Baker and Farbod Shokrieh.
\newblock Chip-firing games, potential theory on graphs, and spanning trees.
\newblock {\em J. Combin. Theory Ser. A}, 120(1):164--182, 2013.

\bibitem{popescu}
Dave Bayer, Sorin Popescu, and Bernd Sturmfels.
\newblock Syzygies of unimodular {L}awrence ideals.
\newblock {\em J. Reine Angew. Math.}, 534:169--186, 2001.

\bibitem{BensonTetali08}
Brian Benson, Deeparnab Chakrabarty, and Prasad Tetali.
\newblock {$G$}-parking functions, acyclic orientations and spanning trees.
\newblock {\em Discrete Math.}, 310(8):1340--1353, 2010.

\bibitem{Biggs97}
Norman Biggs.
\newblock Algebraic potential theory on graphs.
\newblock {\em Bull. London Math. Soc.}, 29(6):641--682, 1997.

\bibitem{conca}
Aldo Conca, Serkan Ho{\c{s}}ten, and Rekha~R. Thomas.
\newblock Nice initial complexes of some classical ideals.
\newblock In {\em Algebraic and geometric combinatorics}, volume 423 of {\em
  Contemp. Math.}, pages 11--42. Amer. Math. Soc., Providence, RI, 2006.

\bibitem{CoriRossinSalvy02}
Robert Cori, Dominique Rossin, and Bruno Salvy.
\newblock Polynomial ideals for sandpiles and their {G}r\"obner bases.
\newblock {\em Theoret. Comput. Sci.}, 276(1-2):1--15, 2002.

\bibitem{Dhar90}
Deepak Dhar.
\newblock Self-organized critical state of sandpile automaton models.
\newblock {\em Phys. Rev. Lett.}, 64(14):1613--1616, 1990.

\bibitem{Anton}
Anton Dochtermann and Raman Sanyal.
\newblock Laplacian ideals, arrangements, and resolutions.
\newblock Preprint available at \href{http://arxiv.org/abs/1212.6244}{{\tt
  ar{X}iv:1212.6244}}, 2012.

\bibitem{Eisenbud}
David Eisenbud.
\newblock {\em Commutative algebra}, volume 150 of {\em Graduate Texts in
  Mathematics}.
\newblock Springer-Verlag, New York, 1995.
\newblock With a view toward algebraic geometry.

\bibitem{EisenbudSyz}
David Eisenbud.
\newblock {\em The geometry of syzygies}, volume 229 of {\em Graduate Texts in
  Mathematics}.
\newblock Springer-Verlag, New York, 2005.
\newblock A second course in commutative algebra and algebraic geometry.

\bibitem{Gabrielov93}
Andrei Gabrielov.
\newblock Abelian avalanches and {T}utte polynomials.
\newblock {\em Phys. A}, 195(1-2):253--274, 1993.

\bibitem{GathmannKerber}
Andreas Gathmann and Michael Kerber.
\newblock A {R}iemann-{R}och theorem in tropical geometry.
\newblock {\em Math. Z.}, 259(1):217--230, 2008.

\bibitem{Singular}
Gert-Martin Greuel and Gerhard Pfister.
\newblock {\em A \textbf{{S}ingular} introduction to commutative algebra}.
\newblock Springer, Berlin, extended edition edition, 2008.
\newblock With contributions by Olaf Bachmann, Christoph Lossen and Hans
  Sch{\"o}nemann.

\bibitem{Kamoi}
Yuji Kamoi.
\newblock Noetherian rings graded by an abelian group.
\newblock {\em Tokyo J. Math.}, 18(1):31--48, 1995.

\bibitem{Lorenzini89}
Dino~J. Lorenzini.
\newblock Arithmetical graphs.
\newblock {\em Math. Ann.}, 285(3):481--501, 1989.

\bibitem{Horia}
Horia Mania.
\newblock Wilmes' conjecture and boundary divisors.
\newblock Preprint available at \href{http://arxiv.org/abs/1210.8109}{{\tt
  ar{X}iv:1210.8109}}, 2012.

\bibitem{Madhu}
Madhusudan Manjunath, Frank-Olaf Schreyer, and John Wilmes.
\newblock Minimal free resolutions of the {$G$}-parking function ideal and the
  toppling ideal.
\newblock To appear in Trans. Amer. Math. Soc., Preprint available at
  \href{http://arxiv.org/abs/1210.7569}{{\tt ar{X}iv:1210.7569}},, 2012.

\bibitem{MadhuBernd}
Madhusudan Manjunath and Bernd Sturmfels.
\newblock Monomials, binomials and {R}iemann--{R}och.
\newblock {\em J. Algebraic Combin.}, 37(4):737--756, 2013.

\bibitem{MK08}
Grigory Mikhalkin and Ilia Zharkov.
\newblock Tropical curves, their {J}acobians and theta functions.
\newblock In {\em Curves and abelian varieties}, volume 465 of {\em Contemp.
  Math.}, pages 203--230. Amer. Math. Soc., Providence, RI, 2008.

\bibitem{MillerSturmfels}
Ezra Miller and Bernd Sturmfels.
\newblock {\em Combinatorial commutative algebra}, volume 227 of {\em Graduate
  Texts in Mathematics}.
\newblock Springer-Verlag, New York, 2005.

\bibitem{Fatemeh}
Fatemeh Mohammadi.
\newblock Prime splittings of determinantal ideals.
\newblock Preprint available at \href{http://arxiv.org/abs/1208.2930}{{\tt
  ar{X}iv:1208.2930}}, 2012.

\bibitem{FarbodFatemeh2}
Fatemeh Mohammadi and Farbod Shokrieh.
\newblock Divisors on graphs, binomial and monomial ideals, and cellular
  resolutions.
\newblock Preprint available at \href{http://arxiv.org/abs/1306.5351}{{\tt
  ar{X}iv:1306.5351}}, 2013.

\bibitem{novik}
Isabella Novik, Alexander Postnikov, and Bernd Sturmfels.
\newblock Syzygies of oriented matroids.
\newblock {\em Duke Math. J.}, 111(2):287--317, 2002.

\bibitem{peeva}
Irena Peeva.
\newblock {\em Graded syzygies}, volume~14 of {\em Algebra and Applications}.
\newblock Springer-Verlag London Ltd., London, 2011.

\bibitem{perkinson}
David Perkinson, Jacob Perlman, and John Wilmes.
\newblock Primer for the algebraic geometry of sandpiles.
\newblock Preprint available at \href{http://arxiv.org/abs/1112.6163}{{\tt
  ar{X}iv:1112.6163}}, 2011.

\bibitem{PostnikovShapiro04}
Alexander Postnikov and Boris Shapiro.
\newblock Trees, parking functions, syzygies, and deformations of monomial
  ideals.
\newblock {\em Trans. Amer. Math. Soc.}, 356(8):3109--3142, 2004.

\bibitem{RaynaudPicard}
Michel Raynaud.
\newblock Sp\'ecialisation du foncteur de {P}icard.
\newblock {\em Inst. Hautes \'Etudes Sci. Publ. Math.}, (38):27--76, 1970.

\bibitem{Schreyer}
Frank Schreyer.
\newblock Die {B}erechnung von {S}yzygien mit dem verallgemeinerten
  {W}eierstrass'schen {D}ivisionssatz.
\newblock Diplom Thesis, University of Hamburg, Germany., 1980.

\bibitem{Spear}
D.~A. Spear.
\newblock A constructive approach to commutative ring theory.
\newblock In {\em Proceedings of the the 1977 MACSYMA Users' Conference, NASA
  CP-2012}, pages 369--376, 1977.

\bibitem{Wilmes}
John Wilmes.
\newblock Algebraic invariants of sandpile graphs.
\newblock Bachelor�s Thesis, Reed College, Portland, OR, 2010.

\end{thebibliography}

\end{document}